\title{Phase transitions in spiked matrix estimation: information-theoretic analysis}
\author{Léo Miolane}
\date{\today}
\begin{document}
\maketitle

\section*{Introduction}
\addcontentsline{toc}{chapter}{Introduction}

Estimating a low-rank object (matrix or tensor) from a noisy observation is a fundamental problem in statistical inference with applications in machine learning, signal processing or information theory. 
These notes mainly focus on so-called ``spiked'' models where we observe a signal spike perturbed with some additive noise.
We should consider here two popular models.

The first one is often denoted as the \textit{spiked Wigner} model. One observes
\begin{equation} \label{eq:spiked_wigner}
	\bbf{Y} = \sqrt{\frac{\lambda}{n}} \, \bbf{X} \bbf{X}^{\sT} +\bbf{Z}
\end{equation}
where $\bbf{X} = (X_1, \dots, X_n) \iid P_0$ is the signal vector and  $\bbf{Z}$ is symmetric matrix that account for noise with standard Gaussian entries: $ (Z_{i,j})_{i \leq j} \iid \mathcal{N}(0,1)$.  $\lambda \geq 0$ is a signal-to-noise ratio.

The second model we consider is the non-symmetric version of~\eqref{eq:spiked_wigner}, sometimes called \textit{spiked Wishart}\protect\footnotemark or spiked covariance model:
\begin{equation} \label{eq:spiked_wishart}
	\bbf{Y} = \sqrt{\frac{\lambda}{n}} \, \bbf{U} \bbf{V}^{\sT} +\bbf{Z}
\end{equation}
where $\bbf{U} = (U_1, \dots, U_n) \iid P_U$, $\bbf{V}=(V_1, \dots, V_m) \iid P_V$ are independent. $\bbf{Z}$ is a noise matrix with standard normal entries: $ Z_{i,j} \iid \mathcal{N}(0,1)$.  $\lambda > 0$ captures again the strength of the signal. We are here interested in the regime where $n,m \to +\infty$, while $m/n \to \alpha>0$.
In both models (\ref{eq:spiked_wigner}-\ref{eq:spiked_wishart}) the goal of the statistician is to estimate the low-rank signals ($\bbf{X}\bbf{X}^{\sT}$ or $\bbf{U}\bbf{V}^{\sT}$) from the observation of $\bbf{Y}$. This task is often called Principal Component Analysis (PCA) in the literature.
\footnotetext{This terminology usually refers to the case where $\bbf{V}$ is a standard Gaussian vector. We consider here a slightly more general case by allowing the entries of $\bbf{V}$ to be taken i.i.d.\ from any probability distribution.}

These spiked models have received a lot of attention since their introduction by~\cite{johnstone2001distribution}. 
From a statistical point of view, there are two main problems linked to the spiked models (\ref{eq:spiked_wigner}-\ref{eq:spiked_wishart}).
\begin{itemize}
	\item The recovery problem: how can we recover the planted signal $\bbf{X}$ / $\bbf{U},\bbf{V}$? Is it possible? Can we do it efficiently?
	\item The detection problem: is it possible to distinguish between the pure noise case ($\lambda=0$) and the case where a spike is present ($\lambda > 0$)? Is there any efficient test to do this?
\end{itemize}

We will focus here on the recovery problem. We let the reader refer to
\cite{berthet2013optimal,onatski2013asymptotic,dobriban2017sharp,banks2016information,perry2016optimality,alaoui2017finite,alaoui2018detection} and the references therein for a detailed analysis of the detection problem.
\\

The spiked models (\ref{eq:spiked_wigner}-\ref{eq:spiked_wishart}) has been extensively studied in random matrix theory. The seminal work of~\cite{baik2005phase} (for the complex spiked Wishart model, and~\cite{baik2006eigenvalues} for the real spiked Wishart)
established the existence of a phase transition:
there exists a critical value of the signal-to-noise ratio $\lambda$ above which the largest singular value of $\bbf{Y}/\sqrt{n}$ escapes from the Marchenko-Pastur bulk. The same phenomenon holds for the spiked Wigner model, see~\cite{peche2006largest,feral2007largest,capitaine2009largest,benaych2011eigenvalues}.
It turns out that for both models the eigenvector (respectively singular vector) corresponding to the largest eigenvalue (respectively singular value) also undergo a phase transition at the same threshold, see~\cite{hoyle2007statistical,paul2007asymptotics,nadler2008finite,benaych2011eigenvalues,benaych2012singular}.

For the spiked Wigner model~\eqref{eq:spiked_wigner}, the main result of interest to us is the following. For any probability distribution $P_0$ such that $\E_{P_0}[X^2]=1$, we have
\begin{itemize}
	\item if $\lambda\leq 1$, the top eigenvalue of $\bbf{Y}/\sqrt{n}$ converges a.s.\ to $2$ as $n\to \infty$, and the top eigenvector $\bbf{\widehat{x}}$ (with norm $\|\bbf{\widehat{x}}\|^2=n$) has asymptotically trivial correlation with $\bbf{X}$: $\frac{1}{n}\bbf{\widehat{x}}^{\sT}\bbf{X}\to 0$ a.s.
	\item if $\lambda>1$, the top eigenvalue of $\bbf{Y}/\sqrt{n}$ converges a.s.\ to $\sqrt{\lambda}+1/\sqrt{\lambda}>2$ and the top eigenvector $\bbf{\widehat{x}}$ (with norm $\|\bbf{\widehat{x}}\|^2=n$) has asymptotically nontrivial correlation with $\bbf{X}$: $\left(\frac{1}{n}\bbf{\widehat{x}}^{\sT}\bbf{X}\right)^2\to 1-1/\lambda$ a.s.
\end{itemize}
An analog statement for the spiked Wishart model is proved in~\cite{benaych2012singular}.
These results give us a precise understanding of the performance of the top eigenvectors (or top singular vectors) for recovering the low-rank signals.

However these naive spectral estimators do not take into account any prior information on the signal.
Thus many algorithms have been proposed to exploit additional properties of the signal, such as sparsity~\cite{johnstone2004sparse,aspremont2005direct,zou2006sparse,amini2008high,deshpande2014sparse} or positivity~\cite{montanari2016non}.
\\

Another line of works study Approximate Message Passing (AMP) algorithms for the spiked models above, see~\cite{rangan2012iterative,deshpande2014information,DBLP:conf/isit/LesieurKZ15,montanari2017estimation}.
Motivated by deep insights from statistical physics, these algorithms are believed (for the models (\ref{eq:spiked_wigner}-\ref{eq:spiked_wishart}), when $\lambda$ and the priors $P_0,P_U,P_V$ are known by the statistician) to be optimal among all polynomial-time algorithms.
A great property of these algorithms is that their performance can be precisely tracked in the high-dimensional limit by a simple recursion called ``state evolution'', see~\cite{bayati2011amp,javanmard2013state}. For a detailed analysis of message-passing algorithms for the models (\ref{eq:spiked_wigner}-\ref{eq:spiked_wishart}), see~\cite{lesieur2017constrained}.
\\

In the following we will not consider any particular estimator but rather try to compute the best performance achievable by any estimator. We will suppose to be in the so-called ``Bayes-optimal'' setting, where the statistician knows the prior $P_0$ (or $P_U$, $P_V$) and the signal-to-noise ratio $\lambda$. In that situation, we will study the posterior distribution of the signal given the observations. 
As we should see in the sequel, both estimation problems (\ref{eq:spiked_wigner}-\ref{eq:spiked_wishart}) can be seen as mean-field spin glass models similar to the Sherrington-Kirkpatrick model, studied in the ground-breaking book of Mézard, Parisi and Virasoro~\cite{mezard1987spin}.
Therefore, the methods that we will use here come from the mathematical study of spin glasses, namely from the works of Talagrand~\cite{talagrand2010meanfield1,talagrand2010meanfield2}, Guerra~\cite{guerra2003broken} and Panchenko~\cite{panchenko2013SK}. 
\\

In order to further motivate the study of the models (\ref{eq:spiked_wigner}-\ref{eq:spiked_wishart}) let us mention some interesting special cases, depending on the choice of the priors $P_0$ / $P_U, P_V$.
\begin{itemize}
	\item \textbf{Sparse PCA}. 
		Consider the spiked Wishart model with $P_U = \Ber(\epsilon)$ and $P_V = \cN(0,1)$. In that case, one see that conditionally on $\bbf{U}$ the columns of $\bbf{Y}$ are i.i.d.\ sampled from $\cN\big(\bbf{0}, \bbf{I}_n + \lambda / n \bbf{U}\bbf{U}^{\sT} \big)$, which is a sparse spiked covariance model. The spiked Wigner model with $P_0 = \Ber(\epsilon)$ has also been used to study sparse PCA.
	\item \textbf{Submatrix localization}. Take $P_0 = \Ber(p)$ in the spiked Wigner model. The goal of submatrix localization is now to extract a submatrix of $\bbf{Y}$ of size $pn \times pn$ with larger mean.
	\item \textbf{Community Detection} in the Stochastic Block Model (SBM). As shown in~\cite{deshpande2016asymptotic,lelarge2016fundamental} recovering two communities of size $pn$ and $(1-p)n$ in a dense SBM of $n$ vertices is (in some sense) ``equivalent'' to the spiked Wigner model with prior
		$$
		P_0 = 
		p \, \delta_{\!\sqrt{\frac{1-p}{p}}}
		+(1-p) \, \delta_{\!-\sqrt{\frac{p}{1-p}}} \,.
		$$
	\item \textbf{$\Z / 2$ synchronization}. This corresponds to the spiked Wigner model with Rademacher prior $P_0 = \frac{1}{2} \delta_{-1} + \frac{1}{2} \delta_{+1}$.
	\item \textbf{High-dimensional Gaussian mixture clustering}. Consider the multidimensional version of the spiked Wishart model where $\bbf{U} \in \R^{n \times k}$ and $\bbf{V} \in \R^{m \times k}$. If one take $P_V$ (the distribution of the rows of $\bbf{V}$) to be supported by the canonical basis of $\R^k$, the model is equivalent to the clustering of $m$ points (the columns of $\bbf{Y}$) in $n$ dimensions from a Gaussian mixture model. The centers of the clusters are here the columns of $\bbf{U}$.
\end{itemize}

{%
	\setcounter{tocdepth}{1}
	\tableofcontents
}

\setcounter{tocdepth}{3}

\chapter{Bayesian inference in Gaussian noise}\label{chap:gaussian}

We introduce in this section some general properties of Bayesian inference in presence of additive Gaussian noise, that will be used repeatedly in the sequel.

\section{Definitions and problem setting}

As explained in the introduction, we will be interested in inference problems of the form:
\begin{equation}\label{eq:inference}
	\bbf{Y} = \sqrt{\lambda} \, \bbf{X} + \bbf{Z} \,,
\end{equation}
where the signal $\bbf{X}$ is sampled according some probability distribution $P_X$ over $\R^n$, and where the noise $\bbf{Z} =(Z_1, \dots, Z_n) \iid \cN(0,1)$ is independent from $\bbf{X}$. In Sections~\ref{sec:xx} and~\ref{sec:uv}, $\bbf{X}$ will typically be a low-rank matrix. The parameter $\lambda \geq 0$ plays the role of a signal-to-noise ratio.
We assume that $P_X$ admits a finite second moment: $\E \|\bbf{X}\|^2 < \infty$.

Given the observation channel~\eqref{eq:inference}, the goal of the statistician is to estimate $\bbf{X}$ given the observations $\bbf{Y}$. We will assume to be in the ``Bayes-optimal'' setting, where the statistician knows all the parameters of the inference model, that is the prior distribution $P_X$ and the signal-to-noise ratio $\lambda$. We measure the performance of an estimator $\widehat{\theta}$ (i.e.\ a measurable function of the observations $\bbf{Y}$) by its Mean Squared Error:
$$
\MSE(\widehat{\theta}) = \E \left[ \| \bbf{X} - \widehat{\theta}(\bbf{Y}) \|^2 \right].
$$
One of our main quantity of interest will be the Minimum Mean Squared Error
$$
\MMSE(\lambda) = \min_{\widehat{\theta}} \MSE(\widehat{\theta}) = \E \left[ \big\| \bbf{X} - \E[\bbf{X}|\bbf{Y}] \big\|^2 \right],
$$
where the minimum is taken over all measurable function $\widehat{\theta}$ of the observations $\bbf{Y}$.
Since the optimal estimator (in term of Mean Squared Error) is the posterior mean of $\bbf{X}$ given $\bbf{Y}$, a natural object to study is the posterior distribution of $\bbf{X}$.

By Bayes rule, the posterior distribution of $\bbf{X}$ given $\bbf{Y}$ is
\begin{equation}\label{eq:posterior_0}
	dP( \bbf{x} \, | \, \bbf{Y}  ) 
	= \frac{1}{\cZ(\lambda,\bbf{Y})} e^{H_{\lambda,\bbf{Y}}(\bbf{x})} dP_X(\bbf{x}) \,,
\end{equation}
where 
$$
H_{\lambda,\bbf{Y}}(\bbf{x})=
\sqrt{\lambda}\, \bbf{x}^{\sT} \bbf{Y} - \frac{\lambda}{2} \| \bbf{x} \|^2
=
\sqrt{\lambda} \, \bbf{x}^{\sT} \bbf{Z} + \lambda \, \bbf{x}^{\sT} \bbf{X} - \frac{\lambda}{2} \| \bbf{x} \|^2 \,.
$$
\begin{definition}
	$H_{\lambda,\bbf{Y}}$ is called the Hamiltonian\protect\footnotemark and the normalizing constant
	$$
	\cZ(\lambda,\bbf{Y}) = \int dP_X(\bbf{x}) e^{H_{\lambda,\bbf{Y}}(\bbf{x})}
	$$
	is called the partition function. 
\end{definition}
\footnotetext{According to the physics convention, this should be minus the Hamiltonian, since a physical system tries to minimize its energy. However, we chose here to remove it for simplicity.}
Expectations with respect the posterior distribution~\eqref{eq:posterior_0} will be denoted by the Gibbs brackets $\langle \cdot \rangle_{\lambda}$:
$$
\big\langle f(\bbf{x}) \big\rangle_{\lambda}
= \E \big[f(\bbf{X})|\bbf{Y} \big]
= \frac{1}{\cZ(\lambda,\bbf{Y})} \int dP_X(\bbf{x}) f(\bbf{x}) e^{H_{\lambda,\bbf{Y}}(\bbf{x})} \,,
$$
for any measurable function $f$ such that $f(\bbf{X})$ is integrable.

\begin{definition}
	$F(\lambda) = \E \log \cZ(\lambda,\bbf{Y})$ is called the free energy\footnotemark. It is related to the mutual information between $\bbf{X}$ and $\bbf{Y}$ by
	\begin{equation}\label{eq:f_i}
		F(\lambda) = \frac{\lambda}{2} \E \| \bbf{X} \|^2 - I(\bbf{X};\bbf{Y})\,.
	\end{equation}
\end{definition}
\footnotetext{This is in fact minus the free energy, but we chose to remove the minus sign for simplicity.}
\begin{proof}
	The mutual information $I(\bbf{X};\bbf{Y})$ is defined as the Kullback-Leibler divergence between $P_{(X,Y)}$, the joint distribution of $(\bbf{X},\bbf{Y})$ and $P_X \! \otimes \! P_Y$ the product of the marginal distributions of $\bbf{X}$ and $\bbf{Y}$. $P_{(X,Y)}$ is absolutely continuous with respect to $P_X \otimes P_Y$ with Radon-Nikodym derivative:
	$$
	\frac{d P_{(X,Y)}}{d P_X \! \otimes \! P_Y}(\bbf{X},\bbf{Y})
	=
	\frac{\exp\left(-\frac{1}{2}\| \bbf{Y} - \sqrt{\lambda} \bbf{X} \|^2\right)}{\int \exp\left(-\frac{1}{2}\| \bbf{Y} - \sqrt{\lambda} \bbf{x} \|^2\right) dP_X(\bbf{x})} \,.
	$$
	Therefore
	\begin{align*}
		I(\bbf{X};\bbf{Y}) 
		&= \E \log\left( \frac{d P_{(X,Y)}}{d P_X \!\otimes \!P_Y}(\bbf{X},\bbf{Y}) \right)
		= - \E \log \int dP_X(\bbf{x}) \exp \left( \sqrt{\lambda} \bbf{x}^{\sT} \bbf{Y} - \sqrt{\lambda} \bbf{X}^{\sT} \bbf{Y} - \frac{\lambda}{2} \|\bbf{x}\|^2 + \frac{\lambda}{2}\|\bbf{X}\|^2 \right)
		\\
		&= - F(\lambda) + \frac{\lambda}{2} \E \| \bbf{X} \|^2 \,.
	\end{align*}
\end{proof}

We state now two basic properties of the $\MMSE$. A more detailed analysis can be found in~\cite{guo2011estimation,wu2012functional}.
\begin{proposition}\label{prop:mmse_dec}
	$\lambda \mapsto \MMSE(\lambda)$ is non-increasing over $\R_+$. Moreover
	\begin{itemize}
		\item $\MMSE(0) = \E\|\bbf{X} - \E[\bbf{X}]\|^2$,
		\item $\MMSE(\lambda) \xrightarrow[\lambda \to +\infty]{} 0$.
	\end{itemize}
\end{proposition}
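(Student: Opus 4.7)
The plan is to handle the three assertions in order, with the monotonicity being the main content.

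For the monotonicity, I would use a Gaussian degradation (Blackwell-style) argument: if $0 \le \lambda_1 \le \lambda_2$ and $\bbf{Y}_{\lambda_2} = \sqrt{\lambda_2}\,\bbf{X} + \bbf{Z}$, then letting $\bbf{Z}'$ be an independent standard Gaussian vector in $\R^n$, a direct computation shows that
\[
\sqrt{\tfrac{\lambda_1}{\lambda_2}}\,\bbf{Y}_{\lambda_2} + \sqrt{1-\tfrac{\lambda_1}{\lambda_2}}\,\bbf{Z}' \;\overset{(d)}{=}\; \sqrt{\lambda_1}\,\bbf{X} + \bbf{Z} \;=\; \bbf{Y}_{\lambda_1},
\]
jointly with $\bbf{X}$. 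Hence any estimator of $\bbf{X}$ based on $\bbf{Y}_{\lambda_1}$ can be simulated from $\bbf{Y}_{\lambda_2}$ (together with independent randomness $\bbf{Z}'$), so $\MMSE(\lambda_2) \le \MMSE(\lambda_1)$. The only mild subtlety is the case $\lambda_1=0$, where the argument still goes through by taking $\bbf{Y}_0 = \bbf{Z}'$, independent of $\bbf{X}$.

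For the value at $\lambda=0$, since $\bbf{Y}=\bbf{Z}$ is independent of $\bbf{X}$, the posterior distribution equals the prior, the conditional mean reduces to $\E[\bbf{X}]$, and thus $\MMSE(0) = \E\|\bbf{X}-\E[\bbf{X}]\|^2$.

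For the limit, I would simply exhibit an explicit estimator: the naive rescaling $\widehat{\theta}(\bbf{Y}) = \bbf{Y}/\sqrt{\lambda}$ satisfies
\[
\MSE(\widehat{\theta}) = \E\bigl\|\bbf{X} - \bbf{Y}/\sqrt{\lambda}\bigr\|^2 = \E\bigl\|\bbf{Z}/\sqrt{\lambda}\bigr\|^2 = \frac{n}{\lambda},
\]
which goes to $0$ as $\lambda \to \infty$. Since $\MMSE(\lambda) \le \MSE(\widehat{\theta}) = n/\lambda$ and $\MMSE \ge 0$, the limit follows.

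The only place requiring care is verifying the distributional identity in the degradation step (checking the covariance of the combined Gaussian and its independence of $\bbf{X}$); everything else is essentially a one-line computation.
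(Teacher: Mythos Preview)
Your proposal is correct and follows essentially the same approach as the paper: the monotonicity is proved via the same Gaussian degradation/Blackwell argument (the paper parametrizes it slightly differently by writing $\bbf{Y}_2 = \bbf{Y}_1 + \sqrt{\Delta_2 - \Delta_1}\,\bbf{Z}_2$ with $\Delta_i = \lambda_i^{-1}$ rather than rescaling, but the content is identical), the value at $\lambda=0$ is immediate from independence, and the limit uses the same naive estimator $\bbf{Y}/\sqrt{\lambda}$ with MSE $n/\lambda$.
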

\begin{proposition}\label{prop:mmse_cont}
	$\lambda \mapsto \MMSE(\lambda)$ is continuous over $\R_+$.
\end{proposition}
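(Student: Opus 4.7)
The plan is to couple the observations at different signal-to-noise ratios on a common probability space, establish almost-sure continuity of the posterior mean via dominated convergence, and then upgrade this to the required convergence of expectations via uniform integrability.

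First, use the polarization identity $\E[\bbf{X}^{\sT}\E[\bbf{X}|\bbf{Y}]]=\E\|\E[\bbf{X}|\bbf{Y}]\|^2$ (obtained by conditioning on $\bbf{Y}$) to rewrite
\[
\MMSE(\lambda)=\E\|\bbf{X}\|^2-\E\|\E[\bbf{X}|\bbf{Y}]\|^2.
\]
Since $\E\|\bbf{X}\|^2$ does not depend on $\lambda$, it suffices to show that $\lambda\mapsto \E\|\E[\bbf{X}|\bbf{Y}(\lambda)]\|^2$ is continuous. To that end, fix $\bbf{X}$ and $\bbf{Z}$ on a single probability space and define $\bbf{Y}(\lambda):=\sqrt{\lambda}\,\bbf{X}+\bbf{Z}$ for every $\lambda\geq 0$, so that $\bbf{Y}(\lambda)$ has the correct joint law with $\bbf{X}$. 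Write $\widehat{\bbf{X}}(\lambda):=\E[\bbf{X}|\bbf{Y}(\lambda)]$, and fix a sequence $\lambda_n\to\lambda$ in some bounded interval $[0,M]$.

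The key technical step is almost-sure continuity of $\lambda\mapsto\widehat{\bbf{X}}(\lambda)$. Bayes' rule \eqref{eq:posterior_0} expresses $\widehat{\bbf{X}}(\lambda)$ as the quotient of $\int \bbf{x}\,e^{H_{\lambda,\bbf{Y}(\lambda)}(\bbf{x})}dP_X(\bbf{x})$ by $\cZ(\lambda,\bbf{Y}(\lambda))$. Completing the square yields
\[
H_{\lambda,\bbf{Y}(\lambda)}(\bbf{x})=-\tfrac{1}{2}\|\sqrt{\lambda}\bbf{x}-\bbf{Y}(\lambda)\|^2+\tfrac{1}{2}\|\bbf{Y}(\lambda)\|^2\leq\tfrac{1}{2}\|\bbf{Y}(\lambda)\|^2\leq M\|\bbf{X}\|^2+\|\bbf{Z}\|^2=:K,
\]
uniformly in $\bbf{x}$ and in $\lambda\in[0,M]$ (where the last inequality uses $2\sqrt{\lambda}\,\bbf{X}^{\sT}\bbf{Z}\leq\lambda\|\bbf{X}\|^2+\|\bbf{Z}\|^2$). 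The integrands in numerator and denominator are therefore dominated by $\|\bbf{x}\|e^{K}$ and $e^{K}$ respectively, both $P_X$-integrable since $\E\|\bbf{X}\|^2<\infty$. As $\lambda\mapsto H_{\lambda,\bbf{Y}(\lambda)}(\bbf{x})$ is continuous pointwise in $\bbf{x}$ and the partition function is strictly positive, dominated convergence gives $\widehat{\bbf{X}}(\lambda_n)\to\widehat{\bbf{X}}(\lambda)$ almost surely.

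It remains to lift this to $\E\|\widehat{\bbf{X}}(\lambda_n)\|^2\to\E\|\widehat{\bbf{X}}(\lambda)\|^2$. By Jensen, $\|\widehat{\bbf{X}}(\lambda_n)\|^2\leq \E\bigl[\|\bbf{X}\|^2 \,\big|\, \bbf{Y}(\lambda_n)\bigr]$. The family $\{\E[\|\bbf{X}\|^2|\bbf{Y}(\lambda_n)]\}_n$ is uniformly integrable, being a family of conditional expectations of the fixed integrable random variable $\|\bbf{X}\|^2$, so $\{\|\widehat{\bbf{X}}(\lambda_n)\|^2\}_n$ is uniformly integrable as well. Combined with the a.s.\ convergence, Vitali's convergence theorem delivers the desired convergence of second moments.

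The main obstacle is precisely this last lift: the pointwise formula for the posterior mean gives only almost-sure continuity, whereas $\MMSE$ is an $L^2$-quantity. Uniform integrability, supplied by the Jensen bound and the classical fact that conditional expectations of a single integrable random variable form a UI family, is what makes the passage from a.s.\ to expectation work under only the second-moment hypothesis on $P_X$.
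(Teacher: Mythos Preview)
Your proof is correct and takes a somewhat different route from the paper's. Both arguments hinge on almost-sure continuity of the posterior mean (via dominated convergence on the Bayes formula) plus a uniform-integrability step to pass to expectations, but they differ in how UI is obtained and in how $\lambda=0$ is handled. The paper splits the analysis: continuity at $\lambda=0$ is proved directly via Fatou and the trivial upper bound $\MMSE(\lambda)\leq\MMSE(0)$, while for $\lambda\geq\lambda_0>0$ it invokes the moment bound $\E\|\bbf{X}-\langle\bbf{x}\rangle_\lambda\|^{2p}\leq C_n/\lambda^p$ (obtained from the identity $\sqrt{\lambda}(\bbf{X}-\langle\bbf{x}\rangle_\lambda)=\E[\bbf{Z}|\bbf{Y}]-\bbf{Z}$, due to Guo--Shamai--Verd\'u) to get $L^2$-boundedness and hence UI of $\|\bbf{X}-\langle\bbf{x}\rangle_\lambda\|^2$. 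Your argument is more unified: by rewriting $\MMSE(\lambda)=\E\|\bbf{X}\|^2-\E\|\widehat{\bbf{X}}(\lambda)\|^2$ and bounding $\|\widehat{\bbf{X}}(\lambda)\|^2\leq\E[\|\bbf{X}\|^2\,|\,\bbf{Y}(\lambda)]$ via Jensen, you reduce UI to the classical fact that conditional expectations of a fixed integrable random variable form a UI family, which works uniformly down to $\lambda=0$. This is arguably more elementary and avoids the noise-representation trick; on the other hand, the paper's moment bound is a quantitative estimate of independent interest that yours does not recover.
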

The proofs of Proposition~\ref{prop:mmse_dec} and~\ref{prop:mmse_cont} can respectively be found in Appendix~\ref{sec:proof_mmse_dec} and~\ref{sec:proof_mmse_cont}.

\section{The Nishimori identity}

We will often consider i.i.d.\ samples $\bbf{x}^{(1)}, \dots, \bbf{x}^{(k)}$ from the posterior distribution $P( \cdot \, | \, \bbf{Y} )$, independently of everything else. Such samples are called replicas. The (obvious) identity below	(which is simply Bayes rule) will be used repeatedly. It states that the planted solution $\bbf{X}$ behaves like a replica.

\begin{proposition}[Nishimori identity] \label{prop:nishimori}
	Let $(\bbf{X},\bbf{Y})$ be a couple of random variables on a polish space. Let $k \geq 1$ and let $\bbf{x}^{(1)}, \dots, \bbf{x}^{(k)}$ be $k$ i.i.d.\ samples (given $\bbf{Y}$) from the distribution $\P(\bbf{X}=\cdot \, | \, \bbf{Y})$, independently of every other random variables. Let us denote $\langle \cdot \rangle$ the expectation with respect to $\P(\bbf{X}=\cdot \, | \, \bbf{Y})$ and $\E$ the expectation with respect to $(\bbf{X},\bbf{Y})$. Then, for all continuous bounded function $f$
	$$
	\E \big\langle f(\bbf{Y},\bbf{x}^{(1)}, \dots, \bbf{x}^{(k)}) \big\rangle
	=
	\E \big\langle f(\bbf{Y},\bbf{x}^{(1)}, \dots, \bbf{x}^{(k-1)}, \bbf{X}) \big\rangle\,.
	$$
\end{proposition}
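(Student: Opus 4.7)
The plan is straightforward: the identity is essentially a restatement of the definition of the conditional distribution $\P(\bbf{X}\in\cdot\mid\bbf{Y})$. By construction of the regular conditional law (which exists because we are on a Polish space), the distribution of $\bbf{X}$ given $\bbf{Y}$ is exactly the measure from which each replica $\bbf{x}^{(j)}$ is sampled conditionally on $\bbf{Y}$. Moreover, the replicas are chosen to be independent of $\bbf{X}$ given $\bbf{Y}$. Hence, conditionally on $\bbf{Y}$, the $(k+1)$-tuple $(\bbf{x}^{(1)},\dots,\bbf{x}^{(k)},\bbf{X})$ is an i.i.d.\ sample of size $k+1$ from $\P(\bbf{X}\in\cdot\mid\bbf{Y})$, and in particular its joint law is exchangeable.

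Given this, the rest is routine. Swapping the $k$th replica with $\bbf{X}$ does not change the conditional joint law, so
$$
\E\bigl[f(\bbf{Y},\bbf{x}^{(1)},\dots,\bbf{x}^{(k)})\bigm|\bbf{Y}\bigr]
=\E\bigl[f(\bbf{Y},\bbf{x}^{(1)},\dots,\bbf{x}^{(k-1)},\bbf{X})\bigm|\bbf{Y}\bigr]
$$
almost surely. I would then identify the left-hand side with $\bigl\langle f(\bbf{Y},\bbf{x}^{(1)},\dots,\bbf{x}^{(k)})\bigr\rangle$ directly, and the right-hand side with $\bigl\langle f(\bbf{Y},\bbf{x}^{(1)},\dots,\bbf{x}^{(k-1)},\bbf{X})\bigr\rangle$ after using conditional independence of $\bbf{x}^{(1)},\dots,\bbf{x}^{(k-1)}$ from $\bbf{X}$ given $\bbf{Y}$ so that $\bbf{X}$ can be frozen while the remaining replicas are averaged out under $\langle\cdot\rangle$. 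Taking expectation over $(\bbf{X},\bbf{Y})$ via the tower property yields the claimed identity.

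The only point worth being careful about — and the closest thing to an obstacle — is the initial existence of a regular conditional probability $\P(\bbf{X}\in\cdot\mid\bbf{Y})$, which is what makes the statement ``$\bbf{X}$ and any $\bbf{x}^{(j)}$ have the same conditional law given $\bbf{Y}$'' well defined pointwise in $\bbf{Y}$; this is precisely why the proposition assumes a Polish state space. Once conditional exchangeability is in hand, no estimates, approximations, or measure-theoretic limits are required: boundedness of $f$ suffices to justify every conditional expectation, and continuity is not actually used — the identity holds verbatim for every bounded measurable~$f$.
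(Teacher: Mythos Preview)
Your proof is correct and follows essentially the same approach as the paper: both argue that sampling $(\bbf{X},\bbf{Y})$ jointly is equivalent to sampling $\bbf{Y}$ first and then $\bbf{X}$ from its conditional law, so that $(\bbf{Y},\bbf{x}^{(1)},\dots,\bbf{x}^{(k)})$ and $(\bbf{Y},\bbf{x}^{(1)},\dots,\bbf{x}^{(k-1)},\bbf{X})$ have the same distribution. Your version is slightly more detailed in spelling out the role of the Polish assumption and the conditional exchangeability, and you correctly observe that continuity of $f$ is not needed.
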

\begin{proof}
	It is equivalent to sample the couple $(\bbf{X},\bbf{Y})$ according to its joint distribution or to sample first $\bbf{Y}$ according to its marginal distribution and then to sample $\bbf{X}$ conditionally to $\bbf{Y}$ from its conditional distribution $\P(\bbf{X}=\cdot \,|\,\bbf{Y})$. Thus the $(k+1)$-tuple $(\bbf{Y},\bbf{x}^{(1)}, \dots,\bbf{x}^{(k)})$ is equal in law to $(\bbf{Y},\bbf{x}^{(1)},\dots,\bbf{x}^{(k-1)},\bbf{X})$.
\end{proof}

\section{The I-MMSE relation}\label{sec:i_mmse}
We present now the very useful ``I-MMSE'' relation from~\cite{guo2005mutual}. This relation was previously known (under a different formulation) as ``de Brujin identity'' see \cite[Equation 2.12]{stam1959some}.
\begin{proposition}\label{prop:i_mmse}
	For all $\lambda \geq 0$,
	\begin{equation}\label{eq:i_mmse}
		\frac{\partial}{\partial \lambda} I(\bbf{X};\bbf{Y}) = \frac{1}{2} \MMSE(\lambda)
		\qquad \text{and} \qquad
		F'(\lambda) = \frac{1}{2} \E \langle \bbf{x}^{\sT} \bbf{X} \rangle_{\lambda} 
		= \frac{1}{2} \big(\E \| \bbf{X}\|^2 - \MMSE(\lambda) \big)
		\,.
	\end{equation}
	$F$ thus is a convex, differentiable, non-decreasing, and $\frac{1}{2}\E\|\bbf{X}\|^2$-Lipschitz function over $\R_{\geq 0}$. If $P_X$ is not a Dirac mass, then $F$ is strictly convex.
\end{proposition}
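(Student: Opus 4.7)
The plan is to obtain the formula $F'(\lambda) = \tfrac{1}{2}\bigl(\E\|\bbf{X}\|^2 - \MMSE(\lambda)\bigr)$ by differentiating $\E\log\cZ$ under the integral, applying Gaussian integration by parts to absorb the noise, and simplifying with the Nishimori identity. Once this is done, the relation for $\partial_\lambda I(\bbf{X};\bbf{Y})$ falls out of equation~(\ref{eq:f_i}), and all the qualitative properties of $F$ reduce to what has already been proved about $\MMSE$ in Propositions~\ref{prop:mmse_dec} and~\ref{prop:mmse_cont}.

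First I would substitute $\bbf{Y} = \sqrt{\lambda}\,\bbf{X} + \bbf{Z}$ into the Hamiltonian so that the $\lambda$-dependence is exposed,
$$H_{\lambda,\bbf{Y}}(\bbf{x}) = \lambda\,\bbf{x}^{\sT}\bbf{X} + \sqrt{\lambda}\,\bbf{x}^{\sT}\bbf{Z} - \tfrac{\lambda}{2}\|\bbf{x}\|^2.$$
For $\lambda > 0$, differentiating $\log\cZ$ and commuting the derivative with $\E$ (justified by dominated convergence using $\E\|\bbf{X}\|^2<\infty$) gives
$$F'(\lambda) = \E\Bigl\langle \bbf{x}^{\sT}\bbf{X} + \tfrac{1}{2\sqrt{\lambda}}\bbf{x}^{\sT}\bbf{Z} - \tfrac{1}{2}\|\bbf{x}\|^2\Bigr\rangle_{\lambda}.$$

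Next I would deal with the $\bbf{Z}$-term via Stein's lemma: since $\partial_{Z_i}\langle x_i\rangle_\lambda = \sqrt{\lambda}\bigl(\langle x_i^2\rangle_\lambda - \langle x_i\rangle_\lambda^2\bigr)$, Gaussian integration by parts yields
$$\E\langle\bbf{x}^{\sT}\bbf{Z}\rangle_\lambda = \sqrt{\lambda}\bigl(\E\langle\|\bbf{x}\|^2\rangle_\lambda - \E\|\langle\bbf{x}\rangle_\lambda\|^2\bigr).$$
The Nishimori identity (Proposition~\ref{prop:nishimori}) then simplifies the three summands: $\E\langle\|\bbf{x}\|^2\rangle_\lambda = \E\|\bbf{X}\|^2$, and $\E\langle\bbf{x}^{\sT}\bbf{X}\rangle_\lambda = \E\|\E[\bbf{X}|\bbf{Y}]\|^2 = \E\|\langle\bbf{x}\rangle_\lambda\|^2$. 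Recognising $\E\|\bbf{X}\|^2 - \E\|\langle\bbf{x}\rangle_\lambda\|^2 = \MMSE(\lambda)$, the three pieces collapse to
$$F'(\lambda) = \tfrac{1}{2}\bigl(\E\|\bbf{X}\|^2 - \MMSE(\lambda)\bigr) = \tfrac{1}{2}\E\langle\bbf{x}^{\sT}\bbf{X}\rangle_\lambda,$$
and the identity for $\partial_\lambda I$ follows from (\ref{eq:f_i}). Continuity of $F'$ at $\lambda=0$ (despite the spurious $1/\sqrt{\lambda}$ in the intermediate expression) comes from Proposition~\ref{prop:mmse_cont}.

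The remaining properties then drop out easily. Monotonicity of $F'$, hence convexity of $F$, is immediate from Proposition~\ref{prop:mmse_dec}; differentiability is just the continuity of $F'$ given by Proposition~\ref{prop:mmse_cont}; the bound $0 \leq \MMSE(\lambda) \leq \E\|\bbf{X}\|^2$ (the upper bound using the trivial estimator $\widehat{\theta}\equiv 0$) gives both $F'\geq 0$ and the Lipschitz constant $\tfrac{1}{2}\E\|\bbf{X}\|^2$. The most delicate part is strict convexity when $P_X$ is not a Dirac mass: here I would differentiate once more, expressing $F''(\lambda)$ via a second Gaussian integration by parts as a non-negative combination of posterior variances of the replica overlap $\bbf{x}^{(1)\sT}\bbf{x}^{(2)}$, and argue that this variance cannot vanish identically unless $\bbf{X}$ is almost surely constant. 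The main technical obstacle throughout is the $1/\sqrt{\lambda}$ singularity appearing during differentiation near $\lambda=0$; away from the origin everything is a routine Gaussian computation.
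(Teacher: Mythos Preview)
Your proposal is correct and follows essentially the same route as the paper: differentiate $\log\cZ$, apply Gaussian integration by parts, simplify with the Nishimori identity, and then read off convexity, monotonicity and the Lipschitz bound from Propositions~\ref{prop:mmse_dec}--\ref{prop:mmse_cont}. The paper handles the endpoint $\lambda=0$ by first proving continuity of $F$ there and writing $F(\lambda)-F(0)$ as an integral, then invoking continuity of $\MMSE$; your version (deduce continuity of $F'$ at $0$ directly from Proposition~\ref{prop:mmse_cont}) amounts to the same thing.

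The only place where the paper does something more than your sketch is strict convexity. Rather than arguing qualitatively that a posterior variance cannot vanish, the paper (i) truncates to a bounded $\bbf{X}^{(N)}$ so that the second differentiation under the integral is legitimate, (ii) computes
\[
F^{(N)\prime\prime}(\lambda)=\tfrac{1}{2}\,\E\Tr\bigl[(\langle\bbf{x}\bbf{x}^{\sT}\rangle-\langle\bbf{x}\rangle\langle\bbf{x}\rangle^{\sT})^{2}\bigr],
\]
which is the squared Frobenius norm of the posterior covariance (not literally the variance of the replica overlap), and (iii) uses Cauchy--Schwarz and Jensen to obtain the quantitative bound $F^{(N)\prime\prime}(\lambda)\ge\tfrac{1}{2n}\MMSE^{(N)}(\lambda)^{2}$. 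Passing to the limit via convergence of derivatives of convex functions yields $F'(t)-F'(s)\ge\tfrac{1}{2n}(t-s)\MMSE(t)^{2}>0$ whenever $P_X$ is not a Dirac. This closes the gap that your outline leaves open: without truncation the second derivative may not be justified for unbounded $\bbf{X}$, and without the $\MMSE^{2}$ lower bound one still has to argue separately why the posterior covariance is nonzero for every $\lambda$.
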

Proposition~\ref{prop:i_mmse} is proved in Appendix~\ref{sec:proof_i_mmse}.
Proposition~\ref{prop:i_mmse} reduces the computation of the MMSE to the computation of the free energy. This will be particularly useful because the free energy $F$ is much easier to handle than the MMSE.

We end this section with the simplest model of the form~\eqref{eq:inference}, namely the additive Gaussian scalar channel:
\begin{equation}\label{eq:additive_scalar_channel}
	Y = \sqrt{\lambda} X + Z \,,
\end{equation}
where $Z \sim \cN(0,1)$ and $X$ is sampled from a distribution $P_0$ over $\R$, independently of $Z$. The corresponding free energy and the MMSE are respectively
	\begin{equation}\label{eq:def_psi_p0}
		\psi_{P_0}(\lambda)= \E \log \int dP_0(x)e^{\sqrt{\lambda} \,Y x - \lambda x^2/2}
		\quad \text{and} \quad \MMSE_{P_0}(\lambda) = \E \big[\big(X - \E[X|Y]\big)^2\big] \,.
	\end{equation}
The study of this simple inference channel will be very useful in the following, because we will see that the inference problems that we are going to study enjoy asymptotically a ``decoupling principle'' that reduces them to scalar channels like~\eqref{eq:additive_scalar_channel}.


Let us compute the mutual information and the MMSE for particular choices of prior distributions:
\begin{example}[Gaussian prior: $P_0 = \cN(0,1)$]\label{ex:gaussian_psi}
			In that case $\E[X|Y]$ is simply the orthogonal projection of $X$ on $Y$: 
			$$
			\E[X|Y] = \frac{\E[XY]}{\E[Y^2]} Y = \frac{\sqrt{\lambda}}{1+\lambda} Y.
			$$
			One deduces $\MMSE_{P_0}(\lambda) = \frac{1}{1+\lambda}$.
			Using~\eqref{eq:i_mmse}, we get $I(X;Y) = \frac{1}{2} \log(1+\lambda)$ and $\psi_{P_0}(\lambda) = \frac{1}{2}\big(\lambda - \log(1+\lambda)\big)$.
\end{example}

\begin{remark}[Worst-case prior]\label{rem:worst_case}
	Let $P_0$ be a probability distribution on $\R$ with unit second moment $\E_{P_0}[X^2] = 1$. By considering the estimator $\widehat{x} = \frac{\sqrt{\lambda}}{1+\lambda} Y$, one obtain $\MMSE_{P_0}(\lambda) \leq \frac{1}{1+ \lambda}$. We conclude:
	$$
	\sup_{P_0} \MMSE_{P_0}(\lambda) = \frac{1}{1+\lambda} \qquad \text{and} \qquad
	\inf_{P_0} \psi_{P_0}(\lambda) = \frac{1}{2}(\lambda - \log(1+\lambda)),
	$$
	where the supremum and infimum are both over the probability distributions that have unit second moment. The standard normal distribution $P_0 = \cN(0,1)$ achieves both extrema.
\end{remark}

\begin{example}[Rademacher prior: $P_0 = \frac{1}{2} \delta_{+1} + \frac{1}{2} \delta_{-1}$] We compute $\psi_{P_0}(\lambda) = \E \log \cosh(\sqrt{\lambda} Z + \lambda)- \frac{\lambda}{2}$ and $I(X;Y) = \lambda - \E \log \cosh(\sqrt{\lambda} Z + \lambda)$. The I-MMSE relation gives
			\begin{align*}
				\frac{1}{2} \MMSE(\lambda) 
				&= \frac{\partial}{\partial \lambda} I(X;Y)
				= 1 - \E \Big[\big(\frac{1}{2\sqrt{\lambda}}Z+1\big) \tanh\big(\sqrt{\lambda}Z + \lambda\big)\Big]
				\\
				&= 1 - \E \tanh(\sqrt{\lambda}Z + \lambda) - \frac{1}{2} \E \tanh'(\sqrt{\lambda}Z + \lambda)
				\\
				&= \frac{1}{2} - \E \tanh(\sqrt{\lambda}Z + \lambda) + \frac{1}{2} \E \tanh^2(\sqrt{\lambda}Z + \lambda)
			\end{align*}
			where we used Gaussian integration by parts. Since by the Nishimori property $\E \langle x X \rangle_{\lambda} = \E \langle x \rangle_{\lambda}^2$, one has $\E \tanh(\sqrt{\lambda}Z + \lambda) = \E \tanh^2(\sqrt{\lambda}Z + \lambda)$ and therefore $\MMSE(\lambda) = 1 - \E \tanh(\sqrt{\lambda}Z + \lambda)$.
\end{example}

\section{A warm-up: ``needle in a haystack'' problem}\label{sec:rem}

In order to illustrate the results seen in the previous sections, we study now a very simple inference model.
Let $(e_1,\dots,e_{2^n})$ be the canonical basis of $\R^{2^n}$. Let $\sigma_0 \sim \Unif(\{1,\dots,2^n\})$ and define $\bbf{X} = e_{\sigma_0}$ (i.e.\ $\bbf{X}$ is chosen uniformly over the canonical basis of $\R^{2^n}$).
Suppose here that we observe:
$$
\bbf{Y} = \sqrt{\lambda n} \bbf{X} + \bbf{Z} \,,
$$
where $\bbf{Z} = (Z_1, \dots, Z_{2^n}) \iid \cN(0,1)$, independently from $\sigma_0$. The goal here is to estimate $\bbf{X}$ or equivalently to find $\sigma_0$.
The posterior distribution reads:
\begin{align*}
\P(\sigma_0 = \sigma | \bbf{Y}) = 
\P(\bbf{X} = e_{\sigma} | \bbf{Y}) &= 
\frac{1}{\cZ_{n}(\lambda)} 2^{-n} \exp \Big( \sqrt{\lambda n} e_{\sigma}^{\sT} \bbf{Y} - \frac{\lambda n}{2} \|e_{\sigma}\|^2 \Big)
\\
&=
\frac{1}{\cZ_{n}(\lambda)} 2^{-n} \exp \Big( \sqrt{\lambda n}Z_{\sigma} + \lambda n \bbf{1}(\sigma = \sigma_0) - \frac{\lambda n}{2} \Big) ,
\end{align*}
where $\cZ_n(\lambda)$ is the partition function
$$
\cZ_{n}(\lambda) = \frac{1}{2^n} \sum_{\sigma=1}^{2^n} \exp \Big( \sqrt{\lambda n}Z_{\sigma} + \lambda n \bbf{1}(\sigma = \sigma_0) - \frac{\lambda n}{2} \Big) \,.
$$
We will be interested in computing the free energy $F_n(\lambda) = \frac{1}{n} \E \log \cZ_n(\lambda)$ in order to deduce then the minimal mean squared error using the I-MMSE relation~\eqref{eq:i_mmse} presented in the previous section. 

Although its simplicity, this model is interesting for many reasons. 
First, it is one of the simplest statistical model for which one observes a phase transition.
Second it is the ``planted'' analog of the random energy model (REM) introduced in statistical physics by Derrida~\cite{derrida1980random,derrida1981random}, for which the free energy reads $\frac{1}{n} \E \log \sum_{\sigma}\frac{1}{2^n} \exp\big(\sqrt{\lambda n} Z_{\sigma}\big)$. 

We start by computing the limiting free energy:

\begin{theorem}
$$
\lim_{n\to \infty} F_n(\lambda) = 
\begin{cases}
	0 & \text{if} \quad \lambda \leq 2 \log 2 \,, \\
	\frac{\lambda}{2} - \log(2)& \text{if} \quad \lambda \geq 2 \log 2 \,.
\end{cases}
$$
\end{theorem}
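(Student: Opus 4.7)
The plan is to split the partition function as $\cZ_n(\lambda) = A_n + B_n$, where
\[
A_n := 2^{-n}\exp\bigl(\sqrt{\lambda n}\,Z_{\sigma_0} + \lambda n/2\bigr)
\quad\text{and}\quad
B_n := 2^{-n}\sum_{\sigma\neq\sigma_0}\exp\bigl(\sqrt{\lambda n}\,Z_\sigma - \lambda n/2\bigr)
\]
isolate the planted term and the $(2^n-1)$ ``noise'' terms, respectively. These two random variables are \emph{independent}, since they are measurable with respect to disjoint families of Gaussians, and a direct computation gives $\frac{1}{n}\E\log A_n = \lambda/2 - \log 2$ together with $\E B_n = 1 - 2^{-n}$.

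For the lower bound I would treat the two regimes separately. When $\lambda \geq 2\log 2$, the bound $\cZ_n \geq A_n$ already gives $F_n(\lambda) \geq \lambda/2 - \log 2$. When $\lambda \leq 2\log 2$, rather than trying to perform a REM-style truncated second moment on $B_n$ (which would be the main technical obstacle in the interval $\lambda \in (\log 2, 2\log 2)$, where the annealed bound $\log\E\cZ_n$ is not tight), I would invoke Proposition~\ref{prop:i_mmse}: after the implicit rescaling $\lambda \mapsto \lambda n$ encoded in this model, it says that $\lambda \mapsto F_n(\lambda)$ is non-decreasing, and $F_n(0) = 0$ since $\cZ_n(0) = 1$. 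Hence $F_n(\lambda) \geq 0$ for all $\lambda \geq 0$, matching the target limit in this regime.

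For the upper bound I would exploit independence of $A_n$ and $B_n$ through the identity $\log \cZ_n = \log A_n + \log(1 + B_n/A_n)$. Since $b \mapsto \log(1 + b/a)$ is concave, conditioning on $A_n$ and applying Jensen gives
\[
\E\log(1 + B_n/A_n) \leq \E\log\bigl(1 + \E[B_n]/A_n\bigr) \leq \E\log(1 + 1/A_n).
\]
Setting $V := -\log A_n$, which is Gaussian with mean $n(\log 2 - \lambda/2)$ and variance $\lambda n$, the elementary bound $\log(1 + e^V) \leq \log 2 + V_+$ reduces everything to estimating $\E V_+$. If $\lambda < 2\log 2$, then $\E V_+ \leq \E |V| \leq n(\log 2 - \lambda/2) + O(\sqrt n)$, which after plugging back cancels perfectly against $\E \log A_n = n(\lambda/2 - \log 2)$ and yields $\E \log \cZ_n \leq O(\sqrt n)$, so $F_n \to 0$. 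If $\lambda > 2\log 2$, then $\E V < 0$ and a standard Gaussian tail estimate gives $\E V_+ = o(n)$, whence $F_n \leq \lambda/2 - \log 2 + o(1)$; combined with the matching lower bound, $F_n \to \lambda/2 - \log 2$. Continuity at the critical point $\lambda = 2\log 2$ is then automatic from the monotonicity of $F_n$.
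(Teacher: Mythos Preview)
Your proof is correct and follows essentially the same route as the paper: identical lower bounds (the planted term gives $\lambda/2-\log 2$, monotonicity from $F_n(0)=0$ gives $0$), and the same Jensen step on the noise part $B_n$ for the upper bound (the paper phrases it as conditioning on $(\sigma_0,Z_{\sigma_0})$, which is exactly your independence-plus-concavity move). The only cosmetic difference is the final estimate of $\E\log(1+1/A_n)$: the paper uses that $x\mapsto\log(1+e^x)$ is $1$-Lipschitz to pull out the Gaussian fluctuation in one line, whereas you use $\log(1+e^V)\le\log 2+V_+$ and a short case analysis on $\E V_+$; both are equally valid.
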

\begin{proof}
	Using Jensen's inequality
	\begin{align*}
	F_n(\lambda) 
	&\leq \frac{1}{n} \E \log \E \left[ \cZ_n(\lambda) \middle| \sigma_0, Z_{\sigma_0} \right]
	= \frac{1}{n} \E \log \left(1 - \frac{1}{2^n} + e^{\sqrt{\lambda n} Z_{\sigma_0} + \frac{\lambda n}{2} - \log(2)n}\right)
	\\
	&\leq \frac{1}{n} \E \log \left(1 + e^{\frac{\lambda n}{2} - \log(2)n}\right) + \sqrt{\frac{\lambda}{n}}
	\ \xrightarrow[n \to \infty]{}  \
	\begin{cases}
		0 & \text{if} \quad \lambda \leq 2 \log(2)\,, \\
		\frac{\lambda}{2} - \log(2) & \text{if} \quad \lambda \geq 2 \log(2) \,.
	\end{cases}
	\end{align*}
	$F_n$ is non-negative since $F_n(0) = 0$ and $F_n$ is non-decreasing. We have therefore $F_n(\lambda) \xrightarrow[n \to \infty]{} 0$ for all $\lambda \in [0,2\log(2)]$.
	We have also, by only considering the term $\sigma = \sigma_0$:
	$$
	F_n(\lambda) \geq \frac{1}{n} \E \log \left(\frac{e^{\sqrt{\lambda n}Z_{\sigma_0} + \frac{\lambda n}{2}}}{2^n}\right) = \frac{\lambda}{2} - \log(2) \,.
	$$
	We obtain therefore that $F_n(\lambda ) \xrightarrow[n \to \infty]{} \frac{\lambda}{2} - \log(2)$ for $\lambda \geq 2 \log(2)$.
\end{proof}
\\

Using the I-MMSE relation~\eqref{eq:i_mmse}, we deduce the limit of the minimum mean Squared Error $\MMSE_n(\lambda) =  \min_{\widehat{\theta}} \E \| \bbf{X} - \widehat{\theta}(\bbf{Y}) \|^2$:
$$
\frac{1}{2} \MMSE_n(\lambda) = \E \|\bbf{X}\|^2 - F_n'(\lambda) = 1 - F_n'(\lambda) \,.
$$
$F_n$ is a convex function of $\lambda$, so its derivative converges to the derivative of its limit at each $\lambda$ at which the limit is differentiable, i.e.\ for all $\lambda \in (0,+\infty) \setminus \{2 \log(2) \}$. We obtain therefore that for all $\lambda >0$,
\begin{itemize}
	\item if $\lambda < 2 \log(2)$, then $\MMSE_n(\lambda) \xrightarrow[n \to \infty]{} 1$: one can not recover $\bbf{X}$ better than a random guess.
	\item if $\lambda > 2 \log(2)$, then $\MMSE_n(\lambda) \xrightarrow[n \to \infty]{} 0$: one can recover $\bbf{X}$ perfectly.
\end{itemize}
Of course, the result we obtain here is (almost) trivial since the maximum likelihood estimator 
$$
\widehat{\sigma}(\bbf{Y}) = \underset{1 \leq \sigma \leq 2^n}{\text{arg\, max}} \, Y_{\sigma}
$$
of $\sigma_0$ is easy to analyze. Indeed, $\max_{\sigma} Z_{\sigma} \simeq \sqrt{2 \log(2)n}$ with high probability so that the maximum likelihood estimator recovers perfectly the signal for $\lambda > 2 \log(2)$ with high probability.

\chapter{A decoupling principle}\label{chap:overlap}

We present in this section a general ``decoupling principle'' that will be particularly useful in the study of planted models.
We consider here the setting where $\bbf{X} = (X_1, \dots, X_n) \iid P_0$ for some probability distribution $P_0$ over $\R$ with support $S$.
Let $\bbf{Y} \in \R^m$ be another random variable that accounts for noisy observation of $\bbf{X}$.
The goal is again to recover the planted vector $\bbf{X}$ from the observations $\bbf{Y}$.
We suppose that the distribution of $\bbf{X}$ given $\bbf{Y}$ takes the following form
\begin{equation}\label{eq:posterior_general}
	\P(\bbf{X} \in A \ | \ \bbf{Y}) = \frac{1}{\cZ_n(\bbf{Y})} \int_{\bbf{x} \in A} dP_0^{\otimes n}(\bbf{x}) e^{H_n(\bbf{x},\bbf{Y})}, \quad \text{for all Borel set } A \subset \R^n,
\end{equation}
where $H_n$ is a measurable function on $\R^{n} \times \R^m$ that can be equal to $-\infty$ (in which case, we use the convention $\exp(- \infty) = 0$) and $\cZ_n(\bbf{Y}) = \int dP_0^{\otimes n}(\bbf{x}) e^{H_n(\bbf{x},\bbf{Y})}$ is the appropriate normalization.
We assume that $\E |\log \cZ_n(\bbf{Y})| < \infty$ in order to define the free energy
$$
F_n = \frac{1}{n} \E \log \cZ_n(\bbf{Y}) = \frac{1}{n} \E \log \left( \int dP_0^{\otimes n}(\bbf{x}) e^{H_n(\bbf{x},\bbf{Y})} \right).
$$
In the following, we are going to drop the dependency in $\bbf{Y}$ of $H_n(\bbf{x},\bbf{Y})$ and simply write $H_n(\bbf{x})$.
\\

We introduce now an important notation: the overlap between to vectors $\bbf{u},\bbf{v} \in \R^n$. This is simply the normalized scalar product:
$$
\bbf{u} \cdot \bbf{v} = \frac{1}{n} \sum_{i=1}^n u_i v_i \,.
$$

One should see $\bbf{x}$ as a system of $n$ spins $(x_1, \dots, x_n)$ interacting through the (random) Hamiltonian $H_n$. Our inference problem should be understood as the study of this spin glass model. A central quantity of interest in spin glass theory is the overlaps $\bbf{x}^{(1)} \!\cdot \bbf{x}^{(2)}$ between two replicas, i.e.\ the normalized scalar product between two independent samples $\bbf{x}^{(1)}$ and $\bbf{x}^{(2)}$ from~\eqref{eq:posterior_general}.
Understanding this quantity is fundamental because it allows to deduce the distance between two typical configurations of the system and thus encodes the ``geometry'' of the ``Gibbs measure''~\eqref{eq:posterior_general}. 

In our statistical inference setting we have
$\bbf{x}^{(1)}\! \cdot \bbf{x}^{(2)} = \bbf{x}^{(1)}\! \cdot \bbf{X}$ in law, 
by the Nishimori identity (Proposition~\ref{prop:nishimori}).
Thus the overlap $\bbf{x}^{(1)} \!\cdot \bbf{x}^{(2)}$ corresponds to the correlation between a typical configuration and the planted configuration. Moreover it is linked to the Minimum Mean Squared Error by
$$
\MMSE = \frac{1}{n} \E \left[ \| \bbf{X} - \langle \bbf{x} \rangle \|^2\right]
= \E_{P_0}[X^2] - \E \left\langle \bbf{x} \cdot \bbf{X} \right\rangle \,,
$$
where $\langle \cdot \rangle$ denotes the expectation with respect to $\bbf{x}$ which is sampled from the posterior $\P(\bbf{X}= \cdot \, | \, \bbf{Y})$ (defined by Equation~\ref{eq:posterior_general}), independently of everything else.
\\

In this section we will see a general principle that states that under a small perturbation of the Gibbs distribution~\eqref{eq:posterior_general}, the overlap $\bbf{x}^{(1)} \cdot \bbf{x}^{(2)}$ between two replicas concentrates around its mean. Such behavior is called ``Replica-Symmetric'' in statistical physics.
It remains to define what ``a small perturbation of the Gibbs distribution'' is.
In spin glass theory, such perturbations are usually obtained by adding small extra terms to the Hamiltonian. 
In our context of Bayesian inference a small perturbation will correspond to a small amount of side-information given to the statistician. This extra information will lead to a new posterior distribution. 
In the following, we will consider two different kind of side-information and we show that the overlaps under the induced posterior concentrate around their mean.

\section{The pinning Lemma}\label{sec:overlap_concentration_montanari}

We suppose here that the support $S$ of $P_0$ is finite. We make this assumption in order to be able to work with the discrete entropy.

In this section, we give extra information to the statistician by revealing a (small) fraction of the coordinates of $\bbf{X}$.
Let us fix $\epsilon\in [0,1]$, and suppose that we have access to the additional observations
$$
Y'_i =
\begin{cases}
	X_i &\text{if } L_i = 1 \,, \\
	* &\text{if } L_i = 0 \,,
\end{cases}
\qquad \text{for} \quad 1 \leq i \leq n ,
$$
where $L_i \iid  \Ber(\epsilon)$ and $*$ is a value that does not belong to $S$. 
The posterior distribution of $\bbf{X}$ is now
\begin{equation}\label{eq:posterior_montanari}
	\P(\bbf{X}=\bbf{x} \, | \, \bbf{Y},\bbf{Y}') = \frac{1}{\cZ_{n,\epsilon}} \left(\prod_{i | L_i=1}\1(x_i=Y'_i) \right) \left( \prod_{i | L_i=0} P_0(x_i) \right) e^{H_n(\bbf{x})} \,,
\end{equation}
where $\cZ_{n,\epsilon}$ is the appropriate normalization constant.
For $\bbf{x} \in S^n$ we will write
\begin{equation} \label{eq:bar}
	\bar{\bbf{x}} = (\bar{x}_1, \dots, \bar{x}_n) = (L_1 X_1 + (1-L_1) x_1, \dots, L_n X_n + (1-L_n)x_n) \,.
\end{equation}
$\bar{\bbf{x}}$ is thus obtained by replacing the coordinates of $\bbf{x}$ that are revealed by $\bbf{Y}'$ by their revealed values. The notation $\bar{\bbf{x}}$ allows us to obtain a convenient expression for the free energy of the perturbed model:
$$
F_{n,\epsilon} = \frac{1}{n} \E \log \cZ_{n,\epsilon} = \frac{1}{n} \E \Big[ \log \sum_{\bbf{x} \in S^n} P_0(\bbf{x}) \exp(H_{n}(\bar{\bbf{x}}))\Big] \,.
$$

\begin{proposition} \label{prop:approximation_f_n_epsilon}
	For all $n \geq 1$ and all $\epsilon\in [0,1]$, we have
	$$
	|F_{n,\epsilon} - F_{n} | \leq H(P_0) \epsilon \,.
	$$
\end{proposition}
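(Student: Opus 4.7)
The plan is to upgrade the bound to the identity
\[
F_{n,\epsilon} - F_n \;=\; \tfrac{1}{n}\,\E_L\bigl[I(\bbf{X}_{\mathcal{L}};\bbf{Y})\bigr], \qquad \mathcal{L}=\{i:L_i=1\}, \quad \bbf{X}_{\mathcal{L}} := (X_i)_{i\in\mathcal{L}},
\]
and then invoke the elementary bounds $0\le I(\bbf{X}_{\mathcal{L}};\bbf{Y})\le H(\bbf{X}_{\mathcal{L}})=|\mathcal{L}|\,H(P_0)$ together with $\E|\mathcal{L}|=n\epsilon$ to conclude $0\le F_{n,\epsilon}-F_n\le\epsilon H(P_0)$, which gives the proposition (and in fact pins down the sign of the difference: revealing information only raises the free energy).

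To establish the identity I condition on $L$, enumerate $\mathcal{L}=\{i_1,\dots,i_k\}$, and reveal the coordinates one at a time. Writing $\cZ^{(j)}$ for the partition function after the first $j$ reveals (so that $\cZ^{(0)}=\cZ_n$ and $\cZ^{(k)}=\cZ_{n,\epsilon}$ given $L$) and $\langle\cdot\rangle_{j-1}$ for the associated pinned Gibbs measure, a direct Bayes computation gives
\[
\frac{\cZ^{(j)}}{\cZ^{(j-1)}} \;=\; \frac{\bigl\langle \1(x_{i_j}=X_{i_j})\bigr\rangle_{j-1}}{P_0(X_{i_j})}.
\]
The measure $\langle\cdot\rangle_{j-1}$ is itself the Bayesian posterior of $\bbf{X}$ given $(\bbf{Y},X_{i_1},\dots,X_{i_{j-1}})$, so the Nishimori identity (Proposition~\ref{prop:nishimori}) applies and yields $\E\log\langle \1(x_{i_j}=X_{i_j})\rangle_{j-1}=-H(X_{i_j}\mid\bbf{Y},X_{i_1},\dots,X_{i_{j-1}})$. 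Combined with $\E[-\log P_0(X_{i_j})]=H(P_0)=H(X_{i_j}\mid X_{i_1},\dots,X_{i_{j-1}})$ (the $X_i$ being i.i.d.), each term in the telescoping sum becomes the conditional mutual information $I(X_{i_j};\bbf{Y}\mid X_{i_1},\dots,X_{i_{j-1}})$, and the chain rule for mutual information collapses the sum to $I(\bbf{X}_{\mathcal{L}};\bbf{Y})$.

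The only nontrivial point to check is that the Nishimori identity really does apply to each pinned measure $\langle\cdot\rangle_{j-1}$. This is essentially automatic from the construction: pinning the coordinates $x_{i_1},\dots,x_{i_{j-1}}$ to the true values $X_{i_1},\dots,X_{i_{j-1}}$ amounts to further Bayesian conditioning on exact observations of those coordinates, so $\langle\cdot\rangle_{j-1}$ is by construction the posterior of $\bbf{X}$ given the enlarged observation $(\bbf{Y},X_{i_1},\dots,X_{i_{j-1}})$, and Proposition~\ref{prop:nishimori} applies verbatim. The finiteness assumption on the support $S$ of $P_0$ ensures that $H(P_0)$ and all the conditional entropies that appear are finite and that $H(\bbf{X}_{\mathcal{L}})\le|\mathcal{L}|\,H(P_0)$, which is what makes the final $\epsilon H(P_0)$ bound come out clean.
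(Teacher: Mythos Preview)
Your proof is correct and ultimately establishes the very same identity as the paper, namely
\[
n\bigl(F_{n,\epsilon}-F_n\bigr)=I(\bbf{Y}';\bbf{Y}\mid\bbf{L})=\E_L\bigl[I(\bbf{X}_{\mathcal{L}};\bbf{Y})\bigr],
\]
followed by the obvious entropy bound. The difference is purely in how you reach this identity. The paper does it in one shot: it computes
\[
P(\bbf{Y}'\mid\bbf{Y},\bbf{L})=\frac{\cZ_{n,\epsilon}}{\cZ_n}\,P(\bbf{Y}'\mid\bbf{L}),
\]
takes $\E\log$ of both sides, and reads off $n(F_{n,\epsilon}-F_n)=H(\bbf{Y}'\mid\bbf{L})-H(\bbf{Y}'\mid\bbf{Y},\bbf{L})$. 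Your route reveals the coordinates in $\mathcal{L}$ one at a time, telescopes, and reassembles the conditional mutual informations via the chain rule. Both are valid; the paper's argument is shorter, while yours is more explicit about the mechanism (and makes the sign $F_{n,\epsilon}\ge F_n$ transparent from the outset). The invocation of Nishimori in your argument is not really needed as a separate ingredient: the step $\E\log\langle\1(x_{i_j}=X_{i_j})\rangle_{j-1}=-H(X_{i_j}\mid\bbf{Y},X_{i_1},\dots,X_{i_{j-1}})$ is just the definition of conditional entropy once you recognise $\langle\1(x_{i_j}=\cdot)\rangle_{j-1}$ as the posterior pmf of $X_{i_j}$.
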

\begin{proof} Let us compute
	\begin{align*}
		P\big(\bbf{Y}' \, | \, \bbf{Y},\bbf{L} \big)
		&=
		\int \1(x_i = Y_i' \ \text{for all} \ i \ \text{such that} \ L_i=1) dP(\bbf{x} \, | \, \bbf{Y})
		\\
		&=
		\frac{1}{\cZ_n}
		\sum_{\bbf{x} \in S^n} \1(x_i = Y_i' \ \text{for all} \ i \ \text{such that} \ L_i=1) e^{H_n(\bbf{x})} \prod_{i=1}^n P_0(x_i)
		\\
		&=
		\frac{\cZ_{n,\epsilon}}{\cZ_n}
		\prod\limits_{i | L_i = 1} \!\!\! P_0(Y'_i) 
		=
		\frac{\cZ_{n,\epsilon}}{\cZ_n}
		\, P\big(\bbf{Y}' \, | \, \bbf{L}\big) \,.
	\end{align*}
	Therefore, $nF_{n,\epsilon} - nF_{n} = H(\bbf{Y}' \, | \, \bbf{L}) - H(\bbf{Y}' \, | \, \bbf{Y},\bbf{L})$ and the proposition follows from the fact that $0 \leq H(\bbf{Y}'| \bbf{Y},\bbf{L}) \leq H(\bbf{Y}'|\bbf{L}) = n \epsilon H(P_0)$.
\end{proof}
\\

From now we suppose $\epsilon_0 \in (0,1]$ to be fixed and consider $\epsilon \in [0,\epsilon_0]$.
The following lemma comes from~\cite{andrea2008estimating} and is sometimes known as the ``pinning lemma''. It shows that the extra information $\bbf{Y}'$ forces the correlations between the spins under the posterior~\eqref{eq:posterior_montanari} to vanish.

\begin{lemma}[Lemma 3.1 from~\cite{andrea2008estimating}\,]\label{lem:montanari}
	For all $\epsilon_0 \in [0,1]$, we have
	$$
	\int_0^{\epsilon_0}\!d\epsilon  \left( \frac{1}{n^2} \sum_{1 \leq i,j \leq n} I(X_i;X_j | \bbf{Y},\bbf{Y'}) \right) \leq \frac{2}{n} H(P_0) \,.
	$$
\end{lemma}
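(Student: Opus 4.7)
The plan is to derive an exact identity expressing $\sum_{1 \leq i,j \leq n} I(X_i;X_j \mid \bbf{Y},\bbf{Y}'_\epsilon)$ as $-(1-\epsilon)\,s'(\epsilon)$, where $s(\epsilon) := \sum_{i=1}^n H(X_i \mid \bbf{Y}, \bbf{Y}'_\epsilon)$, and then integrate by parts. The whole derivation rests on one simple fact: since $L_i \sim \Ber(\epsilon)$ is independent of everything, conditioning on $L_i$ splits every relevant information quantity into a ``not revealed'' case ($L_i=0$, weight $1-\epsilon$, observations reduce to $\bbf{Y}'_{\setminus i,\epsilon}$, the vector with the $i$-th coordinate removed) and a ``revealed'' case ($L_i=1$, weight $\epsilon$, in which $X_i$ becomes known exactly). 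Two elementary consequences are the ``rescaling'' identities
\begin{equation*}
H(X_i\mid\bbf{Y},\bbf{Y}'_\epsilon) \;=\; (1-\epsilon)\,H(X_i\mid\bbf{Y},\bbf{Y}'_{\setminus i,\epsilon}), \qquad I(X_i;X_j\mid\bbf{Y},\bbf{Y}'_\epsilon) \;=\; (1-\epsilon)^2\,I(X_i;X_j\mid\bbf{Y},\bbf{Y}'_{\setminus i,j,\epsilon}) \quad (i\neq j),
\end{equation*}
the second one obtained by applying the splitting twice (once on $L_i$, once on $L_j$).

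For the derivatives I would use the general formula $\frac{d}{d\epsilon}\E_{\bbf{L}_\epsilon}[f(\bbf{L}_\epsilon)] = \sum_{k}\bigl(\E[f\mid L_k=1] - \E[f\mid L_k=0]\bigr)$. Applied to $\phi(\epsilon) := H(\bbf{X}\mid\bbf{Y},\bbf{Y}'_\epsilon)$ it gives $\phi'(\epsilon) = -\sum_i H(X_i\mid\bbf{Y},\bbf{Y}'_{\setminus i,\epsilon})$, which combined with the first identity above yields the ``first-order'' identity $s(\epsilon) = -(1-\epsilon)\,\phi'(\epsilon)$. Applying the same derivative formula to each $g_i(\epsilon) := H(X_i\mid\bbf{Y},\bbf{Y}'_{\setminus i,\epsilon})$ -- differentiating in the remaining coordinates $k \neq i$ -- gives $g_i'(\epsilon) = -\sum_{j\neq i} I(X_i;X_j\mid\bbf{Y},\bbf{Y}'_{\setminus i,j,\epsilon})$. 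Substituting into $s'(\epsilon) = -\sum_i g_i(\epsilon) + (1-\epsilon)\sum_i g_i'(\epsilon)$ and using both rescaling identities, the diagonal and off-diagonal pieces combine into
\begin{equation*}
\sum_{1\leq i,j\leq n} I(X_i;X_j\mid\bbf{Y},\bbf{Y}'_\epsilon) \;=\; -(1-\epsilon)\,s'(\epsilon).
\end{equation*}

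With this identity the rest is arithmetic. Integration by parts gives
\begin{equation*}
\int_0^{\epsilon_0}\!\sum_{i,j} I(X_i;X_j\mid\bbf{Y},\bbf{Y}'_\epsilon)\,d\epsilon \;=\; s(0) - (1-\epsilon_0)\,s(\epsilon_0) - \int_0^{\epsilon_0}\! s(\epsilon)\,d\epsilon \;\leq\; s(0),
\end{equation*}
the inequality holding because $s\geq 0$ and $(1-\epsilon_0)s(\epsilon_0) \geq 0$. Since $X_i\iid P_0$ we have $s(0) = \sum_i H(X_i\mid\bbf{Y}) \leq \sum_i H(X_i) \leq n\,H(P_0)$, so after dividing by $n^2$ we obtain $H(P_0)/n$, which is even slightly sharper than the claimed $2H(P_0)/n$. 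The main obstacle is the two-level bookkeeping producing the ``$-(1-\epsilon)s'$'' identity: one must justify that conditioning on $X_j$ absorbs any further dependence on $L_j$ (so $I(X_i;X_j\mid\bbf{Y},\bbf{Y}'_\epsilon, X_j) = I(X_i;X_j\mid\bbf{Y},\bbf{Y}'_{\setminus j,\epsilon},X_j)$), and verify that the factors $(1-\epsilon)$ and $1/(1-\epsilon)$ arising from the rescaling identities and from $s = (1-\epsilon)\sum_i g_i$ combine cleanly with the off-diagonal $(1-\epsilon)^2$ factor; once this is done, the integration by parts and the bound on $s(0)$ are immediate.
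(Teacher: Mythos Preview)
The paper does not actually give a proof of this lemma: it is simply quoted as Lemma~3.1 from \cite{andrea2008estimating}, so there is no ``paper's own proof'' to compare against. Your argument is correct and is essentially the standard proof from that reference: the Russo-type derivative formula for $\E_{\bbf{L}\sim\Ber(\epsilon)^{\otimes n}}[\,\cdot\,]$, the two rescaling identities, and the resulting identity $\sum_{i,j} I(X_i;X_j\mid\bbf{Y},\bbf{Y}'_\epsilon)=-(1-\epsilon)s'(\epsilon)$ all check out (the diagonal contributes $(1-\epsilon)\sum_i g_i$, the off-diagonal $(1-\epsilon)^2\sum_{i\neq j}I(\cdot)$, and these are exactly the two terms in $-(1-\epsilon)s'$). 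The integration by parts then gives the bound $s(0)/n^2\le H(P_0)/n$, indeed with constant $1$ rather than $2$.
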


Let $\langle \cdot \rangle_{n,\epsilon}$ denote the expectation with respect to two independent samples $\bbf{x}^{(1)},\bbf{x}^{(2)}$ from the posterior~\eqref{eq:posterior_montanari}.
Lemma~\ref{lem:montanari} implies that the overlap between these two replicas concentrates:

\begin{proposition} \label{prop:overlap_concentration_montanari}
	There exists a constant $C>0$ that only depends on $P_0$ such that for all $\epsilon_0 \in [0,1]$, 
	\begin{align*}
		\int_0^{\epsilon_0}\!\!d\epsilon \,\E \left\langle \left( \frac{1}{n} \sum_{i=1}^n x^{(1)}_i x^{(2)}_i - \Big\langle \frac{1}{n} \sum_{i=1}^n x_i^{(1)} x_i^{(2)} \Big\rangle_{\!\! n,\epsilon} \right)^{\!\! 2} \right\rangle_{\!\! n,\epsilon} 
		\leq C \sqrt{\frac{\epsilon_0}{n}}.
	\end{align*}
\end{proposition}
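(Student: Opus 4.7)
The plan is to reduce the overlap variance to a double sum of pairwise covariances, to control each term by a conditional mutual information via Pinsker's inequality, and then to invoke Lemma~\ref{lem:montanari} through two Cauchy--Schwarz steps.

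First, set $q := \frac{1}{n}\sum_{i=1}^n x_i^{(1)} x_i^{(2)}$. Since the two replicas $\bbf{x}^{(1)}, \bbf{x}^{(2)}$ are i.i.d.\ under $\langle\cdot\rangle_{n,\epsilon}$, the identities $\langle x_i^{(1)} x_i^{(2)}\rangle_{n,\epsilon} = \langle x_i\rangle_{n,\epsilon}^2$ and $\langle x_i^{(1)} x_j^{(1)} x_i^{(2)} x_j^{(2)}\rangle_{n,\epsilon} = \langle x_i x_j\rangle_{n,\epsilon}^2$ give
$$\E\bigl\langle(q - \langle q\rangle_{n,\epsilon})^2\bigr\rangle_{n,\epsilon} = \frac{1}{n^2}\sum_{i,j=1}^n \E\bigl[\langle x_i x_j\rangle_{n,\epsilon}^2 - \langle x_i\rangle_{n,\epsilon}^2\langle x_j\rangle_{n,\epsilon}^2\bigr].$$
Because the Gibbs measure coincides with the posterior of $\bbf{X}$ given $(\bbf{Y},\bbf{Y}')$, we may identify $\langle x_i x_j\rangle_{n,\epsilon} = \E[X_iX_j | \bbf{Y},\bbf{Y}']$ and analogously for the single-spin expectations, so each summand measures how far the conditional law of $(X_i, X_j)$ is from the product of its marginals.

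Second, I would control this ``distance to product'' using Pinsker's inequality. With $M := \max_{s\in S}|s| < \infty$ (using that $S$ is finite), applying Pinsker pointwise in $(\bbf{Y},\bbf{Y}')$ to the conditional law of $(X_i, X_j)$ and using $|x_i x_j| \leq M^2$ gives
$$\bigl|\langle x_i x_j\rangle_{n,\epsilon} - \langle x_i\rangle_{n,\epsilon}\langle x_j\rangle_{n,\epsilon}\bigr| \leq M^2\sqrt{2\,I_{\bbf{Y},\bbf{Y}'}(X_i;X_j)},$$
where $I_{\bbf{Y},\bbf{Y}'}$ denotes mutual information under $\P(\cdot | \bbf{Y},\bbf{Y}')$. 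Factoring $a^2 - b^2 = (a-b)(a+b)$ with $|a|, |b| \leq M^2$ then yields
$$\bigl|\langle x_i x_j\rangle_{n,\epsilon}^2 - \langle x_i\rangle_{n,\epsilon}^2\langle x_j\rangle_{n,\epsilon}^2\bigr| \leq 2\sqrt{2}\,M^4\sqrt{I_{\bbf{Y},\bbf{Y}'}(X_i;X_j)}.$$

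Finally, combine this with Jensen and two Cauchy--Schwarz applications. Taking $\E$ and using concavity of $\sqrt{\cdot}$ upgrades $\E I_{\bbf{Y},\bbf{Y}'}(X_i;X_j)$ to the conditional mutual information $I(X_i;X_j | \bbf{Y},\bbf{Y}')$; a Cauchy--Schwarz over the $n^2$ pairs gives $\frac{1}{n^2}\sum_{i,j}\sqrt{I(X_i;X_j | \bbf{Y},\bbf{Y}')} \leq \sqrt{\frac{1}{n^2}\sum_{i,j} I(X_i;X_j | \bbf{Y},\bbf{Y}')}$; and a second Cauchy--Schwarz over $\epsilon \in [0,\epsilon_0]$ pulls out a $\sqrt{\epsilon_0}$. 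Altogether,
$$\int_0^{\epsilon_0}\!d\epsilon\cdot \E\bigl\langle(q - \langle q\rangle_{n,\epsilon})^2\bigr\rangle_{n,\epsilon} \leq 2\sqrt{2}\,M^4\sqrt{\epsilon_0}\sqrt{\int_0^{\epsilon_0}\!\!d\epsilon\cdot \frac{1}{n^2}\sum_{i,j} I(X_i;X_j | \bbf{Y},\bbf{Y}')} \leq C\sqrt{\epsilon_0/n}$$
by Lemma~\ref{lem:montanari}, with $C = 4M^4\sqrt{H(P_0)}$ depending only on $P_0$. The main subtle point is the factoring $a^2 - b^2 = (a-b)(a+b)$: the overlap variance is naturally a difference of squares, while Pinsker's inequality controls absolute covariances $|a-b|$; the factoring converts the problem into bounding $\frac{1}{n^2}\sum_{i,j}\sqrt{I(X_i;X_j | \bbf{Y},\bbf{Y}')}$, a form in which the $O(1/n)$ integrated bound of Lemma~\ref{lem:montanari}, combined with the two Cauchy--Schwarz steps above, precisely delivers the claimed $\sqrt{\epsilon_0/n}$ rate.
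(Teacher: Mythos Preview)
Your proof is correct and follows essentially the same approach as the paper: expand the overlap variance as a double sum of $\langle x_ix_j\rangle_{n,\epsilon}^2 - \langle x_i\rangle_{n,\epsilon}^2\langle x_j\rangle_{n,\epsilon}^2$, factor $a^2-b^2=(a-b)(a+b)$, bound the covariance via total variation and Pinsker, and then combine Jensen/Cauchy--Schwarz over $(i,j)$, $\epsilon$, and the randomness to invoke Lemma~\ref{lem:montanari}. The only cosmetic difference is that the paper performs the Jensen/Cauchy--Schwarz step in one combined inequality, whereas you split it into three explicit applications; the resulting constant $C=4M^4\sqrt{H(P_0)}$ matches.
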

\begin{proof}
	\begin{align*}
		\big\langle (\bbf{x}^{(1)} \cdot \bbf{x}^{(2)} - \langle \bbf{x}^{(1)} \cdot \bbf{x}^{(2)} \rangle_{n,\epsilon} )^2 \big\rangle_{n,\epsilon} &=
		\big\langle (\bbf{x}^{(1)} \! \cdot \bbf{x}^{(2)})^2 \big\rangle_{n,\epsilon} - \big\langle \bbf{x}^{(1)}\! \cdot \bbf{x}^{(2)} \big\rangle_{n,\epsilon}^2 
		\\
		&= \frac{1}{n^2} \sum_{1 \leq i,j \leq n} \big\langle x_i^{(1)} x_i^{(2)} x_j^{(1)} x_j^{(2)} \big\rangle_{n,\epsilon} - \big\langle x_i^{(1)} x_i^{(2)} \big\rangle_{n,\epsilon} \big\langle x_j^{(1)} x_j^{(2)} \big\rangle_{n,\epsilon} \\
		&= \frac{1}{n^2} \sum_{1 \leq i,j \leq n} \langle x_i x_j\rangle_{n,\epsilon}^2 - \langle x_i \rangle_{n,\epsilon}^2 \langle x_j\rangle_{n,\epsilon}^2.
	\end{align*}
	Let now $i,j \in \{1, \dots, n\}$. The support of $P_0$ is finite and thus included in $[-K,K]$ for some  $K>0$. This gives:
	\begin{align*}
		\langle x_i x_j\rangle_{n,\epsilon}^2 &- \langle x_i \rangle_{n,\epsilon}^2 \langle x_j\rangle_{n,\epsilon}^2
		\leq  2K^2 | \langle x_i x_j\rangle_{n,\epsilon} - \langle x_i \rangle_{n,\epsilon} \langle x_j\rangle_{n,\epsilon} | \\
		&= 2K^2\Big| \sum_{x_i,x_j}\! x_i x_j \P(X_i=x_i, X_j=x_j | \bbf{Y},\bbf{Y'})- x_i x_j \P(X_i=x_i | \bbf{Y},\bbf{Y'}) \P(X_j = x_j |\bbf{Y},\bbf{Y'}) \Big| \\
		&\leq 4 K^2\Dtv \big(\P(X_i=\cdot, X_j=\cdot |\bbf{Y},\bbf{Y'}); \P(X_i=\cdot | \bbf{Y},\bbf{Y'})\otimes \P(X_j =\cdot |\bbf{Y},\bbf{Y'}) \big)\\
		&\leq 4 K^2\sqrt{\Dkl \big(\P(X_i=\cdot, X_j=\cdot |\bbf{Y},\bbf{Y'}); \P(X_i=\cdot | \bbf{Y},\bbf{Y'})\otimes \P(X_j =\cdot |\bbf{Y},\bbf{Y'}) \big)}
	\end{align*}
	by Pinsker's inequality.
	Since 
	$$I(X_i;X_j | \bbf{Y},\bbf{Y'}) = \E\big[ \Dkl \big(\P(X_i=\cdot, X_j=\cdot |\bbf{Y},\bbf{Y'}); \P(X_i=\cdot | \bbf{Y},\bbf{Y'})\otimes \P(X_j =\cdot |\bbf{Y},\bbf{Y'}) \big)\big],$$ we get using Lemma~\ref{lem:montanari}:
	\begin{align*}
		\int_0^{\epsilon_0}\! d\epsilon \, 
		\E \left\langle \left( \bbf{x}^{(1)} \!\cdot \bbf{x}^{(2)} - \langle \bbf{x}^{(1)} \!\cdot \bbf{x}^{(2)} \rangle_{n,\epsilon} \right)^{\!2} \right\rangle_{\!\!n,\epsilon} 
		&\leq
		4K^2 \sqrt{\epsilon_0 \int_0^{\epsilon_0} \! d\epsilon \Big( \frac{1}{n^2} \sum_{1 \leq i,j \leq n} I(X_i;X_j | \bbf{Y},\bbf{Y'}) \Big)}
		\\
		&\leq 4K^2 \sqrt{\frac{2\epsilon_0 H(P_0)}{n}}.
	\end{align*}
\end{proof}

\section{Noisy side Gaussian channel}\label{sec:overlap_concentration_gg}

We consider in this section of a different kind of side-information: an observation of the signal $\bbf{X}$ perturbed by some Gaussian noise. It was proved in~\cite{korada2010tight} for CDMA systems that such perturbations forces the overlaps to concentrate around their means. The principle here is in fact more general and holds for any observation system, provided some concentration property of the free energy.

We suppose here that the prior $P_0$ has a bounded support $S \subset [-K,K]$, for some $K > 0$.
Let $a > 0$ and $(s_n)_n \in (0,1]^{\N}$. Let $(Z_i)_{1 \leq i \leq n} \iid \mathcal{N}(0,1)$ independently of everything else. 
The extra side-information takes now the form
\begin{equation} \label{eq:pert_scalar}
	Y'_i = a \sqrt{s_n} X_i + Z_i, \quad \text{for } 1 \leq i \leq n.
\end{equation}
The posterior distribution of $\bbf{X}$ given $\bbf{Y},\bbf{Y}'$ is now $P(\bbf{x} \, | \, \bbf{Y},\bbf{Y}') = \frac{1}{\cZ_{n,a}^{\text{(pert)}}} P_0^{\otimes n}(\bbf{x}) \exp\big(H_{n,a}^{\text{(pert)}}(\bbf{x})\big)$, where
$
H_{n,a}^{\text{(pert)}}(\bbf{x}) = H_{n}(\bbf{x}) + h_{n,a}(\bbf{x})
$
and
$$
h_{n,a}(\bbf{x}) = \sum_{i=1}^n a \sqrt{s_n} Z_i x_i + a^2 s_n x_i X_i -\frac{1}{2} a^2 s_n x_i^2 \,.
$$
$\cZ_{n,a}^{\text{(pert)}}$ is the appropriate normalization.
Let us define
$$
\phi: a \mapsto \frac{1}{n s_n} \log \left( \int dP_0^{\otimes n}(\bbf{x}) e^{H^{\text{(pert)}}_{n,a}(\bbf{x})} \right).
$$
We fix now $A \geq 2$.
Define also $v_n(s_n) = \sup_{1/2 \leq a \leq A+1} \E | \phi(a) - \E \phi(a) |$. The following result shows that, in the perturbed system (under some conditions on $v_n$ and $s_n$) the overlap between two replicas concentrates asymptotically around its expected value.

\begin{proposition}[Overlap concentration] \label{prop:overlap_concentration_gg}
	Assume that $v_n(s_n) \xrightarrow[n \to \infty]{} 0$.
	Then there exists a constant $C>0$ that only depends on $K$ such that for all $A \geq 2$,
	$$
	\frac{1}{A-1}
	\int_1^A \E \Big\langle \big(\bbf{x}^{(1)}\cdot\bbf{x}^{(2)} - \E\langle \bbf{x}^{(1)}\cdot\bbf{x}^{(2)} \rangle_{n,a}\big)^2 \Big\rangle_{\!n,a} da 
	\ \leq \ C \Big(\frac{1}{\sqrt{n s_n}} + \sqrt{v_n(s_n)}\Big),
	$$
where $\langle \cdot \rangle_{n,a}$ denotes the distribution of $\bbf{X}$ given $(\bbf{Y},\bbf{Y'})$. $\bbf{x}^{(1)}$ and $\bbf{x}^{(2)}$ are two independent samples from $\langle \cdot \rangle_{n,a}$, independently of everything else.
\end{proposition}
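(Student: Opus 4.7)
Writing $Q := \bbf{x}^{(1)}\!\cdot\bbf{x}^{(2)}$, I split the target into a thermal and a disorder part,
\[
\E\langle(Q - \E\langle Q\rangle_{n,a})^2\rangle_{n,a} = \underbrace{\E\langle(Q - \langle Q\rangle_{n,a})^2\rangle_{n,a}}_{\text{thermal}} + \underbrace{\E(\langle Q\rangle_{n,a} - \E\langle Q\rangle_{n,a})^2}_{\text{disorder}},
\]
and bound each after integrating in $a\in[1,A]$.

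\textbf{Thermal part.} Gaussian integration by parts in the noise $(Z_i)$, together with the Nishimori identity, produces the basic identity $\E\phi'(a) = a\,\E\langle Q\rangle_{n,a}$. A second IBP extracts from $\E\phi''(a)$ a non-negative contribution proportional to $a^2$ times the thermal variance of $Q$, plus a uniformly bounded remainder coming from $\langle\partial_a^2 h_{n,a}\rangle$ (bounded because $|x_i|\leq K$). Integrating $\E\phi''(a)$ over $[1,A]$ telescopes to $\E\phi'(A)-\E\phi'(1)\leq AK^2$, and the prefactor $(ns_n)^{-1}$ in $\phi''$ combined with the fact that $a^2\geq 1$ on $[1,A]$ yields
\[
\int_1^A \E\langle(Q-\langle Q\rangle)^2\rangle_{n,a}\,da \leq \frac{C}{n s_n}.
\]

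\textbf{Disorder part.} This is the main step. Since $|\partial_a^2 h_{n,a}(\bbf{x})| = |2s_n\sum_i(x_iX_i - x_i^2/2)| \leq 3ns_nK^2$, one has $\phi''(a)\geq -3K^2$, so $\tilde\phi(a):=\phi(a)+\tfrac{3}{2}K^2 a^2$ is convex. The classical convexity sandwich inequality for a convex function $g$,
\[
|g'(a)-\E g'(a)|\leq \tfrac{1}{\delta}\bigl(|g(a\pm\delta)-\E g(a\pm\delta)|+|g(a)-\E g(a)|\bigr) + \bigl(\E g'(a+\delta)-\E g'(a-\delta)\bigr),
\]
applied to $\tilde\phi$ on $[1-\delta,A+\delta]$ and integrated over $a\in[1,A]$ with the choice $\delta\sim\sqrt{v_n(s_n)}$, gives
\[
\int_1^A \E(\phi'(a)-\E\phi'(a))^2\,da \leq C\sqrt{v_n(s_n)},
\]
where one uses that $\E\tilde\phi'$ has $O(1)$ total variation on $[1-\delta,A+\delta]$ (spins bounded by $K$) and that $\phi'-\E\phi'$ is itself bounded (to upgrade $L^1$ into $L^2$). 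To translate this into concentration of $\langle Q\rangle$, I unpack
\[
\phi'(a) = \tfrac{1}{n\sqrt{s_n}}\sum_i Z_i \langle x_i\rangle + 2a\,\bbf{X}\!\cdot\!\langle\bbf{x}\rangle - \tfrac{a}{n}\sum_i\langle x_i^2\rangle :
\]
Gaussian Poincaré bounds the $Z$-linear term in $L^2$ by $K/\sqrt{n s_n}$; Nishimori equates $\E[\bbf{X}\!\cdot\!\langle\bbf{x}\rangle]$ with $\E\langle Q\rangle$ and $\E[\tfrac{1}{n}\sum_i\langle x_i^2\rangle]$ with $\E[X^2]$; and the pointwise discrepancy between $\bbf{X}\!\cdot\!\langle\bbf{x}\rangle$ and $\langle Q\rangle$ is controlled in $L^2$ by the thermal variance of $Q$ via Cauchy–Schwarz on the Gibbs covariances (the relevant double sum reduces, up to $|\cdot|\leq 2K^2$ factors, to the one already bounded in the first part). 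Assembling,
\[
\int_1^A \E(\langle Q\rangle_{n,a} - \E\langle Q\rangle_{n,a})^2\,da \leq C\bigl(\sqrt{v_n(s_n)}+1/\sqrt{n s_n}\bigr).
\]
Combining the two pieces and dividing by $A-1$ produces the claimed bound.

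\textbf{Main obstacle.} The delicate step is the convexity sandwich, which upgrades $L^1$-concentration of $\phi$ at scale $v_n(s_n)$ to $L^2$-concentration of $\phi'$ at scale $\sqrt{v_n(s_n)}$: the square-root loss arises from the optimal balance between the $v_n/\delta$ term and the $\delta$-telescope, is intrinsic to this method, and explains the shape of the final bound. A secondary subtlety is that $\phi'(a)$ directly sees the planted overlap $\bbf{X}\!\cdot\!\langle\bbf{x}\rangle$ rather than $\langle Q\rangle$ itself; the Nishimori identity combined with the thermal-variance estimate from the first part is what bridges the two and closes the argument.
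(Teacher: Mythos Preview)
Your overall toolkit (Gaussian IBP, Nishimori, the convexity sandwich on $\tilde\phi$) is the right one, but the decomposition you choose creates gaps that the paper's approach avoids.

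\textbf{Gap 1 (thermal part).} The second derivative $\phi''(a)=n s_n\big(\langle U^2\rangle-\langle U\rangle^2\big)+\tfrac{1}{n}\sum_i\langle 2x_iX_i-x_i^2\rangle$ gives the \emph{Gibbs variance of $U$}, not of $Q=\bbf{x}^{(1)}\!\cdot\bbf{x}^{(2)}$. Your sentence ``a second IBP extracts from $\E\phi''(a)$ a non-negative contribution proportional to $a^2$ times the thermal variance of $Q$'' is not correct as written: $U$ contains the Gaussian piece $\tfrac{1}{n\sqrt{s_n}}\sum_i Z_i x_i$ and the self-overlap $-\tfrac{a}{n}\sum_i x_i^2$, so its Gibbs variance does not reduce to $a^2\langle (Q-\langle Q\rangle)^2\rangle$ after any single IBP. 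At best one gets a complicated mixture of covariances of $(x_i)$, $(x_i^2)$ and planted terms, from which the two-replica quantity $\langle Q^2\rangle-\langle Q\rangle^2$ does not emerge cleanly.

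\textbf{Gap 2 (disorder part).} You upgrade the convexity-sandwich bound from $L^1$ to $L^2$ by saying ``$\phi'-\E\phi'$ is itself bounded''. But $\phi'(a)=\langle U\rangle$ contains the term $\tfrac{1}{n\sqrt{s_n}}\sum_i Z_i\langle x_i\rangle$, which is \emph{not} a.s.\ bounded, so this upgrade fails. In addition, you do not control the disorder fluctuation of $\tfrac{a}{n}\sum_i\langle x_i^2\rangle$; Nishimori fixes its expectation but says nothing about its variance.

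\textbf{What the paper does instead.} The paper applies your thermal/disorder split to $U$, where it works directly: $\int_1^A\E\langle(U-\langle U\rangle)^2\rangle\,da\le CA/(ns_n)$ from $\phi''$, and $\int_1^A\E|\langle U\rangle-\E\langle U\rangle|\,da\le CA\sqrt{v_n}$ from the convexity sandwich (in $L^1$, no upgrade needed). Then a single Ghirlanda--Guerra-type identity does all the remaining work: since $|Q|\le K^2$,
\[
\Big|\E\big\langle U(\bbf{x}^{(1)})\,Q\big\rangle-\E\langle Q\rangle\,\E\langle U\rangle\Big|\le K^2\,\E\big\langle|U-\E\langle U\rangle|\big\rangle,
\]
and computing the left side by IBP and Nishimori gives exactly $2a\big(\E\langle Q^2\rangle-(\E\langle Q\rangle)^2\big)$. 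Thus the $L^1$ fluctuation of $U$ bounds the full $L^2$ variance of $Q$ in one stroke, with no need to separately control the thermal variance of $Q$, to upgrade $L^1$ to $L^2$, or to handle the self-overlap term. This test-function trick is precisely what is missing from your plan.
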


Proposition~\ref{prop:overlap_concentration_gg} is the analog of \cite[Theorem~3.2]{panchenko2013SK} (the Ghirlanda-Guerra identities, see~\cite{ghirlanda1998general})
and is proved analogously is the remaining of the section.
Denote for $\bbf{x} \in S^n$
$$
U(\bbf{x}) 
= \frac{1}{n s_n} \frac{\partial}{\partial a} h_{n,a}(\bbf{x})
= \frac{1}{n} \sum_{i=1}^n \frac{1}{\sqrt{s_n}} Z_i x_i + 2 a x_i X_i - a x_i^2 \,.
$$

\begin{lemma}\label{lem:concentration_energy}
	Let $\bbf{x}$ be a sample from $\langle \cdot \rangle_{n,a}$, independently of everything else.
	Under the conditions of Proposition~\ref{prop:overlap_concentration_gg}, we have for all $A \geq 2$
	$$
	\frac{1}{A-1}
	\int_{1}^A \E \Big\langle \big| U(\bbf{x}) - \E \langle U(\bbf{x}) \rangle_{n,a} \big| \Big\rangle_{\!n,a} da 
	\leq C \Big(\frac{1}{\sqrt{n s_n}} + \sqrt{v_n(s_n)}\Big)
	\,,
	$$
	for some constant $C>0$ that only depends on $K$.
\end{lemma}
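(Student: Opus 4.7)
The plan is to exploit that $U(\bbf{x})$ equals, up to the factor $1/(ns_n)$, the $a$-derivative of the perturbing Hamiltonian $h_{n,a}$, so that thermodynamic identities link the quantity of interest to derivatives of $\phi$. Direct differentiation gives $\phi'(a) = \langle U(\bbf{x})\rangle_{n,a}$ and $\phi''(a) = ns_n\,\langle (U - \langle U\rangle)^2\rangle_{n,a} + \tfrac{1}{ns_n}\langle \partial_a^2 h_{n,a}(\bbf{x})\rangle_{n,a}$. Since $|x_i|,|X_i| \leq K$ and $s_n \leq 1$, the second summand is bounded by $3K^2$ in absolute value, so $\tilde\phi(a) := \phi(a) + \tfrac{3K^2}{2}a^2$ is a convex function, and so is $\E\tilde\phi$. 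Since $\tilde\phi - \E\tilde\phi = \phi - \E\phi$, the hypothesis $v_n(s_n)\to 0$ controls $\E|\tilde\phi(a) - \E\tilde\phi(a)|$ uniformly on $[1/2, A+1]$.

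I would then split the target via the triangle inequality $\E\langle|U - \E\langle U\rangle|\rangle_{n,a} \leq \E\langle|U - \langle U\rangle|\rangle_{n,a} + \E|\langle U\rangle_{n,a} - \E\langle U\rangle_{n,a}|$ into a thermal piece and a disorder piece. For the thermal piece, Cauchy--Schwarz inside the Gibbs bracket, combined with $\mathrm{Var}_{\langle\cdot\rangle_{n,a}}(U) \leq \tilde\phi''(a)/(ns_n)$, gives $\langle|U - \langle U\rangle|\rangle \leq \sqrt{\tilde\phi''(a)/(ns_n)}$. Integrating in $a$, applying Jensen, and invoking the fundamental theorem of calculus yields $\tfrac{1}{A-1}\int_1^A \E\langle|U - \langle U\rangle|\rangle\, da \leq \bigl((\E\tilde\phi'(A) - \E\tilde\phi'(1))/((A-1)ns_n)\bigr)^{1/2}$. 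A short Gaussian integration by parts combined with the Nishimori identity gives $\E\langle U\rangle_{n,a} = \tfrac{a}{n}\sum_i \E\langle x_i\rangle_{n,a}^2 \in [0, aK^2]$, so $|\E\tilde\phi'(a)| = O(aK^2)$ and this thermal contribution is $O(1/\sqrt{ns_n})$ once $A \geq 2$.

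For the disorder piece I would invoke the classical fact that two $L^1$-close convex functions have derivatives that are close in an integrated sense. With $f = \tilde\phi$ and $g = \E\tilde\phi$, the elementary convexity inequalities $(f(a) - f(a-\delta))/\delta \leq f'(a) \leq (f(a+\delta) - f(a))/\delta$ (and the analogous ones for $g$) yield, for any $\delta \in (0, 1/2]$, the pointwise bound $|f'(a) - g'(a)| \leq g'(a+\delta) - g'(a-\delta) + \delta^{-1}\bigl(|(f-g)(a+\delta)| + 2|(f-g)(a)| + |(f-g)(a-\delta)|\bigr)$. Integrating over $a \in [1,A]$ and taking $\E$, the $g'$-differences telescope and are bounded by $4\delta\,\|\E\tilde\phi'\|_\infty = O(\delta A K^2)$, while the fluctuation terms contribute $4(A-1) v_n(s_n)/\delta$. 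Balancing the two by choosing $\delta$ of order $\sqrt{v_n(s_n)/K^2}$ (truncated at $1/2$) yields the $O(\sqrt{v_n(s_n)})$ rate; if $v_n(s_n)$ already exceeds a fixed constant the lemma is trivial. Summing the thermal and disorder contributions gives the claim. The step I expect to require most care is this convexity-of-derivatives argument, where $\delta$ must remain in the window $[1/2, A+1]$ on which the $v_n$-hypothesis is available; the remaining ingredients are routine applications of the Nishimori identity, Gaussian integration by parts, and basic inequalities.
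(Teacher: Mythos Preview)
Your proposal is correct and follows essentially the same route as the paper: the same thermal/disorder split via the triangle inequality, the same convexification $\tilde\phi(a)=\phi(a)+\tfrac{3K^2}{2}a^2$, the same integration of $\tilde\phi''$ for the thermal variance together with $\E\phi'(a)=O(aK^2)$ via Gaussian integration by parts and Nishimori, and the same convex-derivative comparison inequality (the paper cites it as Lemma~3.2 of Panchenko) optimized at $\delta\sim\sqrt{v_n(s_n)}$. The only cosmetic differences are that the paper keeps the thermal part in $L^2$ rather than passing through Cauchy--Schwarz, and bounds $\int_1^A(\E\phi'(a+b)-\E\phi'(a-b))\,da$ by the mean value theorem rather than phrasing it as a telescoping bound.
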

Before proving Lemma~\ref{lem:concentration_energy}, let us show how it implies Proposition~\ref{prop:overlap_concentration_gg}.
\\

\begin{proof}[Proof of Proposition~\ref{prop:overlap_concentration_gg}]
	By the bounded support assumption on $P_0$, the overlap between two replicas is bounded by $K^2$, thus
	\begin{equation}\label{eq:gg_trick}
		\left| \E \!\left\langle U(\bbf{x}^{(1)}) \, \bbf{x}^{(1)}\! \cdot\bbf{x}^{(2)} \right\rangle_{\!n,a} 
		\!\!- \E\!\left\langle \bbf{x}^{(1)}\! \cdot\bbf{x}^{(2)} \right\rangle_{\!n,a} \!\! \E\! \left\langle U(\bbf{x}^{(1)}) \right\rangle_{\!n,a} \right| 
		\leq K^2  \E\! \left\langle \big| U(\bbf{x}) - \E\! \left\langle U(\bbf{x}) \right\rangle_{\!n,a} \big| \right\rangle_{\!n,a}.
	\end{equation}
	Let us compute the left-hand side of~\eqref{eq:gg_trick}.
	By Gaussian integration by parts and using the Nishimori identity (Proposition~\ref{prop:nishimori}) we get $\E \big\langle U(\bbf{x}^{(1)}) \big\rangle_{\!n,a} = 2 a \, \E \big\langle \bbf{x}^{(1)} \! \cdot \bbf{x}^{(2)} \big\rangle_{\!n,a}$. Therefore
	$$
	\E\left\langle \bbf{x}^{(1)}\! \cdot \bbf{x}^{(2)} \right\rangle_{\!n,a} \E \left\langle U(\bbf{x}^{(1)}) \right\rangle_{\!n,a} 
	= 2 a \left(\E \left\langle \bbf{x}^{(1)} \! \cdot \bbf{x}^{(2)} \right\rangle_{\!n,a}\right)^{\!2}.
	$$
	Using the same tools, we compute for $\bbf{x}^{(1)},\bbf{x}^{(2)},\bbf{x}^{(3)},\bbf{x}^{(4)} \iid \langle \cdot \rangle_{n,a}$, independently of everything else:
	\begin{align*}
		\E&  \big\langle U(\bbf{x}^{(1)})(\bbf{x}^{(1)}\!\cdot\bbf{x}^{(2)}) \big\rangle_{\!n,a} 
		\\
		&= 2a \E \big\langle (\bbf{x}^{(1)}\!\cdot\bbf{X})(\bbf{x}^{(1)}\!\cdot\bbf{x}^{(2)}) \big\rangle_{\!n,a} + \frac{1}{n \sqrt{s_n}} \sum_{i=1}^n \E Z_i \big\langle x^{(1)}_i (\bbf{x}^{(1)}\!\cdot\bbf{x}^{(2)}) \big\rangle_{\!n,a} - \frac{a}{n} \sum_{i=1}^n \E \big\langle (x^{(1)}_i)^2 (\bbf{x}^{(1)}\!\cdot\bbf{x}^{(2)}) \big\rangle_{\!n,a} \\
		&= 2a \E \big\langle (\bbf{x}^{(1)}\!\cdot\bbf{X})(\bbf{x}^{(1)}\!\cdot\bbf{x}^{(2)}) \big\rangle_{\!n,a} + a \E \big\langle (\bbf{x}^{(1)}\!\cdot\bbf{x}^{(2)})^2 \big\rangle_{\!n,a} - a \E \big\langle (\bbf{x}^{(1)}\!\cdot\bbf{x}^{(3)} + \bbf{x}^{(1)}\!\cdot\bbf{x}^{(4)})(\bbf{x}^{(1)}\!\cdot\bbf{x}^{(2)})\big\rangle_{\!n,a} \\
		&= 2a \E \big\langle (\bbf{x}^{(1)}\!\cdot\bbf{x}^{(2)})^2 \big\rangle_{\!n,a} \,.
	\end{align*}
	Thus, by~\eqref{eq:gg_trick} we have for all $a \in [1,A]$
	$$
	\E \Big\langle \big(\bbf{x}^{(1)}\!\cdot\bbf{x}^{(2)} - \E\big\langle \bbf{x}^{(1)}\!\cdot\bbf{x}^{(2)} \big\rangle_{\!n,a}\big)^2 \Big\rangle_{\!n,a}
	\leq
	\frac{K^2}{2}  \E \left\langle \big| U(\bbf{x}) - \E \left\langle U(\bbf{x}) \right\rangle_{\!n,a} \! \big| \right\rangle_{\!n,a} ,
	$$
	and we conclude by integrating with respect to $a$ over $[1,A]$ and using Lemma~\ref{lem:concentration_energy}.
\end{proof}
\\

\begin{proof}[Proof of Lemma~\ref{lem:concentration_energy}]
	$\phi$ is twice differentiable on $(0,+\infty)$, and for $a > 0$
	\begin{align}
		\phi'(a) &= \langle U(\bbf{x}) \rangle_{n,a} \,, \\
		\phi''(a) &= n s_n \big(\langle U(\bbf{x})^2 \rangle_{n,a} -\langle U(\bbf{x}) \rangle_{n,a}^2\big)
		+ \frac{1}{n} \sum_{i=1}^n \Big\langle 2 x_i X_i - x_i^2 \Big\rangle_{n,a} \,. \label{eq:der_sec_phi}
	\end{align}
	Thus $\big\langle (U(\bbf{x}) - \langle U(\bbf{x})\rangle_{n,a})^2 \big\rangle_{n,a} \leq \frac{1}{n s_n} (\phi''(a) + 2 K^2)$ and
	\begin{align*}
		\int_{1}^A \E \big\langle (U(\bbf{x}) - \langle U(\bbf{x})\rangle_{n,a})^2 \big\rangle_{n,a} da \leq \frac{1}{n s_n} \left(\E \phi'(A) - \E \phi'(1) + 2 K^2 (A-1) \right) \leq \frac{C A}{n s_n} \,,
	\end{align*}
	for some constant $C>0$ (that only depend on $K$),
	because $\E \phi'(a) = 2 a \E \langle \bbf{x} \cdot\bbf{X} \rangle_{n,a}$.
	It remains to show that $\int_{1}^A \E \big| \langle U(\bbf{x}) \rangle_{n,a} - \E \langle U(\bbf{x})\rangle_{n,a}\big| da \leq C A \sqrt{v_n(s_n)}$ for some constant $C>0$ that only depends on $K$.
	\\

	We will use the following lemma on convex functions (from~\cite{panchenko2013SK}, Lemma 3.2).

	\begin{lemma}
		If $f$ and $g$ are two differentiable convex functions then, for any $b >0$
		$$
		|f'(a) - g'(a)| \leq g'(a+b) - g'(a-b) + \frac{d}{b} \,,
		$$
		where $d = |f(a+b) - g(a+b)| + |f(a-b) - g(a-b)| + |f(a) - g(a)|$.
	\end{lemma}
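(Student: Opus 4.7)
The plan is to use two very standard facts. First, the convexity of $f$ gives the inequality chain
$$
\frac{f(a)-f(a-b)}{b} \;\leq\; f'(a) \;\leq\; \frac{f(a+b)-f(a)}{b},
$$
because $f'$ is non-decreasing and these are secant slopes on the left and right of $a$. An analogous chain holds for $g$. Second, on each secant slope of $g$ we apply convexity once more: $\frac{g(a+b)-g(a)}{b}\leq g'(a+b)$ and $\frac{g(a)-g(a-b)}{b}\geq g'(a-b)$.

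I would upper bound $f'(a)-g'(a)$ as follows. Start from $f'(a)\leq (f(a+b)-f(a))/b$, then replace $f$ by $g$ on the right-hand side at the cost of the error
$$
\frac{f(a+b)-f(a)}{b} \;=\; \frac{g(a+b)-g(a)}{b} \;+\; \frac{\bigl(f(a+b)-g(a+b)\bigr)-\bigl(f(a)-g(a)\bigr)}{b}.
$$
The first piece is $\leq g'(a+b)$ by convexity of $g$, and the second is bounded in absolute value by $\bigl(|f(a+b)-g(a+b)|+|f(a)-g(a)|\bigr)/b$. Subtracting $g'(a)$, and using the monotonicity $-g'(a)\leq -g'(a-b)$, yields
$$
f'(a) - g'(a) \;\leq\; g'(a+b) - g'(a-b) \;+\; \frac{|f(a+b)-g(a+b)| + |f(a)-g(a)|}{b} \;\leq\; g'(a+b)-g'(a-b) + \frac{d}{b}.
$$

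For the reverse inequality $g'(a)-f'(a)$, I would mirror the argument on the left: use $f'(a)\geq (f(a)-f(a-b))/b$, rewrite in terms of $g$ to get a lower bound of the form $g'(a-b) - \bigl(|f(a)-g(a)|+|f(a-b)-g(a-b)|\bigr)/b$, and then bound $g'(a)-g'(a-b)\leq g'(a+b)-g'(a-b)$ by convexity of $g$. Combining the two bounds gives the stated inequality for $|f'(a)-g'(a)|$. There is no real obstacle here; the only care needed is bookkeeping so that the three ``distance'' terms assemble into $d$ and the two $g'$ terms assemble into $g'(a+b)-g'(a-b)$, which is why we allow ourselves the apparently wasteful step $g'(a)\leq g'(a+b)$ (and symmetrically $g'(a)\geq g'(a-b)$) at the end.
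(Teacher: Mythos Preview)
Your proof is correct. The paper does not actually prove this lemma; it simply quotes it from Panchenko's book (\cite{panchenko2013SK}, Lemma~3.2), and the argument you give---bounding $f'(a)$ above and below by secant slopes, swapping $f$ for $g$ at the cost of the pointwise differences, and then pushing the resulting $g$-secants out to $g'(a\pm b)$ via monotonicity of $g'$---is exactly the standard proof found there.
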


	We apply this lemma to $\lambda \mapsto \phi(\lambda) + \frac{3}{2} K^2 \lambda^2$ and $\lambda \mapsto \E \phi(\lambda) + \frac{3}{2} K^2 \lambda^2$ that are convex because of~\eqref{eq:der_sec_phi} and the bounded support assumption on $P_0$.
	Therefore, for all $a \geq 1$ and $b \in (0,1/2)$ we have
	\begin{equation} \label{eq:appli_lem_convex}
		\E |\phi'(a) - \E \phi'(a)| \leq \E \phi'(a+b) - \E \phi'(a-b) + 6 K^2 b + \frac{3 v_n(s_n)}{b} \,.
	\end{equation}
	Notice that for all $a >0, \ |\E \phi'(a) | = | 2a \E \langle \bbf{x}\cdot\bbf{X} \rangle_{n,a} | \leq 2 a K^2$. Therefore, by the mean value theorem
	\begin{align*}
		\int_1^A \big(\E \phi'(a+b) - \E \phi'(a-b)\big) da 
		&= \big(\E \phi(b+A) - \E \phi(b+1)\big) - \big(\E \phi(A-b) - \E \phi(1-b)\big) \\
		&= \big(\E \phi(b+A) - \E \phi(A-b)\big) + \big(\E \phi(1-b) - \E \phi(1+b)\big) \\
		&\leq 4 K^2 b (A + 2)\,.
	\end{align*}
	Combining this with equation~\eqref{eq:appli_lem_convex}, we obtain
	\begin{equation} \label{eq:control_b}
		\forall b \in (0,1/2), \ \int_1^A \E | \phi'(a) - \E \phi'(a) | da  \leq C A \Big(b + \frac{v_n(s_n)}{b}\Big) \,.
	\end{equation}
	for some constant $C>0$ depending only on $K$.
	The minimum of the right-hand side is achieved for $b=\sqrt{v_n(s_n)} < 1/2$ for $n$ large enough. Then,~\eqref{eq:control_b} gives
	\begin{align*}
		\int_1^A \E \big| \langle U(\bbf{x}) \rangle_{n,a} - \E \langle U(\bbf{x}) \rangle_{n,a} \big| da
		= \int_1^A \E | \phi'(a) - \E \phi'(a) | da
		\leq 2 C A \sqrt{v_n(s_n)}.
	\end{align*}
\end{proof}

\chapter{Low-rank symmetric matrix estimation}\label{sec:xx}

We consider in this chapter the spiked Wigner model~\eqref{eq:spiked_wigner}.
Let $P_0$ be a probability distribution on $\R$ that admits a finite second moment and consider the following observations:
\begin{equation}\label{eq:observation_xx}
	Y_{i,j} = \sqrt{\frac{\lambda}{n}} \, X_i X_j + Z_{i,j}, \quad \text{for } 1 \leq i<j \leq n \,,
\end{equation}
where $X_i \iid  P_0$ and $Z_{i,j} \iid  \mathcal{N}(0,1)$ are independent random variables. 
Note that we suppose here to only observe the coefficients of $\sqrt{\lambda/n} \bbf{X} \bbf{X}^{\sT} + \bbf{Z}$ that are above the diagonal. The case where all the coefficients are observed can be directly deduced from this case. 
In the following, $\E$ will denote the expectation with respect to the $\bbf{X}$ and $\bbf{Z}$ random variables.

Our main quantity of interest is the Minimum Mean Squared Error (MMSE) defined as:
\begin{align*}
	\MMSE_n(\lambda) 
	&= \min_{\widehat{\theta}} \frac{2}{n(n-1)} \sum_{1\leq i<j\leq n} \E\left[ \left(X_iX_j- \widehat{\theta}_{i,j}(\bbf{Y}) \right)^2\right] \label{eq:def_mmse_min_intro} \\
	&=\frac{2}{n(n-1)} \sum_{1\leq i<j\leq n} \E\left[ \left(X_iX_j-\E\left[X_iX_j|\bbf{Y} \right]\right)^2\right],
\end{align*}
where the minimum is taken over all estimators $\widehat{\theta}$ (i.e.\ measurable functions of the observations $\bbf{Y}$).
We have the trivial upper-bound
$$
\MMSE_n(\lambda) \leq \DMSE \defeq \E_{P_0}[X^2]^2 - \E_{P_0}[X]^4 \,,
$$
obtained by considering the ``dummy'' estimator $\widehat{\theta}_{i,j} = \E_{P_0} [X]^2$.
One can also compute the Mean Squared Error achieved by naive PCA.
Let $\widehat{\bbf{x}}$ be the leading eigenvector of $\bbf{Y}$ with norm $\|\widehat{\bbf{x}}\|^2 = n$.
If we take an estimator proportional to $\widehat{x}_i \widehat{x}_j$, i.e.\ $\widehat{\theta}_{i,j}=\delta \widehat{x}_i \widehat{x}_j$ for $\delta\geq 0$, we can compute explicitly (using the results from \cite{peche2006largest} presented in the introduction) the resulting $\MSE$ as a function of $\delta$ and minimize it. The optimal value for $\delta$ depends on $\lambda$, more precisely if $\lambda<\E_{P_0}[X^2]^{-2}$, then $\delta=0$ while for $\lambda\geq \E_{P_0}[X^2]^{-2}$, the optimal of value for $\delta$ is $\E_{P_0}[X^2] - \lambda^{-1} \E_{P_0}[X^2]^{-1}$, resulting in the following $\MSE$ for naive PCA:
\begin{equation} \label{eq:perfpca}
	\MSE^{\text{PCA}}_{n}(\lambda) \xrightarrow[n \to \infty]{} \left\{
		\begin{array}{ll}
			\E_{P_0}[X^2]^2 &\mbox{ if } \lambda\leq \E_{P_0}[X^2]^{-2},\\
			\lambda^{-1}\left(2-\lambda^{-1} \E_{P_0}[X^2]^{-2}\right)&\mbox{ otherwise.}
	\end{array}\right.
\end{equation}
We will see in Section~\ref{sec:interpretation_xx} that in the particular case of $P_0=\cN(0,1)$, PCA is optimal: $\lim\limits_{n\to\infty}\MSE^{\text{PCA}}_n = \lim\limits_{n \to \infty} \MMSE_n$.

\section{Information-theoretic limits}

In order to formulate our inference problem as a statistical physics problem we introduce
the random Hamiltonian 
\begin{equation}\label{eq:hamiltonian_xx}
H_n(\bbf{x}) = \sum_{i<j} \sqrt{\frac{\lambda}{n}} x_i x_j Z_{i,j} + \frac{\lambda}{n} X_i X_j x_i x_j - \frac{\lambda}{2n} x_i^2 x_j^2 \,.
\end{equation}
The posterior distribution of $\bbf{X}$ given $\bbf{Y}$ takes then the form
\begin{equation}
	\label{def:post2}
	dP(\bbf{x} \, | \, \bbf{Y}) = \frac{1}{\cZ_n(\lambda)} dP_0^{\otimes n}(\bbf{x}) \exp\Big( \sum_{i<j} x_i x_j \sqrt{\frac{\lambda}{n}} Y_{i,j} - \frac{\lambda}{2n} x_i^2 x_j^2 \Big) = \frac{1}{\cZ_n(\lambda)} dP^{\otimes n}_0(\bbf{x}) e^{H_n(\bbf{x})} \,,
\end{equation}
where $\cZ_n(\lambda)$ is the appropriate normalization.
The free energy is defined as
$$
F_n(\lambda) = \frac{1}{n} \E \Big[ \log \int dP^{\otimes n}_0(\bbf{x}) \ e^{H_n(\bbf{x})}\Big] = \frac{1}{n} \E \log \cZ_n(\lambda).
$$
We will first compute the limit of the free energy $F_n$ and then deduce the limit of $\MMSE_n$ by an I-MMSE (see Proposition~\ref{prop:i_mmse}) argument.
We express the limit of $F_n$ using the following function
\begin{equation}\label{eq:def_potential_xx}
	\mathcal{F}: (\lambda,q) \mapsto \psi_{P_0}(\lambda q) - \frac{\lambda}{4} q^2 = \E \log \left( \int dP_0(x) \exp\left(\sqrt{\lambda q}Zx + \lambda q x X - \frac{\lambda}{2}q x^2\right)\right)-\frac{\lambda}{4}q^2 ,
\end{equation}
where $Z \sim \mathcal{N}(0,1)$ and $X \sim P_0$ are independent random variables. Recall that $\psi_{P_0}$ denotes the free energy \eqref{eq:def_psi_p0} of the scalar channel~\eqref{eq:additive_scalar_channel}. The main result of this section is:
\begin{theorem}[Replica-Symmetric formula for the spiked Wigner model] \label{th:rs_formula}
	For all $\lambda > 0$,
	\begin{equation}\label{eq:rs_formula}
		F_n(\lambda) \xrightarrow[n \to \infty]{} \sup_{q \geq 0} \mathcal{F}(\lambda,q) \,.
	\end{equation}
\end{theorem}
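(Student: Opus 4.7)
The plan is to establish the limit by proving matching upper and lower bounds on $\liminf F_n(\lambda)$ and $\limsup F_n(\lambda)$. The lower bound follows from Guerra's interpolation scheme, which is straightforward in the Bayes-optimal setting thanks to the Nishimori identity. The upper bound is the harder direction and will rely on an adaptive refinement of this interpolation together with the overlap concentration results of Chapter~\ref{chap:overlap}.

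For the lower bound, fix $q \geq 0$ and introduce an interpolating Hamiltonian $H_n(t,\bbf{x})$ for $t \in [0,1]$ coupling the original Wigner observations (at signal-to-noise ratio $\lambda t$) with an independent scalar Gaussian side channel $Y'_i = \sqrt{\lambda(1-t)q}\,X_i + \widetilde Z_i$ acting coordinatewise. Writing $\psi_n(t) = \tfrac{1}{n}\E \log \cZ_n(t)$, one has $\psi_n(0) = \psi_{P_0}(\lambda q)$ and $\psi_n(1) = F_n(\lambda)$. Gaussian integration by parts applied to both $Z_{i,j}$ and $\widetilde Z_i$, followed by the Nishimori identity (Proposition~\ref{prop:nishimori}) to symmetrize the cross terms, gives
$$
\psi_n'(t) \;=\; \frac{\lambda}{4}\,\E\!\left\langle \bigl(\bbf{x}^{(1)}\!\cdot\bbf{x}^{(2)} - q\bigr)^{\!2}\right\rangle_{\!t} \;-\; \frac{\lambda q^2}{4} \;+\; O(1/n).
$$
The remainder term on the right is manifestly nonnegative, so integrating over $t \in [0,1]$ yields $F_n(\lambda) \geq \mathcal{F}(\lambda,q) + O(1/n)$, and taking the sup over $q \geq 0$ delivers $\liminf_n F_n(\lambda) \geq \sup_{q \geq 0} \mathcal{F}(\lambda,q)$.

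For the upper bound, the plan is to use \emph{adaptive} interpolation: replace the constant $q$ above by a function $q(t)$ (in fact $q_\epsilon(t) = \E\langle \bbf{x}^{(1)}\!\cdot\bbf{x}^{(2)}\rangle_{t,\epsilon}$ in the system perturbed by a small Gaussian side observation of strength $\epsilon$, as in Section~\ref{sec:overlap_concentration_gg}). The same computation then gives
$$
F_n(\lambda) \;=\; \psi_{P_0}\!\Bigl(\lambda \!\int_0^1\! q_\epsilon(t)\,dt\Bigr) - \frac{\lambda}{4}\!\int_0^1\! q_\epsilon(t)^{2}\,dt + \frac{\lambda}{4}\!\int_0^1 \E\!\left\langle \bigl(\bbf{x}^{(1)}\!\cdot\bbf{x}^{(2)} - q_\epsilon(t)\bigr)^{\!2}\right\rangle_{\!t,\epsilon}\! dt + o_n(1).
$$
By the overlap concentration of Proposition~\ref{prop:overlap_concentration_gg} (applied in the interpolating model, which remains of the general form studied there), the last integral can be made vanishingly small on average in $\epsilon$ along a suitable sequence. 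Proposition~\ref{prop:approximation_f_n_epsilon} (or its continuous analog for the Gaussian perturbation) ensures the perturbation changes $F_n$ by at most $O(\epsilon)$. Finally, applying Jensen's inequality to bound the first two terms by $\sup_{q\geq 0}\mathcal{F}(\lambda,q)$ and sending $n \to \infty$ then $\epsilon \to 0$ yields $\limsup_n F_n(\lambda) \leq \sup_{q \geq 0}\mathcal{F}(\lambda,q)$.

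The main obstacle is the upper bound, and more precisely coordinating three ingredients: choosing the perturbation strength $\epsilon_n \to 0$ slowly enough that overlap concentration takes effect (which in turn requires verifying that $v_n(s_n) \to 0$, i.e.\ that the free energy of the interpolating system concentrates around its mean — this can be obtained from a Gaussian Poincaré/concentration inequality applied to $Z_{i,j}$ and $\widetilde Z_i$), proving the concentration with a choice of $\epsilon$ small enough that it does not alter the free energy limit, and extracting the supremum via Jensen. Once these pieces are in place, the matching bounds combine into the claimed replica-symmetric formula \eqref{eq:rs_formula}.
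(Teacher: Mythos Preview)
Your lower bound is identical to the paper's (Proposition~\ref{prop:guerra_bound}). For the upper bound, however, you take a genuinely different route. The paper uses the \emph{Aizenman--Sims--Starr cavity scheme}: it writes $F_{n,\epsilon}$ as a Ces\`aro average of increments $A_{n,\epsilon} = \E\log\cZ_{n+1,\epsilon} - \E\log\cZ_{n,\epsilon}$, perturbs the system by \emph{revealing a fraction $\epsilon$ of coordinates} (the pinning lemma of Section~\ref{sec:overlap_concentration_montanari}, not the Gaussian side channel), and shows via cavity computations that $A_{n,\epsilon}$ is asymptotically close to $\E\,\mathcal{F}_\epsilon(\lambda,Q)$ for a random overlap $Q$. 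Your proposal instead implements the \emph{adaptive interpolation} of Barbier--Macris: you run the same Guerra path but with a time-dependent $q_\epsilon(t)$ solving an ODE that tracks $\E\langle\bbf{x}\cdot\bbf{X}\rangle_{t,\epsilon}$, use the Gaussian side-channel perturbation of Section~\ref{sec:overlap_concentration_gg} for overlap concentration, and close with Jensen applied to the convex $\psi_{P_0}$.

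Both methods are valid; the paper even remarks that adaptive-interpolation proofs exist (citing \cite{barbier2017stochastic}) and in fact \emph{uses your method} for the non-symmetric spiked Wishart model in Section~\ref{sec:proof_rs_uv}. The cavity approach is closer to classical spin-glass arguments and makes the ``decoupling'' to a scalar channel very explicit at the level of a single added spin; your adaptive interpolation is arguably more streamlined (one interpolation handles both bounds) and generalizes more easily, but requires the extra machinery of the ODE flow and free-energy concentration to justify the choice $r(t)=q_\epsilon(t)$. One technical point you glossed over: both proofs first reduce to priors $P_0$ with finite (or at least bounded) support via the Wasserstein-continuity argument of Section~\ref{sec:approximation}, which is needed to make the concentration estimates and various domination arguments go through.
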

Theorem~\ref{th:rs_formula} is proved in Section~\ref{sec:proof_rs_xx}.
In the case of Rademacher prior ($P_0 = \frac{1}{2}\delta_{-1} + \frac{1}{2} \delta_{+1}$), Theorem~\ref{th:rs_formula} was proved in~\cite{deshpande2016asymptotic}. 
The expression~\eqref{eq:rs_formula} for general priors was conjectured by~\cite{DBLP:conf/allerton/LesieurKZ15}.
For discrete priors $P_0$ for which the map $\cF(\lambda,\cdot)$ has not more than $3$ stationary points, the statement of Theorem~\ref{th:rs_formula} was obtained in~\cite{barbier2016mutual}.
The full version of Theorem~\ref{th:rs_formula} as well as its multidimensional generalization (where $\bbf{X}\in \R^{n \times k}$, $k$ fixed) was proved in~\cite{lelarge2016fundamental}.

Theorem~\ref{th:rs_formula} allows us to compute the limit of the mutual information between the signal $\bbf{X}$ and the observations $\bbf{Y}$. Indeed, by using~\eqref{eq:f_i}:
\begin{corollary} \label{cor:rs_information}
	$$
	\lim_{n \rightarrow + \infty} \frac{1}{n}  I(\bbf{X};\bbf{Y}) = \frac{\lambda \E_{P_0} (X^2)^2}{4} - \sup_{q \geq 0} \mathcal{F}(\lambda,q) \,.
	$$
\end{corollary}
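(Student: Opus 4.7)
The corollary is a direct consequence of Theorem~\ref{th:rs_formula} combined with the free-energy/mutual-information relation~\eqref{eq:f_i}; the main task is just to properly recast the spiked Wigner observations~\eqref{eq:observation_xx} into the vector-valued additive Gaussian framework~\eqref{eq:inference} of Chapter~\ref{chap:gaussian} and then compute a single second-moment term.

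First, I would stack the above-diagonal entries of $\bbf{Y}$ into a single vector indexed by the set $\{(i,j) : 1 \leq i < j \leq n\}$. With this reordering, \eqref{eq:observation_xx} becomes exactly an instance of~\eqref{eq:inference} with effective signal $\widetilde{\bbf{X}} = \big(X_iX_j/\sqrt{n}\big)_{i<j}$ and signal-to-noise parameter $\lambda$. The partition function and Hamiltonian $H_n$ defined in~\eqref{eq:hamiltonian_xx}--\eqref{def:post2} are precisely those attached to this vector channel, so the ``big'' free energy $F(\lambda) = \E \log \cZ_n(\lambda)$ from Chapter~\ref{chap:gaussian} equals $n F_n(\lambda)$.

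Next I would apply the identity~\eqref{eq:f_i} to this vector channel, which gives
\[
n F_n(\lambda) \;=\; \frac{\lambda}{2}\, \E \|\widetilde{\bbf{X}}\|^2 \;-\; I(\widetilde{\bbf{X}};\bbf{Y}).
\]
A short observation is needed to replace $I(\widetilde{\bbf{X}};\bbf{Y})$ by $I(\bbf{X};\bbf{Y})$. On one hand, $\widetilde{\bbf{X}}$ is a measurable function of $\bbf{X}$, so by data processing $I(\widetilde{\bbf{X}};\bbf{Y}) \leq I(\bbf{X};\bbf{Y})$. On the other hand, the conditional law of $\bbf{Y}$ given $\bbf{X}$ depends on $\bbf{X}$ only through $\widetilde{\bbf{X}}$, so $\bbf{X} \to \widetilde{\bbf{X}} \to \bbf{Y}$ is a Markov chain and the reverse inequality also holds. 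Hence $I(\bbf{X};\bbf{Y}) = I(\widetilde{\bbf{X}};\bbf{Y})$.

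The second moment is then computed directly using independence of the $X_i$:
\[
\E \|\widetilde{\bbf{X}}\|^2 \;=\; \frac{1}{n} \sum_{i<j} \E[X_i^2 X_j^2] \;=\; \frac{n-1}{2}\, \E_{P_0}[X^2]^2.
\]
Substituting back and dividing by $n$ yields
\[
\frac{1}{n}\, I(\bbf{X};\bbf{Y}) \;=\; \frac{\lambda(n-1)}{4n}\, \E_{P_0}[X^2]^2 \;-\; F_n(\lambda).
\]
Letting $n \to \infty$ and invoking Theorem~\ref{th:rs_formula} for the limit of $F_n(\lambda)$ gives exactly the claimed identity. There is no real obstacle here — the only point deserving a line of justification is the equality $I(\bbf{X};\bbf{Y}) = I(\widetilde{\bbf{X}};\bbf{Y})$, which is why the $\E_{P_0}[X^2]^2$ (rather than $\E\|\bbf{X}\|^2/n$) appears on the right-hand side.
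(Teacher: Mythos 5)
Your proof is correct and it is exactly the route the paper has in mind when it writes that Corollary~\ref{cor:rs_information} follows ``by using~\eqref{eq:f_i}''; the paper simply leaves the details implicit. The one point the paper glosses over and you correctly address is the replacement of $I(\widetilde{\bbf{X}};\bbf{Y})$ by $I(\bbf{X};\bbf{Y})$ via the two-sided data-processing argument, and your second-moment computation $\E\|\widetilde{\bbf{X}}\|^2 = \frac{n-1}{2}\E_{P_0}[X^2]^2$ gives the correct prefactor after dividing by $n$ and letting $n\to\infty$.
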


We will now use Theorem~\ref{th:rs_formula} to obtain the limit of the Minimum Mean Squared Error $\MMSE_n$ by the I-MMSE relation of Proposition~\ref{prop:i_mmse}.
Let us define
$$
D = \left\{ \lambda > 0 \ \middle| \ \mathcal{F}(\lambda,\cdot) \ \text{has a unique maximizer} \ q^*(\lambda) \, \right\}.
$$
We start by computing the derivative of $\lim\limits_{n \to \infty}  F_n(\lambda)$ with respect to $\lambda$.

\begin{proposition} \label{prop:derivative_phi}
	$D$ is equal to $\R_{>0}$ minus some countable set and
	is precisely the set of $\lambda > 0$ at which the function $f: \lambda \mapsto \sup_{q \geq 0} \mathcal{F}(\lambda,q)$ is differentiable. Moreover,
	for all $\lambda \in D$
	$$
	f'(\lambda) = \frac{q^*(\lambda)^2}{4} \,.
	$$
\end{proposition}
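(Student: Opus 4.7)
The plan is to lift convexity from $F_n$ (Proposition~\ref{prop:i_mmse}) to the limit $f(\lambda) := \lim_n F_n(\lambda) = \sup_{q \geq 0} \mathcal{F}(\lambda, q)$ supplied by Theorem~\ref{th:rs_formula}, and then run a standard envelope argument on the variational formula together with a Berge-type continuity argument for the argmax. Since a convex function on an open interval is differentiable outside a countable set, identifying this set with $D$ will yield the first assertion.

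First I would check that maximizers are uniformly bounded. From Proposition~\ref{prop:i_mmse} applied to the scalar channel, $\psi_{P_0}'(t) \leq \tfrac{1}{2}\E_{P_0}[X^2]$, hence $\psi_{P_0}(\lambda q) \leq \tfrac{\lambda q}{2}\E_{P_0}[X^2]$, giving $\mathcal{F}(\lambda, q) \leq \lambda q \E_{P_0}[X^2]/2 - \lambda q^2/4$. Any maximizer $q^*$ satisfies $\mathcal{F}(\lambda, q^*) \geq \mathcal{F}(\lambda, 0) = 0$, so $q^* \leq 2\E_{P_0}[X^2]$. Next, for every maximizer either $q^* = 0$ or $\partial_q \mathcal{F}(\lambda, q^*) = 0$ (the latter forcing $\psi_{P_0}'(\lambda q^*) = q^*/2$); in either case
\[
\partial_\lambda \mathcal{F}(\lambda, q^*) = q^* \psi_{P_0}'(\lambda q^*) - \frac{(q^*)^2}{4} = \frac{(q^*)^2}{4}.
\]

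From the sup formula, $f(\lambda+h) \geq \mathcal{F}(\lambda+h, q^*)$ while $f(\lambda) = \mathcal{F}(\lambda, q^*)$, and since $\mathcal{F}$ is $C^1$ in its first argument (as $\psi_{P_0}$ is $C^1$), $f(\lambda+h) - f(\lambda) \geq h\, \partial_\lambda \mathcal{F}(\lambda, q^*) + o(h)$. Dividing by $h > 0$ and $h < 0$ yields the sandwich
\[
f'_-(\lambda) \leq \frac{(q^*)^2}{4} \leq f'_+(\lambda) \quad \text{for every maximizer } q^*.
\]
If $f$ is differentiable at $\lambda$, then $f'_-(\lambda) = f'_+(\lambda)$ forces $(q^*)^2/4 = f'(\lambda)$ for every maximizer $q^* \geq 0$, uniquely determining $q^*(\lambda)$. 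Hence $\lambda \in D$ and $f'(\lambda) = q^*(\lambda)^2/4$.

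Conversely, suppose $\lambda \in D$ and let $q_h$ be any maximizer at $\lambda+h$. By the uniform bound the family $(q_h)$ is relatively compact, and by continuity of $\mathcal{F}$ and $f$ every subsequential limit is a maximizer at $\lambda$, hence equal to $q^*(\lambda)$; so $q_h \to q^*(\lambda)$ as $h \to 0$. Using $f(\lambda) \geq \mathcal{F}(\lambda, q_h)$,
\[
f(\lambda+h) - f(\lambda) \leq \int_\lambda^{\lambda+h}\!\Big(q_h \psi_{P_0}'(s q_h) - \tfrac{q_h^2}{4}\Big)\,ds,
\]
and dividing by $h$ and letting $h \to 0^+$ gives $f'_+(\lambda) \leq q^*(\lambda)^2/4$; the symmetric argument handles $h < 0$. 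Combined with the sandwich above, $f$ is differentiable at $\lambda$ with $f'(\lambda) = q^*(\lambda)^2/4$. The main delicate point is this last step, namely the continuity of $\lambda \mapsto q^*(\lambda)$ at uniqueness points, for which uniform boundedness of maximizers and joint continuity of $\mathcal{F}$ are the key ingredients; the rest is direct first-order calculus on the variational formula.
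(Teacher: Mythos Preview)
Your proof is correct and follows essentially the same route as the paper: bound the maximizers, compute $\partial_\lambda \mathcal{F}(\lambda,q^*) = (q^*)^2/4$ from the first-order condition, and then run an envelope argument. The only difference is that the paper invokes the Milgrom--Segal envelope theorem (Proposition~\ref{prop:envelope_compact}) as a black box, whereas you reprove it by hand via the sandwich inequality and the Berge-type argmax continuity at uniqueness points; you also make explicit that the countability of $\R_{>0}\setminus D$ comes from the convexity of $f$ inherited from the $F_n$, a point the paper leaves implicit.
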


\begin{proof}
	Let $\lambda > 0$ and compute
	$$
	\frac{\partial}{\partial q}\cF(\lambda,q) = \lambda \psi_{P_0}'(\lambda q) - \frac{\lambda q}{2} \leq \frac{\lambda}{2} \big(\E_{P_0}[X^2] - q \big) \,,
	$$
	because $\psi_{P_0}$ is $\frac{1}{2} \E_{P_0}[X^2]$-Lipschitz by Proposition~\ref{prop:i_mmse}. Consequently, the maximum of $\cF(\lambda, \cdot)$ is achieved on $[0,\E_{P_0}[X^2]]$. 
	If $q^*$ maximizes $\cF(\lambda, \cdot)$, the optimality condition gives $ q^*= 2 \psi'_{P_0}(\lambda q^*) $. Consequently
	$$
	\frac{\partial}{\partial \lambda} \mathcal{F}(\lambda,q^*) = q^* \psi'_{P_0}(\lambda q^*) - \frac{(q^*)^2}{4} = \frac{(q^*)^2}{4}\,.
	$$
	Now, Proposition~\ref{prop:envelope_compact} in Appendix~\ref{sec:appendix_envelope} gives that the $\lambda>0$ at which $f$ is differentiable is exactly the $\lambda > 0$ for which
	$$
	\left\{ \frac{\partial}{\partial \lambda} \mathcal{F}(\lambda,q^*)=\frac{1}{4}(q^*)^2  \, \middle| \, q^* \ \text{maximizer of} \ \cF(\lambda,\cdot) \right\}
	$$
	is a singleton. These $\lambda$ are precisely the elements of $D$. Moreover, Proposition~\ref{prop:envelope_compact} gives also that for all $\lambda \in D$, $f'(\lambda) = \frac{q^*(\lambda)^2}{4}$, which concludes the proof.
\end{proof}
\\

We deduce then the limit of $\MMSE_n$:
\begin{corollary} \label{cor:limit_mmse}
	For all $\lambda \in D$,
	\begin{equation}\label{eq:lim_mmse_xx}
	\MMSE_n(\lambda) \xrightarrow[n \to \infty]{} (\E_{P_0} X^2)^2 - q^*(\lambda)^2 \,.
\end{equation}
\end{corollary}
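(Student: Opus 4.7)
The plan is to deduce the MMSE limit from the free-energy limit (Theorem~\ref{th:rs_formula}) via an I-MMSE relation tailored to the Hamiltonian~\eqref{eq:hamiltonian_xx}, combined with the standard fact that a pointwise limit of convex functions converges in derivative at every point of differentiability of the limit.

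First, I would compute $F_n'(\lambda)$ directly. Differentiating $F_n(\lambda) = \frac{1}{n}\E\log \cZ_n(\lambda)$ under the expectation produces three terms coming from \eqref{eq:hamiltonian_xx}; applying Gaussian integration by parts to the $Z_{i,j}$-term converts it to $\frac{1}{2n}(\E\langle x_i^2x_j^2\rangle - \E\langle x_ix_j\rangle^2)$, and the Nishimori identity (Proposition~\ref{prop:nishimori}) collapses $\E[X_iX_j\langle x_ix_j\rangle]$ to $\E\langle x_ix_j\rangle^2$. After the cancellations one obtains
$$
F_n'(\lambda) = \frac{1}{2n^2}\sum_{i<j}\E\langle x_ix_j\rangle^2 = \frac{n-1}{4n}\Bigl(\E_{P_0}[X^2]^2 - \MMSE_n(\lambda)\Bigr),
$$
where the second equality uses $\E[(X_iX_j-\langle x_ix_j\rangle)^2] = \E_{P_0}[X^2]^2 - \E\langle x_ix_j\rangle^2$ (once more via Nishimori) inside the definition of $\MMSE_n$. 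In particular $F_n$ is nondecreasing and convex on $\R_{\geq 0}$.

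Next, Theorem~\ref{th:rs_formula} gives $F_n \to f$ pointwise, where $f(\lambda) := \sup_{q\geq 0}\cF(\lambda,q)$ is itself convex, being a supremum of functions affine in $\lambda$. The classical result on pointwise convergence of convex functions then yields $F_n'(\lambda) \to f'(\lambda)$ at every $\lambda$ where $f$ is differentiable. By Proposition~\ref{prop:derivative_phi} this set of $\lambda$ is exactly $D$, and there $f'(\lambda) = q^*(\lambda)^2/4$. Combining this with the previous display, for every $\lambda \in D$,
$$
\frac{n-1}{4n}\Bigl(\E_{P_0}[X^2]^2 - \MMSE_n(\lambda)\Bigr) = F_n'(\lambda) \xrightarrow[n\to\infty]{} \frac{q^*(\lambda)^2}{4},
$$
which rearranges to the claimed limit $\MMSE_n(\lambda) \to \E_{P_0}[X^2]^2 - q^*(\lambda)^2$.

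There is no genuine obstacle in this argument: the derivative formula is routine Gaussian bookkeeping (IBP plus Nishimori), and the convex-convergence lemma is textbook. The only subtle point is that convergence of derivatives can fail at the (at most countable) points where $f$ is not differentiable, which is precisely why the statement is restricted to $\lambda \in D$; outside $D$ one would recover only inequalities between the left/right derivatives and the $\limsup$/$\liminf$ of $\MMSE_n$.
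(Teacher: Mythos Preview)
Your proof is correct and follows essentially the same route as the paper: derive the I-MMSE identity $F_n'(\lambda)=\tfrac{n-1}{4n}\bigl(\E_{P_0}[X^2]^2-\MMSE_n(\lambda)\bigr)$, invoke Theorem~\ref{th:rs_formula} for pointwise convergence of the convex functions $F_n$, and use the convex-convergence lemma (Proposition~\ref{prop:deriv_convex}) together with Proposition~\ref{prop:derivative_phi}. One harmless slip: for fixed $q$, $\lambda\mapsto\cF(\lambda,q)=\psi_{P_0}(\lambda q)-\tfrac{\lambda q^2}{4}$ is convex (not affine) in $\lambda$, but a supremum of convex functions is still convex, so your conclusion about $f$ stands.
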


\begin{proof}
	By Proposition~\ref{prop:i_mmse}, $(F_n)_{n \geq 1}$ is a sequence of differentiable convex functions that converges pointwise on $\R_{>0}$ to $f$. By Proposition~\ref{prop:deriv_convex}, $F_n'(\lambda) \xrightarrow[n \to\infty]{} f'(\lambda)$ for every $\lambda > 0$ at which $f$ is differentiable, that is for all $\lambda \in D$. We conclude using the I-MMSE relation~\eqref{eq:i_mmse}:
	\begin{equation}
		\label{eq:rel_f_mmse}
		\frac{n-1}{4n} \big(
			\E_{P_0}[X^2]^2 - \MMSE_n(\lambda)
		\big) 
		=F_n'(\lambda) 
		\xrightarrow[n \to \infty]{} f'(\lambda) = \frac{q^*(\lambda)^2}{4}
		\,.
	\end{equation}
\end{proof}

Let us now define the information-theoretic threshold
\begin{equation}\label{eq:def_it_threshold_xx}
	\lambda_c = \inf \Big\{ \lambda \in D \, \Big| \, q^*(\lambda) > (\E_{P_0} X)^2 \Big\} \,.
\end{equation}
If the above set is empty, we define $\lambda_c = 0$.
By Corollary~\ref{cor:limit_mmse} we obtain that
\begin{itemize}
	\item if $\lambda > \lambda_c$, then $\lim\limits_{n \to \infty} \MMSE_n < \DMSE$: one can estimate the signal better than a random guess.
	\item if $\lambda < \lambda_c$, then $\lim\limits_{n \to \infty} \MMSE_n = \DMSE$: one can not estimate the signal better than a random guess.
\end{itemize}
Thus, there is no hope for reconstructing the signal below $\lambda_c$. Interestingly, one can not even detect if the measurements $\bbf{Y}$ contains some signal below $\lambda_c$. If one denotes by $Q_{\lambda}$ the distribution of $\bbf{Y}$ given by~\eqref{eq:observation_xx}, the work~\cite{alaoui2017finite} shows that for $\lambda <\lambda_c$ one can not asymptotically distinguish between $Q_{\lambda}$ and $Q_0$: both distributions are contiguous.




\section{Information-theoretic and algorithmic phase transitions}\label{sec:interpretation_xx}

\subsection{Approximate Message Passing (AMP) algorithms} \label{sec:amp_xx}

Approximate Message Passing (AMP) algorithms, introduced in~\cite{donoho2009message} for compressed sensing, have then be used for various other tasks.
Rigorous properties of AMP algorithms have been established in~\cite{bayati2011amp,javanmard2013state,bayati2015universality,berthier2017state}, following the seminal work of Bolthausen \cite{bolthausen2014iterative}.
In the context of low-rank matrix estimation an AMP algorithm has been proposed by~\cite{rangan2012iterative} for the rank-one case and then by~\cite{matsushita2013low} for finite-rank matrix estimation. 
For detailed review and developments about matrix factorization with message-passing algorithms, see~\cite{lesieur2017constrained}.
We will only give a brief description of AMP here and we let the reader refer to~\cite{rangan2012iterative,deshpande2014information,DBLP:conf/allerton/LesieurKZ15,montanari2017estimation}. 
In this section, we follow \cite{montanari2017estimation} who provides the most advanced results for our problem \eqref{eq:spiked_wigner}. For simplicity, we assume here that $P_0$ has a unit second moment: $\int x^2 dP_0(x) = 1$.
\\

Starting from an initialization $\bbf{x}^0$, the AMP algorithm produces vectors $\bbf{x}^1, \dots, \bbf{x}^t$ according to the following recursion:
\begin{equation}\label{eq:amp_xx}
	\bbf{x}^{t+1} = (\bbf{Y}/\sqrt{n}) f_t(\bbf{x}^t) - b_t f_{t-1}(\bbf{x}^{t-1}),
\end{equation}
where $b_t = \frac{1}{n} \sum_{i=1}^n f_t'(x_i^t)$ and where the functions $f_t$ act componentwise on vectors.
After $t$ iterations of \eqref{eq:amp_xx}, the AMP estimate of $\bbf{X}$ is defined by $\what{\bbf{x}}^t = f_t(\bbf{x}^t)$.

A natural choice for the initialization is to take $\bbf{x}^0$ proportional to $\bbf{\varphi}_1$, the leading unit eigenvector of $\bbf{Y}$:
$$
\bbf{x}^0 = \sqrt{n (\lambda^2 -1)} \bbf{\varphi}_1.
$$
We need now to specify the ``denoisers'' $(f_t)_{t \geq 1}$. Let us consider the following one-dimensional recursion:
\begin{equation}\label{eq:se_xx}
	\begin{cases}
		q_0 &= (1 - \lambda^{-1})_+ , \\
		q_{t+1} &= 2 \psi_{P_0}'(\lambda q_t) = 1 - \MMSE_{P_0}(\lambda q_t).
	\end{cases}
\end{equation}

Recall the additive Gaussian scalar channel from Section~\ref{sec:i_mmse}: $Y_0 = \sqrt{\gamma} X_0 + Z_0$. Let us define $g_{P_0}(y,\gamma) = \E[X_0|\sqrt{\gamma}X_0 + Z_0 = y]$.
We define then
\begin{equation}\label{eq:amp_xx_denoiser}
	f_t(x) = g_{P_0}\big(x/ \sqrt{q_t},\lambda q_t\big).
\end{equation}
The next theorem is a consequence of the more general results of \cite{montanari2017estimation}, specified to our setting.

\begin{theorem}\label{th:amp_xx}
	For all $t \geq 0$,
	$$
	\lim_{n \to \infty} \frac{ | \langle \what{\bbf{x}}^t, \bbf{X} \rangle | }{ \|\what{\bbf{x}}^t\| \| \bbf{X} \| } = \lim_{n \to \infty} \|\what{\bbf{x}}^t \| = \sqrt{q_t}.
	$$
\end{theorem}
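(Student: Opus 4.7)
The plan is to prove the theorem via the state evolution (SE) machinery for AMP, developed in~\cite{bayati2011amp,javanmard2013state} following Bolthausen~\cite{bolthausen2014iterative}, and adapted to spectral initializations in~\cite{montanari2017estimation}. The defining feature of the iteration~\eqref{eq:amp_xx} is that the Onsager term $b_t f_{t-1}(\bbf{x}^{t-1})$ cancels the cumulative correlation between the noise matrix $\bbf{Z}/\sqrt{n}$ and the past iterates, so that asymptotically the noise acts as a fresh independent Gaussian at each step.

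The first step is to establish state evolution itself: for each fixed $t \geq 0$, the empirical joint distribution of $(x_i^t, X_i)_{1 \leq i \leq n}$ converges (e.g.\ in Wasserstein-$2$) to the law of $(\mu_t X + \tau_t G, X)$, where $X \sim P_0$ and $G \sim \cN(0,1)$ are independent and $(\mu_t, \tau_t)$ satisfy a deterministic two-parameter recursion determined by $f_t$. The proof proceeds by induction on $t$ using Bolthausen's conditioning technique: conditionally on all past iterates, $\bbf{Z}$ remains Gaussian with a low-rank correction, from which one extracts the stated asymptotic Gaussianity of $\bbf{x}^{t+1}$. Since $f_t$ is the posterior mean denoiser for a scalar channel of SNR $\lambda q_t$, the Nishimori identity (Proposition~\ref{prop:nishimori}) forces a rigidity on the two parameters that collapses the recursion for $(\mu_t, \tau_t)$ to the one-dimensional recursion~\eqref{eq:se_xx}.

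The main obstacle is handling the spectral initialization. Classical SE arguments require $\bbf{x}^0$ to be independent of $\bbf{Z}$, whereas here $\bbf{x}^0 \propto \bbf{\varphi}_1$ is a highly nonlinear function of $\bbf{Z}$. Following~\cite{montanari2017estimation}, one exploits the rotational invariance of the GOE: conditionally on the leading eigenpair $(\lambda_1, \bbf{\varphi}_1)$ of $\bbf{Y}/\sqrt{n}$, the remainder of the noise matrix retains a GOE-like conditional law. Combined with the BBP result~\cite{baik2005phase,benaych2011eigenvalues} giving $(\bbf{\varphi}_1^{\sT}\bbf{X})^2/n \to 1 - 1/\lambda = q_0$, this shows that $\bbf{x}^0$ decomposes asymptotically into a component along $\bbf{X}$ with the correct magnitude plus a Gaussian transverse component, matching the SE ansatz at $t=0$. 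A careful ``first iteration'' computation then plugs this into Bolthausen's conditioning scheme, and the standard inductive argument propagates SE to all later times.

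Once SE is available, the theorem is a short computation. Since $f_t$ is pseudo-Lipschitz, SE applied to $\what{\bbf{x}}^t = f_t(\bbf{x}^t)$ yields
\[
    \frac{\|\what{\bbf{x}}^t\|^2}{n} \longrightarrow \E\bigl[f_t(\mu_t X + \tau_t G)^2\bigr], \qquad \frac{\langle \what{\bbf{x}}^t, \bbf{X}\rangle}{n} \longrightarrow \E\bigl[X f_t(\mu_t X + \tau_t G)\bigr].
\]
Rescaling by $1/\sqrt{q_t}$ identifies $\mu_t X + \tau_t G$ with $\sqrt{q_t}$ times the observation in the scalar channel of SNR $\lambda q_t$. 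Since $f_t$ is by design the Bayes posterior mean in that channel, the Nishimori identity (which gives $\E[X \, \E[X|Y]] = \E[\E[X|Y]^2]$) ensures both right-hand sides equal $1 - \MMSE_{P_0}(\lambda q_t)$. Combined with $\|\bbf{X}\|^2/n \to \E_{P_0}[X^2] = 1$, this yields the stated convergence of the norm and of the correlation coefficient, up to the indexing convention used in the theorem statement.
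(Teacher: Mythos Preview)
The paper does not give its own proof of this theorem: it simply states that the result ``is a consequence of the more general results of \cite{montanari2017estimation}, specified to our setting.'' Your proposal correctly identifies and outlines the argument from that reference --- state evolution via the Bolthausen conditioning technique, with the spectral initialization handled through the rotational invariance of the GOE and the BBP asymptotics --- so you are on exactly the right track and in fact provide more detail than the paper itself.

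One small caution: the hedge ``up to the indexing convention'' at the end is hiding a genuine off-by-one. With $f_t$ the Bayes denoiser for SNR $\lambda q_t$, your SE computation gives $\|\what{\bbf{x}}^t\|^2/n \to 1 - \MMSE_{P_0}(\lambda q_t) = q_{t+1}$, not $q_t$. The theorem as stated in the paper reads $\sqrt{q_t}$; this is consistent with the convention in \cite{montanari2017estimation} where the denoiser applied at step $t$ is indexed by the SNR reached \emph{after} the previous AMP step. If you want your computation to match the displayed $\sqrt{q_t}$ literally, you need to be explicit about which SE parameters $(\mu_t,\tau_t)$ correspond to $\bbf{x}^t$ and check that $f_t$ is the posterior mean for SNR $\lambda q_{t-1}$ rather than $\lambda q_t$ (or, equivalently, accept a one-step shift in the index). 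This is purely bookkeeping, not a mathematical gap, but it is worth spelling out rather than waving away.
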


Consequently,
\begin{equation}\label{eq:perfamp}
	\MSE^{\rm AMP}_t \defeq
	\lim_{n \to \infty} \frac{1}{n^2}
	\E \| \bbf{X}\bbf{X}^{\sT} - \what{\bbf{x}}^t (\what{\bbf{x}}^t)^{\sT} \|^2
	= 1 - q_t^2.
\end{equation}

By Proposition~\ref{prop:i_mmse}, the function $\psi_{P_0}'$ is increasing and bounded. The sequence $(q_t)_{t \geq 0}$ converges therefore to a point $q_{\infty} \geq 0$ that verifies $q_{\infty} = 2 \psi_{P_0}'(\lambda q_{\infty})$.
$q_{\infty}$ is therefore a critical point of $\cF(\lambda,\cdot)$. 
In the case where $q_{\infty}$ is the global minimizer of $\cF(\lambda,\cdot)$, i.e.\ $q_{\infty} = q^*(\lambda)$, we see using Corollary~\ref{cor:limit_mmse} that $\lim_{t \to \infty}\MSE^{\rm AMP}_t = \MMSE(\lambda)$: AMP achieves the Bayes-optimal accuracy.
\\

In the case where $q_{\infty} \neq q^*(\lambda)$, AMP does not reach the information-theoretically optimal performance.
However, AMP is conjectured (see for instance~\cite{zdeborova2016statistical,antenucci2018glassy}) to be optimal among polynomial-time algorithms, i.e.\ $\lim_{t \to \infty} \MSE^{\rm AMP}_t$ is conjectured to be the best Mean Squared Error achievable by any polynomial-time algorithm.

\subsection{Examples of phase transitions}\label{sec:examples_phase}
We give here some illustrations and interpretations of the results presented in the previous sections.
Let us first study the case where $P_0 = \cN(0,1)$ where the formulas~\eqref{eq:rs_formula} and~\eqref{eq:lim_mmse_xx} can be evaluated explicitly. Indeed, we saw in Example~\ref{ex:gaussian_psi} in Section~\ref{sec:i_mmse} that $\psi_{\cN(0,1)}(q) = \frac{1}{2} \big(q - \log(1+q) \big)$. We can then compute $q^*(\lambda) = (1 - \lambda^{-1} )_{+}$ which gives
$$
\lim_{n \to \infty} \MMSE_n(\lambda) = 
\begin{cases}
	0 & \text{if} \ \lambda \leq 1 \,, \\
	\frac{1}{\lambda}\big(2 - \frac{1}{\lambda}\big) & \text{if} \ \lambda \geq 1 \,.
\end{cases}
$$
Comparing the limit above with the performance of (naive) PCA given by~\eqref{eq:perfpca} we see that in the case $P_0 = \cN(0,1)$, PCA is information-theoretically optimal.
\\

However, as we see on~\eqref{eq:perfpca}, the MSE of PCA only depends on the second moment of $P_0$: naive PCA is not able to exploit additional properties of the signal.
We compare on Figure~\ref{fig:mmse_xx} the asymptotic performance of the naive PCA~\eqref{eq:perfpca} and the Approximate Message Passing (AMP) algorithm~\eqref{eq:perfamp} to the asymptotic Minimum Mean Squared Error for the prior
\begin{equation}\label{eq:prior_sbm}
	P_0 = 
	p \, \delta_{\!\sqrt{\frac{1-p}{p}}}
	+(1-p) \, \delta_{\!-\sqrt{\frac{p}{1-p}}} \,,
\end{equation}
where $p \in (0,1)$. This is a two-points distribution with zero mean and unit variance. 
It is of particular interest because it is related with the community detection problem in the (dense) Stochastic Block Model \cite{deshpande2014information,lelarge2016fundamental}.
\begin{figure*}[h!]
	\centering
	\includegraphics[width=0.6\linewidth]{./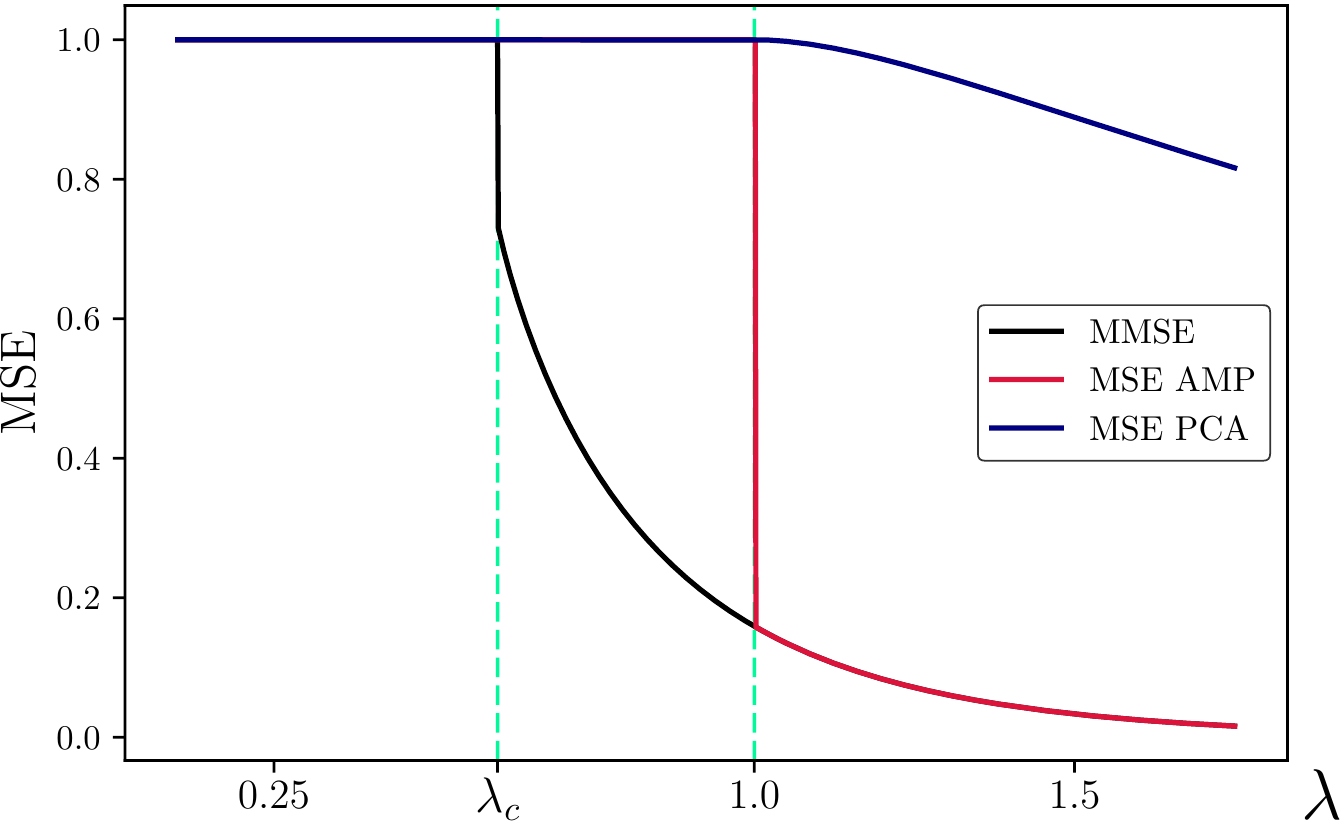}
	\caption{Mean Squared Errors for the Spiked Wigner model with prior $P_0$ given by~\eqref{eq:prior_sbm} with $p=0.05$.}
	\label{fig:mmse_xx}
\end{figure*}
We see on Figure~\ref{fig:mmse_xx} that the MMSE is equal to $1$ for $\lambda$ below the information-theoretic threshold $\lambda_c \simeq 0.6$. One can not asymptotically recover the signal better than a random guess in this region: we call this region the ``impossible'' phase.
For $\lambda > 1$ we see that spectral methods and AMP perform better than random guessing. This region is therefore called the ``easy'' phase, because non-trivial estimation is here possible using efficient algorithms. Notice also that AMP achieves the Minimum Mean Squared Error for $\lambda > 1$, as proved in~\cite{montanari2017estimation}.
The region $\lambda_c < \lambda < 1$ is more intriguing. It is still possible to build a non-trivial estimator (for instance by computing the posterior mean), but our two polynomial-time algorithms fail. This region is thus denoted as the ``hard'' phase because it is conjectured that polynomial-time algorithms can only provide trivial estimates (based on the belief that AMP is here optimal among polynomial-time algorithms).
\\

Quite surprisingly, one can guess in which phase (easy-hard-impossible) we are, simply by plotting the ``potential'' $q \mapsto -\cF(\lambda, q)$. This is done in Figure~\ref{fig:free_energy_landscape}.
\begin{figure*}[h!]
	\hspace{-1cm}
	\includegraphics[width=1.1\linewidth]{./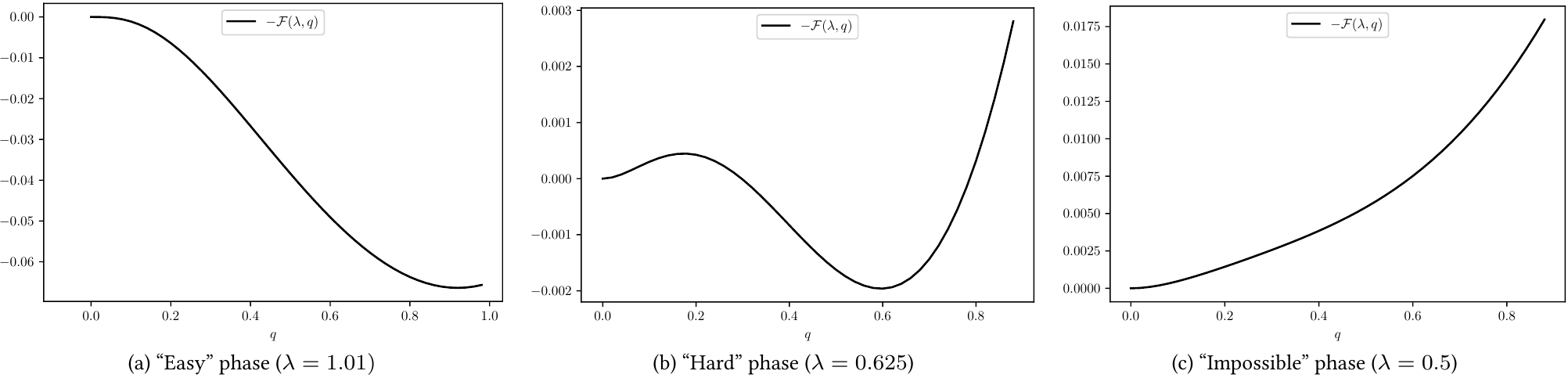}
	\caption{Plots of $q \mapsto - \cF(\lambda,q)$ for different values of $\lambda$ and $P_0$ given by~\eqref{eq:prior_sbm} with $p=0.05$.}
	\label{fig:free_energy_landscape}
\end{figure*}
By Corollary~\ref{cor:limit_mmse} we know that the limit of the MMSE is equal to $1-q^*(\lambda)^2$ where $q^*(\lambda)$ is the minimizer of $-\cF(\lambda,\cdot)$. Thus when $-\cF(\lambda,\cdot)$ is minimal at $q = 0$, we are in the impossible phase. 

When $q^*(\lambda) > 0$, the shape of $-\cF(\lambda,\cdot)$ indicates whether we are in the easy or hard phase.
If the $q=0$ is a local maximum, then we are in the easy phase, whereas when it is a local minimum we are in a hard phase.
The shape of $-\cF(\lambda,\cdot)$ could be interpreted as a simplified ``free energy landscape'': the hard phase appears when the ``informative'' minimum $q^*(\lambda) > 0$ is separated from the non-informative critical point $q=0$ by a ``free energy barrier'' as in Figure~\ref{fig:free_energy_landscape} (b).

\begin{figure}[h!]
	\centering
	\includegraphics[width=10cm]{./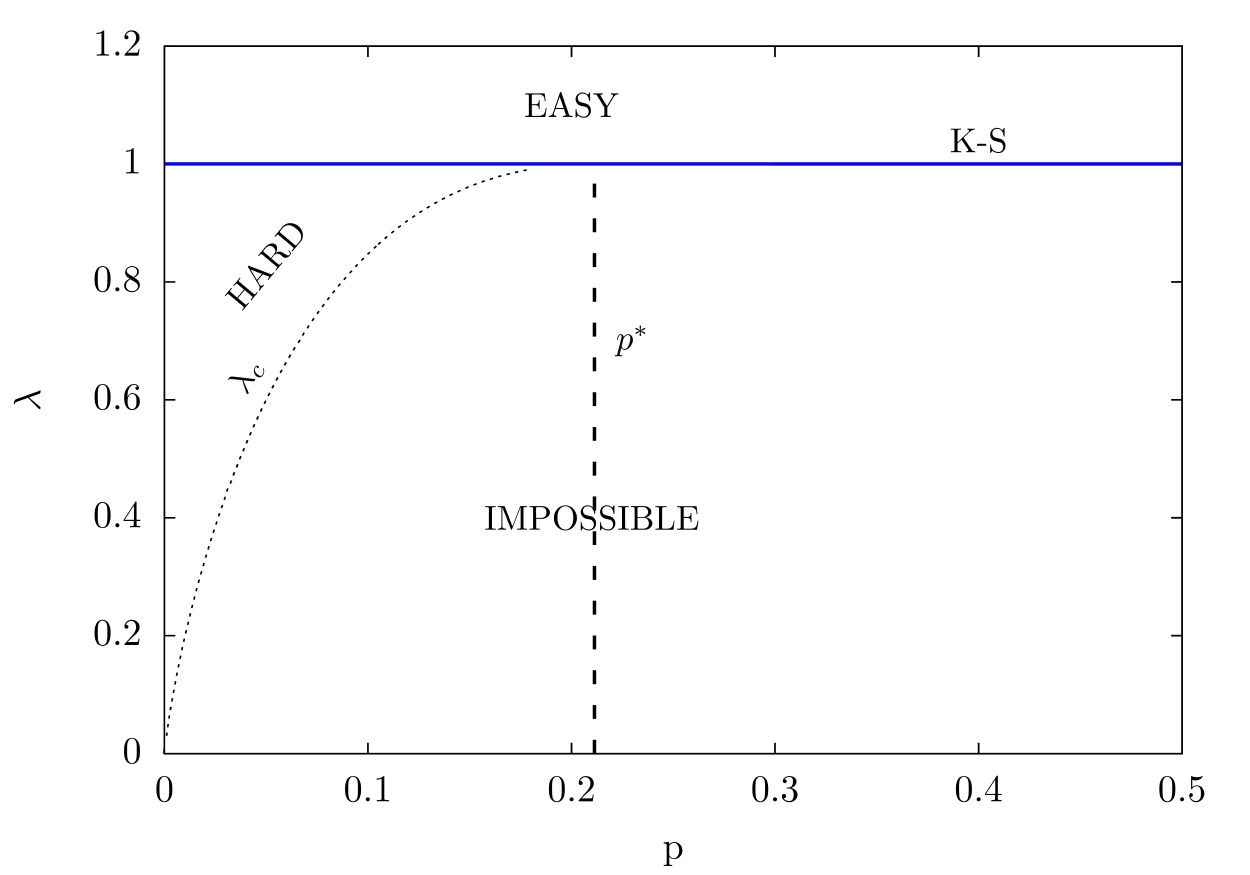}
	\caption{Phase diagram for the spiked Wigner model with prior~\eqref{eq:prior_sbm}.}
	\label{fig:phase_diagram}
\end{figure}

The phase diagram from Figure~\ref{fig:phase_diagram} displays the three phases on the $(p,\lambda)$-plane. One observes that the hard phase only appears when the prior is sufficiently asymmetric, i.e.\ for $p < p^* = \frac{1}{2} - \frac{1}{2\sqrt{3}}$, as computed in~\cite{barbier2016mutual,caltagirone2016asymmetric}. 
For a more detailed analysis of the phase transitions in the spiked Wigner model, see~\cite{lesieur2017constrained} where many other priors are considered.

\section{Proof of the Replica-Symmetric formula (Theorem~\ref{th:rs_formula})} \label{sec:proof_rs_xx}

We prove Theorem~\ref{th:rs_formula} in this section, following~\cite{lelarge2016fundamental}. We have to mention that other proofs of Theorem~\ref{th:rs_formula} have appeared since then: see~\cite{barbier2017stochastic,alaoui2018estimation,mourrat2018hamilton}.

Because of an approximation argument presented in Section~\ref{sec:approximation} it suffices to prove Theorem~\ref{th:rs_formula} for priors $P_0$ with finite (and thus bounded) support $S \subset [-K,K]$, for some $K > 0$. From now, we assume to be in that case.

\subsection{The lower bound: Guerra's interpolation method}\label{sec:guerraton}

The following result comes from~\cite{krzakala2016mutual}. It adapts arguments from the study of the gauge symmetric $p$-spin glass model of~\cite{korada2009exact} to the inference model~\eqref{eq:observation_xx}. It is based on Guerra's interpolation technique for the Sherrington-Kirkpatrick model, see~\cite{guerra2003broken}. We reproduce the proof for completeness.

\begin{proposition} \label{prop:guerra_bound}
	\begin{equation} \label{eq:guerra_bound}
		\liminf_{n \to \infty} F_n(\lambda) \geq \sup_{q \geq 0} \mathcal{F}(\lambda,q) \,.
	\end{equation}
\end{proposition}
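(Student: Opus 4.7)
The plan is to use Guerra's interpolation method. Fix $q \geq 0$ and introduce an interpolating family parametrized by $t \in [0,1]$: at $t=1$ we recover the spiked Wigner observations, while at $t=0$ each coordinate $X_i$ is observed only through an independent scalar channel at effective SNR $\lambda q$. Concretely, I would define the interpolated Hamiltonian
\begin{align*}
H_n(\bbf{x},t)
&= \sum_{i<j} \Big( \sqrt{\tfrac{\lambda t}{n}}\, Z_{i,j}\, x_i x_j + \tfrac{\lambda t}{n} X_i X_j x_i x_j - \tfrac{\lambda t}{2n} x_i^2 x_j^2 \Big) \\
&\quad + \sum_{i=1}^n \Big( \sqrt{\lambda q(1-t)}\, Z'_i x_i + \lambda q(1-t)\, X_i x_i - \tfrac{\lambda q(1-t)}{2} x_i^2 \Big),
\end{align*}
where $Z'_i \iid \cN(0,1)$, independent of everything else, and set $\varphi(t) = \frac{1}{n} \E \log \int e^{H_n(\bbf{x},t)} dP_0^{\otimes n}(\bbf{x})$. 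By construction $\varphi(1) = F_n(\lambda)$; at $t=0$ the Hamiltonian factorizes into $n$ copies of the scalar additive channel~\eqref{eq:additive_scalar_channel} at SNR $\lambda q$, so $\varphi(0) = \psi_{P_0}(\lambda q)$.

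The core of the argument is the computation of $\varphi'(t)$ via Gaussian integration by parts on each $Z_{i,j}$ and each $Z'_i$, followed by the Nishimori identity (Proposition~\ref{prop:nishimori}). The quadratic ``self-interaction'' terms $-\tfrac{\lambda t}{2n} x_i^2 x_j^2$ and $-\tfrac{\lambda q(1-t)}{2} x_i^2$ are tailored precisely so that, after integration by parts, the surviving contributions reorganize into replica overlaps. Replacing planted quantities by a replica $\bbf{x}^{(2)}$ through Nishimori and completing the square, one obtains
$$\varphi'(t) \;=\; \frac{\lambda}{4}\, \E \big\langle (\bbf{x}^{(1)} \cdot \bbf{x}^{(2)} - q)^2 \big\rangle_t - \frac{\lambda q^2}{4} + R_n(t),$$
where $R_n(t)$ gathers the diagonal ($i=j$) terms omitted from $\sum_{i<j}$ and is bounded by $C(K,\lambda)/n$ using the bounded-support assumption $|X|,|x| \leq K$ made at the start of the section.

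Since the square term is non-negative, $\varphi'(t) \geq -\tfrac{\lambda q^2}{4} - C/n$ uniformly in $t \in [0,1]$. Integrating over $[0,1]$ yields
$$F_n(\lambda) \;=\; \varphi(0) + \int_0^1 \varphi'(t)\, dt \;\geq\; \psi_{P_0}(\lambda q) - \frac{\lambda q^2}{4} - \frac{C}{n} \;=\; \cF(\lambda, q) - \frac{C}{n},$$
and taking $\liminf_{n \to \infty}$ followed by the supremum over $q \geq 0$ gives~\eqref{eq:guerra_bound}. The main obstacle is the bookkeeping in $\varphi'(t)$: one must check that the Wigner noise, scalar noise, and the two quadratic Nishimori corrections combine \emph{exactly} into the perfect square $\tfrac{\lambda}{4}(\bbf{x}^{(1)}\cdot\bbf{x}^{(2)} - q)^2$, with cross-terms such as $\E\langle X_i X_j x_i x_j\rangle_t$ and $\E\langle X_i x_i\rangle_t$ reducing to $\E\langle x_i^{(1)} x_j^{(1)} x_i^{(2)} x_j^{(2)}\rangle_t$ and $\E\langle x_i\rangle_t^2$ via Nishimori; the diagonal $O(1/n)$ remainder is harmless in the $n \to \infty$ limit.
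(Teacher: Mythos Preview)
Your proposal is correct and follows essentially the same approach as the paper: the same interpolating Hamiltonian, the same use of Gaussian integration by parts together with the Nishimori identity to reduce $\varphi'(t)$ to a perfect square minus $\lambda q^2/4$ (up to an $O(1/n)$ diagonal remainder), and the same conclusion by integrating over $t$. The only cosmetic difference is that the paper writes the square as $(\bbf{x}\cdot\bbf{X}-q)^2$ whereas you write $(\bbf{x}^{(1)}\cdot\bbf{x}^{(2)}-q)^2$; these are equal in law by Nishimori.
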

\begin{proof}
	Let $q \geq 0$. For $t \in [0,1]$ we define
	$$
	H_{n,t}(\bbf{x}) = \sum_{i<j} \sqrt{\frac{\lambda t}{n}} Z_{i,j} x_i x_j + \frac{\lambda t}{n} x_i x_j X_i X_j - \frac{\lambda t}{2n} x_i^2 x_j^2
	+ \sum_{i=1}^n \sqrt{(1-t)\lambda q} Z_i' x_i + (1-t)\lambda q x_i X_i - \frac{(1-t) \lambda q}{2} x_i^2 \,.
	$$
	Let $\langle \cdot \rangle_{n,t}$ denote the Gibbs measure associated with the Hamiltonian $H_{n,t}(\bbf{x})$:
	$$
	\big\langle f(\bbf{x}) \big\rangle_{n,t} = \frac{ \sum_{\bbf{x} \in S^n} P_0^{\otimes n}(\bbf{x}) f(\bbf{x}) e^{H_{n,t}(\bbf{x})} }{ \sum_{\bbf{x} \in S^n} P^{\otimes n}_0(\bbf{x}) e^{H_{n,t}(\bbf{x})}} \,,
	$$
	for any function $f$ on $S^n$.
	The Gibbs measure $\langle \cdot \rangle_{n,t}$ corresponds to the distribution of $\bbf{X}$ given $\bbf{Y}$ and $\bbf{Y}'$ in the following inference channel:
	$$
	\begin{cases}
		Y_{i,j} = \sqrt{\frac{\lambda t}{n}} X_i X_j + Z_{i,j} & \ \text{for } 1 \leq i < j \leq n, \\
		\, Y'_{i} \ = \sqrt{(1-t) \lambda q} X_i + Z'_{i} & \ \text{for } 1 \leq i \leq n,
	\end{cases}
	$$
	where $X_i \iid  P_0$ and $Z_{i,j},\ Z'_i \iid  \mathcal{N}(0,1)$ are independent random variables. We will therefore be able to apply the Nishimori property (Proposition~\ref{prop:nishimori}) to the Gibbs measure $\langle \cdot \rangle_{n,t}$. Let us define
	$$
	\psi: t \in [0,1] \mapsto \frac{1}{n} \E \log \sum_{\bbf{x} \in S^n} P^{\otimes n}_0(\bbf{x}) e^{H_{n,t}(\bbf{x})} \,.
	\vspace{-0.3cm}
	$$
	We have $\psi(1)=F_n(\lambda)$ and 
	\vspace{-0.2cm}
	\begin{align*}
		\psi(0) 
		&= \frac{1}{n} \E \log \sum_{\bbf{x} \in S^n} P^{\otimes n}_0(\bbf{x}) 
		\exp\left( \sum_{i=1}^n \sqrt{\lambda q} Z_i' x_i + \lambda q x_i X_i - \frac{\lambda q}{2} x_i^2 \right) \\
		&= \frac{1}{n} \E \log \prod_{i=1}^n \left( \sum_{x_i \in S} P_0(x_i) 
	\exp\left( \sqrt{\lambda q} Z_i' x_i + \lambda q x_i X_i - \frac{\lambda q}{2} x_i^2\right) \right) \\
	&= \mathcal{F}(\lambda,q) + \frac{\lambda q^2}{4} \,.
\end{align*}
$\psi$ is continuous on $[0,1]$, differentiable on $(0,1)$. For $0<t<1$,
\begin{align} \label{eq:deriv_psi}
	\psi'(t) = \frac{1}{n} \E \left\langle 
		\sum_{i<j} \frac{\sqrt{\lambda}}{2\sqrt{nt}} Z_{i,j} x_i x_j + \frac{\lambda}{n} x_i x_j X_i X_j - \frac{\lambda}{2n} x_i^2 x_j^2
		- \sum_{i=1}^n \frac{\sqrt{\lambda q}}{2 \sqrt{1-t}} Z_i' x_i - \lambda q x_i X_i + \frac{\lambda q}{2} x_i^2
	\right\rangle_{\!\! n,t},
\end{align}
where $\bbf{x}$ is a sample from the Gibbs measure $\langle \cdot \rangle_{n,t}$, independently of everything else.
For $1 \leq i < j \leq n$ we have, by Gaussian integration by parts and by the Nishimori property
\begin{align*}
	\E \!\left[Z_{i,j} \Big\langle \frac{\sqrt{\lambda}}{2\sqrt{nt}} x_i x_j \Big\rangle_{\!\! n,t}\right]
	&=
	\frac{\lambda}{2n} \Big( \E \langle x_i^2 x_j^2 \rangle_{n,t} - \E \langle x_i x_j \rangle_{n,t}^2 \Big)
	=
	\frac{\lambda}{2n} \Big( \E \langle x_i^2 x_j^2 \rangle_{n,t} - \E \langle x^{(1)}_i x^{(1)}_j x^{(2)}_i x^{(2)}_j \rangle_{n,t} \Big)
	\\
	&=
	\frac{\lambda}{2n} \Big( \E \langle x_i^2 x_j^2 \rangle_{n,t} - \E \langle x_i x_j X_i X_j \rangle_{n,t} \Big),
\end{align*}
where $\bbf{x}^{(1)}$ and $\bbf{x}^{(2)}$ are two independent samples from the Gibbs measure $\langle \cdot \rangle_{n,t}$, independently of everything else.
Similarly, we have for $1 \leq i \leq n$
$$
\E \left\langle \frac{\sqrt{\lambda q}}{2 \sqrt{1-t}} Z_i' x_i \right\rangle_{\!\! n,t} =\, 
\frac{\lambda q}{2} \left( \E \langle x_i^2 \rangle_{n,t} - \E \langle x_i X_i \rangle_{n,t} \right)\,.
$$
Therefore~\eqref{eq:deriv_psi} simplifies
\begin{align}
	\psi'(t) &= \frac{1}{n} \E \Big\langle 
		\sum_{i<j} \frac{\lambda}{2n} x_i x_j X_i X_j 
		- \sum_{i=1}^n  \frac{\lambda q}{2} x_i X_i 
	\Big\rangle_{\!\! n,t}
	= \frac{\lambda}{4} \E \Big\langle (\bbf{x} \cdot \bbf{X})^2 - 2 q \, \bbf{x}\cdot\bbf{X} \Big\rangle_{\!\!n,t} + o_n(1) \nonumber
	\\
	&=\frac{\lambda}{4} \E \Big\langle (\bbf{x}\cdot\bbf{X} - q)^2 \Big\rangle_{\!\!n,t} - \frac{\lambda q^2}{4} + o_n(1) \geq -\frac{\lambda q^2}{4} + o_n(1)\,, \label{eq:convex_guerra}
\end{align}
where $o_n(1)$ denotes a quantity that goes to $0$ uniformly in $t \in (0,1)$. Then
$$
F_n(\lambda) - \mathcal{F}(\lambda,q) -\frac{\lambda}{4} q^2 = \psi(1) - \psi(0)
= \int_0^1 \psi'(t) dt 
\geq - \frac{\lambda}{4} q^2 + o_n(1) \,.
$$
Thus $\liminf\limits_{n \to \infty} F_n(\lambda) \geq \mathcal{F}(\lambda,q)$, for all $q \geq 0$.
\end{proof}

\subsection{Adding a small perturbation} \label{sec:small_perturbation}

It remains to prove the converse bound of~\eqref{eq:guerra_bound}. 
For this purpose, we need to show that the overlap $\bbf{x} \cdot \bbf{X}$ (where $\bbf{x}$ is a sample from the posterior distribution of $\bbf{X}$ given $\bbf{Y}$, independently of everything else) concentrates around its mean. To obtain such a result, we follow the ideas of Section~\ref{sec:overlap_concentration_montanari} that states that giving a small amount of side information to the statistician forces the overlap to concentrate, while keeping the free energy almost unchanged.

Let us fix $\epsilon\in [0,1]$, and suppose that we have access, in addition of $\bbf{Y}$, to the additional information, for $1 \leq i \leq n$
\begin{equation}\label{eq:perturbation_xx}
	Y'_i =
	\begin{cases}
		X_i &\text{if } L_i = 1, \\
		* &\text{if } L_i = 0,
	\end{cases} 
\end{equation}
where $L_i \iid  \Ber(\epsilon)$ and $*$ is a value that does not belong to $S$. 
Recall the free energy that corresponds to this perturbed inference channel is
$$
F_{n,\epsilon} = 
\frac{1}{n} \E \Big[ \log \sum_{\bbf{x} \in S^n} P^{\otimes n}_0(\bbf{x}) \exp(H_{n}(\bar{\bbf{x}}))\Big] \,,
$$
where
\begin{equation}\label{eq:def_x_bar_xx}
	\bar{\bbf{x}} = (\bar{x}_1, \dots, \bar{x}_n) = (L_1 X_1 + (1-L_1) x_1, \dots, L_n X_n + (1-L_n)x_n)\,.
\end{equation}
From now we suppose $\epsilon_0 \in (0,1]$ to be fixed and consider $\epsilon \in [0,\epsilon_0]$. We will compute the limit of $F_{n,\epsilon}$ as $n \to \infty$ and then let $\epsilon \to 0$ to deduce the limit of $F_n$, because by Proposition~\ref{prop:approximation_f_n_epsilon}
$$
|F_{n,\epsilon} - F_n | \leq H(P_0) \epsilon \,.
$$

\subsection{Aizenman-Sims-Starr scheme} \label{sec:aizenman}

The Aizenman-Sims-Starr scheme was introduced in~\cite{aizenman2003extended} in the context of the SK model.
This is what physicists call a ``cavity computation'': one compare the system with $n+1$ variables to the system with $n$ variables and see what happen to the $(n+1)^{\text{th}}$ variable we add.
\\

\noindent With the convention $F_{0,\epsilon} = 0$, we have
$F_{n,\epsilon} = \frac{1}{n} \sum\limits_{k=0}^{n-1} A_{k,\epsilon}^{(0)}$ where
\vspace{-0.2cm}
$$
A^{(0)}_{k,\epsilon} = (k+1) F_{k+1, \epsilon} - k F_{k,\epsilon} = \E [ \log(\cZ_{k+1,\epsilon}) ] - \E [ \log(\cZ_{k,\epsilon}) ] \,.
$$
We recall that $\cZ_{n,\epsilon} = \sum_{\bbf{x} \in S^n} P^{\otimes n}_0(\bbf{x}) e^{H_{n}(\bar{\bbf{x}})}$ where the notation $\bar{\bbf{x}}$ is defined by equation~\eqref{eq:def_x_bar_xx}. Consequently
\begin{equation} \label{eq:limsup1}
	\limsup_{n \to \infty} \int_0^{\epsilon_0} \! d\epsilon \, F_{n,\epsilon}
	\leq
	\limsup_{n \to \infty} \int_0^{\epsilon_0} \! d\epsilon \,A_{n,\epsilon}^{(0)} \,.
\end{equation}
We now compare $H_{n+1}$ with $H_{n}$. Let $\bbf{x} \in S^n$ and $\sigma \in S$. $\sigma$ plays the role of the $(n+1)^{\text{th}}$ variable. 
We decompose $H_{n+1}(\bbf{x},\sigma) = H_{n}'(\bbf{x}) + \sigma z_0(\bbf{x}) + \sigma^2 s_0(\bbf{x})$, where
\vspace{-0.1cm}
\begin{align*}
	H_{n}'(\bbf{x}) &= \sum_{1 \leq i<j \leq n} \sqrt{\frac{\lambda}{n+1}} Z_{i,j} x_i x_j + \frac{\lambda}{n+1} X_i X_j x_i x_j  - \frac{\lambda}{2(n+1)} x_i^2 x_j^2 \,,
	\\
	z_0(\bbf{x}) &= \sum_{i=1}^n \sqrt{\frac{\lambda}{n+1}} Z_{i,n+1} x_i  + \frac{\lambda}{n+1} X_i X_{n+1} x_i \,, \\
	s_0(\bbf{x}) &= -\frac{\lambda}{2(n+1)} \sum_{i=1}^n x_i^2  \,.
\end{align*}
Let $(\widetilde{Z}_{i,j})_{1\leq i < j \leq n}$ be independent, standard Gaussian random variables, independent of all other random variables. We have then $H_{n}(\bbf{x}) = H_{n}'(\bbf{x}) + y_0(\bbf{x})$ in law, where
$$
y_0(\bbf{x}) = \sum_{1 \leq i<j \leq n} \frac{\sqrt{\lambda}}{\sqrt{n(n+1)}} \widetilde{Z}_{i,j} x_i x_j + \frac{\lambda}{n(n+1)} X_i X_j x_i x_j - \frac{\lambda}{2(n+1)n} x_i^2 x_j^2 \,.
$$
We define the Gibbs measure $\langle \cdot \rangle_{n,\epsilon}$ by
\begin{equation} \label{eq:def_gibbs}
	\langle f(\bbf{x}) \rangle_{n,\epsilon} = \frac{1}{\cZ_{n,\epsilon}} \sum_{\bbf{x} \in S^n} P_0(\bbf{x}) f(\bar{\bbf{x}}) \exp(H_n'(\bar{\bbf{x}})) \,,
\end{equation}
for any function $f$ on $S^n$.
The Gibbs measure $\langle \cdot \rangle_{n,\epsilon}$ corresponds to the posterior distribution of $\bbf{X}$ given $(\sqrt{\lambda/(n+1)}X_i X_j + Z_{i,j})_{1 \leq i < j \leq n}$ and $\bbf{Y'}$ from~\eqref{eq:perturbation_xx}. We will therefore be able to apply the Nishimori identity (Proposition~\ref{prop:nishimori}) and Proposition~\ref{prop:overlap_concentration_montanari} to the Gibbs measure $\langle \cdot \rangle_{n,\epsilon}$.
Let us define $\bar{\sigma} = (1-L_{n+1}) \sigma + L_{n+1} X_{n+1}$. We can rewrite
$
\cZ_{n+1,\epsilon} = \sum_{\bbf{x} \in S^n} P_0^{\otimes n}(\bbf{x}) e^{H_{n}'(\bar{\bbf{x}})} \Big(\sum_{\sigma \in S} P_0(\sigma) \exp(\bar{\sigma} z_0(\bar{\bbf{x}}) + \bar{\sigma}^2 s_0(\bar{\bbf{x}}) ) \Big)
$
and 
$
\cZ_{n,\epsilon} = \sum_{\bbf{x} \in S^n} P_0^{\otimes n}(\bbf{x}) e^{H_{n}'(\bar{\bbf{x}})} e^{y_0(\bar{\bbf{x}})}
$
. Thus
\begin{align*}
A^{(0)}_{n,\epsilon} = \E \log \Big\langle  \sum_{\sigma \in S} P_0(\sigma) \exp\big(\bar{\sigma} z_0(\bbf{x}) + \bar{\sigma}^2 s_0(\bbf{x}) \big) \Big\rangle_{\!\! n,\epsilon}
- \E  \log \Big\langle \exp(y_0(\bbf{x})) \Big\rangle_{\!\! n,\epsilon}  \,.
\end{align*}
In the sequel, it will be more convenient to use slightly simplified versions of $z_0, s_0$ and $y_0$ in order to obtain nicer expressions in the sequel. We define
\begin{align*}
	z(\bbf{x}) &= \sum_{i=1}^n \sqrt{\frac{\lambda}{n}} Z_{i,n+1} x_i  + \frac{\lambda}{n} X_i X_{n+1} x_i = \sqrt{\frac{\lambda}{n}} \sum_{i=1}^n x_i Z_{i,n+1} + \lambda (\bbf{x}\cdot\bbf{X}) X_{n+1} \,,\\
	s(\bbf{x}) &= -\frac{\lambda}{2n} \sum_{i=1}^n x_i^2 = -\frac{\lambda}{2} \bbf{x}\cdot\bbf{x} \,, \\
	y(\bbf{x}) &= \frac{\sqrt{\lambda}}{\sqrt{2}n} \sum_{i=1}^n Z_i'' x_i^2 
	+\frac{\lambda}{2 n^2} \sum_{i=1}^n \left(x_i^2 X_i^2 - \frac{x_i^4}{2}\right)
	+ \frac{\sqrt{\lambda}}{n} \sum_{1 \leq i<j \leq n} x_i x_j \left(\widetilde{Z}_{i,j} 
	+ \frac{\sqrt{\lambda}}{n} X_i X_j \right) - \frac{\lambda}{2n^2} x_i^2 x_j^2
	\\
	&= \frac{\sqrt{\lambda}}{\sqrt{2}n} \sum_{i=1}^n Z_i'' x_i^2 
	+ \frac{\sqrt{\lambda}}{n} \sum_{1 \leq i<j \leq n} x_i x_j \widetilde{Z}_{i,j} 
	+ \frac{\lambda}{2} \left((\bbf{x}\cdot\bbf{X})^2 - \frac{1}{2} (\bbf{x}\cdot\bbf{x})^2\right) ,
\end{align*}
where $Z_i'' \iid  \mathcal{N}(0,1)$ independently of any other random variables. Define now
\begin{align*}
	A_{n,\epsilon} = \E \log \left\langle  \sum_{\sigma \in S} P_0(\sigma)\exp(\bar{\sigma} z(\bbf{x}) + \bar{\sigma}^2 s(\bbf{x}) ) \right\rangle_{\!\! n,\epsilon}
	- \E  \log \left\langle \exp(y(\bbf{x})) \right\rangle_{\! n,\epsilon}.
\end{align*}
Using Gaussian interpolation techniques, it is not difficult to show that $\int_0^{\epsilon_0} \! d\epsilon \, (A_{n,\epsilon} - A^{(0)}_{n,\epsilon} )\xrightarrow[n \to \infty]{} 0$
because the modifications made in $z_0, s_0$ and $y_0$ are of negligible order.
Using~\eqref{eq:limsup1} we conclude
\begin{equation} \label{eq:limsup2}
	\limsup_{n \rightarrow \infty}\int_0^{\epsilon_0} \! d\epsilon \, F_{n,\epsilon}
	\leq
	\limsup_{n \rightarrow \infty} \int_0^{\epsilon_0} \! d\epsilon \, A_{n,\epsilon} \,.
\end{equation}

\subsection{Overlap concentration}

Proposition~\ref{prop:overlap_concentration_montanari} implies that the overlap between two replicas, i.e.\ two independent samples $\bbf{x}^{(1)}$ and $\bbf{x}^{(2)}$ from the Gibbs distribution $\langle \cdot \rangle_{n,\epsilon}$, concentrates. Let us define the random variables
$$
Q = \Big\langle \frac{1}{n} \sum_{i=1}^n x^{(1)}_i x^{(2)}_i \Big\rangle_{\!\!n,\epsilon}
\qquad \text{and} \qquad
b_i = \langle x_i \rangle_{n,\epsilon} \,.
$$
Notice that $Q = \frac{1}{n} \sum_i b_i^2 \geq 0$. 
By Proposition~\ref{prop:overlap_concentration_montanari} we know that
\begin{equation}\label{eq:concentration_xx_1}
	\int_0^{\epsilon_0} d\epsilon \E \left\langle (\bbf{x}^{(1)} \cdot \bbf{x}^{(2)} - Q)^2 \right\rangle_{\!n,\epsilon} \xrightarrow[n \to \infty]{} 0 \,.
\end{equation}
Thus, using the Nishimori property (Proposition~\ref{prop:nishimori}) we deduce:
\begin{equation}\label{eq:concentration_xx_2}
	\int_0^{\epsilon_0} d\epsilon \E \left\langle (\bbf{x} \cdot \bbf{X} - Q)^2 \right\rangle_{n,\epsilon} \xrightarrow[n \to \infty]{} 0
	\qquad \text{and} \qquad
	\int_0^{\epsilon_0} d\epsilon \E \left\langle (\bbf{x} \cdot \bbf{b} - Q)^2 \right\rangle_{n,\epsilon} \xrightarrow[n \to \infty]{} 0 \,.
\end{equation}

\subsection{The main estimate}

Let us denote, for $\epsilon \in [0,1]$,
$$
\mathcal{F}_{\epsilon}: (\lambda,q) \mapsto -\frac{\lambda}{4}q^2 + \epsilon (\E_{P_0} X^2) \frac{\lambda q}{2} + (1-\epsilon)\E \Big[ \log \sum_{x \in S} P_0(x) \exp\Big(\sqrt{\lambda q}Zx + \lambda q x X - \frac{\lambda}{2}q x^2\Big)\Big]
$$
where the expectation $\E$ is taken with respect to the independent random variables $X \sim P_0$ and $Z \sim \mathcal{N}(0,1)$.
The following proposition is one of the key steps of the proof.
\begin{proposition} \label{prop:main_estimate}
	For all $\epsilon_0 \in [0,1]$,
	$$
	\int_0^{\epsilon_0}\! d\epsilon \left( A_{n,\epsilon} - \E [\mathcal{F}_{\epsilon}(\lambda,Q)]\right) \xrightarrow[n \to \infty]{} 0 \,.
	$$
\end{proposition}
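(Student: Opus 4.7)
The plan is to identify the two terms of $A_{n,\epsilon}=T_{1,n,\epsilon}-T_{2,n,\epsilon}$ (with $T_1$ the ``$\sigma$-sum'' part and $T_2$ the ``$y$-field'' part) with the matching pieces of $\cF_\epsilon(\lambda,Q)$, using the overlap concentration~\eqref{eq:concentration_xx_1}-\eqref{eq:concentration_xx_2} to replace random overlaps by the order parameter $Q$ and Gaussian calculus to deal with the fresh noises $Z_{i,n+1}$, $\widetilde Z_{i,j}$, $Z_i''$ (which are independent of $\langle\cdot\rangle_{n,\epsilon}$). Specifically, I aim to show, modulo errors integrable-in-$\epsilon$ that vanish as $n\to\infty$:
\[
T_{1,n,\epsilon}\approx(1-\epsilon)\E\,\psi_{P_0}(\lambda Q)+\epsilon\,\tfrac{\lambda}{2}\E_{P_0}[X^2]\,\E[Q],\qquad T_{2,n,\epsilon}\approx \tfrac{\lambda}{4}\E[Q^2].
\]

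For $T_{1,n,\epsilon}$, I condition on $L_{n+1}\sim\Ber(\epsilon)$, which is independent of everything else, to split it into the $\bar{\sigma}=\sigma$ case (weight $1-\epsilon$, genuine sum over $\sigma$) and the $\bar{\sigma}=X_{n+1}$ case (weight $\epsilon$, collapsing the sum). In both, the exponent $\bar\sigma z(\bbf{x})+\bar\sigma^2 s(\bbf{x})$ decomposes as a quadratic form in $\bar\sigma$ with $\bbf{x}$-dependent coefficients $\lambda\bbf{x}\!\cdot\!\bbf{X}\,X_{n+1}$ and $-\lambda\bbf{x}\!\cdot\!\bbf{x}/2$, plus the Gaussian field $\bar\sigma\sqrt{\lambda/n}\sum_i x_iZ_{i,n+1}$ whose covariance in $\bbf{x}$ is $\lambda\,\bbf{x}^{(1)}\!\cdot\bbf{x}^{(2)}$. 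After substituting $\bbf{x}\!\cdot\!\bbf{X}\to Q$ using~\eqref{eq:concentration_xx_2} (a Lipschitz estimate on the exponent, using boundedness of $S\subset[-K,K]$, lifts this into $\E\log\langle\cdot\rangle$), I decompose the Gaussian field as $\sqrt{\lambda Q}\,W+\widetilde G(\bbf{x})$, with $W\sim\cN(0,1)$ capturing the overlap-concentrated ``common'' part (of variance $\lambda Q$) and $\widetilde G(\bbf{x})$ the chaotic residual of conditional variance $\lambda(\bbf{x}\!\cdot\!\bbf{x}-Q)$. The crucial cancellation is that the $-\tfrac{\lambda}{2}\bar\sigma^2\bbf{x}\!\cdot\!\bbf{x}$ term in $\bar\sigma^2 s(\bbf{x})$ exactly supplies the drift so that $\bar\sigma\widetilde G(\bbf{x})-\tfrac{1}{2}\bar\sigma^2\lambda(\bbf{x}\!\cdot\!\bbf{x}-Q)$ has Gaussian expectation $1$ for every $\bbf{x}$; the remainder $\bar\sigma\sqrt{\lambda Q}W+\bar\sigma\lambda Q X_{n+1}-\tfrac{1}{2}\bar\sigma^2\lambda Q$ is $\bbf{x}$-independent, so the Gibbs bracket trivialises. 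For $L_{n+1}=0$ the surviving sum over $\sigma$ is the scalar-channel partition function at SNR $\lambda Q$, producing $\psi_{P_0}(\lambda Q)$; for $L_{n+1}=1$ the collapsed exponent evaluates to $\lambda X_{n+1}^2 Q/2$, whose expectation is $\lambda\E_{P_0}[X^2]\E[Q]/2$.

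The treatment of $T_{2,n,\epsilon}$ is parallel: the deterministic overlap piece of $y(\bbf{x})$ contributes $\tfrac{\lambda}{2}((\bbf{x}\!\cdot\!\bbf{X})^2-\tfrac{1}{2}(\bbf{x}\!\cdot\!\bbf{x})^2)\to\tfrac{\lambda}{2}Q^2-\tfrac{\lambda}{4}\E_{P_0}[X^2]^2$ by overlap concentration and Nishimori, while the Gaussian piece of $y$, whose covariance in $\bbf{x}$ is $\simeq \tfrac{\lambda}{2}(\bbf{x}^{(1)}\!\cdot\!\bbf{x}^{(2)})^2$ (concentrating on $\tfrac{\lambda}{2}Q^2$), contributes via its chaotic residual the variance term $\tfrac{\lambda}{4}(\E_{P_0}[X^2]^2-Q^2)$; the two sum to $\tfrac{\lambda}{4}Q^2$.

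The main obstacle is making the common/chaotic Gaussian decomposition rigorous: a naive pointwise substitution $\bbf{x}\!\cdot\!\bbf{x}\to Q$ is \emph{false} (by Nishimori, $\E\langle\bbf{x}\!\cdot\!\bbf{x}\rangle=\E_{P_0}[X^2]$, generically distinct from $\E[Q]$), so the matching with the scalar channel cannot come from replacement of $\bbf{x}\!\cdot\!\bbf{x}$ alone but must emerge from the exact pairing with $s(\bbf{x})$. The cleanest implementation is a Gaussian interpolation that morphs $\sqrt{\lambda/n}\sum_i x_i Z_{i,n+1}$ into $\sqrt{\lambda Q}\,W$, tracking the $t$-derivative via Gaussian integration by parts and the Nishimori identity. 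The resulting integrand is a linear combination of terms of the form $\E\langle(\bbf{x}^{(1)}\!\cdot\!\bbf{x}^{(2)}-Q)^2\rangle_{n,\epsilon}$ and $\E\langle(\bbf{x}\!\cdot\!\bbf{X}-Q)^2\rangle_{n,\epsilon}$, which vanish on average in $\epsilon$ by \eqref{eq:concentration_xx_1}-\eqref{eq:concentration_xx_2}, with pre-factors bounded through the $L^\infty$ bound on $S\subset[-K,K]$. The same interpolation handles the Gaussian part of $y$, finishing the proof after integrating over $\epsilon$.
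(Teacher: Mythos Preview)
Your strategy is correct and matches the paper's: split $A_{n,\epsilon}$ into the $\sigma$-sum and the $y$-term, use that the fresh noises $Z_{i,n+1},\widetilde Z_{i,j},Z_i''$ are independent of $\langle\cdot\rangle_{n,\epsilon}$, and reduce everything to functions of overlaps that are then pinned to $Q$ via~\eqref{eq:concentration_xx_1}--\eqref{eq:concentration_xx_2}. Your observation that $\bbf{x}\!\cdot\!\bbf{x}$ does \emph{not} concentrate on $Q$ and must instead be absorbed by the exact pairing with $s(\bbf{x})$ (respectively with the $-\tfrac{\lambda}{4}(\bbf{x}\!\cdot\!\bbf{x})^2$ drift in $y$) is the right diagnosis.

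The implementation, however, differs. You propose a Gaussian interpolation morphing $\sqrt{\lambda/n}\sum_i x_i Z_{i,n+1}$ into $\sqrt{\lambda Q}\,W$. The paper instead defines an explicit comparison object $V$ by replacing $x_i$ with $b_i=\langle x_i\rangle_{n,\epsilon}$ in the fresh-noise term (so that $\tfrac{1}{\sqrt{n}}\sum_i b_i Z_i'$ is exactly $\cN(0,Q)$ conditionally), shows $\int d\epsilon\,\E(U-V)^2\to 0$ by first integrating out the fresh Gaussian (leaving a Lipschitz function of $\bbf{x}^{(1)}\!\cdot\!\bbf{x}^{(2)}$, $\bbf{x}^{(k)}\!\cdot\!\bbf{X}$ under the \emph{unperturbed} $\langle\cdot\rangle_{n,\epsilon}$), and then converts $L^2$-closeness to closeness of logs via an inverse second-moment bound $\E U^{-2}+\E V^{-2}\le C$. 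The second-moment route is what makes the overlap concentration directly applicable: after $\E_{\bbf{Z}'}$ the Gibbs measure is untouched.

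This is the one technical wrinkle in your plan: the $t$-derivative of your interpolation lives under a measure tilted by the fresh-noise exponential, and the overlap concentration~\eqref{eq:concentration_xx_1}--\eqref{eq:concentration_xx_2} is only stated for the bare $\langle\cdot\rangle_{n,\epsilon}$. You would need an extra step (Cauchy--Schwarz plus a uniform bound on the tilting factor, available since $S\subset[-K,K]$ and the fresh Gaussian has controlled moments) to transfer concentration to the tilted measure. This is doable but not free; the paper's $L^2$ comparison sidesteps it entirely by integrating the fresh Gaussian out \emph{before} invoking concentration.
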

The proof of Proposition~\ref{prop:main_estimate} is deferred to Section~\ref{sec:proof_main_estimate}. We deduce here Theorem~\ref{th:rs_formula} from Proposition~\ref{prop:main_estimate} and the results of the previous sections. Because of Proposition~\ref{prop:guerra_bound}, we only have to show that $\limsup\limits_{n \to \infty} F_n \leq \sup\limits_{q \geq 0} \mathcal{F}(\lambda,q)$.
\\

By Proposition~\ref{prop:approximation_f_n_epsilon} we have
$$
	\epsilon_0  F_n \leq 
	 \int_0^{\epsilon_0} \! d\epsilon F_{n,\epsilon} + \frac{1}{2}H(P_0) \epsilon_0^2 \,.
$$
Therefore by equation~\eqref{eq:limsup2} and Proposition~\ref{prop:main_estimate}
\begin{equation}
	\epsilon_0 \limsup_{n \to \infty} F_n 
	\leq 
	\limsup_{n \to \infty} \int_0^{\epsilon_0} \! d\epsilon A_{n,\epsilon} + \frac{1}{2} H(P_0) \epsilon_0^2
	\leq \limsup_{n \to \infty} \int_0^{\epsilon_0} \! d\epsilon \, \E \mathcal{F}_{\epsilon}(\lambda,Q) + \frac{1}{2} H(P_0) \epsilon_0^2 \,.
	\label{eq:main_ineq}
\end{equation}
It remains then to show that $\limsup\limits_{n \to \infty} \int d\epsilon \, \E \mathcal{F}_{\epsilon}(\lambda,Q) \leq \epsilon_0 \sup\limits_{q \geq 0} \mathcal{F}(\lambda,q) + O(\epsilon_0^2)$.
We have for $\epsilon \in [0,1]$,
\vspace{-0.2cm}
\begin{align*}
	\sup_{q \in [0,K^2]} \!\!\left| \mathcal{F}_{\epsilon}(\lambda,q) - \mathcal{F}(\lambda,q) \right| 
	&\leq \epsilon \!\!\sup_{q \in [0,K^2]} \!\!\left\{
\frac{\lambda q}{2} \E_{P_0} [X^2]  + \Big| \E \log \sum_{x \in S}\! P_0(x) \exp(\sqrt{\lambda q}Zx + \lambda q x X - \frac{\lambda}{2}q x^2)\Big| \right\}
\\
&\leq C \epsilon \,,
\end{align*}
for some constant $C$ that only depends on $\lambda$ and $P_0$. Noticing that $Q \in [0, K^2]$ a.s., we have then $| \E \mathcal{F}_{\epsilon}(\lambda,Q) - \E \mathcal{F}(\lambda,Q) | \leq C \epsilon_0$, for all $\epsilon \in [0,\epsilon_0]$ and therefore
$$
\int_0^{\epsilon_0}\!d\epsilon \, \E \mathcal{F}_{\epsilon}(\lambda,Q)
\leq \epsilon_0 \sup_{q \geq 0}\mathcal{F}(\lambda,q) + \frac{1}{2}C \epsilon_0^2 \,.
$$
Combined with~\eqref{eq:main_ineq}, this implies $\limsup\limits_{n \to \infty} F_n \leq \sup\limits_{q \geq 0} \mathcal{F}(\lambda,q) + \frac{1}{2}H(P_0) \epsilon_0 + \frac{1}{2}C \epsilon_0$, for all $\epsilon_0 \in (0,1]$. Theorem~\ref{th:rs_formula} is proved.

\subsection{Proof of Proposition~\ref{prop:main_estimate}} \label{sec:proof_main_estimate}

In this section, we prove Proposition~\ref{prop:main_estimate} which is a consequence of Lemmas~\ref{lem:part1} and~\ref{lem:part2} below.
In order to lighten the formulas, we will use the following notations
\begin{align*}
	X' = X_{n+1} \ \ \ \text{and } \ \ \
	Z'_i = Z_{i,n+1}.
\end{align*}
Recall
\begin{equation} \label{eq:a_n_proof}
	A_{n,\epsilon} = \E \log \Big\langle  \sum_{\sigma \in S} P_0(\sigma)\exp(\bar{\sigma} z(\bbf{x}) + \bar{\sigma}^2 s(\bbf{x})) \Big\rangle_{\!\! n,\epsilon}
	- \E  \log \big\langle \exp(y(\bbf{x})) \big\rangle_{\! n,\epsilon} \,,
\end{equation}
where for $\sigma \in S$, $\bar{\sigma}= (1-L_{n+1}) \sigma + L_{n+1} X'$. 
We recall that $\langle \cdot \rangle_{n,\epsilon}$ denotes the expectation with respect to $\bbf{x}$ sampled from the Gibbs measure defined by \eqref{eq:def_gibbs}.
The computations here are closely related to the cavity computations in the SK model, see for instance~\cite{talagrand2010meanfield1}.

\begin{lemma} \label{lem:part1}
	\begin{align*}
		&\int_0^{\epsilon_0} \! d\epsilon \, \Big| \E \log \Big\langle  \sum_{\sigma \in S} P_0(\sigma)\exp(\bar{\sigma} z(\bbf{x}) + \bar{\sigma}^2 s(\bbf{x}) ) \Big\rangle_{\!\! n,\epsilon}
		\\
		&- \Big( \epsilon (\E_{P_0} X^2) \E \frac{\lambda Q}{2} + (1-\epsilon) \E \log \sum_{\sigma \in S} P_0(\sigma) \exp \big( \sqrt{\lambda Q} \sigma Z_0 + \lambda Q \sigma X' - \frac{\lambda \sigma^2}{2} Q \big) \Big) \Big| \xrightarrow[n \to \infty]{} 0 \,,
	\end{align*}
	where $Z_0 \sim \mathcal{N}(0,1)$ is independent of all other random variables.
\end{lemma}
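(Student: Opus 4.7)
Using the independence of $L_{n+1} \sim \Ber(\epsilon)$, the expectation inside the absolute value of Lemma~\ref{lem:part1} splits as
\[
\E\log\Big\langle\sum_\sigma P_0(\sigma)\, e^{\bar\sigma z(\bbf{x}) + \bar\sigma^2 s(\bbf{x})}\Big\rangle_{n,\epsilon} = (1-\epsilon)\, I_1 + \epsilon\, I_2,
\]
where $I_1 = \E\log\langle\sum_\sigma P_0(\sigma) e^{\sigma z + \sigma^2 s}\rangle_{n,\epsilon}$ (the case $\bar\sigma=\sigma$) and $I_2 = \E\log\langle e^{X' z + (X')^2 s}\rangle_{n,\epsilon}$ (the case $\bar\sigma=X'$, in which the prior sum collapses to $1$). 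The two pieces will be analysed separately by a Guerra-style Gaussian interpolation combined with Proposition~\ref{prop:overlap_concentration_montanari} and the Nishimori identity.

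For $I_2$, introduce $\phi_2(t) = \E\log\langle e^{t(X'z + (X')^2 s)}\rangle_{n,\epsilon}$, with $\phi_2(0)=0$ and $\phi_2(1)=I_2$. Compute $\phi_2'(t) = \E\langle X'z + (X')^2 s\rangle_t$ by Gaussian integration by parts in the fresh variables $Z_{i,n+1}$. At $t=1$ the tilted measure $\langle\cdot\rangle_t$ coincides with the $n$-dim Wigner posterior after revealing the $(n{+}1)$-th planted spin $X_{n+1}=X'$, and the $\sqrt{t}$-scaled Gaussian observation structure preserves Nishimori at every $t\in[0,1]$, so $\langle\bbf{x}^{(1)}\cdot\bbf{x}^{(2)}\rangle_t = \langle\bbf{x}\cdot\bbf{X}\rangle_t$. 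This gives
\[
\phi_2'(t) = \bigl(t-\tfrac12\bigr)\lambda\,\E\bigl[(X')^2\langle\bbf{x}\cdot\bbf{x}\rangle_t\bigr] + (1-t)\lambda\,\E\bigl[(X')^2\langle\bbf{x}\cdot\bbf{X}\rangle_t\bigr].
\]
Integrating over $t\in[0,1]$, the first contribution vanishes because $\int_0^1 (t-\tfrac12)\,dt=0$, and the second contribution converges to $\tfrac\lambda2\,\E_{P_0}[X^2]\,\E Q$ after using overlap concentration (Proposition~\ref{prop:overlap_concentration_montanari} applied to the tilted system, which forces $\langle\bbf{x}\cdot\bbf{X}\rangle_t \to Q$ once we integrate against $\epsilon\in[0,\epsilon_0]$) and the independence of $X'$. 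This matches the Case~2 target. For $I_1$, the same interpolation strategy applies in the enlarged system on $(\bbf{x},\sigma)$: interpolate between the scalar channel at $t=0$ (which is precisely the target log-partition $\E\log\sum_\sigma P_0(\sigma)e^{\sqrt{\lambda Q}\sigma Z_0 + \lambda Q\sigma X' - \lambda Q\sigma^2/2}$) and the Gibbs expression at $t=1$. The derivative, after IBP and Nishimori applied to the enlarged system, reduces to terms controlled by $\int_0^{\epsilon_0}\E\langle(\bbf{x}^{(1)}\cdot\bbf{x}^{(2)}-Q)^2\rangle_{n,\epsilon}\,d\epsilon$, which vanishes by Proposition~\ref{prop:overlap_concentration_montanari}.

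The main obstacle is the apparent mismatch between the self-overlap $\bbf{x}\cdot\bbf{x}$, whose Gibbs mean equals $\E_{P_0}[X^2]\neq Q$, and the target coefficient $-\lambda Q\sigma^2/2$. The resolution is that $\bbf{x}\cdot\bbf{x}$ enters $\phi_j'(t)$ in two places: once through the variance of the fresh Gaussian $\sqrt{\lambda/n}\sigma\sum_i x_i Z_{i,n+1}$ produced by Gaussian IBP, and once through the explicit term $-\lambda\sigma^2\,\bbf{x}\cdot\bbf{x}/2$ from $s(\bbf{x})$. Along the interpolation these are weighted by $t$ and $-\tfrac12$ respectively, so that $\int_0^1(t-\tfrac12)\,dt=0$ eliminates every $\bbf{x}\cdot\bbf{x}$-dependent contribution from $\phi_j(1)-\phi_j(0)$. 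What remains is controlled only by the Nishimori-symmetric overlap $\bbf{x}\cdot\bbf{X}\to Q$, which explains why $\lambda Q$ (rather than $\lambda\,\E_{P_0}[X^2]$) appears as the effective signal-to-noise ratio of the limiting scalar channel. Throughout, the bounded-support assumption $S\subset[-K,K]$ provides the uniform $L^\infty$-bounds on all Gibbs averages needed to invoke Proposition~\ref{prop:overlap_concentration_montanari} and interchange limits with the $t$- and $\epsilon$-integrals.
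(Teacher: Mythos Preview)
Your interpolation approach contains two genuine gaps.

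First, the derivative formula you write for $\phi_2'(t)$ is incorrect. With $\phi_2(t)=\E\log\langle e^{tH}\rangle_{n,\epsilon}$, $H=X'z(\bbf{x})+(X')^2 s(\bbf{x})$, Gaussian integration by parts in $Z_{i,n+1}$ gives
\[
\phi_2'(t)=(t-\tfrac12)\lambda\,\E\big[(X')^2\langle\bbf{x}\cdot\bbf{x}\rangle_t\big]
+\lambda\,\E\big[(X')^2\langle\bbf{x}\cdot\bbf{X}\rangle_t\big]
-t\lambda\,\E\big[(X')^2\langle\bbf{x}^{(1)}\!\cdot\bbf{x}^{(2)}\rangle_t\big].
\]
To collapse the last two terms into $(1-t)\lambda\,\E[(X')^2\langle\bbf{x}\cdot\bbf{X}\rangle_t]$ you need $\E[(X')^2\langle\bbf{x}\cdot\bbf{X}\rangle_t]=\E[(X')^2\langle\bbf{x}^{(1)}\!\cdot\bbf{x}^{(2)}\rangle_t]$, i.e.\ Nishimori for the tilted measure $\langle\cdot\rangle_t$. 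But your tilt is \emph{linear} in $t$ on the noise term and on the signal term simultaneously; the Bayesian structure requires the signal coefficient to be the square of the noise coefficient, which forces $t^2=t$. So Nishimori holds only at the endpoints $t\in\{0,1\}$, not along the path, and the identity you invoke fails. (Your sentence about a ``$\sqrt{t}$-scaled Gaussian observation structure'' describes a different interpolation from the one you actually wrote.)

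Second, even granting your derivative formula, the claim that $\int_0^1 (t-\tfrac12)\,\E[(X')^2\langle\bbf{x}\cdot\bbf{x}\rangle_t]\,dt=0$ requires $\E[(X')^2\langle\bbf{x}\cdot\bbf{x}\rangle_t]$ to be independent of $t$. It is not: the tilt $tH$ is an $O(1)$ perturbation of the Hamiltonian, so $\langle\bbf{x}\cdot\bbf{x}\rangle_t$ genuinely depends on $t$. A separate concentration argument for the self-overlap would be needed here, and Proposition~\ref{prop:overlap_concentration_montanari} says nothing about $\bbf{x}\cdot\bbf{x}$, nor does it apply to the non-Bayesian tilted measure $\langle\cdot\rangle_t$. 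The same issues arise for $I_1$, with the additional difficulty that your proposed $t=0$ endpoint already contains the random variable $Q$ (which depends on the entire disorder through $\langle\cdot\rangle_{n,\epsilon}$), so it is unclear how to even set up that interpolation.

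The paper avoids all of this by a direct $L^2$ comparison rather than a $t$-interpolation. It defines $U=\langle\sum_\sigma P_0(\sigma)e^{\bar\sigma z(\bbf{x})+\bar\sigma^2 s(\bbf{x})}\rangle_{n,\epsilon}$ and a surrogate $V=\sum_\sigma P_0(\sigma)\exp(\bar\sigma\sqrt{\lambda/n}\sum_i b_i Z'_i+\lambda Q X'\bar\sigma-\tfrac{\lambda Q}{2}\bar\sigma^2)$ with $b_i=\langle x_i\rangle_{n,\epsilon}$, then shows $\int_0^{\epsilon_0}\E[(U-V)^2]\,d\epsilon\to 0$ by integrating out the fresh Gaussians $Z'_i$ first (which reduces $\E_{\bbf{Z}'}U^2$, $\E_{\bbf{Z}'}UV$, $\E_{\bbf{Z}'}V^2$ to smooth functions of the overlaps $\bbf{x}^{(1)}\!\cdot\bbf{x}^{(2)}$, $\bbf{x}\cdot\bbf{X}$) and then invoking Proposition~\ref{prop:overlap_concentration_montanari}. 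The passage from $U-V$ to $\log U-\log V$ uses moment bounds on $U^{-2},V^{-2}$, and finally $\E\log V$ is computed exactly via the distributional identity $\frac{1}{\sqrt n}\sum_i b_i Z'_i\stackrel{d}{=}\sqrt{Q}\,Z_0$. No interpolation, no Nishimori at intermediate states, and the self-overlap never enters.
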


\begin{lemma} \label{lem:part2}
	$$
	\int_0^{\epsilon_0} \!\! d\epsilon \, \left| \E \log \big\langle \!\exp(y(\bbf{x})) \big\rangle_{n,\epsilon} - \frac{\lambda}{4} \E Q^2 \right| \xrightarrow[n \to  \infty]{} 0 \,.
	$$
\end{lemma}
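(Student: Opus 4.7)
The plan is to decompose $y(\bbf{x}) = G(\bbf{x}) + T(\bbf{x})$, where
$G(\bbf{x}) := \frac{\sqrt{\lambda}}{\sqrt{2}\,n}\sum_{i=1}^n Z''_i x_i^2 + \frac{\sqrt{\lambda}}{n}\sum_{i<j} x_ix_j\widetilde{Z}_{i,j}$
is a centered Gaussian field in the auxiliary noise variables $Z''_i,\widetilde{Z}_{i,j}$, and $T(\bbf{x}) := \frac{\lambda}{2}(\bbf{x}\cdot\bbf{X})^2 - \frac{\lambda}{4}(\bbf{x}\cdot\bbf{x})^2$ is deterministic given $(\bbf{x},\bbf{X})$. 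A direct variance computation yields $\E[G(\bbf{x})G(\bbf{x}')] = \frac{\lambda}{2}(\bbf{x}\cdot\bbf{x}')^2$. Writing $r := \E_{P_0}[X^2]$, the target $\frac{\lambda}{4}\E Q^2$ will arise from the exact cancellation
\begin{equation*}
\Bigl(\tfrac{\lambda}{2}\E Q^2 - \tfrac{\lambda}{4}r^2\Bigr) + \tfrac{\lambda}{4}\bigl(r^2 - \E Q^2\bigr) \;=\; \tfrac{\lambda}{4}\E Q^2,
\end{equation*}
where the first parenthesis will be the contribution of $T$ and the second that of $G$.

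For the $T$-contribution I would use overlap concentration: Proposition~\ref{prop:overlap_concentration_montanari} together with Nishimori give $\int_0^{\epsilon_0}\E\langle(\bbf{x}\cdot\bbf{X}-Q)^2\rangle_{n,\epsilon}\,d\epsilon \to 0$; applying Nishimori to substitute $\bbf{x}$ by $\bbf{X}$ and the law of large numbers to $\bbf{X}\cdot\bbf{X} = \frac{1}{n}\sum_i X_i^2$ gives $\E\langle(\bbf{x}\cdot\bbf{x}-r)^2\rangle_{n,\epsilon}\to 0$ uniformly in $\epsilon$. Since $|\bbf{x}\cdot\bbf{X}|,|\bbf{x}\cdot\bbf{x}|\leq K^2$, these together imply that $T(\bbf{x})$ is close in $L^2$ (under $\E\langle\cdot\rangle$ integrated over $\epsilon$) to the $\bbf{x}$-independent scalar $\tau := \frac{\lambda}{2}Q^2 - \frac{\lambda}{4}r^2$, hence
$\int_0^{\epsilon_0}\bigl|\E\log\langle e^{G+T}\rangle_{n,\epsilon} - \E\tau - \E\log\langle e^G\rangle_{n,\epsilon}\bigr|\,d\epsilon \to 0.$

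For the Gaussian contribution I would introduce the interpolation $\psi(s) := \E\log\langle e^{sG(\bbf{x})}\rangle_{n,\epsilon}$, so $\psi(0)=0$. Gaussian integration by parts in $Z''_i,\widetilde{Z}_{i,j}$ using the covariance formula above yields
\begin{equation*}
\psi'(s) \;=\; \tfrac{s\lambda}{2}\,\E\!\Bigl[\bigl\langle(\bbf{x}\cdot\bbf{x})^2\bigr\rangle_{s} - \bigl\langle(\bbf{x}^{(1)}\!\cdot\bbf{x}^{(2)})^2\bigr\rangle_{s}\Bigr],
\end{equation*}
where $\langle\cdot\rangle_s$ is $\langle\cdot\rangle_{n,\epsilon}$ tilted by $e^{sG}$. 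Replacing the two tilted moments by $r^2$ and $\E Q^2$ (via the concentration of the previous step, transferred to $\langle\cdot\rangle_s$) and integrating $s$ over $[0,1]$ gives $\psi(1)\to \frac{\lambda}{4}(r^2-\E Q^2)$, which combined with the $T$-step proves the lemma after integration over $\epsilon$.

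\textbf{Main obstacle.} The delicate step is the last replacement: the tilt $e^{sG(\bbf{x})}$ introduces Gaussian noise terms without matching planted signal, so $\langle\cdot\rangle_s$ is not the posterior of any natural inference problem and the Nishimori identities break under the tilt. The saving observation is that $|\bbf{x}\cdot\bbf{x}|\leq K^2$, so $G(\bbf{x})$ has uniformly bounded sub-Gaussian fluctuations and the Radon--Nikodym factor $e^{sG(\bbf{x})}/\langle e^{sG}\rangle_{n,\epsilon}$ has bounded $L^p$-moments; this lets one push the $\epsilon$-integrated concentration of the untilted overlaps onto $\langle\cdot\rangle_s$ by Cauchy--Schwarz. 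The integration over $\epsilon$ in the statement is essential here. As an alternative one could fold $T$ back into the interpolation so that the Nishimori structure is preserved throughout, at the cost of more intricate integration-by-parts bookkeeping.
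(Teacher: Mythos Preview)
Your decomposition $y=G+T$ and the target identity $\bigl(\tfrac{\lambda}{2}\E Q^2-\tfrac{\lambda}{4}r^2\bigr)+\tfrac{\lambda}{4}(r^2-\E Q^2)=\tfrac{\lambda}{4}\E Q^2$ are correct, and the covariance computation $\E[G(\bbf{x})G(\bbf{x}')]=\tfrac{\lambda}{2}(\bbf{x}\cdot\bbf{x}')^2$ is exactly what drives everything. However, the route you take is genuinely different from the paper's, and the obstacle you flag is real.

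The paper (following the template of Lemma~\ref{lem:part1}) never introduces the tilt $\langle\cdot\rangle_s$. Instead it sets $U=\langle e^{y(\bbf{x})}\rangle_{n,\epsilon}$, builds an explicit surrogate $V$ in which the overlaps are replaced by $Q$, and shows $\int d\epsilon\,\E(U-V)^2\to 0$. The point is that once you integrate out the auxiliary Gaussians $\widetilde{\bbf{Z}},\bbf{Z}''$ \emph{first}, $\E_{\widetilde{\bbf{Z}},\bbf{Z}''}U^2$, $\E_{\widetilde{\bbf{Z}},\bbf{Z}''}UV$, $\E_{\widetilde{\bbf{Z}},\bbf{Z}''}V^2$ all become smooth bounded functions of the overlaps $\bbf{x}^{(1)}\!\cdot\bbf{x}^{(2)}$, $\bbf{x}^{(k)}\!\cdot\bbf{X}$, $\bbf{x}^{(k)}\!\cdot\bbf{x}^{(k)}$ only (the self-overlap terms cancel, since $\E_G e^{y(\bbf{x})}=e^{\lambda(\bbf{x}\cdot\bbf{X})^2/2}$). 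These are then handled by a Lipschitz argument and the overlap concentration of Proposition~\ref{prop:overlap_concentration_montanari}, all under the genuine posterior $\langle\cdot\rangle_{n,\epsilon}$ where Nishimori is available. One finishes with $|\log U-\log V|\leq\max(U^{-1},V^{-1})|U-V|$ and an inverse-moment bound as in Lemma~\ref{lem:u_v_moment}, and computes $\E\log V=\tfrac{\lambda}{4}\E Q^2$ directly.

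Your interpolation $\psi(s)=\E\log\langle e^{sG}\rangle$ forces you to control overlaps under $\langle\cdot\rangle_s$, which, as you note, is not a posterior. Your Cauchy--Schwarz fix can in fact be pushed through: since variances of $G$ are bounded by $\tfrac{\lambda}{2}K^4$, Jensen gives $\langle e^{sG}\rangle^{-2}\leq e^{-2s\langle G\rangle}$ and hence $\E\langle R_s^2\rangle\leq e^{Cs^2}$ uniformly in $s\in[0,1]$; combined with $(\bbf{x}^{(1)}\!\cdot\bbf{x}^{(2)}-Q)^4\leq(2K^2)^2(\bbf{x}^{(1)}\!\cdot\bbf{x}^{(2)}-Q)^2$ this lets you transfer the $\epsilon$-averaged concentration to $\langle\cdot\rangle_s$. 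But this is noticeably more work than the paper's route, and your $T$-step (going from $L^2$ closeness of $T$ to $\tau$ to closeness of $\log\langle e^{G+T}\rangle$ to $\tau+\log\langle e^G\rangle$) hides the same kind of tilted-measure issue again.

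Your ``alternative'' is in fact the better idea and is closest in spirit to the paper: $y(\bbf{x})$ is itself the Hamiltonian of a Gaussian side channel with per-entry SNR $\lambda/n^2$, so scaling this SNR by $t\in[0,1]$ keeps the tilted measure a posterior throughout, Nishimori applies, and I-MMSE gives $\phi'(t)=\tfrac{\lambda}{4}\E\langle(\bbf{x}\cdot\bbf{X})^2\rangle_t+O(n^{-1})$, which by overlap concentration yields $\tfrac{\lambda}{4}\E Q^2$ after integrating $t$. This is essentially the paper's argument in interpolated form.
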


We will only prove Lemma~\ref{lem:part1} here since Lemma~\ref{lem:part2} follows from the same kind of arguments (the full proof can be found in~\cite{lelarge2016fundamental}).
The remaining of the section is thus devoted to the proof of Lemma~\ref{lem:part1}.
\\

Let us write $f(z,s) = \sum\limits_{\sigma \in S} P_0(\sigma) e^{\bar{\sigma} z + \bar{\sigma}^2 s}$ and we define:
\begin{align*}
	U &= \big\langle f(z(\bbf{x}),s(\bbf{x})) \big\rangle_{n,\epsilon} \,, \\
	V 
	&= \sum_{\sigma \in S}P_0(\sigma) \exp\left(\bar{\sigma} \sqrt{\frac{\lambda}{n}} \sum_{i=1}^n b_i Z_i' + \lambda Q X' \bar{\sigma} - \frac{\lambda Q}{2} \bar{\sigma}^2 \right) \,.
\end{align*}

\begin{lemma}
	$$
	\int_0^{\epsilon_0} \! d\epsilon \, \E \Big[ (U-V)^2 \Big] \xrightarrow[n \to \infty]{} 0 \,.
	$$
\end{lemma}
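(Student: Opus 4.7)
The plan is to integrate out $(Z'_i)_{1 \leq i \leq n} = (Z_{i,n+1})_{1 \leq i \leq n}$ first, since this Gaussian vector is independent of every quantity appearing in the Gibbs measure $\langle \cdot \rangle_{n,\epsilon}$ (the Hamiltonian $H_n'$ only involves $Z_{i,j}$ with $i<j\leq n$). The cavity parametrization is designed so that the quadratic term $\bar\sigma^2 s(\bbf{x}) = -\frac{\lambda}{2}\bar\sigma^2\,\bbf{x}\!\cdot\!\bbf{x}$ (and its analogue $-\frac{\lambda Q}{2}\bar\sigma^2$ in $V$) exactly cancels the variance produced by completing the square in $Z'$. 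Consequently $\E_{Z'}[U^2]$, $\E_{Z'}[V^2]$ and $\E_{Z'}[UV]$ each become a Gibbs average of an exponential that depends only on the overlaps $\bbf{x}^{(1)}\!\cdot\!\bbf{x}^{(2)}$, $\bbf{x}\!\cdot\!\bbf{X}$, $\bbf{x}\!\cdot\!\bbf{b}$ together with the bounded factors $\bar\sigma_i,\, X'$; the self-overlap $\bbf{x}\!\cdot\!\bbf{x}$ disappears entirely.

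Writing $G(u,v,w) := \exp\!\big( \lambda \bar\sigma_1 \bar\sigma_2\, u + \lambda X'(\bar\sigma_1 v + \bar\sigma_2 w) \big)$, a direct computation (completing the square in $Z'$) yields
\begin{align*}
	\E_{Z'}[U^2] &= \Big\langle \textstyle\sum_{\sigma_1,\sigma_2} P_0(\sigma_1)P_0(\sigma_2)\, G\!\big(\bbf{x}^{(1)}\!\cdot\!\bbf{x}^{(2)},\ \bbf{x}^{(1)}\!\cdot\!\bbf{X},\ \bbf{x}^{(2)}\!\cdot\!\bbf{X}\big)\Big\rangle_{\!n,\epsilon}, \\
	\E_{Z'}[UV] &= \Big\langle \textstyle\sum_{\sigma_1,\sigma_2} P_0(\sigma_1)P_0(\sigma_2)\, G\!\big(\bbf{x}\!\cdot\!\bbf{b},\ \bbf{x}\!\cdot\!\bbf{X},\ Q\big)\Big\rangle_{\!n,\epsilon}, \\
	\E_{Z'}[V^2] &= \textstyle\sum_{\sigma_1,\sigma_2} P_0(\sigma_1)P_0(\sigma_2)\, G(Q,Q,Q).
\end{align*}
Replacing every overlap by $Q$ collapses all three expressions to the same thing, so $\E_{Z'}[U^2 - 2UV + V^2]$ vanishes on that ``diagonal''. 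Since $\bar\sigma_i,\, X' \in [-K,K]$ and every overlap lies in $[-K^2,K^2]$, $G$ is Lipschitz on the relevant compact set, and the triangle inequality $|e^A - 2e^B + e^C| \leq |e^A - e^B| + |e^B - e^C|$ combined with consolidation of replicas (using that $\bbf{x}^{(1)},\bbf{x}^{(2)}$ are i.i.d.\ under $\langle\cdot\rangle_{n,\epsilon}$) gives
\[
\E_{Z'}\!\big[(U-V)^2\big] \ \leq \ C \,\Big\langle |\bbf{x}^{(1)}\!\cdot\!\bbf{x}^{(2)} - Q| \,+\, |\bbf{x}\!\cdot\!\bbf{X} - Q| \,+\, |\bbf{x}\!\cdot\!\bbf{b} - Q| \Big\rangle_{\!n,\epsilon}
\]
for some constant $C = C(\lambda,K,P_0)$.

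To conclude, I would take the outer expectation $\E$, integrate over $\epsilon \in [0,\epsilon_0]$, and apply Cauchy--Schwarz twice to obtain, for each of the three overlap terms, a bound of the form $\sqrt{\epsilon_0\int_0^{\epsilon_0}\!d\epsilon\,\E\langle(\bbf{x}^{(1)}\!\cdot\!\bbf{x}^{(2)} - Q)^2\rangle_{n,\epsilon}}$, which tends to zero as $n\to\infty$ by the overlap-concentration estimates \eqref{eq:concentration_xx_1}--\eqref{eq:concentration_xx_2}. The main subtlety, and the whole \emph{raison d'être} of integrating $Z'$ out first, is that one cannot simply replace $\bbf{x}\!\cdot\!\bbf{x}$ by $Q$ directly inside $U$: under the Gibbs measure the self-overlap concentrates (by Nishimori) on $\E_{P_0}[X^2]$, which is \emph{not} in general equal to $Q$. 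The Gaussian integration against $Z'$ is precisely the mechanism by which the self-overlap drops out, leaving only overlaps controlled by the pinning-induced concentration of Proposition~\ref{prop:overlap_concentration_montanari}.
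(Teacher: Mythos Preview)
Your proposal is correct and follows essentially the same route as the paper: integrate out the cavity noise $Z'$ first so that the self-overlap cancels, express $\E_{Z'}U^2$, $\E_{Z'}UV$, $\E_{Z'}V^2$ as (Gibbs averages of) a common smooth function $G$ of the overlaps $\bbf{x}^{(1)}\!\cdot\bbf{x}^{(2)}$, $\bbf{x}\!\cdot\bbf{X}$, $\bbf{x}\!\cdot\bbf{b}$, use its Lipschitz continuity on $[-K^2,K^2]^3$, and conclude via the overlap concentration \eqref{eq:concentration_xx_1}--\eqref{eq:concentration_xx_2}. The only cosmetic difference is that the paper bounds $|\E U^2-\E V^2|$ and $|\E UV-\E V^2|$ separately, whereas you combine them directly into $\E_{Z'}[(U-V)^2]$; your remark about the self-overlap being the reason one must integrate $Z'$ out is exactly the point.
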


\begin{proof}
	It suffices to show that $\int \! d\epsilon \, |\E U^2 -\E V^2| \xrightarrow[n \to \infty]{} 0$ and $\int \! d\epsilon \, |\E U V -\E V^2| \xrightarrow[n \to \infty]{} 0$. 
	\\
Let $\E_{\bbf{Z}'}$ denote the expectation with respect to $\bbf{Z}'=(Z_{i,n+1})_{1 \leq i \leq n}$ only.
Compute
	\begin{align}
		\E_{\bbf{Z'}} V^2 &= \E_{\bbf{Z'}} \sum_{\sigma_1,\sigma_2 \in S} P_0(\sigma_1,\sigma_2) \exp \Big(
			(\bar{\sigma}_1 + \bar{\sigma}_2) \sqrt{\frac{\lambda}{n}} \sum_{i=1}^n b_i Z_i' + \lambda Q X' (\bar{\sigma}_1 + \bar{\sigma}_2) - \frac{\lambda Q}{2} (\bar{\sigma}_1^2 + \bar{\sigma}_2^2) 
		\Big) \nonumber \\
		&= \sum_{\sigma_1,\sigma_2 \in S} P_0(\sigma_1,\sigma_2) \exp \Big(
		(\bar{\sigma}_1 + \bar{\sigma}_2)^2 \frac{\lambda}{2} Q + \lambda Q X' (\bar{\sigma}_1 + \bar{\sigma}_2) - \frac{\lambda Q}{2} (\bar{\sigma}_1^2 + \bar{\sigma}_2^2) 
	\Big) \nonumber \\
	&= \sum_{\sigma_1,\sigma_2 \in S} P_0(\sigma_1,\sigma_2) \exp \big(
	\bar{\sigma}_1 \bar{\sigma}_2 \lambda Q + \lambda Q X' (\bar{\sigma}_1 + \bar{\sigma}_2) 
\big) \label{eq:esp_v2}
	\end{align}
	where we write for $i=1,2$, $\bar{\sigma}_i = (1-L_{n+1})\sigma_i + L_{n+1}X'$, as before.
	\\

	Let us show that $\int \! d\epsilon \, | \E U^2 -\E V^2 | \xrightarrow[n \to \infty]{} 0$.
	\begin{align*}
		\E_{\bbf{Z'}} U^2 &= \E_{\bbf{Z'}} \big\langle f(z(\bbf{x}),s(\bbf{x})) \big\rangle_{n,\epsilon}^2 \\
							&= \E_{\bbf{Z'}} \big\langle f(z(\bbf{x}^{(1)}),s(\bbf{x}^{(1)})) f(z(\bbf{x}^{(2)}),s(\bbf{x}^{(2)})) \big\rangle_{n,\epsilon} \ \ \ \text{(} \bbf{x}^{(1)} \ \text{and } \bbf{x}^{(2)} \ \text{are indep.\ samples from } \langle \cdot \rangle_{n,\epsilon} \text{)} \\
							&= \left\langle \E_{\bbf{Z'}} f(z(\bbf{x}^{(1)}),s(\bbf{x}^{(1)})) f(z(\bbf{x}^{(2)}),s(\bbf{x}^{(2)})) \right\rangle_{\! n,\epsilon} \\
							&= \left\langle 
		\sum_{\sigma_1,\sigma_2 \in S} P_0(\sigma_1,\sigma_2) \E_{\bbf{Z'}} \exp\Big(\bar{\sigma}_1 z(\bbf{x}^{(1)}) + \bar{\sigma}_1^2 s(\bbf{x}^{(1)}) 
		+ \bar{\sigma}_2 z(\bbf{x}^{(2)}) + \bar{\sigma}_2^2 s(\bbf{x}^{(2)}) \Big)
	\right\rangle_{\!\! n,\epsilon}.
\end{align*}
The next lemma follows from the simple fact that for $N \sim \mathcal{N}(0,1)$ and $t \in \R$, $\E e^{tN} = \exp(\frac{t^2}{2})$.
\begin{lemma} \label{lem:esp_comp1}
	Let $\bbf{x}^{(1)}, \bbf{x}^{(2)} \in S^n$ and $\sigma_1, \sigma_2 \in S$ be fixed. Then
	\begin{align*}
		\E_{\bbf{Z'}} \exp\!\left( \! \sigma_1 \sqrt{\frac{\lambda}{n}} \sum_{i=1}^n x^{(1)}_i Z_i' + \sigma_2 \sqrt{\frac{\lambda}{n}} \sum_{i=1}^n x^{(2)}_i Z_i'  \right) 
		\!=
		\exp\!\left(\!
			\lambda \sigma_1 \sigma_2 \, \bbf{x}^{(1)}\!\cdot\bbf{x}^{(2)} 
			+\frac{\lambda \sigma_1^2}{2n}  \|\bbf{x}^{(1)}\|^2
			+\frac{\lambda \sigma_2^2}{2n}  \|\bbf{x}^{(2)}\|^2
		\!\right).
	\end{align*}
\end{lemma}
Thus, for all $\bbf{x}^{(1)}, \bbf{x}^{(2)} \!\in\! S^n$ and $\sigma_1, \sigma_2 \! \in \! S$
\begin{align*}
	\E_{\bbf{Z'}} e^{\bar{\sigma}_1 z(\bbf{x}^{(1)}) + \bar{\sigma}_1^2 s(\bbf{x}^{(1)})
	+ \bar{\sigma}_2 z(\bbf{x}^{(2)}) + \bar{\sigma}_2^2 s(\bbf{x}^{(2)})} 
	= e^{\lambda \bar{\sigma}_1 \bar{\sigma}_2 \bbf{x}^{(1)}\cdot\bbf{x}^{(2)} \!+ \lambda X' (\bar{\sigma}_1 (\bbf{x}^{(1)}\cdot\bbf{X}) + \bar{\sigma}_2 (\bbf{x}^{(2)}\cdot\bbf{X}))}\,,
\end{align*}
where we used the fact that $s(\bbf{x}) \!=\! -\frac{\lambda}{2n} \|\bbf{x}\|^2$ for all $\bbf{x} \!\in\! S^n$.
We have therefore
$$
\E_{\bbf{Z'}} U^2 = \left\langle \sum_{\sigma_1,\sigma_2 \in S} P_0(\sigma_1,\sigma_2) \exp\left(\lambda \bar{\sigma}_1 \bar{\sigma}_2 \bbf{x}^{(1)}\cdot\bbf{x}^{(2)} + \lambda X' \left(\bar{\sigma}_1 (\bbf{x}^{(1)}\cdot\bbf{X}) + \bar{\sigma}_2 (\bbf{x}^{(2)}\cdot\bbf{X})\right) \right) \right\rangle_{\!\! n,\epsilon}.
$$
Define
\begin{align*}
	g:(s,r_1,r_2) \in [-K^2,K^2]^3 \mapsto &
	\sum_{\sigma_1,\sigma_2 \in S}P_0(\sigma_1,\sigma_2) \exp\Big(\lambda \bar{\sigma}_1 \bar{\sigma}_2 s
	+ \lambda X'(\bar{\sigma}_1 r_1 + \bar{\sigma}_2 r_2) \Big).
\end{align*}
We have $\E_{\bbf{Z'}} U^2 = \big\langle g(\bbf{x}^{(1)}\!\cdot\bbf{x}^{(2)},\bbf{x}^{(1)}\!\cdot\bbf{X},\bbf{x}^{(2)}\!\cdot\bbf{X}) \big\rangle_{n,\epsilon}$ and by~\eqref{eq:esp_v2}, $\E_{\bbf{Z'}} V^2 = g(Q,Q,Q)$.

\begin{lemma} \label{lem:f_1_lipschitz}
	There exists a constant $M$ that only depends on $\lambda$ and $K$, such that $g$ is almost surely $M$-Lipschitz.
\end{lemma}
\begin{proof}
	$g$ is a random function that depends only on the random variables $X'$ and $L_{n+1}$ (because of $\bar{\sigma}_1$ and $\bar{\sigma}_2$). $g$ is $\mathcal{C}^1$ on the compact $[-K^2, K^2]^3$. An easy computation show that
	$$
	\forall (s,r_1,r_2) \in [-K^2,K^2]^3, \ \|\nabla g(s,r_1,r_2) \| \leq 3 \lambda K^4 \exp(3 \lambda K^4).
	$$
	$g$ is thus $M$-Lipschitz with $M = 3 \lambda K^4 \exp(3 \lambda K^4)$.
\end{proof}
\\

\noindent Using Lemma~\ref{lem:f_1_lipschitz} we obtain
\begin{align*}
	\Big\langle | g(\bbf{x}^{(1)}\!\cdot\bbf{x}^{(2)},\bbf{x}^{(1)}\!\cdot\bbf{X},\bbf{x}^{(2)}\!\cdot\bbf{X}) &- g(Q,Q,Q) | \Big\rangle_{\!\!n,\epsilon}
	\\
	&\leq 
	M \Big\langle \!\sqrt{(\bbf{x}^{(1)}\!\cdot\bbf{x}^{(2)} - Q)^2 + (\bbf{x}^{(1)}\!\cdot\bbf{X} - Q)^2 + (\bbf{x}^{(2)}\!\cdot\bbf{X} - Q)^2} \Big\rangle_{\!\!n,\epsilon}\,.
\end{align*}
We recall equation~\eqref{eq:esp_v2} to notice that $g(Q,Q,Q) = \E_{\bbf{Z'}} V^2$. Thus, using~\eqref{eq:concentration_xx_1} and~\eqref{eq:concentration_xx_2}
\begin{align*}
	\int_0^{\epsilon_0} \!\!  \! d\epsilon \, \E | \E_{\bbf{Z'}} U^2 - \E_{\bbf{Z'}} V^2 |
	\leq
	M \int_0^{\epsilon_0} \!\!  \! d\epsilon \, \E \Big\langle \sqrt{(\bbf{x}^{(1)}\cdot\bbf{x}^{(2)} - Q)^2 + (\bbf{x}^{(1)}\cdot\bbf{X} - Q)^2 + (\bbf{x}^{(2)}\cdot\bbf{X} - Q)^2 } \Big\rangle_{\!\!n,\epsilon}
	\,,
\end{align*}
and the right-hand side goes to $0$ by (\ref{eq:concentration_xx_1}-\ref{eq:concentration_xx_2}).

\vspace{0.5cm}

Showing that
$\int \! d\epsilon \, | \E U V -\E V^2 | \xrightarrow[n \to \infty]{} 0$
goes exactly the same way. We thus omit this part here for the sake of brevity, but the reader can refer to~\cite{lelarge2016fundamental} where all details are presented.
\end{proof}
\\

Using the fact that $|\log U - \log V | \leq \max(U^{-1},V^{-1}) | U - V|$ and the Cauchy-Schwarz inequality, we have
$$
\E | \log U - \log V | \leq \sqrt{\E U^{-2} + \E V^{-2}} \sqrt{\E (U - V)^2} \,.
$$

\begin{lemma} \label{lem:u_v_moment}
	There exists a constant $C$ that depends only on $\lambda$ and $K$ such that
	$$
	\E U^{-2} + \E V^{-2} \leq C \,.
	$$
\end{lemma}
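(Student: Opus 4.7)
The plan is to control both $\E U^{-2}$ and $\E V^{-2}$ by bounding the denominators from below by single exponentials via Jensen's inequality, and then dispatching the surviving Gaussian exponents with elementary Gaussian moment identities. Recall that both $U$ and $V$ are of the form $\sum_{\sigma\in S} P_0(\sigma) e^{\bar{\sigma}\xi + \bar{\sigma}^2 \eta}$ --- for $U$ this expression sits inside an outer Gibbs bracket, but otherwise the structure is the same --- so Jensen applied to $P_0$ (a probability measure on the bounded set $S\subset[-K,K]$) yields a lower bound $\exp(a\xi + b\eta)$ where $a = (1-L_{n+1})\E_{P_0}[X] + L_{n+1}X'$ and $b = (1-L_{n+1})\E_{P_0}[X^2] + L_{n+1}(X')^2$, so that $|a|\leq K$ and $0\leq b\leq K^2$ almost surely.

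For $U$, I would first apply Jensen twice to the outer Gibbs bracket: since $u\mapsto 1/u$ and $u\mapsto u^2$ are convex on $(0,\infty)$,
\[
U^{-2} = \langle f(z(\bbf{x}),s(\bbf{x}))\rangle_{n,\epsilon}^{-2} \leq \langle f(z(\bbf{x}),s(\bbf{x}))^{-1}\rangle_{n,\epsilon}^{2} \leq \langle f(z(\bbf{x}),s(\bbf{x}))^{-2}\rangle_{n,\epsilon}.
\]
The Jensen bound $f(z,s)\geq e^{az+bs}$ then gives $f(z(\bbf{x}),s(\bbf{x}))^{-2}\leq e^{-2az(\bbf{x})-2bs(\bbf{x})}$. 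The crucial structural fact is that the Hamiltonian $H_n'$ defining $\langle \cdot \rangle_{n,\epsilon}$ depends only on $(Z_{i,j})_{i<j\leq n}$ and on $\bbf{Y}'$, and in particular does \emph{not} involve $\bbf{Z}' = (Z_{i,n+1})_{1\leq i\leq n}$. Consequently $\bbf{Z}'$ is independent of a sample $\bbf{x}$ drawn from $\langle\cdot\rangle_{n,\epsilon}$, and we may compute the $\bbf{Z}'$-expectation pointwise in $\bbf{x}$. Using $z(\bbf{x}) = \sqrt{\lambda/n}\sum_i x_iZ'_i + \lambda(\bbf{x}\cdot\bbf{X})X'$ and $-s(\bbf{x}) = \frac{\lambda}{2n}\|\bbf{x}\|^2$, the standard identity $\E e^{tN} = e^{t^2/2}$ for $N\sim\cN(0,1)$ yields
\[
\E_{\bbf{Z}'}\bigl[e^{-2a\sqrt{\lambda/n}\sum_i x_i Z'_i}\bigr] = e^{2a^2\lambda \|\bbf{x}\|^2/n} \leq e^{2\lambda K^4},
\]
while the remaining $\bbf{Z}'$-free factor $\exp(-2a\lambda(\bbf{x}\cdot\bbf{X})X' - 2b s(\bbf{x}))$ is deterministically bounded by $\exp(O(\lambda K^4))$ thanks to $|\bbf{x}\cdot\bbf{X}|\leq K^2$, $|X'|\leq K$, and $\|\bbf{x}\|^2\leq nK^2$. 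Averaging over the remaining randomness gives $\E U^{-2}\leq C_1$ for a constant $C_1$ depending only on $\lambda$ and $K$.

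For $V$, the same Jensen-plus-Gaussian strategy applies, and is in fact simpler because there is no outer Gibbs bracket. Set $W=\sqrt{\lambda/n}\sum_i b_iZ'_i$; conditional on $\bbf{b}=(\langle x_i\rangle_{n,\epsilon})_i$ (which is measurable with respect to the Gibbs data, hence independent of $\bbf{Z}'$), $W$ is centered Gaussian with variance $\lambda Q$, since $\sum_i b_i^2 = nQ$. Jensen on $P_0$ gives $V \geq \exp(aW + \lambda Q\, aX' - \tfrac{\lambda Q}{2}b)$ with the same $a,b$ as above, so
\[
V^{-2} \leq \exp\bigl(-2aW - 2\lambda Q\, aX' + \lambda Q b\bigr).
\]
The Gaussian moment $\E_{W}[e^{-2aW}] = e^{2a^2\lambda Q}\leq e^{2\lambda K^4}$ together with the uniform bounds $Q\leq K^2$, $|a|\leq K$, $|X'|\leq K$, $b\leq K^2$ give $\E V^{-2}\leq C_2$. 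Taking $C=C_1+C_2$ proves the lemma.

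I do not anticipate a genuine obstacle here: the only thing to be careful about is the bookkeeping in the two cases $L_{n+1}\in\{0,1\}$ (which just change the explicit values of $a,b$) and using the independence of $\bbf{Z}'$ from the Gibbs measure at the correct moment so that the Gaussian expectation can be carried out pointwise in the Gibbs variables.
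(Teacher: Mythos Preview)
Your proof is correct and follows essentially the same approach as the paper: lower-bound $U$ and $V$ by single exponentials via Jensen's inequality, then control the resulting exponential moments using the Gaussian identity $\E e^{tN}=e^{t^2/2}$ together with the bounded-support assumption. The only cosmetic difference is the order of the two Jensen applications: the paper first uses the convexity of $(z,s)\mapsto f(z,s)$ to obtain $U\geq f(\langle z(\bbf{x})\rangle_{n,\epsilon},\langle s(\bbf{x})\rangle_{n,\epsilon})$ and then applies Jensen over $P_0$ via $u\mapsto u^{-2}$, whereas you push the inverse inside the Gibbs bracket first and then apply Jensen over $P_0$ via $e^x$; both routes lead to the same Gaussian-moment bound.
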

\begin{proof}
	Using Jensen inequality, we have $U \geq f\big(\langle z(\bbf{x}) \rangle_{n,\epsilon}, \langle s(\bbf{x}) \rangle_{n,\epsilon}\big)$. Then
	$$
	U^{-2} \leq f\big(\langle z(\bbf{x}) \rangle_{n,\epsilon}, \langle s(\bbf{x}) \rangle_{n,\epsilon}\big)^{-2} 
	\leq \sum_{\sigma \in S} P_0(\sigma) \exp\big(-2 \bar{\sigma} \langle z(\bbf{x}) \rangle_{n,\epsilon} - 2 \bar{\sigma}^2 \langle s(\bbf{x}) \rangle_{n,\epsilon} \big) \,.
	$$
	It remains to bound $\E \exp(-2 \bar{\sigma} \langle z(\bbf{x}) \rangle_{n,\epsilon} - 2 \bar{\sigma}^2 \langle s(\bbf{x}) \rangle_{n,\epsilon})$. $P_0$ has a bounded support, therefore
	$$
	\E \exp\big(-2 \bar{\sigma} \langle z(\bbf{x}) \rangle_{n,\epsilon} - 2 \bar{\sigma}^2 \langle s(\bbf{x}) \rangle_{n,\epsilon}\big) \leq C_0 \E \exp\left(-2 \bar{\sigma} \sum_{i=1}^n \sqrt{\frac{\lambda}{n}} \langle x_i \rangle_{n,\epsilon} Z_i'\right) = C_0 \E \exp(2 \lambda Q \bar{\sigma}^2) \leq C_1 \,,
	$$
	for some constant $C_0,C_1$ depending only on $\lambda$ and $K$. Similar arguments show that $\E V^{-2}$ is upper-bounded by a constant.
\end{proof}
\\

\noindent Using the previous lemma we obtain
$
\int \! d\epsilon \, \E | \log U - \log V | \xrightarrow[n \to \infty]{} 0
$. We now compute $\E \log V$ explicitly.
\begin{lemma} \label{lem:esp_log_v1}
	$$
	\E \log V = \epsilon (\E_{P_0} X^2) \E \frac{\lambda Q}{2} + (1-\epsilon) \E \log \sum_{\sigma \in S} P_0(\sigma) \exp \left( \sigma \sqrt{\frac{\lambda}{n}} \sum_{i=1}^n b_i Z'_i + \lambda Q \sigma X' - \frac{\lambda \sigma^2}{2} Q \right).
	$$
\end{lemma}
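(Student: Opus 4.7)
The plan is to split on the Bernoulli variable $L_{n+1}$, which is the only source of the two-term decomposition in the statement. Recall $\bar{\sigma}=(1-L_{n+1})\sigma+L_{n+1}X'$, so $L_{n+1}$ has probability $\epsilon$ of collapsing every $\bar{\sigma}$ to the common value $X'$, and probability $1-\epsilon$ of leaving $\bar\sigma=\sigma$. The term on the right-hand side of the lemma with prefactor $(1-\epsilon)$ is exactly $V$ on the event $\{L_{n+1}=0\}$, so nothing needs to be done in that case. All the work is in the event $\{L_{n+1}=1\}$.

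On $\{L_{n+1}=1\}$, since $\bar\sigma_1=\bar\sigma_2=X'$ for every $\sigma\in S$, the sum in the definition of $V$ collapses:
\begin{equation*}
V = \Bigl(\sum_{\sigma\in S} P_0(\sigma)\Bigr)\exp\!\Bigl(X'\sqrt{\tfrac{\lambda}{n}}\sum_{i=1}^n b_i Z_i' + \lambda Q (X')^2 - \tfrac{\lambda Q}{2}(X')^2\Bigr) = \exp\!\Bigl(X'\sqrt{\tfrac{\lambda}{n}}\sum_{i=1}^n b_i Z_i' + \tfrac{\lambda Q}{2}(X')^2\Bigr).
\end{equation*}
Taking the logarithm and expectation, I use the crucial independence structure: $Z'_i=Z_{i,n+1}$ and $X'=X_{n+1}$ do not appear in the Hamiltonian $H_n'$ defining $\langle\cdot\rangle_{n,\epsilon}$, so both $b_i=\langle x_i\rangle_{n,\epsilon}$ and $Q=\tfrac{1}{n}\sum_i b_i^2$ are independent of $(X',Z'_1,\dots,Z'_n)$, and the $Z'_i$ are centered. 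Hence the linear-in-$Z'$ term vanishes in expectation, and
\begin{equation*}
\mathbb{E}\bigl[\log V \,\big|\, L_{n+1}=1\bigr] = \tfrac{\lambda}{2}\,\mathbb{E}[Q]\,\mathbb{E}[(X')^2] = \tfrac{\lambda}{2}\,\mathbb{E}_{P_0}[X^2]\,\mathbb{E} Q.
\end{equation*}

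Combining the two cases via the tower property with respect to $L_{n+1}\sim\mathrm{Ber}(\epsilon)$ (which is independent of $X', \bbf{Z}', \bbf{Y}, \bbf{Y}'$) gives
\begin{equation*}
\mathbb{E}\log V = \epsilon\cdot\tfrac{\lambda}{2}\mathbb{E}_{P_0}[X^2]\,\mathbb{E} Q + (1-\epsilon)\,\mathbb{E}\log\sum_{\sigma\in S} P_0(\sigma)\exp\!\Bigl(\sigma\sqrt{\tfrac{\lambda}{n}}\sum_{i=1}^n b_i Z_i' + \lambda Q\sigma X' - \tfrac{\lambda\sigma^2}{2}Q\Bigr),
\end{equation*}
which is exactly the claimed identity. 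There is no real obstacle: the only thing to verify carefully is the independence of $(b_i,Q)$ from $(X',\bbf{Z}')$, which follows immediately from the fact that $H_n'$ involves only $\{Z_{i,j}\}_{i<j\le n}$ and $\{X_i\}_{i\le n}$ while $\bbf{Y}'$ involves $\{X_i, L_i\}_{i\le n}$, so the $(n{+}1)$-th spin's noise and signal do not enter the Gibbs measure at all.
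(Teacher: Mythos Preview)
Your proposal is correct and follows essentially the same approach as the paper: split on the Bernoulli variable $L_{n+1}$, observe that on $\{L_{n+1}=1\}$ every $\bar\sigma$ collapses to $X'$ so the log is explicit and the linear-in-$Z'$ term vanishes by independence, while the case $\{L_{n+1}=0\}$ gives exactly the second term. The only cosmetic slip is writing ``$\bar\sigma_1=\bar\sigma_2=X'$'' where there is only one $\sigma$ in the definition of $V$; you mean simply $\bar\sigma=X'$ for every $\sigma\in S$.
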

\begin{proof}
	It suffices to distinguish the cases $L_{n+1}\!=\!0$ and $L_{n+1}\!=\!1$. If $L_{n+1}\!=\!1$ then for all $\sigma \in S$, $\bar{\sigma}=X'$ and
	\begin{align*}
		\log V &= \log \left( \exp\Big(X' \sqrt{\frac{\lambda}{n}} \sum_{i=1}^n b_i Z'_i + \lambda Q X'^2 - \frac{\lambda X'^2}{2} Q\Big) \right)
		= X' \sqrt{\frac{\lambda}{n}} \sum_{i=1}^n b_i Z'_i + \frac{\lambda X'^2}{2} Q  \,.
	\end{align*}
	$L_{n+1}$ is independent of all other random variables, thus
	$$
	\E \Big[ \1(L_{n+1}=1) \log V \Big] = \epsilon (\E_{P_0} X^2) \frac{\lambda}{2} \E Q \,,
	$$
	because the $Z_i'$ are centered, independent from $X'$ and because $X'$ is independent from $Q$. The case $L_{n+1}=0$ is obvious.
\end{proof}
\\

\noindent The variables $(b_i)_{1 \leq i \leq n}$ and $(Z_i')_{1 \leq i \leq n}$ are independent. Recall that $Q = \frac{1}{n} \sum\limits_{i=1}^n b_i^2$. Therefore, 
\vspace{-0.4cm}
$$
\left(X',Q, \frac{1}{\sqrt{n}} \sum_{i=1}^n b_i Z_i'\right) = \left(X', Q, \sqrt{Q} Z_0\right)  \ \ \text{in law, }
\vspace{-0.2cm}
$$
where $Z_0 \sim \mathcal{N}(0,1)$ is independent of $Q,X'$. The expression of $\E \log V$ from Lemma~\ref{lem:esp_log_v1} simplifies
$$
\E \log V = \epsilon (\E_{P_0} X^2) \E \frac{\lambda Q}{2} + (1-\epsilon) \E \log \sum_{\sigma \in S} P_0(\sigma) \exp \left( \sqrt{\lambda Q} \sigma Z_0 + \lambda Q \sigma X' - \frac{\lambda \sigma^2}{2} Q \right),
\vspace{-0.4cm}
$$
thus
$$
\int_0^{\epsilon_0} \!\! d\epsilon \, \Big| \E \log U - \Big( \epsilon (\E_{P_0} X^2) \E \frac{\lambda Q}{2} + (1-\epsilon) \E \log \sum_{\sigma \in S} P_0(\sigma) \exp \big( \sqrt{\lambda Q} \sigma Z_0 + \lambda Q \sigma X' - \frac{\lambda \sigma^2}{2} Q \big) \Big) \Big| \xrightarrow[n \to \infty]{} 0 \,,
$$
which is precisely the statement of Lemma~\ref{lem:part1}.

\subsection{Reduction to distribution with finite support}\label{sec:approximation}

We will show in this section that it suffices to prove Theorems~\ref{th:rs_formula} for input distribution $P_0$ with finite support.
\\

Suppose the Theorem~\ref{th:rs_formula} holds for all prior distributions over $\R$ with finite support. Let $P_0$ be a probability distribution that admits a finite second moment: $\E_{P_0} X^2 < \infty$. We are going to approach $P_0$ with distributions with finite supports.
\\
Let $0 < \epsilon \leq 1$.
Let $K > 0$ such that $\E_{P_0}[X^2 \1(|X| \geq K)] \leq \epsilon^2$.
Let $m \in \N^*$ such that $\frac{K}{m} \leq \epsilon$. For $x \in \R$ we will use the notation
$$
\bar{x} = 
\begin{cases}
	\frac{K}{m} \Big\lfloor \frac{x m}{K} \Big\rfloor & \ \text{if} \ x \in [-K,K]\,, \\
	0 & \ \text{otherwise}.
\end{cases}
$$
Consequently if $x \in [-K,K]$, $\bar{x} \leq x < \bar{x} + \frac{K}{m} \leq \bar{x} + \epsilon$. 
We define $\bar{P}_0$ the image distribution of $P_0$ through the application $x \mapsto \bar{x}$. Let $n \geq 1$. We will note $\bar{F}_n$ the free energy corresponding to the distribution $\bar{P}_0$ and $\bar{\mathcal{F}}$ the function $\mathcal{F}$ from~\eqref{eq:def_potential_xx} corresponding to the distribution $\bar{P}_0$.
$\bar{P}_0$ has a finite support, we have then by assumptions
\begin{equation} \label{eq:limit_discrete}
	\bar{F}_n(\lambda) \xrightarrow[n \to \infty]{} \sup_{q \geq 0} \bar{\mathcal{F}}(\lambda,q) \,.
\end{equation}
By construction we have for all $1 \leq i \leq n$, $\E (X_i - \bar{X}_i)^2 \leq \epsilon^2$.
Hence
$$
\E  \big\| (X_{i} X_{j})_{i<j} - (\bar{X}_{i} \bar{X}_{j})_{i<j}\big\|^2 \leq 2 (n-1)n \E_{P_0}[X^2] \epsilon^2 \,.
$$
Consequently, by ``pseudo-Lipschitz'' continuity of the free energy with respect to the Wasserstein metric (see Proposition~\ref{prop:free_wasserstein} in Appendix~\ref{sec:free_wasserstein}) there exist a constant $C > 0$ depending only on $P_0$, such that, for all $n \geq 1$ and all $\lambda \geq 0$,
\begin{equation} \label{eq:lipF}
	| F_n(\lambda) - \bar{F}_n(\lambda) | \leq \lambda C \epsilon\,.
\end{equation}

\begin{lemma} \label{lem:approx_F_discrete}
	There exists a constant $C' > 0$ that depends only on $P_0$, such that
	$$
	\Big| \sup_{q \geq 0} \mathcal{F}(\lambda,q)
	-
	\sup_{q \geq 0} \bar{\mathcal{F}}(\lambda,q) \Big| \leq \lambda C' \epsilon \,.
	$$
\end{lemma}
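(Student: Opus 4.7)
The approach I would take is threefold: (i) restrict both suprema to a common compact interval, (ii) bound the pointwise difference of the two integrands by a Wasserstein-type estimate on the scalar free energy, and (iii) control the Wasserstein distance between $P_0$ and $\bar P_0$ from the explicit construction. The key observation is that
$$\mathcal{F}(\lambda,q)-\bar{\mathcal{F}}(\lambda,q)=\psi_{P_0}(\lambda q)-\psi_{\bar P_0}(\lambda q),$$
since the quadratic correction $-\lambda q^2/4$ is the same on both sides and cancels.

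For step (i), Proposition~\ref{prop:i_mmse} implies that $\psi_{P_0}$ is $\tfrac12\E_{P_0}[X^2]$-Lipschitz, so $\partial_q\mathcal{F}(\lambda,q)\leq \tfrac{\lambda}{2}(\E_{P_0}[X^2]-q)$ and $\mathcal{F}(\lambda,\cdot)$ is non-increasing on $[\E_{P_0}[X^2],\infty)$. In the natural coupling $(X,\bar X)$ produced by the quantization rule one has $|\bar X|\leq |X|$ pointwise (and $\bar X=0$ outside $[-K,K]$), so $\E_{\bar P_0}[X^2]\leq \E_{P_0}[X^2]$ and the same monotonicity holds for $\bar{\mathcal{F}}(\lambda,\cdot)$. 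Setting $M=\E_{P_0}[X^2]$, both suprema are attained on $[0,M]$, giving
$$\Bigl| \sup_{q\geq 0}\mathcal{F}(\lambda,q)-\sup_{q\geq 0}\bar{\mathcal{F}}(\lambda,q)\Bigr|\leq \sup_{q\in[0,M]}\bigl|\psi_{P_0}(\lambda q)-\psi_{\bar P_0}(\lambda q)\bigr|.$$

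For step (ii), the scalar channel $Y=\sqrt{\gamma}X+Z$ is the $n=1$ case of the Gaussian inference setting of Chapter~\ref{chap:gaussian}, and the pseudo-Lipschitz estimate of Proposition~\ref{prop:free_wasserstein} that was used to obtain~\eqref{eq:lipF} specializes to
$$\bigl|\psi_{P_0}(\gamma)-\psi_{\bar P_0}(\gamma)\bigr|\leq \gamma\, C_1\, W_2(P_0,\bar P_0)$$
for a constant $C_1$ depending only on $P_0$. (Alternatively, this is a one-line Gaussian interpolation: differentiate along a $W_2$-optimal coupling $t\mapsto \E\log\int e^{\sqrt{\gamma}x\,(\sqrt{\gamma}\,((1-t)X+t\bar X)+Z)-\gamma x^2/2}dP_0(x)$ and apply Cauchy--Schwarz.) For step (iii), by construction $|X-\bar X|\leq K/m\leq \epsilon$ on $\{|X|\leq K\}$, while $\bar X=0$ on $\{|X|>K\}$ where $\E_{P_0}[X^2\mathbf{1}(|X|\geq K)]\leq \epsilon^2$, so $W_2(P_0,\bar P_0)^2\leq \E(X-\bar X)^2\leq 2\epsilon^2$. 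Combining with $\gamma=\lambda q\leq \lambda M$ yields the claimed bound with $C'=\sqrt{2}\,C_1\,\E_{P_0}[X^2]$.

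The main obstacle is step (ii): ensuring that Proposition~\ref{prop:free_wasserstein} is stated in a form covering the scalar channel $\psi_{P_0}$, or else re-deriving the inequality $|\psi_{P_0}(\gamma)-\psi_{\bar P_0}(\gamma)|\lesssim \gamma\, W_2(P_0,\bar P_0)$ directly. Once that piece is in hand, steps (i) and (iii) are purely soft (elementary monotonicity and an explicit coupling), and the whole argument reduces to importing a scalar analogue of the Wasserstein-Lipschitz estimate that was already invoked right above the lemma to produce~\eqref{eq:lipF}.
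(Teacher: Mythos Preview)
Your approach is essentially the paper's own: restrict to a common compact interval via the Lipschitz bound on $\psi_{P_0}$, then apply Proposition~\ref{prop:free_wasserstein} to the scalar channel to get $|\psi_{P_0}(\lambda q)-\psi_{\bar P_0}(\lambda q)|\leq \lambda q\,C\,W_2(P_0,\bar P_0)$, and finally use the explicit coupling to bound $W_2$.

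There is one small slip in step (i): the claim $|\bar X|\leq |X|$ pointwise is false for negative $X$. The quantization $\bar x=\tfrac{K}{m}\lfloor xm/K\rfloor$ rounds \emph{down}, so for $x<0$ it rounds away from zero (e.g.\ $x=-0.5$, $K/m=1$ gives $\bar x=-1$). Hence $\E_{\bar P_0}[X^2]\leq \E_{P_0}[X^2]$ need not hold. The paper sidesteps this by taking the common interval to be $[0,\E_{P_0}[X^2+\bar X^2]]$; you can do the same, or simply note that $|\bar X|\leq |X|+\epsilon$ always, so $\E[\bar X^2]$ is bounded by a constant depending only on $P_0$ (for $\epsilon\leq 1$). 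With that correction your argument goes through and matches the paper's.
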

\begin{proof}
	First notice that both suprema are achieved over a common compact set $[0, \E_{P_0}[X^2 + \bar{X}^2]]$. Indeed, for $q \geq 0$,
	$$
	\frac{\partial}{\partial q}\cF(\lambda,q) = \lambda \psi_{P_0}'(\lambda q) - \frac{\lambda q}{2} \leq \frac{\lambda}{2} \big(\E_{P_0}[X^2] - q \big)
	$$
	because $\psi_{P_0}$ is $\frac{1}{2}\E_{P_0}[X^2]$-Lipschitz by Proposition~\ref{prop:i_mmse}. Consequently, the maximum of $\cF(\lambda, \cdot)$ is achieved on $[0,\E_{P_0}[X^2]]$ and similarly the supremum of $\bar{\mathcal{F}}(\lambda,\cdot)$ is achieved over $[0, \E_{P_0}[\bar{X}^2]]$.
	Using Proposition~\ref{prop:free_wasserstein} in Appendix~\ref{sec:free_wasserstein}, we obtain that there exists a constant $C'$ depending only on $P_0$ such that $\forall q \in [0, \E_{P_0}[X^2 + \bar{X}^2]], \ | \mathcal{F}(\lambda,q) - \bar{\mathcal{F}}(\lambda, q)| \leq \lambda C' \epsilon$. The lemma follows.
\end{proof}
\\

Combining Equation~\ref{eq:limit_discrete} and~\ref{eq:lipF} and Lemma~\ref{lem:approx_F_discrete}, we obtain that there exists $n_0 \geq 1$ such that for all $n \geq n_0$,
$$
| F_n - \sup_{q \geq 0} \mathcal{F}(\lambda,q) | \leq \lambda (C + C' +1) \epsilon \,,
$$
where $C$ and $C'$ are two constants that only depend on $P_0$. This proves Theorem~\ref{th:rs_formula}. 

\chapter{Non-symmetric low-rank matrix estimation}\label{sec:uv}

We consider now the spiked Wishart model~\eqref{eq:spiked_wishart}.
Let $P_U$ and $P_V$ be two probability distributions on $\R$ with finite second moment. 
We assume that $\Var_{P_U}(U) , \Var_{P_V}(V) > 0$. 
Let $n,m \geq 1$, $\lambda > 0$  and consider $\bbf{U} = (U_1, \dots, U_n) \iid P_U$ and $\bbf{V} = (V_1,\dots, V_m) \iid P_V$, independent from each other.
Suppose that we observe
\begin{equation} \label{eq:model}
	Y_{i,j} = \sqrt{\frac{\lambda}{n}} \, U_i V_j + Z_{i,j}, \qquad \text{for} \quad 1 \leq i \leq n \quad \text{and} \quad 1 \leq j \leq m \,,
\end{equation}
where $(Z_{i,j})_{i,j}$ are i.i.d.\ standard normal random variables, independent from $\bbf{U}$ and $\bbf{V}$. In the following, $\E$ will denote the expectation with respect to the variables $(\bbf{U},\bbf{V})$ and $\bbf{Z}$. 
We define the Minimum Mean Squared Error (MMSE) for the estimation of the matrix $\bbf{U} \bbf{V}^{\sT}$ given the observation of the matrix $\bbf{Y}$:
\begin{align*}
	\MMSE_n(\lambda) 
	&= \min_{\widehat{\theta}} \left\{ \frac{1}{nm} \sum_{i=1}^n \sum_{j=1}^m \E\left[ \left(U_i V_j - \widehat{\theta}_{i,j}(\bbf{Y}) \right)^2\right] \right\} \label{eq:def_mmse_min_intro}
	=\frac{1}{nm}  \sum_{i=1}^n \sum_{j=1}^m\E \left[ \left(U_i V_j - \E\left[U_i V_j |\bbf{Y} \right]\right)^2\right],
\end{align*}
where the minimum is taken over all estimators $\widehat{\theta}$ (i.e.\ measurable functions of the observations $\bbf{Y}$).
In order to get an upper bound on the MMSE, let us consider the ``dummy estimator''
$\widehat{\theta}_{i,j} = \E[U_i V_j]$ for all $i,j$ which achieves a ``dummy'' matrix Mean Squared Error of:
$$
\DMSE =  \frac{1}{nm} \sum_{i=1}^n \sum_{j=1}^m \E \left[ \left(U_i V_j - \E [U_i V_j] \right)^2\right] 
= \E[U^2] \E[V^2] - (\E U)^2 (\E V)^2 \,.
$$

\section{Fundamental limits of estimation} \label{sec:rs_formula_uv}

As in Chapter~\ref{sec:xx}, we investigate the posterior distribution of $\bbf{U},\bbf{V}$ given $\bbf{Y}$. We define the Hamiltonian 
\begin{equation} \label{eq:hamiltonian}
	H_n(\bbf{u},\bbf{v}) = \sum_{i,j} \sqrt{\frac{\lambda}{n}} u_i v_j Z_{i,j} + \frac{\lambda}{n} u_i U_i v_j V_j - \frac{\lambda}{2n} u_i^2 v_j^2
	, \quad \text{for} \  (\bbf{u},\bbf{v}) \in \R^n \times \R^m.
\end{equation}
The posterior distribution of $(\bbf{U},\bbf{V})$ given $\bbf{Y}$ is then
\begin{equation} \label{eq:posterior}
	dP\big(\bbf{u},\bbf{v} \, \big| \, \bbf{Y} \big)
	= \frac{1}{\cZ_n(\lambda)}  e^{H_n(\bbf{u},\bbf{v})} d P_U^{\otimes n}(\bbf{u}) d P_V^{\otimes m}(\bbf{v}),
\end{equation}
where $\cZ_n(\lambda) = \!\!\!\int \!  e^{H_n(\bbf{u},\bbf{v})}d P_U^{\otimes n}(\bbf{u}) d P_V^{\otimes m}(\bbf{v})$ is the appropriate normalization. The corresponding free energy is
$$
F_n(\lambda) = \frac{1}{n} \E \log \cZ_n(\lambda) = \frac{1}{n} \E \log \left( \int
e^{H_n(\bbf{u},\bbf{v})} d P_U^{\otimes n}(\bbf{u}) d P_V^{\otimes m}(\bbf{v}) \right).
$$

We consider here the high-dimensional limit where $n,m \to \infty$, while $m / n \to \alpha > 0$.
We will be interested in the following fixed point equations, sometimes called ``state evolution equations''.

\begin{definition}
	We define the set $\Gamma(\lambda,\alpha)$ as
	\begin{equation} \label{eq:def_gamma}
		\Gamma(\lambda,\alpha) = \left\{
			(q_u,q_v) \in \R_{\geq 0}^2 \ \big| \
			q_u = 2 \psi'_{P_U}(\lambda \alpha q_v)  \ \text{and} \
			q_v = 2 \psi'_{P_V}(\lambda q_u)
		\right\}\,.
	\end{equation}
\end{definition}

First notice that $\Gamma(\lambda,\alpha)$ is not empty. 
The function $f: (q_u,q_v) \mapsto (2 \psi'_{P_U}(\lambda \alpha q_v),2 \psi'_{P_V}(\lambda q_u))$ is continuous from the convex compact set $[0, \E U^2] \times [0, \E V^2]$  into itself (see Proposition~\ref{prop:i_mmse}). Brouwer's Theorem gives the existence of a fixed point of $f$: $\Gamma(\lambda,\alpha) \neq \emptyset$.
\\

We will express the limit of $F_n$ using the following function
\begin{equation}\label{eq:def_potential_uv}
	\mathcal{F}:(\lambda,\alpha,q_u,q_v) \mapsto 
	\psi_{P_U}(\lambda \alpha q_v) + \alpha \psi_{P_V}(\lambda q_u)
	- \frac{\lambda \alpha}{2} q_u q_v \,.
\end{equation}
Recall that $\psi_{P_U}$ and $\psi_{P_V}$, defined by~\eqref{eq:def_psi_p0}, are the free energies of additive Gaussian scalar channels~\eqref{eq:additive_scalar_channel} with priors $P_U$ and $P_V$.
The Replica-Symmetric formula states that the free energy $F_n$ converges to the supremum of $\mathcal{F}$ over $\Gamma(\lambda,\alpha)$.

\begin{theorem}[Replica-Symmetric formula for the spiked Wishart model] \label{th:rs_formula_uv}
	\begin{equation} \label{eq:lim_fn}
		F_n(\lambda) \xrightarrow[n \to \infty]{} \sup_{(q_u,q_v) \in \Gamma(\lambda,\alpha)} \mathcal{F}(\lambda,\alpha,q_u,q_v) = \sup_{q_u \geq 0} \inf_{q_v \geq 0} \mathcal{F}(\lambda,\alpha,q_u,q_v) \,.
	\end{equation}
	Moreover, these extrema are achieved over the same couples $(q_u,q_v) \in \Gamma(\lambda,\alpha)$.
\end{theorem}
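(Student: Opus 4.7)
The proof runs in parallel with that of Theorem~\ref{th:rs_formula}, now tracking the two species $\bbf{u}\in\R^n$ and $\bbf{v}\in\R^m$ and the two overlaps $\bbf{u}\cdot\bbf{U}=\tfrac{1}{n}\sum_i u_iU_i$ and $\bbf{v}\cdot\bbf{V}=\tfrac{1}{m}\sum_j v_jV_j$. The plan is: (i) reduce to $P_U,P_V$ with bounded support via Wasserstein continuity as in Section~\ref{sec:approximation}; (ii) obtain the lower bound $\liminf_n F_n\geq \sup_{q_u}\inf_{q_v}\mathcal{F}(\lambda,\alpha,q_u,q_v)$ by a Guerra-type interpolation; (iii) obtain the matching upper bound through an Aizenman-Sims-Starr cavity scheme, applying the pinning lemma to both species to force overlap concentration; and (iv) identify the sup-inf with $\sup_{\Gamma(\lambda,\alpha)}\mathcal{F}$.

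\textbf{Guerra interpolation.} Fix $q_u,q_v\geq 0$ and introduce the interpolating Hamiltonian
$$H_{n,t}(\bbf{u},\bbf{v})=\sum_{i,j}\Big(\sqrt{\tfrac{\lambda t}{n}}Z_{i,j}u_iv_j+\tfrac{\lambda t}{n}u_iU_iv_jV_j-\tfrac{\lambda t}{2n}u_i^2v_j^2\Big)+\sum_{i=1}^n h^{(u)}_{1-t}(u_i)+\sum_{j=1}^m h^{(v)}_{1-t}(v_j),$$
where $h^{(u)}_{1-t}(u_i)$ (resp.\ $h^{(v)}_{1-t}(v_j)$) is a scalar-channel Hamiltonian of the form~\eqref{eq:additive_scalar_channel} at SNR $(1-t)\lambda\alpha q_v$ with planted signal $U_i$ (resp.\ at SNR $(1-t)\lambda q_u$ with planted $V_j$). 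Writing $\psi(t)=\tfrac{1}{n}\E\log\int e^{H_{n,t}}dP_U^{\otimes n}dP_V^{\otimes m}$, one has $\psi(1)=F_n$ and $\psi(0)\to\mathcal{F}(\lambda,\alpha,q_u,q_v)+\tfrac{\lambda\alpha}{2}q_uq_v$ as $m/n\to\alpha$. Gaussian integration by parts together with the Nishimori identity (Proposition~\ref{prop:nishimori}) give, exactly as in the derivation of~\eqref{eq:convex_guerra},
$$\psi'(t)=\tfrac{\lambda\alpha}{2}\,\E\big\langle(\bbf{u}\cdot\bbf{U}-q_u)(\bbf{v}\cdot\bbf{V}-q_v)\big\rangle_{n,t}-\tfrac{\lambda\alpha}{2}q_uq_v+o(1).$$
Contrary to the symmetric case the cross-overlap integrand is not a square and hence not of definite sign. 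An adaptive scheme remedies this: for fixed $q_u\geq 0$, take $q_v=2\psi'_{P_V}(\lambda q_u)$, the unique minimizer of the convex function $q_v\mapsto\mathcal{F}(\lambda,\alpha,q_u,q_v)$ (whose second derivative $(\lambda\alpha)^2\psi''_{P_U}(\lambda\alpha q_v)$ is non-negative by Proposition~\ref{prop:i_mmse}); taking the supremum in $q_u$ then yields $\liminf_n F_n\geq\sup_{q_u}\inf_{q_v}\mathcal{F}(\lambda,\alpha,q_u,q_v)$.

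\textbf{Cavity upper bound.} Following Sections~\ref{sec:small_perturbation}--\ref{sec:proof_main_estimate}, add pinning side-information revealing independent fractions $\epsilon_U,\epsilon_V\in[0,\epsilon_0]$ of the coordinates of $\bbf{U}$ and $\bbf{V}$. Proposition~\ref{prop:approximation_f_n_epsilon} applied to each species gives $|F_{n,\epsilon}-F_n|\leq H(P_U)\epsilon_U+\alpha H(P_V)\epsilon_V$, and Proposition~\ref{prop:overlap_concentration_montanari} forces both $\bbf{u}^{(1)}\cdot\bbf{u}^{(2)}$ and $\bbf{v}^{(1)}\cdot\bbf{v}^{(2)}$ to concentrate around their Gibbs means $Q_U,Q_V$ on average over $(\epsilon_U,\epsilon_V)\in[0,\epsilon_0]^2$. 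An Aizenman-Sims-Starr computation comparing $\cZ_{n+1,m}$ with $\cZ_{n,m}$---separating off one extra $u$-spin $u_{n+1}$ whose effective cavity field is Gaussian with variance $\propto\bbf{v}\cdot\bbf{v}$ and mean $\propto V_{n+1}(\bbf{v}\cdot\bbf{V})$---produces an $A_{n,\epsilon}$ of the same structure as~\eqref{eq:a_n_proof}, now depending on both $Q_U$ and $Q_V$. A two-overlap extension of Proposition~\ref{prop:main_estimate} then gives
$$\int_{[0,\epsilon_0]^2}\!d\epsilon_U\,d\epsilon_V\,\big(A_{n,\epsilon}-\E[\mathcal{F}_\epsilon(\lambda,\alpha,Q_U,Q_V)]\big)\xrightarrow[n\to\infty]{}0,$$
with $\mathcal{F}_\epsilon\to\mathcal{F}$ uniformly on the compact $[0,\E_{P_U}[U^2]]\times[0,\E_{P_V}[V^2]]\ni(Q_U,Q_V)$. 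Letting $\epsilon_0\to 0$ yields $\limsup_n F_n\leq\sup_{q_u,q_v\geq 0}\mathcal{F}$.

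\textbf{Saddle identification and main obstacle.} Combining the two bounds gives $F_n\to\sup_{q_u}\inf_{q_v}\mathcal{F}$. Its equality with $\sup_{\Gamma(\lambda,\alpha)}\mathcal{F}$ follows from examining critical points: the inner infimum in $q_v$ is attained at a $q_v^*(q_u)$ satisfying $q_u=2\psi'_{P_U}(\lambda\alpha q_v^*(q_u))$, while the outer maximum in $q_u$ forces the second equation $q_v^*=2\psi'_{P_V}(\lambda q_u)$, so the optimizing pair lies in $\Gamma(\lambda,\alpha)$; the reverse inequality $\sup_\Gamma\mathcal{F}\leq\sup_{q_u}\inf_{q_v}\mathcal{F}$ follows from $\inf_{q_v}\mathcal{F}(q_u,\cdot)=\mathcal{F}(q_u,q_v^*(q_u))$. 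The principal obstacle, absent in the Wigner case, is the indefinite sign of the cross-overlap term in Guerra's derivative: being a product (not a square) of two overlap deviations, it cannot be bounded uniformly below by $-\tfrac{\lambda\alpha}{2}q_uq_v$ without an adaptive choice of $(q_u,q_v)$, and the sup-inf structure in~\eqref{eq:lim_fn} is therefore intrinsic to the answer rather than an artefact of the method.
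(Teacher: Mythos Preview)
Your proposal has a genuine gap in \emph{both} bounds, and the approach you sketch does not close either one.

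\textbf{Lower bound.} You correctly identify that the Guerra remainder
\[
\psi'(t)+\tfrac{\lambda\alpha}{2}q_uq_v+o(1)=\tfrac{\lambda\alpha}{2}\,\E\big\langle(\bbf{u}\cdot\bbf{U}-q_u)(\bbf{v}\cdot\bbf{V}-q_v)\big\rangle_{n,t}
\]
is not a square, but then you write ``an adaptive scheme remedies this: for fixed $q_u$, take $q_v=2\psi'_{P_V}(\lambda q_u)$''. Two problems. First, the minimizer of $q_v\mapsto\mathcal{F}(\lambda,\alpha,q_u,q_v)$ satisfies $q_u=2\psi'_{P_U}(\lambda\alpha q_v)$, not the equation you wrote. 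Second, and more importantly, fixing $(q_u,q_v)$ in any deterministic way does \emph{nothing} to control the sign of the cross term: $\bbf{u}\cdot\bbf{U}-q_u$ and $\bbf{v}\cdot\bbf{V}-q_v$ are still arbitrary centred random variables and their product has no reason to have non-negative expectation. The paper's Section~\ref{sec:proof_techniques} explains that making this term non-negative requires first proving that the overlaps themselves approximately satisfy the state-evolution equations (a TAP-type statement), and then exploiting monotonicity of $\psi'_{P_U}$. You have not done this, and it is a substantial piece of work.

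\textbf{Upper bound.} Your Aizenman--Sims--Starr scheme, as written, produces at best
\[
\limsup_n F_n \ \le\ \sup_{(q_u,q_v)\in[0,\E U^2]\times[0,\E V^2]}\mathcal{F}(\lambda,\alpha,q_u,q_v),
\]
which is \emph{strictly larger} than the target $\sup_{q_u}\inf_{q_v}\mathcal{F}$ in general. Indeed $\mathcal{F}$ is convex in each variable separately, so its supremum over the rectangle is achieved on the boundary (for instance $\mathcal{F}(0,\E V^2)=\psi_{P_U}(\lambda\alpha\E V^2)$ can exceed the saddle value). The cavity computation alone does not know that $(Q_U,Q_V)$ must lie on $\Gamma(\lambda,\alpha)$; you would again need the TAP-type relation between $Q_U$ and $Q_V$ to reduce the two-parameter supremum to the constrained one. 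This is exactly the difficulty you flagged in the lower bound, reappearing here.

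\textbf{What the paper does instead.} The paper bypasses both obstacles with a single device: the adaptive (Barbier--Macris) interpolation of Section~\ref{sec:proof_rs_uv}. The interpolation paths $q_1(t),q_2(t)$ are not fixed constants but solutions of an ODE whose right-hand side involves the running expected overlap $Q(t,q_1,q_2)=\E\langle\bbf{u}\cdot\bbf{U}\rangle_{n,t}$. Choosing $q_2'(t)=Q(t,q_1(t),q_2(t))$ makes the cross term in~\eqref{eq:der_f_interpolation_uv} vanish exactly (up to the overlap-concentration error of Proposition~\ref{prop:overlap_uv}), rather than merely making it non-negative. Both the lower and upper bounds then follow from this single identity (Proposition~\ref{prop:key_uv}) by two different choices of $q_1'$: constant for the lower bound, and $q_1'=2\psi'_{P_V}(q_2')$ for the upper bound, the latter together with convexity of $\psi_{P_U},\psi_{P_V}$ forcing the integrand to sit at the inner infimum. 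No cavity/ASS step is used at all.
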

This result proved in~\cite{miolane2017fundamental} was conjectured by~\cite{DBLP:conf/allerton/LesieurKZ15}, in particular $\mathcal{F}$ corresponds to the ``Bethe free energy'' \cite[Equation 47]{DBLP:conf/allerton/LesieurKZ15}. 
Theorem~\ref{th:rs_formula_uv} is proved in Section~\ref{sec:proof_rs_uv}. For the rank-$k$ case (where $P_U$ and $P_V$ are probability distributions over $\R^k$), see~\cite{miolane2017fundamental}.
As in Chapter~\ref{sec:xx}, the Replica-Symmetric formula (Theorem~\ref{th:rs_formula}) allows to compute the limit of the $\MMSE$.

\begin{proposition}[Limit of the $\MMSE$] \label{prop:mmse_uv}
	Let 
	$$
	D_{\alpha} = \Big\{ \lambda > 0 \ \Big| \ \mathcal{F}(\lambda,\alpha, \cdot,\cdot) \ \text{has a unique maximizer $(q_u^*(\lambda,\alpha),q_v^*(\lambda,\alpha))$ over} \ \Gamma(\lambda,\alpha) \Big\}.
	$$
	Then $D_{\alpha}$ is equal to $(0,+\infty)$ minus a countable set and for all $\lambda \in D_{\alpha}$ (and thus almost every $\lambda>0$)
	\begin{align}
		&\MMSE_n(\lambda) \xrightarrow[n \to \infty]{} \E[U^2] \E[V^2] - q_u^*(\lambda,\alpha) q_v^*(\lambda,\alpha) \label{eq:lim_mmse} \,.
	\end{align}
\end{proposition}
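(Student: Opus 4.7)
The proof should follow the same template as Corollary~\ref{cor:limit_mmse} for the symmetric case: combine the I-MMSE relation with an envelope-theorem computation of the derivative of the limiting free energy.

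First I would recast the Wishart observation model as a vector Gaussian channel of the form~\eqref{eq:inference}: setting $\bbf{X} = \bbf{U}\bbf{V}^{\sT}/\sqrt{n}$ viewed as a vector of length $nm$, the observations become $\bbf{Y} = \sqrt{\lambda}\,\bbf{X} + \bbf{Z}$. Proposition~\ref{prop:i_mmse} applied to this channel yields
\[
n F_n'(\lambda) \;=\; \tfrac{1}{2}\bigl(\E\|\bbf{X}\|^2 - \E\|\bbf{X} - \E[\bbf{X}|\bbf{Y}]\|^2\bigr) \;=\; \tfrac{m}{2}\bigl(\E[U^2]\E[V^2] - \MMSE_n(\lambda)\bigr),
\]
so that $F_n'(\lambda) = \frac{m}{2n}\bigl(\E[U^2]\E[V^2] - \MMSE_n(\lambda)\bigr)$. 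Proposition~\ref{prop:i_mmse} also tells us that each $F_n$ is convex.

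Next I would compute the derivative of the limit $f(\lambda) := \sup_{(q_u,q_v)\in\Gamma(\lambda,\alpha)} \mathcal{F}(\lambda,\alpha,q_u,q_v)$ at a point $\lambda\in D_\alpha$ via an envelope argument. Using the sup-inf representation from Theorem~\ref{th:rs_formula_uv} (whose optimizers coincide with the maximizers over $\Gamma$), the constraint set $\{q_u,q_v\geq 0\}$ no longer depends on $\lambda$, so a version of Proposition~\ref{prop:envelope_compact} applies. A direct computation gives
\[
\partial_\lambda \mathcal{F}(\lambda,\alpha,q_u,q_v) = \alpha q_v\, \psi_{P_U}'(\lambda\alpha q_v) + \alpha q_u\, \psi_{P_V}'(\lambda q_u) - \tfrac{\alpha}{2} q_u q_v,
\]
and at any point of $\Gamma(\lambda,\alpha)$ the fixed-point equations $q_u = 2\psi_{P_U}'(\lambda\alpha q_v)$ and $q_v = 2\psi_{P_V}'(\lambda q_u)$ simplify this to $\frac{\alpha}{2} q_u q_v$. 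Hence for $\lambda\in D_\alpha$,
\[
f'(\lambda) = \tfrac{\alpha}{2}\, q_u^*(\lambda,\alpha)\, q_v^*(\lambda,\alpha).
\]

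To transfer this derivative to $F_n$, I would invoke the standard fact (Proposition~\ref{prop:deriv_convex} used in the proof of Corollary~\ref{cor:limit_mmse}) that a pointwise-convergent sequence of differentiable convex functions has derivatives converging at every point of differentiability of the limit. Combining $F_n'(\lambda) \to f'(\lambda)$ with the I-MMSE identity above and passing $m/n\to\alpha$ yields
\[
\tfrac{\alpha}{2}\bigl(\E[U^2]\E[V^2] - \lim_n\MMSE_n(\lambda)\bigr) = \tfrac{\alpha}{2}\, q_u^*(\lambda,\alpha)\, q_v^*(\lambda,\alpha),
\]
which is exactly~\eqref{eq:lim_mmse}.

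The main obstacle is the identification of $D_\alpha$ with the full-measure set of differentiability points of $f$. Convexity of $f$ already gives that $f$ is differentiable outside a countable set, so one needs only check that $\lambda$ lies in $D_\alpha$ \emph{iff} the set $\{\partial_\lambda \mathcal{F}(\lambda,\alpha,q_u^*,q_v^*) \mid (q_u^*,q_v^*)\text{ optimal}\} = \{\tfrac{\alpha}{2} q_u^* q_v^*\}$ is a singleton. Care is needed here because two distinct saddle points of the sup-inf could in principle produce the same value of $q_u q_v$; handling this requires the second half of Theorem~\ref{th:rs_formula_uv} stating that the maximizers over $\Gamma$ coincide with the saddle points, together with the envelope proposition from the appendix, to conclude that non-uniqueness of the maximizer is the only obstruction to differentiability of $f$.
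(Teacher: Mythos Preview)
Your approach is exactly what the paper intends: it explicitly says Proposition~\ref{prop:mmse_uv} ``follows from the same kind of arguments than Corollary~\ref{cor:limit_mmse}'', and your outline (I-MMSE to get $F_n'(\lambda)=\tfrac{m}{2n}(\E[U^2]\E[V^2]-\MMSE_n)$, convexity of $F_n$, Proposition~\ref{prop:deriv_convex}, envelope theorem on the limit) reproduces that argument faithfully, including the correct simplification $\partial_\lambda\mathcal F=\tfrac{\alpha}{2}q_uq_v$ on $\Gamma$.

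On the obstacle you flag at the end, the resolution is cleaner than you suggest: the map $(q_u,q_v)\mapsto q_uq_v$ is in fact \emph{injective} on $\Gamma(\lambda,\alpha)$. Indeed, every point of $\Gamma$ satisfies $q_v=2\psi_{P_V}'(\lambda q_u)$; since $\Var_{P_V}(V)>0$, Proposition~\ref{prop:i_mmse} makes $\psi_{P_V}$ strictly convex, so along $\Gamma$ the coordinate $q_v$ is a strictly increasing function of $q_u$, and hence so is the product $q_uq_v$. Thus if $f$ is differentiable at $\lambda$, the envelope theorem forces all maximizers to share the same value of $q_uq_v$, and injectivity then gives a unique maximizer. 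This yields $D_\alpha$ equal to the differentiability set of the convex function $f$, whose complement is countable.
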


Again, this was conjectured in~\cite{DBLP:conf/allerton/LesieurKZ15}: the performance of the Bayes-optimal estimator (i.e.\ the MMSE) corresponds to the fixed point of the state-evolution equations~\eqref{eq:def_gamma} which has the greatest Bethe free energy $\mathcal{F}$.
Proposition~\ref{prop:mmse_uv} follows from the same kind of arguments than Corollary~\ref{cor:limit_mmse} so we omit its proof for the sake of brevity.
\\

Proposition~\ref{prop:mmse_uv} allows to locate the information-theoretic threshold for our matrix estimation problem.
Let us define 
\begin{equation} \label{eq:lambda_c}
	\lambda_c(\alpha) = \inf \left\{\lambda \in D_{\alpha} \ | \ q_u^*(\lambda,\alpha) q_v^*(\lambda,\alpha) > (\E U)^2 (\E V)^2 \right\}.
\end{equation}
If the set of the left-hand side is empty, one defines $\lambda_c(\alpha) = 0$.
Proposition~\ref{prop:mmse_uv} gives that $\lambda_c(\alpha)$ is the information-theoretic threshold for the estimation of $\bbf{U} \bbf{V}^{\sT}$ given $\bbf{Y}$:
\begin{itemize}
	\item If $\lambda < \lambda_c(\alpha)$, then $\MMSE_n(\lambda) \xrightarrow[n \to \infty]{} \DMSE$. It is not possible to reconstruct the signal $\bbf{U} \bbf{V}^{\sT}$ better than a ``dummy'' estimator.
	\item If $\lambda > \lambda_c(\alpha)$, then $\lim\limits_{n \to \infty} \MMSE_n(\lambda) < \DMSE$. It is possible to reconstruct the signal $\bbf{U} \bbf{V}^{\sT}$ better than a ``dummy'' estimator.
\end{itemize}

Proposition~\ref{prop:mmse_uv} gives us the limit of the MMSE for the estimation of the matrix $\bbf{U} \bbf{V}^{\sT}$, but does not gives us the minimal error for the estimation of $\bbf{U}$ or $\bbf{V}$ separately. As we will see in the next section with the spiked covariance model, one can be interested in estimating $\bbf{U} \bbf{U}^{\sT}$ or $\bbf{V} \bbf{V}^{\sT}$, only. Let us define:
\begin{align*}
	\MMSE_n^{(u)}(\lambda) &= \frac{1}{n^2} \E \Big[\sum_{1 \leq i,j \leq n}\big(U_i U_j - \E[U_i U_j | \bbf{Y}]\big)^2\Big] \,,
\\
\MMSE_n^{(v)}(\lambda) &= \frac{1}{m^2} \E \Big[\sum_{1 \leq i,j \leq m}\big(V_i V_j - \E[V_i V_j | \bbf{Y}]\big)^2\Big] \,.
\end{align*}
\vspace{-4mm}
\begin{theorem}\label{th:limit_mmse_vv}
	For all $\alpha > 0$ and all $\lambda \in D_{\alpha}$
	$$
	\MMSE_n^{(u)}(\lambda) \xrightarrow[n \to \infty]{} \E_{P_U}[U^2]^2 - q_u^*(\lambda,\alpha)^2
	\quad \text{and} \quad
	\MMSE_n^{(v)}(\lambda) \xrightarrow[n \to \infty]{} \E_{P_V}[V^2]^2 - q_v^*(\lambda,\alpha)^2 \,.
	$$
\end{theorem}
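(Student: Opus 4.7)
The plan is an I-MMSE argument applied to a Wigner-type perturbation of the observation channel targeted at $\bbf{U}$ (and symmetrically at $\bbf{V}$). I would introduce independent extra observations $Y''_{ij} = \sqrt{\mu/n}\,U_iU_j + Z''_{ij}$ for $1\le i<j\le n$, with $Z''_{ij}\iid\cN(0,1)$ and independent of everything else, and write $F_n(\lambda,\mu)$ for the free energy of the joint channel $(\bbf{Y},\bbf{Y}'')$. The crucial step is to extend Theorem~\ref{th:rs_formula_uv} to this two-parameter family and prove
\[
F_n(\lambda,\mu) \xrightarrow[n\to\infty]{} \sup_{q_u,q_v\ge 0}\Bigl[\psi_{P_U}(\lambda\alpha q_v + \mu q_u) + \alpha\,\psi_{P_V}(\lambda q_u) - \tfrac{\lambda\alpha}{2}q_uq_v - \tfrac{\mu}{4}q_u^2\Bigr],
\]
with stationarity equations $q_u = 2\psi'_{P_U}(\lambda\alpha q_v + \mu q_u)$ and $q_v = 2\psi'_{P_V}(\lambda q_u)$. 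This follows the two-step strategy of Sections~\ref{sec:guerraton}--\ref{sec:aizenman}: the Guerra interpolation must simultaneously decouple the Wishart channel (into scalar channels with effective SNRs $\lambda\alpha q_v$ and $\lambda q_u$) and the Wigner channel (into a scalar channel with effective SNR $\mu q_u$ and a self-overlap correction $-\mu q_u^2/4$); the Aizenman-Sims-Starr cavity computation must then track the two additive contributions when the $(n+1)^{\text{th}}$ $\bbf{U}$-spin is inserted. Since the two Hamiltonians superpose additively and the cavity fields decouple at the replica-symmetric level, the adaptations are conceptually routine.

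Once the extended formula is in hand, the remainder is an envelope/I-MMSE manipulation analogous to Corollary~\ref{cor:limit_mmse}. Applying the I-MMSE relation of Proposition~\ref{prop:i_mmse} to the Wigner perturbation (exactly as in the derivation of~\eqref{eq:rel_f_mmse}) gives
\[
\partial_\mu F_n(\lambda,\mu) = \tfrac{n-1}{4n}\bigl(\E_{P_U}[U^2]^2 - \MMSE_n^{(u)}(\lambda,\mu)\bigr).
\]
Since $\mu\mapsto F_n(\lambda,\mu)$ is convex by Proposition~\ref{prop:i_mmse}, Proposition~\ref{prop:deriv_convex} transfers the convergence of the free energies to the convergence of their derivatives at every point of differentiability of the limit. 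Proposition~\ref{prop:envelope_compact} then identifies these points: when $\lambda\in D_\alpha$ the maximizer $(q_u^*,q_v^*)$ at $\mu=0$ is unique, the limit is differentiable there, and
\[
\partial_\mu F_\infty(\lambda,0) = \psi'_{P_U}(\lambda\alpha q_v^*)\,q_u^* - \tfrac{1}{4}(q_u^*)^2 = \tfrac{1}{4}(q_u^*)^2,
\]
where the last equality uses $q_u^* = 2\psi'_{P_U}(\lambda\alpha q_v^*)$. Equating the two expressions for $\partial_\mu F$ at $\mu=0$ yields $\lim_n \MMSE_n^{(u)}(\lambda) = \E_{P_U}[U^2]^2 - (q_u^*)^2$. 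A mirror-image argument with the perturbation $\sqrt{\nu/m}\,V_jV_k + Z'''_{jk}$ on $\bbf{V}$ gives the $\MMSE_n^{(v)}$ statement; the $1/m$ normalization in the perturbation combines with the $1/n$ prefactor in $F_n$ to produce a common factor $\alpha$ on both sides of the resulting I-MMSE identity, which cancels cleanly and recovers $\E_{P_V}[V^2]^2 - (q_v^*)^2$.

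The main obstacle is clearly the two-channel extension of the replica-symmetric formula. The Guerra lower bound is mechanical, but the upper bound via the Aizenman-Sims-Starr scheme of Section~\ref{sec:aizenman} requires a new version of the main estimate (Proposition~\ref{prop:main_estimate}) that simultaneously accommodates the Wishart cavity fields on both sides \emph{and} the extra Wigner cavity term of order $\mu\,u_{n+1}^2\,\bbf{u}\!\cdot\!\bbf{u}$. This in turn relies on the overlap concentration of Section~\ref{sec:overlap_concentration_gg} applied to both $\bbf{u}\cdot\bbf{U}$ and $\bbf{v}\cdot\bbf{V}$ under a suitable joint scalar perturbation, which should remain valid since the two-channel free energy is again concentrated and the proof of Proposition~\ref{prop:overlap_concentration_gg} only uses convexity and a free-energy fluctuation bound. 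Once the extended formula is proved, the rest of the argument is a purely analytical envelope/I-MMSE manipulation.
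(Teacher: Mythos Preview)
Your overall strategy --- add a Wigner perturbation $\sqrt{\mu/n}\,\bbf{U}\bbf{U}^{\sT}+\bbf{Z}''$, prove an extended replica-symmetric formula for $F_n(\lambda,\mu)$, then differentiate in $\mu$ at $\mu=0$ --- is exactly the paper's. But your argument has a genuine gap at the key step.

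The point is that $\mu=0$ is a \emph{boundary} point of the domain $[0,\infty)$ on which $\mu\mapsto F_n(\lambda,\mu)$ is defined as a Bayesian free energy. Proposition~\ref{prop:deriv_convex} applied to a sequence of convex functions on $[0,\infty)$ only gives the one-sided inequality
\[
\limsup_{n\to\infty}\partial_\mu F_n(\lambda,0^+)\ \le\ \partial_\mu f(\lambda,0^+)\ =\ \tfrac14\,q_u^*(\lambda,\alpha)^2,
\]
because the left derivative $f'(0^-)$ is unavailable. Via I-MMSE this yields only
\[
\liminf_{n\to\infty}\MMSE_n^{(u)}(\lambda)\ \ge\ \E_{P_U}[U^2]^2-q_u^*(\lambda,\alpha)^2,
\]
i.e.\ an \emph{upper} bound on $\limsup_n \E\big[(\bbf{u}\cdot\bbf{U})^2\big]$. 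Your claimed two-sided convergence ``at every point of differentiability of the limit'' is not justified at the endpoint. (You cannot extend the formula to $\mu<0$ to make $0$ interior: the Nishimori identities and the overlap concentration machinery of Chapter~\ref{chap:overlap} rely on the posterior interpretation, which requires a nonnegative signal-to-noise ratio.)

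The paper faces exactly this obstacle and closes it with a separate argument: from the above one gets $\limsup_n \E[(\bbf{u}\cdot\bbf{U})^2]\le (q_u^*)^2$ and, by the symmetric perturbation on $\bbf{V}$, $\limsup_n \E[(\bbf{v}\cdot\bbf{V})^2]\le (q_v^*)^2$. Independently, Proposition~\ref{prop:mmse_uv} (which follows from the unperturbed formula and I-MMSE in $\lambda$, an interior parameter) gives $\E[(\bbf{u}\cdot\bbf{U})(\bbf{v}\cdot\bbf{V})]\to q_u^*q_v^*$. Cauchy--Schwarz then forces both $\limsup$'s to be equalities, which is the missing matching bound. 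Your proposal omits this step entirely.

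A secondary remark: your extended formula is written as an unconstrained $\sup_{q_u,q_v\ge0}$, but already at $\mu=0$ the limit is $\sup_{q_u}\inf_{q_v}\cF$ (or equivalently $\sup_{\Gamma}\cF$), not a joint supremum. The paper's formulation (Proposition~\ref{prop:f_eps_uu}) uses monotone conjugates to obtain a genuine joint supremum, which is what makes the envelope theorem applicable. Also, the paper proves the extended formula not by Aizenman--Sims--Starr but by an adaptive interpolation (as in Section~\ref{sec:proof_rs_uv}) that reduces the Wigner-in-$\bbf{U}$ part to the already-known $L_n(r)$ with scalar side information on $\bbf{U}$; this is more economical than redoing the full cavity computation.
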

Theorem~\ref{th:limit_mmse_vv} is proved in Section~\ref{sec:proof_mmse_vv}.

\section{Application to the spiked covariance model}

Let us consider now the so-called spiked covariance model. Let $\bbf{U} = (U_1,\dots,U_n) \iid P_U$, where $P_U$ is a distribution over $\R$ with finite second moment. Define the ``spiked covariance matrix''
\begin{equation}\label{eq:spiked_covariance_matrix}
\bbf{\Sigma} = \Id_n + \frac{\lambda}{n} \bbf{U} \bbf{U}^{\sT} \,,
\end{equation}
and suppose that we observe $\bbf{Y}_1,\dots, \bbf{Y}_m \iid \cN(\bbf{0},\bbf{\Sigma})$, conditionally on $\bbf{\Sigma}$. Given the matrix $\bbf{Y} = (\bbf{Y}_1 | \cdots | \bbf{Y}_m)$, one would like to estimate the ``spike'' $\bbf{U} \bbf{U}^{\sT}$. We deduce from Theorem~\ref{th:limit_mmse_vv} above the minimal mean squared error for this task, in the asymptotic regime where $n,m \to + \infty$ and $m/n \to \alpha >0$.

\begin{corollary}\label{cor:mmse_spiked_covariance}
	For all $\alpha >0$, the function
	$$
	q \mapsto \Big\{ \psi_{P_U}(\lambda \alpha q) + \frac{\alpha}{2}\big(q + \log(1-q)\big) \Big\}
	$$
	admits for almost all $\lambda>0$ a unique maximizer $q^*(\lambda,\alpha)$ on $[0, 1)$ and
	$$
\MMSE_n^{(u)}(\lambda) = \frac{1}{n^2} \E \Big[ \big\| \bbf{U}\bbf{U}^{\sT} - \E[\bbf{U}\bbf{U}^{\sT} | \bbf{Y}] \big\|^2\Big] 
\xrightarrow[n \to \infty]{} \E_{P_U}[U^2]^2 - \left(\frac{q^*(\lambda,\alpha)}{\lambda (1-q^*(\lambda,\alpha))}\right)^2 \,.
	$$
\end{corollary}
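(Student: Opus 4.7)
The plan is to identify the spiked covariance model with the spiked Wishart model~\eqref{eq:model} with Gaussian prior $P_V = \cN(0,1)$, and then specialize the results already proved for that model. Indeed, if in~\eqref{eq:model} we take $P_V = \cN(0,1)$ and condition on $\bbf{U}$, integrating out $\bbf{V}$ together with the noise $\bbf{Z}$ shows that the columns of $\bbf{Y}$ are i.i.d.\ $\cN(0, \Id_n + (\lambda/n) \bbf{U}\bbf{U}^{\sT})$; hence the joint law of $(\bbf{U},\bbf{Y})$ coincides with the one in the spiked covariance model of~\eqref{eq:spiked_covariance_matrix}. In particular the posterior of $\bbf{U}$ given $\bbf{Y}$, and therefore $\MMSE_n^{(u)}(\lambda)$, is the same in both models, and Theorem~\ref{th:limit_mmse_vv} yields
\[
\MMSE_n^{(u)}(\lambda) \xrightarrow[n \to \infty]{} \E_{P_U}[U^2]^2 - q_u^*(\lambda,\alpha)^2
\]
for every $\lambda \in D_\alpha$. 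It then remains to rewrite $q_u^*(\lambda,\alpha)$ in the form stated.

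For this I would reduce the bivariate optimization defining $\Gamma(\lambda,\alpha)$ and $\mathcal{F}$ to the univariate one in the statement. By Example~\ref{ex:gaussian_psi}, $\psi_{\cN(0,1)}(t) = \frac{1}{2}(t - \log(1+t))$ and $2\psi'_{\cN(0,1)}(t) = t/(1+t)$, so the second defining equation of $\Gamma(\lambda,\alpha)$ in~\eqref{eq:def_gamma} reads $q_v = \lambda q_u/(1+\lambda q_u)$. This is a smooth bijection between $q_u \in [0,\infty)$ and $q := q_v \in [0,1)$ with inverse $q_u = q/(\lambda(1-q))$. Substituting this relation into the potential $\mathcal{F}$ of~\eqref{eq:def_potential_uv} and using the identities $\lambda q_u = q/(1-q)$, $\psi_{\cN(0,1)}(\lambda q_u) = \frac{1}{2}[q/(1-q) + \log(1-q)]$ and $\lambda q_u q_v = q^2/(1-q)$ collapses the Wishart-specific terms to $\frac{\alpha}{2}(q + \log(1-q))$, producing exactly $f(q) := \psi_{P_U}(\lambda\alpha q) + \frac{\alpha}{2}(q + \log(1-q))$.

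A direct computation then gives $f'(q) = \lambda\alpha\,\psi'_{P_U}(\lambda\alpha q) - \alpha q/[2(1-q)]$, and $f'(q) = 0$ is equivalent, under $q_u = q/(\lambda(1-q))$, to the remaining first equation of~\eqref{eq:def_gamma}, namely $q_u = 2\psi'_{P_U}(\lambda\alpha q)$. Hence the bijection above restricts to a bijection between critical points of $f$ on $[0,1)$ and points of $\Gamma(\lambda,\alpha)$, along which $f$ and $\mathcal{F}$ take equal values. Since $f(q) \to -\infty$ as $q \to 1^-$ while $f(0) = 0$, the supremum of $f$ on $[0,1)$ is attained at some critical point, so the suprema of $f$ over $[0,1)$ and of $\mathcal{F}$ over $\Gamma(\lambda,\alpha)$ agree, and the uniqueness of the respective maximizers is equivalent. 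By Proposition~\ref{prop:mmse_uv}, the common uniqueness set is $D_\alpha$, which is $(0,+\infty)$ minus a countable set, and on $D_\alpha$ we have $q_u^*(\lambda,\alpha) = q^*(\lambda,\alpha)/(\lambda(1-q^*(\lambda,\alpha)))$. Plugging this into the limit above gives the stated formula. No step is a serious obstacle: the argument is a specialization of Theorem~\ref{th:limit_mmse_vv} together with a change of variable exploiting the closed form of the Gaussian scalar-channel free energy, and the only point deserving care is the verification that the sup of $f$ on $[0,1)$ matches the sup of $\mathcal{F}$ over $\Gamma(\lambda,\alpha)$, which follows from the $-\infty$ behavior of $f$ at its right endpoint.
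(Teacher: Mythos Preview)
Your proposal is correct and follows essentially the same route as the paper: identify the spiked covariance model with the spiked Wishart model with $P_V=\cN(0,1)$, invoke Theorem~\ref{th:limit_mmse_vv}, plug in the closed form $\psi_{\cN(0,1)}(t)=\tfrac12(t-\log(1+t))$, and use the fixed-point relation $q_v^*=\lambda q_u^*/(1+\lambda q_u^*)$ from $\Gamma(\lambda,\alpha)$ to express $q_u^*$ in terms of $q^*=q_v^*$. The only cosmetic difference is that the paper obtains the one-variable potential by first evaluating the inner supremum in $q_u$ in the $\sup\inf$ formulation of Theorem~\ref{th:rs_formula_uv}, whereas you arrive at it by parametrizing $\Gamma(\lambda,\alpha)$ via the bijection $q_u\leftrightarrow q$ and checking that critical points and values match; both lead to the same reduction.
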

\begin{proof}
	There exists independent Gaussian random variables $\bbf{V} = (V_1, \dots, V_m) \iid \cN(0,1)$ and $Z_{i,j} \iid \cN(0,1)$, independent from $\bbf{U}$ such that
	$$
	\bbf{Y} = (\bbf{Y}_1 | \cdots | \bbf{Y}_m)=
	\sqrt{\frac{\lambda}{n}} \bbf{U} \bbf{V}^{\sT} + \bbf{Z} \,.
	$$
	Therefore, the limit of the MMSE for the estimation of $\bbf{U} \bbf{U}^{\sT}$ is given by Theorem~\ref{th:limit_mmse_vv} above. 
	It remains only to evaluate the formulas of Theorems~\ref{th:rs_formula_uv} and~\ref{th:limit_mmse_vv} in the case $P_V = \cN(0,1)$. 
	As computed in Example~\ref{ex:gaussian_psi}, $\psi_{\cN(0,1)}(q) = \frac{1}{2} (q -\log (1+q))$. Thus, the limit of the free energy~\eqref{eq:lim_fn} becomes (after evaluation of the supremum in $q_u$):
	$$
	\sup_{q_v \in [0,1)}\Big\{ \psi_{P_U}(\lambda \alpha q_v) + \frac{\alpha}{2}\big(q_v + \log(1-q_v)\big) \Big\} \,.
	$$
	By Theorem~\ref{th:limit_mmse_vv} for all $\alpha >0$ and almost all $\lambda >0$ this supremum admits a unique maximizer $q_v^*(\lambda,\alpha)$ and $\MMSE_n^{(u)}(\lambda) \to \E_{P_U}[U^2]^2 - q^*_u(\lambda,\alpha)^2$ where $q_u^*$ verifies (recall that $(q_u^*,q_v^*) \in \Gamma$):
	$$
	q^*_v(\lambda,\alpha) = 2 \psi_{\cN(0,1)}'(\lambda q_u^*(\lambda,\alpha)) = \frac{\lambda q_u^*(\lambda,\alpha)}{1 + \lambda q_u^*(\lambda,\alpha)} \,.
	$$
	We deduce from the equation above that $q_u^*(\lambda,\alpha) = \frac{q_v^*(\lambda,\alpha)}{\lambda (1 - q_v^*(\lambda,\alpha))}$, which concludes the proof.
\end{proof}
\\

We will now compare the MMSE given by Corollary~\ref{cor:mmse_spiked_covariance} to the mean squared errors achieved by PCA and Approximate Message Passing (AMP). 
\begin{figure*}[h!]
	\centering
	\includegraphics[width=1.0\linewidth]{./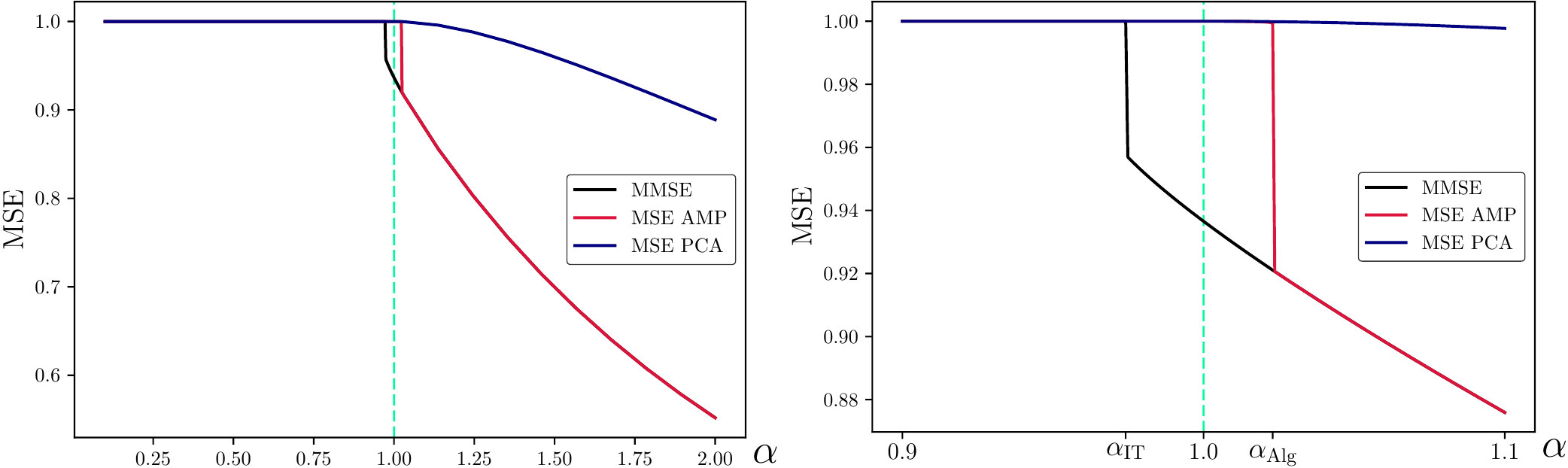}
	\caption{Mean Squared Errors for the spiked covariance model, where the spike is generated by~\eqref{eq:spiked_covariance_prior} with $s=0.15$, $\lambda = 1$. The right-hand side panel is a zoom of the left-hand side panel around $\alpha = 1$.}
	\label{fig:mmse_spiked_covariance}
\end{figure*}

Let $\widehat{\bbf{u}}$ be a singular vector of $\bbf{Y}/\sqrt{n}$ associated with $\sigma_1$, the top singular value of $\bbf{Y} / \sqrt{n}$, such that $\|\widehat{\bbf{u}}\| = \sqrt{n}$. Then results from~\cite{benaych2012singular,dobriban2016pca} give that almost surely:
$$
\lim_{n \to \infty} \big(\widehat{\bbf{u}} \cdot \bbf{U}\big)^2 
\ = \
\begin{cases}
	\frac{\lambda^2 \alpha - 1}{\lambda (\lambda \alpha + 1)} & \text{if} \ \lambda^2 \alpha \geq 1\,, \\
	0 & \text{otherwise,}
\end{cases}
\quad \text{and} \quad
\lim_{n \to \infty} \sigma_1
\ = \
\begin{cases}
	\sqrt{\frac{(1+\lambda)(\alpha^{-1} + \lambda)}{\lambda}} & \text{if} \ \lambda^2 \alpha \geq 1\,, \\
	1 + 1 / \sqrt{\alpha} & \text{otherwise.}
\end{cases}
$$
We are then going to estimate $\bbf{U} \bbf{U}^{\sT}$ using $\widehat{\theta}^{\rm PCA} = \delta \widehat{\bbf{u}} \widehat{\bbf{u}}^{\sT}$, where $\delta$ is chosen in order to minimize the mean squared error. 
The optimal choice of $\delta$ is $\delta^* = \big( \frac{\lambda^2 \alpha - 1}{\lambda (\lambda \alpha + 1)} \big)_{\! +}$, which can be estimated using $\sigma_1$.
We obtain the mean squared error of the spectral estimator $\widehat{\theta}^{\rm PCA}$:
$$
\lim_{n \to \infty} \MSE^{\rm PCA}_n 
\ = \
\begin{cases}
\frac{1+\lambda}{\lambda (\lambda \alpha + 1)} \Big(2 - \frac{1+\lambda}{\lambda (\lambda \alpha + 1)}\Big) & \text{if} \ \lambda^2 \alpha \geq 1\,, \\
	1 & \text{otherwise.}
\end{cases}
$$
As in the symmetric case (see Section~\ref{sec:amp_xx}) one can define an Approximate Message Passing (AMP) algorithm to estimate $\bbf{U} \bbf{U}^{\sT}$.
For a precise description of the algorithm, see \cite{rangan2012iterative,deshpande2014sparse,DBLP:conf/isit/LesieurKZ15}.
The MSE achieved by AMP after $t$ iterations is:
$$
\lim_{n \to \infty} \MSE^{\rm AMP}_n = 1 - \big(q_u^t\big)^2 \,,
$$
where $q_u^t$ is given by the recursion:
\begin{equation} \label{eq:state_evolution_uv}
	\begin{cases}
		q_u^{t} = 2 \psi'_{P_U}(\lambda \alpha q_v^t) \\
		q_v^{t+1} = 2 \psi'_{P_V}(\lambda q_u^t) \,,
	\end{cases}
\end{equation}
with initialization $(q_u^0, q_v^0) = (0,0)$. 
We know by Proposition~\ref{prop:i_mmse} that the functions $\psi_{P_U}'$ and $\psi_{P_V}'$ are both non-decreasing and bounded. This ensures that $(q_u^t,q_v^t)_{t \geq 0}$ converges as $t \to \infty$  to some fixed point $(q_u^{\rm AMP},q_v^{\rm AMP}) \in \Gamma$. If this fixed point turns out to be the one that maximizes $\mathcal{F}(\lambda,\alpha, \cdot, \cdot)$, i.e.\ that $(q_u^{\rm AMP},q_v^{\rm AMP}) = (q_u^*(\lambda,\alpha), q_v^*(\lambda,\alpha))$, then AMP achieves the minimal mean squared error!
\\

For the plots of Figure~\ref{fig:mmse_spiked_covariance}, we consider a case where the signal is sparse:
\begin{equation}\label{eq:spiked_covariance_prior}
	P_U = s \cN(0,1/s) + (1-s) \delta_0 \,,
\end{equation}
for some $s \in (0,1]$, so that $\E_{P_U}[U^2] = 1$.  
We plot the different MSE on Figure~\ref{fig:mmse_spiked_covariance}. We chose $\lambda = 1$ so the ``spectral threshold'' (the minimal value of $\alpha$ for which PCA performs better than a random guess) it at $\alpha = 1$ (green dashed line). This threshold corresponds also to the threshold for AMP: $\MSE^{\rm  AMP} = 1$ for $\alpha < 1$ while $\MSE^{\rm AMP} < 1$ for $\alpha >1$. The information-theoretic threshold $\alpha_{\rm IT}$ is however strictly less than $1$. For $\alpha \in (\alpha_{\rm IT},1)$ inference is ``hard'': it is information-theoretically possible to achieve a $\MSE$ strictly less than $1$, but PCA and AMP fail (and it is conjectured that any polynomial-time algorithm will also fail).

However, even for $\alpha > 1$, AMP does not always succeed to reach the MMSE. For $\alpha \in (1,\alpha_{\rm Alg})$, $\MSE^{\rm AMP}$ is strictly less than $1$ but is still very bad. So, the region $\alpha \in (1,\alpha_{\rm Alg})$ is also a ``hard region'' in the sense that achieving the $\MMSE$ seems impossible for polynomial-time algorithms (under the conjecture that AMP is optimal among polynomial-time algorithms). The scenario presented on Figure~\ref{fig:mmse_spiked_covariance} is not the only one possible: various cases have been studied in great details in~\cite{lesieur2017constrained}. See in particular Figure~6 from~\cite{lesieur2017constrained} and the phase diagrams of Figure~7 and~8.

\section{Proof of the Replica-Symmetric formula (Theorem~\ref{th:rs_formula_uv})}\label{sec:proof_rs_uv}

\subsection{Proof ideas} \label{sec:proof_techniques}

The proof of the Replica formula for the non-symmetric case is a little bit more involved compared to the symmetric case, because one can not use the convexity argument of Proposition~\ref{prop:guerra_bound} to obtain the lower bound. 
Indeed, a key step in the proof of Proposition~\ref{prop:guerra_bound} was the inequality~\eqref{eq:convex_guerra} that was obtained by saying that for every $q \geq 0$ 
\begin{equation} \label{eq:pos_xx}
	\E  \big[ ( \bbf{x} \cdot \bbf{X} - q )^2 \big] \geq 0 \,,
\end{equation}
where $\bbf{x}$ is a sample from the posterior distribution of $\bbf{X}$ given some observations (we omit the notation's details here in order to focus on the main ideas). 

However, if we apply the strategy of Proposition~\ref{prop:guerra_bound} to the non-symmetric case, one obtains
\begin{equation}\label{eq:non_pos_uv}
	\E \big[ (\bbf{u} \cdot \bbf{U} - q_u) ( \bbf{v} \cdot \bbf{V} - q_v) \big]
\end{equation}
where $(\bbf{u},\bbf{v})$ is a sample from the posterior distribution of $(\bbf{U},\bbf{V})$ given some observations,
instead of~\eqref{eq:pos_xx}. Now, it not obvious anymore that~\eqref{eq:non_pos_uv} is non-negative. In order to prove it, one has to investigate further the distributions of the overlaps $\bbf{u} \cdot \bbf{U}$ and $\bbf{v} \cdot \bbf{V}$. By following the approach used by Talagrand in~\cite{talagrand2010meanfield1} to prove the TAP equations (discovered by Thouless, Anderson and Palmer in~\cite{thouless1977solution}) for the Sherrington-Kirkpatrick model, one can show that the overlaps approximately satisfy (when $n$ and $m$ are large)
$$
\begin{cases}
	\bbf{u} \cdot \bbf{U} \, \simeq \, 2 \psi'_{P_U}(\lambda \alpha \bbf{v} \cdot \bbf{V}) \\
	\bbf{v} \cdot \bbf{V} \, \simeq \, 2 \psi'_{P_V}(\lambda \bbf{u} \cdot \bbf{U}) \,.
\end{cases}
$$
These are precisely the fixed point equations verified by $(q_u,q_v) \in \Gamma(\lambda,\alpha)$. Thus one has
\begin{equation}
	\E \big[ (\bbf{u} \cdot \bbf{U} - q_u) ( \bbf{v} \cdot \bbf{V} - q_v) \big]
	\simeq
	\E \big[ (2 \psi'_{P_U}(\lambda \alpha \bbf{v} \cdot \bbf{V}) - 2 \psi'_{P_U}(\lambda \alpha q_v)) ( \bbf{v} \cdot \bbf{V} - q_v) \big] \geq 0 \,,
\end{equation}
because by Proposition~\ref{prop:i_mmse}, $\psi'_{P_U}$ is non-decreasing. One obtain thus the analog of the lower-bound of Proposition~\ref{prop:guerra_bound} for the non-symmetric case. The converse upper-bound is proved following the Aizenman-Sims-Starr scheme, as in the symmetric case.
\\

In the following sections we will not, however, follow the proof strategy that we just described. This was done in~\cite{miolane2017fundamental}. We will instead provide a more straightforward proof from~\cite{barbier2017layered} that uses an evolution of Guerra's interpolation technique, see~\cite{barbier2017stochastic}.

\subsection{Interpolating inference model}

We prove Theorem~\ref{th:rs_formula_uv} in this section. First, notice that is suffices to prove Theorem~\ref{th:rs_formula_uv} for $\lambda = 1$, because the dependency in $\lambda$ can be ``incorporated'' in the prior $P_U$. We will thus consider in this section that $\lambda=1$ and consequently alleviate the notations by removing the dependencies in $\lambda$.
Second, it suffices to prove that 
\begin{equation}\label{eq:goal_proof_uv}
	F_n \xrightarrow[n \to \infty]{} \sup_{q_u \geq 0} \inf_{q_v \geq 0} \cF(\alpha,q_u,q_v)
\end{equation}
because the equality with $\sup_{(q_u,q_v)\in \Gamma(\lambda,\alpha)} \cF(\alpha,q_u,q_v)$ follows then from simple convex analysis arguments (Proposition~\ref{prop:sup_inf}) presented in Appendix~\ref{sec:app_convex}.

Third, by a straightforward adaptation of the approximation argument of Section~\ref{sec:approximation} to the non-symmetric case, it suffices to prove~\eqref{eq:goal_proof_uv} in the case where the priors $P_U$ and $P_V$ have bounded supports included in $[-K,K]$ for some $K >0$.
We suppose now that the above conditions are verified and we will show that~\eqref{eq:goal_proof_uv} holds.
\\

Let $q_1,q_2: [0,1] \to \R_{\geq 0}$ be two differentiable functions. For $0 \leq t \leq 1$ we consider the following observation channel
\begin{equation}\label{eq:interpolation_uv}
	\left\{
		\begin{array}{llcll}
			\bbf{Y}_t &=& \sqrt{(1-t) / n} \, \bbf{U} \bbf{V}^{\sT} &+& \bbf{Z} \\
			\bbf{Y}^{(u)}_t &=& \sqrt{\alpha q_1(t)} \, \bbf{U} &+& \bbf{Z}^{(u)} \\
			\bbf{Y}^{(v)}_t &=& \sqrt{q_2(t)} \, \bbf{V} &+& \bbf{Z}^{(v)} \,,
		\end{array}
	\right.
\end{equation}
where $Z^{(u)}_i,Z^{(v)}_j \iid \cN(0,1)$, are independent from everything else.
The observation channel~\eqref{eq:interpolation_uv} interpolates between 
the initial matrix estimation problem~\eqref{eq:model} ($t=0$, provided that $q_1(0)$ and $q_2(0)$ are small), and two decoupled inference channels on $\bbf{U}$ and $\bbf{V}$ ($t=1$).
For $r_1, r_2 \geq 0$, we define the Hamiltonian:
\begin{align*}
	H_{n,t}(\bbf{u},\bbf{v};r_1,r_2) = 
	&\sum_{i,j} \sqrt{\frac{(1-t)}{n}} u_i v_j Z_{i,j} + \frac{(1-t)}{n} u_i v_j U_i V_j - \frac{(1-t)}{2n} u_i^2 v_j^2
	\\
	+& \sum_{i=1}^n \sqrt{\alpha r_1} u_i Z_i^{(u)} + \alpha r_1 u_i U_i - \frac{\alpha r_1}{2} u_i^2 
	 + \sum_{j=1}^m \sqrt{r_2} \, v_j Z_j^{(v)} + r_2 \, v_j V_j - \frac{r_2}{2} \, v_j^2 \,.
\end{align*}
The posterior distribution of $(\bbf{U},\bbf{V})$ given $(\bbf{Y}_t,\bbf{Y}^{(u)}_t,\bbf{Y}^{(v)}_t)$ is then
\begin{equation} \label{eq:interpolation_posterior_uv}
	dP\big(\bbf{u},\bbf{v} \, \big| \, \bbf{Y}_t,\bbf{Y}^{(u)}_t,\bbf{Y}^{(v)}_t \big)
	= \frac{1}{\cZ_{n,t}}  e^{H_{n,t}(\bbf{u},\bbf{v};q_1(t),q_2(t))}d P_U^{\otimes n}(\bbf{u}) d P_V^{\otimes m}(\bbf{v}) \,,
\end{equation}
where $\cZ_{n,t}$ is the appropriate normalization. We will often drop the dependencies in $q_1(t),q_2(t)$ and write simply $H_{n,t}(\bbf{u},\bbf{v})$. The Gibbs bracket $\langle \cdot \rangle_{n,t}$ denotes the expectation with respect to samples $(\bbf{u},\bbf{v})$ from the posterior~\eqref{eq:interpolation_posterior_uv}:
\begin{equation} \label{eq:interpolation_gibbs_uv}
	\big\langle f(\bbf{u},\bbf{v}) \big\rangle_{n,t} = \frac{1}{\cZ_{n,t}} \int
	f(\bbf{u},\bbf{v}) e^{H_{n,t}(\bbf{u},\bbf{v};q_1(t),q_2(t))} dP_U^{\otimes n}(\bbf{u}) dP_V^{\otimes m}(\bbf{v}) \,,
\end{equation}
for all function $f$ for which the right-hand side is well defined.
The corresponding free energy is then
\begin{equation} \label{eq:interpolation_free_uv}
	f_n(t) = \frac{1}{n} \E \log \cZ_{n,t} = \frac{1}{n} \E \log \left( \int
e^{H_{n,t}(\bbf{u},\bbf{v})} d P_U^{\otimes n}(\bbf{u}) d P_V^{\otimes m}(\bbf{v})\right).
\end{equation}
Notice that
\begin{equation}
	\left\{
		\begin{array}{ccl}
			f_n(0) &=& F_n + O(q_1(0) + q_2(0)) \\
			f_n(1) &=& \psi_{P_U}(\alpha q_1(1)) + \frac{m}{n} \psi_{P_V}\left(q_2(1) \right) \,.
		\end{array}
	\right.
\end{equation}
$f_n(1)$ looks similar to the limiting expression $\mathcal{F}$ defined by~\eqref{eq:def_potential_uv}. We would therefore like to compare $f_n(1)$ and $F_n = f_n(0) + O(q_1(0) + q_2(0))$. We thus compute the derivative of $f_n$:

\begin{lemma}\label{lem:der_f_interpolation_uv}
	For all $t \in (0,1)$,
	\begin{equation}\label{eq:der_f_interpolation_uv}
		f_n'(t)
		=
		\frac{\alpha}{2} q_1'(t) q_2'(t) - \frac{1}{2} \E \left\langle
			\Big( \bbf{u} \cdot \bbf{U} - q'_2(t) \Big)
			\Big( \frac{m}{n} \bbf{v} \cdot \bbf{V} - \alpha q_1'(t) \Big)
		\right\rangle_{\! n,t} \,.
	\end{equation}
\end{lemma}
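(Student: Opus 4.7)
The plan is to carry out a standard Guerra-style interpolation computation. Differentiating $f_n(t) = \frac{1}{n}\E\log\cZ_{n,t}$ under the expectation gives
$$f_n'(t) = \frac{1}{n}\E\big\langle \partial_t H_{n,t}(\bbf{u},\bbf{v};q_1(t),q_2(t)) \big\rangle_{n,t},$$
and $\partial_t H_{n,t}$ splits naturally into three contributions corresponding to the matrix channel (the $(1-t)/n$ terms), the $u$-side channel (the $\alpha q_1(t)$ terms), and the $v$-side channel (the $q_2(t)$ terms). The chain rule applied to each $\sqrt{\,\cdot\,}$ coefficient produces a term of the shape $(\mathrm{prefactor})\cdot Z\cdot(\text{function of }\bbf{u},\bbf{v})$, together with terms linear in $u_iU_iv_jV_j$, $u_iU_i$, $v_jV_j$, and quadratic in $u_i^2v_j^2$, $u_i^2$, $v_j^2$.

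The key step is to apply Gaussian integration by parts to each of the noise variables. Since $Z_{i,j}$ enters $H_{n,t}$ only through $\sqrt{(1-t)/n}\,u_iv_jZ_{i,j}$, one has $\E[Z_{i,j}\langle u_iv_j\rangle_{n,t}] = \sqrt{(1-t)/n}\,\E[\langle u_i^2v_j^2\rangle_{n,t}-\langle u_iv_j\rangle_{n,t}^2]$, and analogous identities hold for $Z_i^{(u)}$ and $Z_j^{(v)}$. The $\sqrt{\,\cdot\,}$ prefactors coming from the chain rule cancel exactly with the $\sqrt{\,\cdot\,}$ factors from the IBP, leaving clean rational expressions. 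I then invoke the Nishimori identity (Proposition~\ref{prop:nishimori}), which applies because $\langle\cdot\rangle_{n,t}$ is a genuine posterior of $(\bbf{U},\bbf{V})$ given the observations in~\eqref{eq:interpolation_uv}, to replace $\E\langle u_iv_j\rangle_{n,t}^2$ by $\E\langle u_iv_jU_iV_j\rangle_{n,t}$, $\E\langle u_i\rangle_{n,t}^2$ by $\E\langle u_iU_i\rangle_{n,t}$, and $\E\langle v_j\rangle_{n,t}^2$ by $\E\langle v_jV_j\rangle_{n,t}$. Thanks to these substitutions, the ``self-interaction'' contributions $u_i^2v_j^2$, $u_i^2$, $v_j^2$ appearing in $\partial_tH_{n,t}$ cancel against pieces produced by IBP, and each of the three blocks collapses to a single overlap term.

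Concretely, the matrix contribution reduces to $-\frac{1}{2n^2}\sum_{i,j}\E\langle u_iU_iv_jV_j\rangle_{n,t} = -\frac{m}{2n}\E\langle(\bbf{u}\cdot\bbf{U})(\bbf{v}\cdot\bbf{V})\rangle_{n,t}$, using the overlap normalizations $\bbf{u}\cdot\bbf{U}=\frac{1}{n}\sum_{i=1}^n u_iU_i$ and $\bbf{v}\cdot\bbf{V}=\frac{1}{m}\sum_{j=1}^m v_jV_j$; the $u$-side contribution reduces to $\frac{\alpha q_1'(t)}{2}\E\langle\bbf{u}\cdot\bbf{U}\rangle_{n,t}$; and the $v$-side contribution reduces to $\frac{m\,q_2'(t)}{2n}\E\langle\bbf{v}\cdot\bbf{V}\rangle_{n,t}$. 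Summing the three and performing the algebraic identity
$$-\tfrac{m}{2n}\E\langle ab\rangle + \tfrac{\alpha q_1'}{2}\E\langle a\rangle + \tfrac{m q_2'}{2n}\E\langle b\rangle = \tfrac{\alpha}{2}q_1'q_2' - \tfrac{1}{2}\E\big\langle(a-q_2')(\tfrac{m}{n}b-\alpha q_1')\big\rangle$$
with $a=\bbf{u}\cdot\bbf{U}$ and $b=\bbf{v}\cdot\bbf{V}$ yields the stated formula. The only real difficulty is bookkeeping: one must keep careful track of the differing normalizations ($n$ for $\bbf{u}\cdot\bbf{U}$ versus $m$ for $\bbf{v}\cdot\bbf{V}$, which is precisely where the $m/n$ in the statement originates) and verify that the six cancellations produced by the three IBPs and the three Nishimori reductions line up correctly. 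No analytic estimate is needed, since the identity holds at each fixed $n$.
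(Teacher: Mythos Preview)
Your proposal is correct and follows exactly the same approach as the paper: differentiate under the expectation, apply Gaussian integration by parts to each noise term, use the Nishimori identity to collapse the quadratic contributions, and then algebraically rearrange the three resulting overlap terms into the stated form. The paper's own proof is terser (it just cites the method of Proposition~\ref{prop:i_mmse}), but your detailed bookkeeping of the $n$ versus $m$ normalizations and the explicit verification of the algebraic identity are accurate and match the paper's computation line by line.
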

\begin{proof}
	Let $t \in (0,1)$. Compute
	$$
	f_n'(t)
	=
	\frac{1}{n}
	\E \left\langle
		\frac{\partial}{\partial t} H_{n,t}(\bbf{u},\bbf{v})
	\right\rangle_{\! n,t} \,.
	$$
	Using Gaussian integration by parts and the Nishimori property (Proposition~\ref{prop:nishimori}) as in the proof of Proposition~\ref{prop:i_mmse}, one obtains:
	$$
	\frac{1}{n}
	\E \left\langle
		\frac{\partial}{\partial t} H_{n,t}(\bbf{u},\bbf{v})
	\right\rangle_{\! n,t}
	=
	\frac{1}{2}\alpha q_1'(t)
	\E \left\langle
		\bbf{u}\cdot\bbf{U}
	\right\rangle_{\! n,t}
	+
	\frac{1}{2} q'_2(t)
	\E \left\langle
		\frac{m}{n}\bbf{v}\cdot\bbf{V}
	\right\rangle_{\! n,t}
	-
	\frac{1}{2}
	\E \left\langle
		(\bbf{u}\cdot\bbf{U}) (\frac{m}{n}\bbf{v}\cdot\bbf{V})
	\right\rangle_{\! n,t} \,,
	$$
	which leads to~\eqref{eq:der_f_interpolation_uv}.
\end{proof}
\\

Our goal now is to show that the expectation of the Gibbs measure in~\eqref{eq:der_f_interpolation_uv} vanishes. If this is the case, the relation $F_n \simeq f_n(0) = f_n(1) - \int_0^1 f_n'(t) dt$ would give us almost the formula that we want to prove.
The arguments can be summarized as follows:
\begin{itemize}
	\item First, we show that the overlap $\bbf{u} \cdot \bbf{U}$ concentrates around its mean $\E \langle \bbf{u} \cdot \bbf{U} \rangle_{n,t}$.
	\item Then, we chose $q_2$ to be solution of the differential equation $q'_2(t) = \E \langle \bbf{u} \cdot \bbf{U} \rangle_{n,t}$ in order to cancel the Gibbs average in~\eqref{eq:der_f_interpolation_uv}.
\end{itemize}

\subsection{Overlap concentration} \label{sec:overlap_concentration_uv}

Following the ideas of Section~\ref{sec:overlap_concentration_gg}, we show here that the overlap $\bbf{u} \cdot \bbf{U}$ concentrates around its mean, on average over small perturbations of our observation model.
\begin{proposition}\label{prop:overlap_uv}
	Let $R_1, R_2 : [0,1] \times (0,+\infty)^2 \to \R_{\geq 0}$ be two continuous, bounded functions that admits partial derivatives with respect to their second and third arguments, that are continuous and non-negative.
	Let $s_n = n^{-1/32}$.
	For $\epsilon \in [1,2]^2$, we let $q_1(\cdot,\epsilon),q_2(\cdot,\epsilon)$ be the unique solution of
	\begin{equation}\label{eq:cauchy}
	\begin{cases}
		q_1(0) = s_n \epsilon_1 \\
		q_2(0) = s_n \epsilon_2 
	\end{cases}
	\begin{cases}
		q_1'(t) = R_1(t,q_1(t),q_2(t)) \\
		q_2'(t) = R_2(t,q_1(t),q_2(t)).
	\end{cases}
	\end{equation}
	Then there exists a constant $C>0$ that only depends on $K$, $\alpha$, $\|R_1\|_{\infty}$ and $\|R_2\|_{\infty}$, such that for all $t \in [0,1]$,
	$$
	\int_1^2 \int_1^2 \E \Big\langle \big(\bbf{u} \cdot \bbf{U} - \E \langle \bbf{u} \cdot \bbf{U} \rangle_{n,t}\big)^2 \Big\rangle_{n,t} d\epsilon_1 d\epsilon_2
	\leq \frac{C}{n^{1/8}},
	$$
	where $\langle \cdot \rangle_{n,t}$ is the Gibbs measure \eqref{eq:interpolation_gibbs_uv} with $(q_1,q_2) = (q_1(\cdot, \epsilon), q_2(\cdot, \epsilon))$.
\end{proposition}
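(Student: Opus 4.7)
The proof adapts the scalar-channel concentration of Proposition~\ref{prop:overlap_concentration_gg} to the interpolating model~\eqref{eq:interpolation_uv}, by inserting a change-of-variables step that exploits the flow of the Cauchy problem~\eqref{eq:cauchy}. The observation is that the channel $\bbf{Y}_t^{(u)}$ is precisely the scalar Gaussian side-channel of Section~\ref{sec:overlap_concentration_gg}, with effective noise parameter $\alpha q_1(t)$ playing the role of $a^2 s_n$. If the parameter $r_1$ in the Hamiltonian were free (rather than coupled to the ODE), Proposition~\ref{prop:overlap_concentration_gg} would apply essentially as stated. The strategy is therefore: (i) prove concentration of $\bbf{u}\cdot\bbf{U}$ in terms of the decoupled parameters $(r_1,r_2)$; (ii) transfer this estimate back to the parameterization by $(\epsilon_1,\epsilon_2)$ via the ODE flow.

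\textbf{Step 1 (decoupled concentration).} Fix $t\in[0,1]$ and introduce
$$\tilde{f}_n(r_1,r_2)=\frac{1}{n}\E\log\int e^{H_{n,t}(\bbf{u},\bbf{v};r_1,r_2)}dP_U^{\otimes n}(\bbf{u})dP_V^{\otimes m}(\bbf{v}),$$
so that $f_n(t)=\tilde{f}_n(q_1(t),q_2(t))$. Gaussian integration by parts and the Nishimori identity give $\partial_{r_1}\tilde{f}_n=\tfrac{\alpha}{2}\E\langle\bbf{u}\cdot\bbf{U}\rangle_{r_1,r_2}$, and a short computation shows that $r_1\mapsto\tilde{f}_n(r_1,r_2)+C r_1^2$ is convex (with $C$ depending only on $K,\alpha$). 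Standard Gaussian concentration for the $Z_{i,j},Z_i^{(u)},Z_j^{(v)}$ together with a bounded-difference argument for the bounded priors $P_U,P_V$ yields $\E|\tilde{f}_n-\E\tilde{f}_n|\leq C/\sqrt{n}$, uniformly in $(r_1,r_2)$ on the relevant bounded range. Applying Panchenko's convexity lemma (as in the proof of Lemma~\ref{lem:concentration_energy}) to $\tilde{f}_n$ in the variable $r_1$, then running the Ghirlanda--Guerra-type calculation of Proposition~\ref{prop:overlap_concentration_gg} with $s_n$ replaced by the pointwise value of $r_1$, yields for every $r_2\geq 0$ and every bounded interval $I\subset\R_{\geq 0}$
$$\int_I\E\bigl\langle\bigl(\bbf{u}\cdot\bbf{U}-\E\langle\bbf{u}\cdot\bbf{U}\rangle_{r_1,r_2}\bigr)^2\bigr\rangle_{r_1,r_2}dr_1\ \leq\ C\,\eta_n,$$
with $\eta_n\to 0$ at a quantitative rate governed by $1/\sqrt{n}$ and by the minimum of $r_1$ on~$I$.

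\textbf{Step 2 (change of variables via Liouville).} Consider the map $\Phi_t:[1,2]^2\to\R_{\geq 0}^2$ defined by $\Phi_t(\epsilon)=(q_1(t,\epsilon),q_2(t,\epsilon))$. By Cauchy--Lipschitz, $\Phi_t$ is $C^1$, and by Liouville's formula applied to~\eqref{eq:cauchy},
$$\det D\Phi_t(\epsilon)\ =\ s_n^{2}\exp\!\left(\int_0^t\bigl(\partial_{q_1}R_1+\partial_{q_2}R_2\bigr)\bigl(s,q_1(s,\epsilon),q_2(s,\epsilon)\bigr)ds\right)\ \geq\ s_n^{2},$$
where the lower bound uses the hypothesis that $\partial_{q_j}R_i\geq 0$. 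The image $\Phi_t([1,2]^2)$ is contained in a fixed box $[0,B]^2$ with $B$ depending only on $\|R_1\|_\infty,\|R_2\|_\infty$. Changing variables,
$$\int_{[1,2]^2}\!\!\E\bigl\langle(\bbf{u}\cdot\bbf{U}-\E\langle\bbf{u}\cdot\bbf{U}\rangle)^2\bigr\rangle_{n,t}d\epsilon_1 d\epsilon_2\ \leq\ \frac{1}{s_n^2}\int_{[0,B]^2}\!\!\E\bigl\langle(\bbf{u}\cdot\bbf{U}-\E\langle\bbf{u}\cdot\bbf{U}\rangle_{r_1,r_2})^2\bigr\rangle_{r_1,r_2}dr_1 dr_2,$$
and Fubini reduces the right-hand side to the bound of Step~1. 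Optimising the exponents in $\eta_n$ against the $s_n^{-2}$ prefactor, with $s_n=n^{-1/32}$, produces the advertised rate $C n^{-1/8}$.

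\textbf{Main obstacle.} The critical point is the Jacobian lower bound in Step~2: it depends essentially on the monotonicity assumption $\partial_{q_j}R_i\geq 0$, which ensures that the ODE flow is volume-expanding and therefore cannot collapse the initial Lebesgue mass $s_n^2$. The secondary difficulty is the careful bookkeeping of rates: the perturbation strength $s_n$ must be chosen small enough for the change of variables to cover a range of $r_1$ sufficient for the scalar-channel argument to give nontrivial concentration, but large enough that the factor $s_n^{-2}$ does not destroy the bound. The choice $s_n=n^{-1/32}$ is made to balance these competing constraints against the $1/\sqrt{n}$ free-energy fluctuation rate and the $1/\sqrt{n s_n}$ term inherited from Proposition~\ref{prop:overlap_concentration_gg}.
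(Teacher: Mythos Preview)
Your proposal is correct and follows essentially the same route as the paper: free-energy concentration via Gaussian Poincar\'e plus bounded differences, the scalar-channel overlap concentration argument of Proposition~\ref{prop:overlap_concentration_gg}, and the change of variables through the ODE flow with the Liouville formula giving the crucial Jacobian lower bound $\det D\Phi_t\geq s_n^2$. The only cosmetic difference is that the paper performs the explicit substitution $r_1=a^2$ (after first rescaling by $s_n$) so as to land exactly in the setting of Proposition~\ref{prop:overlap_concentration_gg}, whereas you work directly in the variable $r_1$; both lead to the same bound, and your identification of the rate-balancing role of $s_n=n^{-1/32}$ is the paper's as well. One small correction: the image $\Phi_t([1,2]^2)$ lies in $[s_n,B]^2$, not $[0,B]^2$, and this positive lower bound on $r_1$ is what keeps the $1/\sqrt{r_1}$-type terms in the derivative computation under control.
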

\begin{proof}
	The existence and uniqueness of the solution of the Cauchy problem \eqref{eq:cauchy} comes from the usual Cauchy-Lipschitz theorem (see for instance Theorem~3.1 in Chapter~V from \cite{hartmanordinary}). Let us fix $t \in [0,1]$
	The flow
	$$
	Q_t: \epsilon \mapsto (q_1(t,\epsilon), q_2(t,\epsilon))
	$$
	of \eqref{eq:cauchy} is a $\cC^1$-diffeomorphism. Its Jacobian is given by the Liouville formula (see for instance Corollary~3.1 in Chapter~V from \cite{hartmanordinary}):
	\begin{equation}\label{eq:ineq_j}
	J(\epsilon) \defeq {\rm det}\Big(\frac{\partial Q_t}{\partial \epsilon}(\epsilon) \Big)
	=s_n^2 \exp \Big( 
		\int_0^t \frac{\partial R_1}{\partial q_1}(s,Q_s(\epsilon)) ds 
		+
		\int_0^t \frac{\partial R_2}{\partial q_2}(s,Q_s(\epsilon)) ds 
	\Big)
	\geq s_n^2,
\end{equation}
	because the partial derivatives inside the exponential are both non-negative.
	The quantity
	$$
\E \Big\langle \big(\bbf{u} \cdot \bbf{U} - \E \langle \bbf{u} \cdot \bbf{U} \rangle_{n,t}\big)^2 \Big\rangle_{n,t}
	$$
	is a function of the signal-to-noise ratios $q_1$ and $q_2$, that we denote by $V$.
	Let us write $\Omega = Q([1,2]^2)/s_n$ and $M = \max(\|R_1\|_{\infty}, \|R_2\|_{\infty}) + 2$. Notice that $\Omega \subset [1,M/s_n]^2$ because $q_1,q_2$ are by \eqref{eq:cauchy} non-decreasing and $\max(\|R_1\|_{\infty}, \|R_2\|_{\infty})$-Lipschitz.
	By the change of variable $(r_1,r_2) = Q(\epsilon_1,\epsilon_2) / s_n$ we have
	\begin{align*}
	\int_1^2 \int_1^2 \E \Big\langle \big(\bbf{u} \cdot \bbf{U} - \E \langle \bbf{u} \cdot \bbf{U} \rangle_{n,t}\big)^2 \Big\rangle_{n,t} d\epsilon_1 d\epsilon_2
	&= 
	\int_1^2 \int_1^2 V(q_1(t,\epsilon_1),q_2(t,\epsilon_2)) d\epsilon_1 d\epsilon_2
	\\
	&= 
	\int_{\Omega}
	V(s_n r_1, s_n r_2)  \frac{s_n^2 dr_1dr_2}{J(Q_t^{-1}(s_n r))}
	\\
	&\leq
	\int_{1}^{M / s_n}
	\int_{1}^{M / s_n}
	V(s_n r_1, s_n r_2) dr_1 dr_2,
	\end{align*}
	where we used \eqref{eq:ineq_j} for the last inequality.
	By the change of variable $r_1 = a^2$, we have for all $r_2 \geq 0$:
	$$
	\int_1^{M / s_n} V(s_n r_1, s_n r_2) d r_1
	=
	\int_1^{\sqrt{M/s_n}} V(s_n a^2, s_n r_2) 2 a d a
	\leq 2 \sqrt{\frac{M}{s_n}}\int_1^{\sqrt{M/s_n}} V(s_n a^2, s_n r_2)d a.
	$$
	By definition of $V$, the quantity $V(s_n a^2, s_n r_2)$ is the variance of the overlap $\bbf{u} \cdot \bbf{U}$ where $\bbf{u}$ is sampled from the posterior distribution of $\bbf{U}$ given $\bbf{Y}_t$, $a \sqrt{\alpha s_n} \bbf{U} + \bbf{Z}^{(u)}$ and $\sqrt{s_n r_2} \bbf{V} + \bbf{Z}^{(v)}$.
	By Proposition~\ref{prop:overlap_concentration_gg} we have for all $1 \leq r_2 \leq M /s_n$
	$$
	\frac{1}{\sqrt{M/s_n} - 1} \int_1^{\sqrt{M/s_n}} V(s_n a^2, s_n r_2) da \leq C \Big(  \frac{1}{\sqrt{ns_n}} + \sqrt{v_n} \Big)
	$$
	where $C>0$ is a constant that depends only on $K,\alpha$,
	$$
	v_n = \sup_{t \in [0,1]} 
	\sup_{0 \leq r_1,r_2 \leq M/s_n}
	\E \big| \phi_t(r_1,r_2) - \E \phi_t(r_1,r_2) \big|
	$$
	and
	$$
	\phi_t : (r_1,r_2) \mapsto \frac{1}{n s_n} \log\Big(\int_{\bbf{u},\bbf{v}} dP_U^{\otimes n}(\bbf{u}) dP_V^{\otimes m}(\bbf{v}) e^{H_{n,t}(\bbf{u},\bbf{v}; s_n r_1, s_n r_2)}\Big).
	$$
	Consequently
	$$
	\int_1^2 \int_1^2 \E \Big\langle \big(\bbf{u} \cdot \bbf{U} - \E \langle \bbf{u} \cdot \bbf{U} \rangle_{n,t}\big)^2 \Big\rangle_{n,t} d\epsilon_1 d\epsilon_2
	\leq 2 \Big(\frac{M}{s_n} \Big)^{2} C \Big(\frac{1}{\sqrt{n s_n}} + \sqrt{v_n}\Big)
	\leq \frac{C'}{s_n^2} \Big(\frac{1}{\sqrt{ns_n}} + \sqrt{v_n}\Big),
	$$
	for some constant $C'>0$. We now use the following lemma to control $v_n$:
	\begin{lemma}\label{lem:v_n_uv}
		There exists a constant $C>0$ (that only depends on $K$, $M$ and $\alpha$) such that
		$$
		v_n \leq C n^{-1/2} s_n^{-1}.
		$$
	\end{lemma}
We delay the proof of Lemma~\ref{lem:v_n_uv} to Section~\ref{sec:technical_uv}.
We deduce that 
	$$
	\int_1^2 \int_1^2 \E \Big\langle \big(\bbf{u} \cdot \bbf{U} - \E \langle \bbf{u} \cdot \bbf{U} \rangle_{n,t}\big)^2 \Big\rangle_{n,t} d\epsilon_1 d\epsilon_2
	\leq
	\frac{2C'}{n^{1/8}}
$$
if we choose $s_n = n^{-1/32}$.
\end{proof}

\subsection{Lower and upper bounds}

From now we write $\E \langle \bbf{u} \cdot \bbf{U} \rangle_{n,t}$, as a function of $(t, q_1(t), q_2(t))$:
\begin{equation}
	\E \langle \bbf{u} \cdot \bbf{U} \rangle_{n,t} = Q(t, q_1(t), q_2(t)).
\end{equation}
Notice that $Q$ is continuous, non-negative on $[0,1] \times (0,+\infty)^2$, bounded by $K^2$ and admits partial derivatives with respect to its second and third argument. These derivatives are both continuous. Moreover, notice that
$$
\E_{P_U}[U^2] - Q(t,r_1,r_2) = \E \big\| \bbf{U} - \E \big[\bbf{U}\big| \bbf{Y}_t, \sqrt{\alpha r_1} \bbf{U} + \bbf{Z}^{(u)}, \sqrt{r_2} \bbf{V} + \bbf{Z}^{(v)} \big]\big\|^2
$$
is of course non-increasing with respect to $r_1$ and $r_2$. The partial derivatives of $Q$ with respect to its second and third argument are thus non-negative.

For simplicity we will now omit the dependencies on $\lambda$ and $\alpha$ in $\cF$.
The proof of~\eqref{eq:goal_proof_uv} will follow from the two matching lower- and upper-bounds below.
\begin{proposition}\label{prop:key_uv}
	In the setting of Proposition~\ref{prop:overlap_uv}, for $\epsilon \in [1,2]^2$
	we let $q_1(t,\epsilon), q_2(t,\epsilon)$ be the solution of \eqref{eq:cauchy}, with the choice $R_2 = Q$. For this choice of functions $q_1,q_2$, we have:
	\begin{align*}
		F_n = \int_{[1,2]^2} \int_0^1\Big(\psi_{P_U}(\alpha q_1(1,\epsilon)) + \alpha \psi_{P_V}(q_2(1,\epsilon)) - \frac{\alpha}{2} q_1'(t,\epsilon) q_2'(t,\epsilon) \Big) dt d\epsilon + o_n(1).
	\end{align*}
\end{proposition}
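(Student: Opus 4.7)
The plan is to combine the fundamental theorem of calculus applied to $t \mapsto f_n(t,\epsilon)$ with the overlap concentration of Proposition~\ref{prop:overlap_uv}, which was engineered precisely to kill the Gibbs cross-term in Lemma~\ref{lem:der_f_interpolation_uv}. Note that the properties of $Q$ recorded just after the proof of Proposition~\ref{prop:overlap_uv} (continuity, boundedness by $K^2$, non-negative partial derivatives in the last two arguments, coming from the I-MMSE relation) make $R_2 = Q$ admissible in that proposition; $R_1$ plays no special role beyond being admissible.

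\textbf{Step 1: reduction to $f_n(0,\epsilon)$.} For each $\epsilon\in[1,2]^2$, $f_n(0,\epsilon)$ differs from $F_n$ only by two scalar side-channels with signal-to-noise ratios at most $2s_n$. By the Lipschitz continuity of $\psi_{P_U},\psi_{P_V}$ (Proposition~\ref{prop:i_mmse}), $|f_n(0,\epsilon) - F_n| = O(s_n) = o_n(1)$ uniformly in $\epsilon$. Averaging over $\epsilon$ and writing $f_n(0,\epsilon)=f_n(1,\epsilon)-\int_0^1 f_n'(t,\epsilon)\,dt$ gives
\begin{equation*}
F_n = \int_{[1,2]^2}\!\Big(f_n(1,\epsilon) - \int_0^1 f_n'(t,\epsilon)\,dt\Big)d\epsilon + o_n(1).
\end{equation*}

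\textbf{Step 2: killing the cross-term.} Since $R_2=Q$, we have $q_2'(t,\epsilon) = \E\langle \bbf{u}\cdot\bbf{U}\rangle_{n,t}$, so Lemma~\ref{lem:der_f_interpolation_uv} rewrites $f_n'(t,\epsilon) = \tfrac{\alpha}{2}q_1'(t,\epsilon)q_2'(t,\epsilon) - R(t,\epsilon)$, where
\begin{equation*}
R(t,\epsilon) \;=\; \tfrac{1}{2}\E\Big\langle\big(\bbf{u}\cdot\bbf{U} - \E\langle\bbf{u}\cdot\bbf{U}\rangle_{n,t}\big)\big(\tfrac{m}{n}\bbf{v}\cdot\bbf{V} - \alpha q_1'(t,\epsilon)\big)\Big\rangle_{n,t}.
\end{equation*}
The bounded support of $P_V$ and the uniform boundedness of $R_1$ make the second factor bounded by a constant, so Cauchy--Schwarz gives $|R(t,\epsilon)|\le C\sqrt{V(t,\epsilon)}$, with $V(t,\epsilon)=\E\langle(\bbf{u}\cdot\bbf{U}-\E\langle\bbf{u}\cdot\bbf{U}\rangle_{n,t})^2\rangle_{n,t}$ and $C$ independent of $n,t,\epsilon$. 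A second application of Cauchy--Schwarz on the unit-measure region $[0,1]\times[1,2]^2$, followed by Proposition~\ref{prop:overlap_uv}, yields
\begin{equation*}
\int_{[1,2]^2}\!\!\int_0^1 |R(t,\epsilon)|\,dt\,d\epsilon \;\le\; C\sqrt{\int_{[1,2]^2}\!\!\int_0^1 V(t,\epsilon)\,dt\,d\epsilon} \;=\; O(n^{-1/16}) \;=\; o_n(1).
\end{equation*}

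\textbf{Step 3: adjusting the boundary term.} Boundedness of $R_1,R_2$ keeps $q_1(1,\epsilon),q_2(1,\epsilon)$ in a fixed compact, on which $\psi_{P_V}$ is bounded, and the assumption $m/n\to\alpha$ gives $f_n(1,\epsilon) = \psi_{P_U}(\alpha q_1(1,\epsilon)) + \alpha\psi_{P_V}(q_2(1,\epsilon)) + o_n(1)$ uniformly in $\epsilon$. Combining the three steps and rewriting the constant-in-$t$ boundary contribution as $\int_0^1(\cdots)\,dt$ produces the claimed identity. The main obstacle is Step~2; its validity rests entirely on Proposition~\ref{prop:overlap_uv}, and the delicate tuning of the perturbation scale $s_n=n^{-1/32}$ together with the auxiliary flow~\eqref{eq:cauchy} is designed for exactly this cancellation.
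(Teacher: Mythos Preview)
Your proof is correct and follows essentially the same route as the paper: identify $q_2'(t,\epsilon)=\E\langle\bbf{u}\cdot\bbf{U}\rangle_{n,t}$ via the choice $R_2=Q$, use Lemma~\ref{lem:der_f_interpolation_uv} and Proposition~\ref{prop:overlap_uv} (with Cauchy--Schwarz) to kill the cross-term after integration in $(t,\epsilon)$, and handle the boundary terms $f_n(0,\epsilon)$ and $f_n(1,\epsilon)$ by the I-MMSE Lipschitz bound and $m/n\to\alpha$. If anything, your Step~2 is slightly more careful than the paper's wording, since Proposition~\ref{prop:overlap_uv} only gives an $\epsilon$-integrated bound and you correctly pass through the double integral rather than asserting a uniform-in-$\epsilon$ pointwise $o_n(1)$.
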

\begin{proof}
Let us fix $\epsilon \in [1,2]^2$. 
	With the choice $R_2 = Q$, we have for all $t \in [0,1]$:
	$$
	q_2'(t,\epsilon) = Q(t,q_1(t,\epsilon),q_2(t,\epsilon)) = \E \langle \bbf{u} \cdot \bbf{U} \rangle_{n,t}.
	$$
	The derivative of \eqref{eq:der_f_interpolation_uv} becomes then by Proposition~\ref{prop:overlap_uv}:
	$$
	f_n'(t)
		=
		\frac{\alpha}{2} q'_1(t) q_2'(t) - \frac{1}{2} \E \left\langle
			\Big( \bbf{u} \cdot \bbf{U} - \E\langle \bbf{u} \cdot \bbf{U} \rangle_{n,t} \Big)
			\Big( \frac{m}{n} \bbf{v} \cdot \bbf{V} - \alpha q_1'(t) \Big)
		\right\rangle_{\! n,t} 
		=
		\frac{\alpha}{2} q_1'(t) q_2'(t) + o_n(1)
	$$
	where $o_n(1)$ denotes a quantity that goes to $0$ as $n \to \infty$, uniformly in $t,\epsilon$. By Proposition~\ref{prop:free_wasserstein} we have $f_n(0)= F_n + O_n(s_n)$. We have also: $f_n(1)= \psi_{P_U}(\alpha q_1(1,\epsilon)) + \alpha \psi_{P_V}(q_2(1,\epsilon)) + o_n(1)$.
	We conclude by
	\begin{align*}
		F_n &= \int_{[1,2]^2} f_n(0) d\epsilon  + o_n(1) 
		= \int_{[1,2]^2} \Big(f_n(1) - \int_0^1 f_n'(t) dt \Big) d\epsilon + o_n(1)
	\\
			&= \int_{[1,2]^2} \int_0^1\Big(\psi_{P_U}(\alpha q_1(1,\epsilon)) + \alpha \psi_{P_V}(q_2(1,\epsilon)) - \frac{\alpha}{2} q_1'(t,\epsilon) q_2'(t,\epsilon) \Big) dt d\epsilon + o_n(1).
	\end{align*}
\end{proof}

\subsubsection{Lower bound}

One deduces from Proposition~\eqref{prop:key_uv} the following lower bound:

\begin{proposition}\label{prop:lower_bound_uv}
	$$
	\liminf_{n \to \infty} F_{n} \geq \sup_{q_1 \geq 0} \inf_{q_2 \geq 0} \cF(q_2,q_1) \,.
	$$
\end{proposition}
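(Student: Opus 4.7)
The plan is to invoke Proposition~\ref{prop:key_uv} with a judicious choice of the controlling function $R_1$. Specifically, I will fix $r \geq 0$ arbitrarily and take $R_1(t, q_1, q_2) \equiv r$, the constant function. This choice trivially verifies the hypotheses of Proposition~\ref{prop:key_uv}: it is continuous, bounded by $r$, and its partial derivatives in the second and third arguments vanish (hence are continuous and non-negative). The point is that making $R_1$ constant will eventually decouple the two signal-to-noise ratios.

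With this $R_1$, the first coordinate of the Cauchy problem~\eqref{eq:cauchy} solves explicitly: $q_1(t,\epsilon) = s_n \epsilon_1 + r t$, so in particular $q_1'(t,\epsilon) = r$ and $q_1(1,\epsilon) \to r$ uniformly in $\epsilon \in [1,2]^2$ as $s_n \to 0$. Because $q_1'$ is constant, the cross-term in Proposition~\ref{prop:key_uv} collapses:
$$\int_0^1 q_1'(t,\epsilon)\, q_2'(t,\epsilon)\, dt = r\big(q_2(1,\epsilon) - s_n \epsilon_2\big).$$
Substituting this in, and using the continuity of $\psi_{P_U}$ to absorb $s_n \epsilon_1$ into the $o_n(1)$ term, the identity of Proposition~\ref{prop:key_uv} becomes
$$F_n = \psi_{P_U}(\alpha r) + \int_{[1,2]^2} \Big(\alpha \psi_{P_V}(q_2(1,\epsilon)) - \tfrac{\alpha r}{2}\, q_2(1,\epsilon)\Big)\, d\epsilon + o_n(1).$$

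For each $\epsilon$, I bound the integrand pointwise from below by the infimum
$$\alpha \psi_{P_V}(q_2(1,\epsilon)) - \tfrac{\alpha r}{2}\, q_2(1,\epsilon) \ \geq\ \inf_{q \geq 0}\Big\{\alpha \psi_{P_V}(q) - \tfrac{\alpha r}{2}\, q\Big\}.$$
This gives $\liminf_{n \to \infty} F_n \geq \psi_{P_U}(\alpha r) + \inf_{q \geq 0}\{\alpha \psi_{P_V}(q) - \tfrac{\alpha r}{2}\, q\} = \inf_{q_2 \geq 0} \mathcal{F}(\alpha, q_2, r)$. Since $r \geq 0$ was arbitrary, taking the supremum over $r$ yields the desired bound.

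No real obstacle remains: all the heavy lifting (overlap concentration in Proposition~\ref{prop:overlap_uv}, the Gaussian integration-by-parts identity of Lemma~\ref{lem:der_f_interpolation_uv}, and the free-energy concentration estimate of Lemma~\ref{lem:v_n_uv}) has already been encapsulated in Proposition~\ref{prop:key_uv}. The choice $R_2 = Q$ there was designed precisely to cancel the Gibbs bracket in $f_n'(t)$, and it is this cancellation that lets us choose $R_1$ at will. Taking $R_1$ constant is the minimal move that renders the endpoint expression linear in $q_2(1,\cdot)$, so that the pointwise lower bound by an infimum over $q_2$ is immediate.
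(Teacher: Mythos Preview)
Your proof is correct and follows essentially the same approach as the paper: both apply Proposition~\ref{prop:key_uv} with the constant choice $R_1 \equiv r$, solve $q_1(t,\epsilon) = s_n\epsilon_1 + rt$, use the Lipschitz continuity of $\psi_{P_U}$ and $s_n \to 0$ to replace $q_1(1,\epsilon)$ by $r$, integrate $r\,q_2'$ to $r\,q_2(1,\epsilon)$ up to an $o_n(1)$ term, and then lower-bound the integrand pointwise by $\inf_{q_2 \geq 0}\mathcal{F}(q_2,r)$ before taking the supremum over $r$.
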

\begin{proof}
	We apply Proposition~\ref{prop:key_uv} with $R_1 = r$, for some $r \geq 0$. We get $q_1(t,\epsilon) = \epsilon_1 s_n + rt$, so that:
	\begin{align*}
		F_n &= \int_{[1,2]^2} \int_0^1\Big(\psi_{P_U}(\alpha (s_n \epsilon_1 + r)) + \alpha \psi_{P_V}(q_2(1,\epsilon)) - \frac{\alpha}{2} r  q_2'(t,\epsilon) \Big) dt d\epsilon + o_n(1).
		\\
		&= \int_{[1,2]^2} \Big(\psi_{P_U}(\alpha r) + \alpha \psi_{P_V}(q_2(1,\epsilon)) - \frac{\alpha}{2} r  q_2(1,\epsilon) \Big) d\epsilon + o_n(1).
		\\
		& \geq \inf_{q_2 \geq 0} \mathcal{F}(q_2,r) + o_n(1),
	\end{align*}
	where we used the fact that $\psi_{P_U}$ is $\frac{1}{2} K^2$-Lipschitz, and that $s_n \to 0$. This proves the proposition since the last inequality holds for all $r \geq 0$.
\end{proof}

\subsubsection{Upper bound}

We will now prove the converse upper bound.

\begin{proposition}\label{prop:upper_bound_uv}
	$$
	\limsup_{n \to \infty} F_{n} \leq \sup_{q_1 \geq 0} \inf_{q_2 \geq 0} \cF(q_2,q_1) \,.
	$$
\end{proposition}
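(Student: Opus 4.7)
The plan is to mirror the lower bound argument of Proposition~\ref{prop:lower_bound_uv}, but with the roles of $\bbf{U}$ and $\bbf{V}$ exchanged. Since the model~\eqref{eq:model} is symmetric under $(n,\bbf{U},P_U,\alpha)\leftrightarrow(m,\bbf{V},P_V,1/\alpha)$, I would first establish the analogue of Proposition~\ref{prop:overlap_uv} for the ``$\bbf{V}$-overlap'' $\tfrac{m}{n}\bbf{v}\cdot\bbf{V}$: repeating the same proof verbatim with the roles swapped yields a constant $C>0$ such that
$$\int_{[1,2]^2}\E\Big\langle\Big(\tfrac{m}{n}\bbf{v}\cdot\bbf{V}-\E\langle\tfrac{m}{n}\bbf{v}\cdot\bbf{V}\rangle_{n,t}\Big)^{\!2}\Big\rangle_{n,t}\,d\epsilon\leq\frac{C}{n^{1/8}}.$$

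With this concentration in hand, I would derive a ``symmetric'' counterpart of Proposition~\ref{prop:key_uv}: pinning $R_1$ to $\widetilde{Q}(t,q_1,q_2)\defeq\tfrac{1}{\alpha}\E\langle\tfrac{m}{n}\bbf{v}\cdot\bbf{V}\rangle_{n,t}$ (so that $\alpha q_1'(t)=\E\langle\tfrac{m}{n}\bbf{v}\cdot\bbf{V}\rangle_{n,t}$) and leaving $R_2$ free, the Gibbs correction in Lemma~\ref{lem:der_f_interpolation_uv} vanishes by Cauchy--Schwarz together with the concentration above. This gives
$$F_n=\int_{[1,2]^2}\!\Big[\psi_{P_U}(\alpha q_1(1,\epsilon))+\alpha\psi_{P_V}(q_2(1,\epsilon))-\tfrac{\alpha}{2}\!\int_0^1 q_1'(t,\epsilon)q_2'(t,\epsilon)\,dt\Big]\,d\epsilon+o_n(1).$$
Choosing now $R_2\equiv r$ constant for some $r\geq 0$, one has $q_2(t,\epsilon)=s_n\epsilon_2+rt$ and $\int_0^1 q_1'q_2'\,dt=r\,q_1(1,\epsilon)+O(s_n)$, so this collapses to the clean identity $F_n=\int_{[1,2]^2}\cF(r,q_1(1,\epsilon))\,d\epsilon+o_n(1)$.

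The map $q_v\mapsto\cF(r,q_v)$ is convex with unique minimizer $q_v^\star(r)$ satisfying $r=2\psi'_{P_U}(\alpha q_v^\star(r))$, so it would suffice to choose $r=r^\star$ such that the endpoint $q_1(1,\epsilon)$ lies asymptotically at $q_v^\star(r^\star)$. I would pick $r^\star$ to be any maximizer of $r\mapsto\inf_{q_v}\cF(r,q_v)$; a short computation (differentiating the inner infimum) shows that the pair $(r^\star,q_v^\star(r^\star))$ then satisfies \emph{both} fixed-point relations defining $\Gamma(\lambda,\alpha)$. At this self-consistent choice, a Gronwall/Duhamel comparison along the interpolation path --- exploiting that $\widetilde Q(t,\cdot,\cdot)$ is itself a Bayes-optimal posterior overlap, hence bounded and monotonically related to $\psi'_{P_V}$ --- should force $q_1(1,\epsilon)\to q_v^\star(r^\star)$ uniformly in $\epsilon\in[1,2]^2$. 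Combined with the local Lipschitz continuity of $\cF(r^\star,\cdot)$ on the compact range of the overlap, this yields $\limsup_n F_n\leq\cF(r^\star,q_v^\star(r^\star))=\sup_{q_u}\inf_{q_v}\cF(q_u,q_v)$, which coincides with the claimed bound $\sup_{q_1}\inf_{q_2}\cF(q_2,q_1)$ via Proposition~\ref{prop:sup_inf}.

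The hard part will be this last convergence step: pinning down the endpoint of the adaptive ODE without circularly invoking the very replica formula one is trying to prove. This is genuinely a self-consistency statement --- at the saddle point $r^\star$ the interpolating channel asymptotically decouples into two scalar channels whose Bayes-optimal overlaps are exactly $(r^\star,q_v^\star(r^\star))$, and one has to quantify this asymptotic decoupling sharply enough to control $q_1(1,\epsilon)-q_v^\star(r^\star)$ in $L^2([1,2]^2)$ as $n\to\infty$. This is the essentially non-symmetric obstruction that is absent in Guerra's interpolation (Proposition~\ref{prop:guerra_bound}).
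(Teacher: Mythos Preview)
Your proposal has a genuine gap precisely where you flag it: the endpoint control $q_1(1,\epsilon)\to q_v^\star(r^\star)$. With $R_2=r$ constant and $R_1=\widetilde{Q}$ adaptive, you obtain the \emph{equality}
\[
F_n=\int_{[1,2]^2}\cF\big(r,\,q_1(1,\epsilon)\big)\,d\epsilon+o_n(1),
\]
but $q_v\mapsto\cF(r,q_v)$ is convex, so without knowing where $q_1(1,\epsilon)$ lands you can only bound this from \emph{below} by $\inf_{q_v}\cF(r,q_v)$ --- which reproduces the lower bound, not the upper one. Forcing the endpoint to sit at the minimizer would require knowing the asymptotic overlaps of the interpolating system, i.e.\ essentially the replica formula itself; the Gronwall/Duhamel sketch you propose does not supply this without circularity.

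The paper's proof sidesteps the endpoint problem entirely by a different choice of interpolation: keep $R_2=Q$ (the $\bbf{u}\cdot\bbf{U}$ overlap, exactly as in Proposition~\ref{prop:key_uv}) but take $R_1=2\psi'_{P_V}\circ Q$ adaptive as well. Then two observations finish the argument. First, since $q_i(1)-q_i(0)=\int_0^1 q_i'(t)\,dt$ and $\psi_{P_U},\psi_{P_V}$ are convex, Jensen's inequality gives
\[
\psi_{P_U}(\alpha q_1(1))\leq\int_0^1\psi_{P_U}(\alpha q_1'(t))\,dt+o_n(1),
\qquad
\psi_{P_V}(q_2(1))\leq\int_0^1\psi_{P_V}(q_2'(t))\,dt+o_n(1),
\]
so Proposition~\ref{prop:key_uv} yields $F_n\leq\int_{[1,2]^2}\int_0^1\cF\big(q_2'(t,\epsilon),q_1'(t,\epsilon)\big)\,dt\,d\epsilon+o_n(1)$. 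Second, the coupled choice enforces $q_1'(t)=2\psi'_{P_V}(q_2'(t))$ for every $t$, which is precisely the first-order condition making $q_2'(t)$ the minimizer of $q_2\mapsto\cF(q_2,q_1'(t))$. Hence each integrand is already $\inf_{q_2\geq0}\cF(q_2,q_1'(t))\leq\sup_{q_1}\inf_{q_2}\cF(q_2,q_1)$, and integrating gives the claim. No $\bbf{V}$-overlap concentration and no endpoint analysis are needed.
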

\begin{proof}
	We apply Proposition~\ref{prop:key_uv} with $R_1 = 2 \alpha \psi_{P_V}' \circ Q$.
	$R_1$ verifies the conditions of Proposition~\ref{prop:overlap_uv} because $\psi_{P_V}$ is a $\cC^2$ convex Lipschitz function (Proposition~\ref{prop:i_mmse}).

	For simplicity we omit briefly the dependencies in $\epsilon$ of $q_1$ and $q_2$.
	$\psi_{P_U}$ is $K^2 / 2$-Lipschitz, and $q_1(0) = \epsilon s_n = o_n(1)$ so $\psi_{P_U}(\alpha q_1(1)) = \psi_{P_U}\big(\alpha (q_1(1) - q_1(0))\big) + o_n(1)$, where $o_n(1)$ is a quantity that goes to $0$ as $n \to \infty$, uniformly in $\epsilon \in [1,2]^2$.
	Notice that by convexity of the functions $\psi_{P_U}$ and $\psi_{P_V}$, we get
	$$
	\psi_{P_U}(\alpha q_1(1))
	=
	\psi_{P_U}\Big(\alpha \int_0^1 q'_1(t)\Big) + o_n(1)
	\leq \int_0^1 \psi_{P_U}(\alpha q_1'(t)) dt + o_n(1).
	$$
	and similarly: $\psi_{P_V}(q_2(1)) \leq \int_0^1 \psi_{P_V}(q_2'(t)) dt + o_n(1)$.
	We get by Proposition~\ref{prop:key_uv}
	\begin{align}
		F_n &\leq \int_{[1,2]^2} \int_0^1\Big(\psi_{P_U}(\alpha q_1'(t,\epsilon)) + \alpha \psi_{P_V}(q'_2(t,\epsilon)) - \frac{\alpha}{2} q_1'(t,\epsilon)  q_2'(t,\epsilon) \Big) dt d\epsilon + o_n(1). \nonumber
		\\
			&= \int_{[1,2]^2} \int_0^1 \mathcal{F}(q_2'(t,\epsilon),q_1'(t,\epsilon)) dt d\epsilon + o_n(1). \label{eq:upper_uv}
	\end{align}
	Since we chose $R_2 = Q$ and $R_1 = 2 \alpha \psi_{P_V}' \circ Q = 2 \alpha \psi_{P_V}' \circ R_2$, Equation~\eqref{eq:cauchy} gives:
	$$
	\forall \epsilon \in [1,2]^2, \ \forall t \in [0,1],
	\qquad q_1'(t,\epsilon) = 2 \alpha \psi'_{P_V}(q_2'(t,\epsilon)).
	$$
	By convexity of $\psi_{P_V}$, this gives that for all $\epsilon \in [1,2]^2$ and all $t \in [0,1]$ we have
	$$
	\mathcal{F}(q_2'(t,\epsilon),q_1'(t,\epsilon)) = \inf_{q_2 \geq 0} \mathcal{F}(q_2, q_1'(t,\epsilon)) \leq \sup_{q_1 \geq 0} \inf_{q_2 \geq 0} \mathcal{F}(q_2,q_1).
	$$
	Together with \eqref{eq:upper_uv}, this concludes the proof.
\end{proof}


\subsection{Concentration of the free energy: proof of Lemma~\ref{lem:v_n_uv}}\label{sec:technical_uv}

In this section, we prove Lemma~\ref{lem:v_n_uv}: we show that the perturbed free energy concentrates around its mean, uniformly in the perturbation. Lemma~\ref{lem:v_n_uv} will follow from Lemma~\ref{lem:concentration_gauss} and Lemma~\ref{lem:concentration_uv} below.
Let $\E_z$ denote the expectation with respect to the Gaussian random variables $\bbf{Z}, \bbf{Z}^{(u)}, \bbf{Z}^{(v)}$.

\begin{lemma}\label{lem:concentration_gauss}
	There exists a constant $C > 0$, that only depends on $K,\alpha$, such that for all $t \in [0,1]$, $B \geq 0$ and $(r_1,r_2) \in [0,B]^2$,
	$$
	\E \left|
	\phi_t(r_1,r_2) - \E_z \phi_t(r_1,r_2)
	\right|
	\leq C n^{-1/2} s_n^{-1} \sqrt{1 + B s_n} \,.
	$$
\end{lemma}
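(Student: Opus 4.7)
The plan is to view $\phi_t(r_1,r_2)$ as a function of the underlying Gaussian noise vector $\bbf{G} = (\bbf{Z}, \bbf{Z}^{(u)}, \bbf{Z}^{(v)})$ (with $\bbf{U},\bbf{V}$ held fixed) and apply the Gaussian Poincaré inequality $\Var(f(\bbf{G})) \leq \E \|\nabla f(\bbf{G})\|^2$. Since $\E|\phi_t - \E_z \phi_t| \leq \sqrt{\Var_z(\phi_t)}$ by Cauchy--Schwarz, it suffices to bound each squared partial derivative and sum.

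The key observation is that differentiating $\phi_t$ in any Gaussian variable produces a Gibbs average of the corresponding ``spin'' coefficient, which is bounded thanks to the bounded support assumption $|u_i|,|v_j| \leq K$. Writing $\langle \cdot \rangle$ for the Gibbs measure at the point $(t, s_n r_1, s_n r_2)$, direct computation from the definition of $H_{n,t}$ gives
\begin{align*}
\frac{\partial \phi_t}{\partial Z_{i,j}} &= \frac{1}{n s_n}\sqrt{\tfrac{1-t}{n}}\,\langle u_i v_j\rangle, \\
\frac{\partial \phi_t}{\partial Z^{(u)}_i} &= \frac{1}{n s_n}\sqrt{\alpha s_n r_1}\,\langle u_i\rangle, \\
\frac{\partial \phi_t}{\partial Z^{(v)}_j} &= \frac{1}{n s_n}\sqrt{s_n r_2}\,\langle v_j\rangle.
\end{align*}
Using $|\langle u_i v_j\rangle|, |\langle u_i\rangle|^2, |\langle v_j\rangle|^2 \leq K^2$ and $m \leq (\alpha+1)n$ for $n$ large, one obtains
\[
\|\nabla \phi_t\|^2 \leq \frac{1}{n^2 s_n^2}\Bigl(\tfrac{1-t}{n}\cdot nm\cdot K^4 + \alpha s_n r_1 \cdot n K^2 + s_n r_2 \cdot m K^2\Bigr) \leq \frac{C_1}{n s_n^2}\bigl(1 + B s_n\bigr),
\]
for a constant $C_1 = C_1(K,\alpha)$ and all $(r_1,r_2) \in [0,B]^2$.

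Combining Gaussian Poincaré with the above bound yields
\[
\E_z\bigl(\phi_t - \E_z \phi_t\bigr)^2 \leq \frac{C_1}{n s_n^2}(1+B s_n),
\]
and taking square roots and then $\E$-expectation (over $\bbf{U},\bbf{V}$, for which the bound is deterministic in this step) gives the claim with $C = \sqrt{C_1}$. No truly hard step arises: the only thing to keep an eye on is that the bound on $\|\nabla\phi_t\|^2$ must be uniform in $t,r_1,r_2$, which follows from the bounded prior support and from $r_1,r_2 \leq B$. If one prefers to avoid invoking Gaussian Poincaré directly, the same estimate follows from the Gaussian log-Sobolev/Herbst subgaussian concentration or from Efron--Stein applied after discretizing, but Gaussian Poincaré is the cleanest route.
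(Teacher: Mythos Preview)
Your proof is correct and follows essentially the same route as the paper: condition on $(\bbf{U},\bbf{V})$, apply the Gaussian Poincaré inequality to $\phi_t$ viewed as a function of $(\bbf{Z},\bbf{Z}^{(u)},\bbf{Z}^{(v)})$, bound $\|\nabla\phi_t\|^2$ using the bounded support of the priors, and then take expectation over $(\bbf{U},\bbf{V})$ via Jensen. The paper merely asserts the gradient bound without writing out the partial derivatives, whereas you compute them explicitly; the argument is otherwise identical.
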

\begin{proof}
	Let $(r_1,r_2) \in [0,B]^2$ and consider $\bbf{U}$ and $\bbf{V}$ to be fixed (i.e.\ we first work conditionally on $\bbf{U},\bbf{V}$).
	Consider the function
	$$
	f:(\bbf{Z}, \bbf{Z}^{(u)}, \bbf{Z}^{(v)}) \mapsto \phi_t(r_1,r_2) \,.
	$$
	It is not difficult to verify that 
	$$
	\| \nabla f \|^2 \leq \frac{C}{n s_n^2} (1 + B s_n)
	$$
	for some constant $C > 0$ that depends only on $K$ and $\alpha$.
	The Gaussian Poincaré inequality (see~\cite{boucheron2013concentration} Chapter 3) gives then 
	$$
	\E_z \left(\phi_t(r_1,r_2) - \E_z \phi_t(r_1,r_2)\right)^2 \leq \frac{C}{n s_n^2} (1 + B s_n)\,.
	$$
	We obtain the lemma by integration over $\bbf{U},\bbf{V}$ and Jensen's inequality.
\end{proof}
\\
\begin{lemma}\label{lem:concentration_uv}
	There exists a constant $C > 0$, that only depends on $K,\alpha$, such that for all $t \in [0,1]$, $B \geq 0$ and $(r_1,r_2) \in [0,B]^2$,
	$$
	\E \left|
	\E_z \phi_t(r_1,r_2) - \E \phi_t(r_1,r_2)
	\right|
	\leq C n^{-1/2} s_n^{-1} \sqrt{1 + B s_n} \,.
	$$
\end{lemma}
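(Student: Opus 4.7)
The plan is to apply a bounded-differences (Efron--Stein) inequality to the function $(\bbf U,\bbf V)\mapsto \E_z\phi_t(r_1,r_2)$, which is the natural bounded-support counterpart of the Gaussian Poincaré argument used in Lemma~\ref{lem:concentration_gauss}. Since $|U_i|,|V_j|\le K$ almost surely, this fits exactly into the Efron--Stein framework and requires only estimating the coordinate-wise oscillations of $\E_z\phi_t$.

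The first step is to control these bounded differences. Replacing $U_i$ by an i.i.d.\ copy $U_i'$ modifies the Hamiltonian $H_{n,t}(\bbf u,\bbf v;s_nr_1,s_nr_2)$ pointwise by
\[
\Delta H(\bbf u,\bbf v)\;=\;u_i(U_i-U_i')\!\left[\alpha s_nr_1+\frac{1-t}{n}\sum_{j=1}^m v_jV_j\right],
\]
whose supremum norm over $(\bbf u,\bbf v)\in[-K,K]^{n+m}$ is bounded by $C_1+C_2 Bs_n$ for constants depending only on $K$ and $\alpha$. Using $|\log\cZ_{n,t}-\log\cZ_{n,t}'|\le\|\Delta H\|_\infty$ at each realization of the Gaussian variables gives the coordinate-wise bound
\[
\bigl|\E_z\phi_t(\bbf U,\bbf V)-\E_z\phi_t(\bbf U^{(i)},\bbf V)\bigr|\le\frac{C(1+Bs_n)}{ns_n},
\]
with the analogous estimate for the replacement of $V_j$. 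Plugging these into the Efron--Stein inequality and applying Jensen's inequality yields an $L^1$-estimate of the form $\E|\E_z\phi_t-\E\phi_t|\le C(1+Bs_n)/(s_n\sqrt n)$. This already suffices for the subsequent use in Lemma~\ref{lem:v_n_uv}, since the Lemma is only applied in the regime $r_1,r_2\in[0,M/s_n]$, where $1+Bs_n$ is bounded by a constant and the estimate matches $C\sqrt{1+Bs_n}/(s_n\sqrt n)$ up to multiplicative constants.

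To recover the exact square-root dependence in the statement, one should split the contributions of the $t$-dependent bulk (whose bounded-difference constant is of order $1/(ns_n)$, independent of $B$) and of the side-information perturbation (whose Hamiltonian factorizes across coordinates so that its variance scales linearly, not quadratically, in $B$). Since $\phi_t$ itself is not literally a sum of these two pieces, the decomposition is carried out via the integral identity $\phi_t(r_1,r_2)-\phi_t(0,0)=\int_0^{r_1}\partial_r\phi_t(r,r_2)\,dr+\int_0^{r_2}\partial_r\phi_t(0,r)\,dr$ together with Efron--Stein applied to each integrand, carefully tracking how the bounded-difference constants depend on the integration variable. The main obstacle is precisely this last step: extracting the sharp $\sqrt{1+Bs_n}$ from the factorized structure of the perturbation Hamiltonian, rather than the looser linear dependence that the direct Efron--Stein bound produces.
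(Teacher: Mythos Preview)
Your approach is exactly the paper's: apply the Efron--Stein/bounded-differences inequality to $(\bbf U,\bbf V)\mapsto\E_z\phi_t(r_1,r_2)$, using that each coordinate is bounded by $K$, and conclude by Jensen. The paper's proof is in fact less detailed than yours---it simply asserts that the bounded-difference property holds and quotes Corollary~3.2 of Boucheron--Lugosi--Massart to obtain $\E(\E_z\phi_t-\E\phi_t)^2\le Cn^{-1}s_n^{-2}(1+s_nB)$.

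Your careful computation of the coordinate-wise oscillation $c_i\le C(1+Bs_n)/(ns_n)$ is correct, and you are right that plugging this directly into $\Var(g)\le\frac14\sum_i c_i^2$ gives the variance bound $C(1+Bs_n)^2/(ns_n^2)$ rather than $C(1+Bs_n)/(ns_n^2)$. The paper does not explain how the sharper dependence is obtained, and your observation that only the regime $Bs_n\le M$ is ever used (so the two bounds coincide up to constants) is exactly the right way to close the argument. Your second and third paragraphs---splitting via integral identities to chase the square root---are unnecessary: the paper does not do this, and the weaker bound you already have is all that Lemma~\ref{lem:v_n_uv} requires. You should simply stop after the first Efron--Stein computation and note that $1+Bs_n\le 1+M$ in the only application.
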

\begin{proof}
	It is not difficult to verify that the function
	$$
	g:(\bbf{U},\bbf{V}) \mapsto \E_z \phi_t(r_1,r_2) 
	$$
	verifies a ``bounded difference property'' (see~\cite{boucheron2013concentration}, Section 3.2) because the components of $\bbf{U}$ and $\bbf{V}$ are bounded by a constant $K > 0$.
	Then Corollary 3.2 from~\cite{boucheron2013concentration} (which is a corollary from the Efron-Stein inequality) implies that for all $t \in [0,1]$ and $r_1,r_2 \in [0,B]$
	$$
	\E \left(
		\E_z \phi_t(r_1,r_2) - \E \phi_t(r_1,r_2)
	\right)^2
	\leq C n^{-1}s_n^{-2}(1 + s_n B) \,.
	$$
	for some constant $C$ depending only on $K$ and $\alpha$. We conclude the proof using Jensen's inequality.
\end{proof}

\section{Proof of Theorem~\ref{th:limit_mmse_vv}}\label{sec:proof_mmse_vv}

In order to prove Theorem~\ref{th:limit_mmse_vv}, we are going to consider the following model with side information to obtain a lower bound on the MMSE.
	Suppose that we observe for $\gamma \geq 0$
	\begin{equation}\label{eq:model_mix}
	\begin{cases}
		\bbf{Y}_{\lambda} &= \sqrt{\frac{\lambda}{n}} \bbf{U} \bbf{V}^{\sT} + \bbf{Z} \\
		\bbf{Y}_{\gamma}' &= \sqrt{\frac{\gamma}{n}} \bbf{U} \bbf{U}^{\sT} + \bbf{Z}'
	\end{cases}
\end{equation}
where $(Z_{i,j}' = Z_{j,i}')_{i\leq j} \iid \cN(0,1)$ are independent from everything else. Define the corresponding free energy
 $$
 F_n(\lambda,\gamma) = \frac{1}{n} \E \log \int dP_U^{\otimes n}(\bbf{u}) dP_V^{\otimes m}(\bbf{v})
 \exp\Big(\sum_{1 \leq i \leq j \leq n} \sqrt{\frac{\gamma}{n}} Y_{i,j}' u_i u_j - \frac{\gamma u_i^2 u_j^2}{2}
 +
 \sum_{i,j} \sqrt{\frac{\lambda}{n}}Y_{i,j} u_i v_j - \frac{\lambda u_i^2 v_j^2}{2n}
 \Big) \,.
 $$
 \begin{proposition}\label{prop:f_eps_uu}
	 Recall that $\psi_{P_U}^*$ (resp.\ $\psi_{P_V}^*$) denotes the monotone conjugate (Definition~\ref{def:monotone_conjugate} in Appendix~\ref{sec:app_convex}) of $\psi_{P_U}$ (resp.\ $\psi_{P_V}$).
	 For all $\lambda, \gamma \geq 0$, we have
	 \begin{equation}\label{eq:lim_f_eps_uu}
	F_n(\lambda,\gamma)
	\xrightarrow[n \to \infty]{} 
	f(\lambda,\gamma) \defeq 
	\sup_{q_u,q_v \geq 0} \Big\{ 
		\frac{\gamma q_u^2}{4} + \frac{\alpha \lambda q_u q_v}{2} - \psi_{P_U}^*(q_u/2) - \alpha \psi_{P_V}^*(q_v/2)
	\Big\}.
	 \end{equation}
\end{proposition}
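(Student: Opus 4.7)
The plan is to extend the adaptive interpolation technique of Section~\ref{sec:proof_rs_uv} to include the additional Wigner-type $UU^{\sT}$ observations alongside the Wishart-type $UV^{\sT}$ observations. For paths $R_1,R_2:[0,1]\to\R_{\geq 0}$ to be chosen later, introduce the interpolating observation system combining $UV^{\sT}$ at SNR $(1-t)\lambda$, $UU^{\sT}$ at SNR $(1-t)\gamma$, and two independent Gaussian side-information channels $\bbf{Y}_t^{(u)}=\sqrt{R_1(t)}\,\bbf{U}+\bbf{Z}^{(u)}$, $\bbf{Y}_t^{(v)}=\sqrt{R_2(t)}\,\bbf{V}+\bbf{Z}^{(v)}$. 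With small randomized initializations $R_i(0)=s_n\epsilon_i$ averaged over $\epsilon\in[1,2]^2$ in the spirit of Proposition~\ref{prop:overlap_uv}, the interpolating free energy $f_n(t)$ satisfies $f_n(0)=F_n(\lambda,\gamma)+o(1)$ and $f_n(1)=\psi_{P_U}(R_1(1))+\alpha\psi_{P_V}(R_2(1))+o(1)$.

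By Gaussian integration by parts and the Nishimori identity---the Wishart contribution as in Lemma~\ref{lem:der_f_interpolation_uv} and the Wigner contribution as in the derivation of Proposition~\ref{prop:i_mmse} applied to Chapter~\ref{sec:xx}---one obtains
\[
f_n'(t) = -\frac{\lambda}{2}\E\big\langle(\bbf{u}\!\cdot\!\bbf{U})(\tfrac{m}{n}\bbf{v}\!\cdot\!\bbf{V})\big\rangle_{n,t} - \frac{\gamma}{4}\E\big\langle(\bbf{u}\!\cdot\!\bbf{U})^2\big\rangle_{n,t} + \frac{R_1'(t)}{2}\E\big\langle\bbf{u}\!\cdot\!\bbf{U}\big\rangle_{n,t} + \frac{R_2'(t)}{2}\E\big\langle\tfrac{m}{n}\bbf{v}\!\cdot\!\bbf{V}\big\rangle_{n,t}.
\]
The extension of Proposition~\ref{prop:overlap_uv} to this combined setting, whose proof uses exactly the Gaussian Poincaré and bounded-differences arguments of Section~\ref{sec:technical_uv} applied in addition to the $UU^{\sT}$ noise variables $(Z'_{i,j})_{i\leq j}$, produces concentration of both overlaps. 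Writing $q_u(t):=\E\langle\bbf{u}\cdot\bbf{U}\rangle_{n,t}$ and $q_v(t):=\E\langle\bbf{v}\cdot\bbf{V}\rangle_{n,t}$, one has $\E\langle(\bbf{u}\cdot\bbf{U})^2\rangle\approx q_u^2$ and $\E\langle(\bbf{u}\cdot\bbf{U})(\tfrac{m}{n}\bbf{v}\cdot\bbf{V})\rangle\approx\alpha q_u q_v$.

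Integrating from $0$ to $1$ yields the identity $F_n=\psi_{P_U}(R_1(1))+\alpha\psi_{P_V}(R_2(1))+\int_0^1[\tfrac{\gamma q_u^2}{4}+\tfrac{\lambda\alpha q_u q_v}{2}-\tfrac{R_1' q_u}{2}-\tfrac{R_2'\alpha q_v}{2}]\,dt+o(1)$. For the \emph{upper bound}, I would take adaptively $R_1'(t)=\gamma q_u(t)+\lambda\alpha q_v(t)$ and $R_2'(t)=\lambda q_u(t)$, which collapses the bracket to $-\tfrac{\gamma q_u^2}{4}-\tfrac{\lambda\alpha q_u q_v}{2}$; Jensen's inequality (from convexity of $\psi_{P_U},\psi_{P_V}$) combined with the Fenchel--Young saturation at the self-consistent stationary point of $f(\lambda,\gamma,\cdot,\cdot)$---that is, the solution of $q_u=2\psi_{P_U}'(\gamma q_u+\lambda\alpha q_v)$, $q_v=2\psi_{P_V}'(\lambda q_u)$, which is the first-order condition for the supremum defining $f(\lambda,\gamma)$---then delivers $\limsup F_n\leq f(\lambda,\gamma)$. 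For the \emph{lower bound}, I would fix a maximizer $(q_u^\star,q_v^\star)$ of $f(\lambda,\gamma)$, pick constants $R_1,R_2$ saturating Fenchel--Young at that target, and invoke the Guerra-type manipulation of Section~\ref{sec:guerraton}.

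The principal technical obstacle will be that the residual quadratic form $\gamma x^2+2\lambda\alpha xy$ in the displacements $x=q_u-q_u^\star$, $y=q_v-q_v^\star$ is \emph{indefinite}, so the naïve Guerra positivity used in Proposition~\ref{prop:guerra_bound} is unavailable. The key to overcoming this will be the self-consistency of the adaptive flow: by construction the ODE defining $(R_1,R_2)$ drives $(q_u(t),q_v(t))$ onto the stationary manifold of $f$ in the large-$n$ limit, where $(x,y)$ vanishes and the cross term is controlled. A careful analysis of this flow, in the spirit of the regularity argument behind Proposition~\ref{prop:overlap_uv}, will ensure that this convergence occurs uniformly enough in $\epsilon$ and $t$ to conclude matching bounds.
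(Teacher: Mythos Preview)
Your simultaneous interpolation of both the $UV^{\sT}$ and $UU^{\sT}$ channels is a natural first idea, but it runs into exactly the sign obstruction you flag at the end, and your proposed resolution does not close the gap. The paper avoids this issue entirely by a \emph{nested} (two-step) argument rather than a joint interpolation.

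Concretely, the paper keeps the $UV^{\sT}$ channel \emph{fixed} and interpolates only the symmetric $UU^{\sT}$ part, replacing it by a scalar Gaussian side channel $\sqrt{r(t)}\,\bbf U+\bbf Z''$. Writing $L_n(r)$ for the free energy of the system \{$UV^{\sT}$ at full strength\} $+$ \{scalar channel at SNR $r$\}, the derivative along this one-parameter interpolation is
\[
f_n'(t)=-\tfrac14\,\E\big\langle(\bbf u\!\cdot\!\bbf U - r'(t))^2\big\rangle_{n,t}+\tfrac{r'(t)^2}{4}+o_n(1),
\]
i.e.\ a perfect square with a definite sign, exactly as in Proposition~\ref{prop:guerra_bound}. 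Hence Guerra's lower bound is immediate ($r(t)=rt$ gives $F_n\ge L_n(r)-r^2/4$), and the matching upper bound follows from the adaptive choice $r'(t)=\E\langle\bbf u\!\cdot\!\bbf U\rangle_{n,t}$ together with Jensen. One obtains $F_n\to\sup_{r\ge 0}\{L(r)-r^2/4\}$, where $L(r)=\lim L_n(r)$ is known from the already-proved Theorem~\ref{th:rs_formula_uv} (the $UV^{\sT}$ RS formula with an extra scalar side channel on $\bbf U$). A short convex-analysis lemma then rewrites $\sup_r\{L(r)-r^2/4\}$ in the monotone-conjugate form~\eqref{eq:lim_f_eps_uu}.

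In your route, the residual in the lower bound is the indefinite form $\tfrac{\gamma}{4}x^2+\tfrac{\lambda\alpha}{2}xy$ (with $x=\bbf u\!\cdot\!\bbf U-q_u^\star$, $y$ similarly), and ``the ODE drives the overlaps onto the stationary manifold'' is not an argument: for the Guerra-type lower bound you took constant paths, so there is no flow to invoke; and for the adaptive upper bound, your Fenchel--Young step goes in the wrong direction (it yields a lower, not upper, estimate on $\psi_{P_U}(R_1')+\psi_{P_U}^*(q_u/2)$). The paper's decoupling of the two channels is precisely what buys the sign-definiteness you are missing.
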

Proposition~\ref{prop:f_eps_uu} is proved at the end of this section.
Before we deduce Theorem~\ref{th:limit_mmse_vv} from Proposition~\ref{prop:f_eps_uu}, let us just mention that Proposition~\ref{prop:f_eps_uu} allows to precisely derive the information-theoretic limits for the model~\eqref{eq:model_mix}, by the ``I-MMSE'' relation (Proposition~\ref{prop:i_mmse}).

\begin{corollary}\label{cor:mmse_mix}
	For almost all $\gamma > 0$ the supremum of Proposition~\ref{prop:f_eps_uu} is achieved at a unique $q_u^*(\lambda,\gamma,\alpha)$ and 
	$$
	\MMSE^{(u)}_n(\lambda,\gamma) \defeq 
	\frac{1}{n^2} \E \Big[\sum_{1 \leq i , j \leq n} \big(U_i U_j - \E [U_i U_j| \bbf{Y}_{\lambda},\bbf{Y}'_{\gamma}]\big)^2\Big]
	 \xrightarrow[n \to \infty]{} \E[U^2]^2 - q_u^*(\lambda,\gamma,\alpha)^2 \,.
	$$
\end{corollary}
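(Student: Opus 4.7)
The plan is to deduce this corollary from Proposition~\ref{prop:f_eps_uu} by mimicking the deduction of Corollary~\ref{cor:limit_mmse} from Theorem~\ref{th:rs_formula}: an I-MMSE relation in the variable $\gamma$ transfers the convergence of the free energy into convergence of the MMSE, and an envelope-theorem computation identifies the limit.

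First I would establish the I-MMSE identity in the $\gamma$-direction. The observation channel $\bbf{Y}'_\gamma = \sqrt{\gamma/n}\,\bbf{U}\bbf{U}^{\sT} + \bbf{Z}'$ has exactly the structure of the spiked Wigner model of Chapter~\ref{sec:xx}, with the other channel $\bbf{Y}_\lambda$ playing the role of independent side information. Since $\bbf{Z}'$ is independent of $(\bbf{U},\bbf{V},\bbf{Z})$, the Gaussian integration-by-parts and Nishimori identity (Proposition~\ref{prop:nishimori}) underlying the I-MMSE relation apply verbatim to the joint posterior of $(\bbf{U},\bbf{V})$ given $(\bbf{Y}_\lambda,\bbf{Y}'_\gamma)$, and I expect the derivation to yield, as in~\eqref{eq:rel_f_mmse},
\begin{equation*}
\frac{\partial F_n}{\partial \gamma}(\lambda,\gamma) \;=\; \frac{1}{4}\Big(\E_{P_U}[U^2]^2 - \MMSE^{(u)}_n(\lambda,\gamma)\Big) + o(1).
\end{equation*}
Next, $F_n(\lambda,\cdot)$ is convex (as a free energy of an additive Gaussian channel) and converges pointwise to $f(\lambda,\cdot)$ by Proposition~\ref{prop:f_eps_uu}, so Proposition~\ref{prop:deriv_convex} gives $\partial_\gamma F_n \to \partial_\gamma f$ at every $\gamma$ at which the limit is differentiable. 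Being convex in $\gamma$ (as a supremum of affine functions of $\gamma$), $f(\lambda,\cdot)$ is differentiable at all but countably many $\gamma>0$.

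To identify $\partial_\gamma f$, I would apply the envelope theorem (Proposition~\ref{prop:envelope_compact}) to the functional
$$\Phi(q_u,q_v;\gamma) \;=\; \frac{\gamma q_u^2}{4} + \frac{\alpha\lambda q_u q_v}{2} - \psi_{P_U}^*(q_u/2) - \alpha\psi_{P_V}^*(q_v/2),$$
whose $\gamma$-derivative is $q_u^2/4$. Because $\psi_{P_U}$ and $\psi_{P_V}$ are respectively $\tfrac{1}{2}\E_{P_U}[U^2]$- and $\tfrac{1}{2}\E_{P_V}[V^2]$-Lipschitz (Proposition~\ref{prop:i_mmse}), their monotone conjugates $\psi_{P_U}^*$ and $\psi_{P_V}^*$ equal $+\infty$ outside $[0,\tfrac{1}{2}\E_{P_U}[U^2]]$ and $[0,\tfrac{1}{2}\E_{P_V}[V^2]]$, so the supremum in~\eqref{eq:lim_f_eps_uu} is attained on the compact rectangle $[0,\E_{P_U}[U^2]]\times[0,\E_{P_V}[V^2]]$. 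Proposition~\ref{prop:envelope_compact} then yields: $f(\lambda,\cdot)$ is differentiable at $\gamma$ if and only if the $q_u$-coordinate of the maximizer is unique, and in that case $\partial_\gamma f(\lambda,\gamma) = q_u^*(\lambda,\gamma,\alpha)^2/4$. Combined with the first two steps, this gives the claimed limit at every such $\gamma$.

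The only genuinely non-routine point will be Step~1: checking that the I-MMSE derivation survives the extra conditioning on $\bbf{Y}_\lambda$. I expect no surprise --- both the integration by parts (performed on the independent noise $\bbf{Z}'$) and the Nishimori identity (applied to the joint posterior of $(\bbf{U},\bbf{V})$) go through unchanged --- but it is worth writing out. Everything else is a direct analog of the proof of Corollary~\ref{cor:limit_mmse}.
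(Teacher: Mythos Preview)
Your proposal is correct and follows exactly the approach the paper intends: it merely states that the corollary follows ``by the I-MMSE relation (Proposition~\ref{prop:i_mmse})'' and relies on the reader to reproduce the argument of Proposition~\ref{prop:derivative_phi}/Corollary~\ref{cor:limit_mmse}, which is precisely what you have outlined. One minor technical caution for Step~3: Proposition~\ref{prop:envelope_compact} requires continuity of $\Phi(\cdot,\cdot;\gamma)$ on the compact set, but the monotone conjugates $\psi_{P_U}^*,\psi_{P_V}^*$ are only lower semi-continuous and may equal $+\infty$ at the right endpoints of your rectangle; this is easily handled (e.g.\ by noting that for $\gamma$ in any bounded interval the maximizers stay in a fixed compact subset of the interior where the conjugates are finite and continuous), but it is worth a sentence.
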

The model~\eqref{eq:model_mix} was considered in \cite{deshpande2018contextual}, in the special case $P_U = \frac{1}{2} \delta_{-1} + \frac{1}{2} \delta_{+1}$ and $P_V = \cN(0,1)$.
Theorem~6 from \cite{deshpande2018contextual} shows that one can estimate $\bbf{U}\bbf{U}^{\sT}$ better than a random guess if and only if $\gamma^2 + \alpha \lambda^2 > 1$. Corollary~\ref{cor:mmse_mix} above is more precise and general because it gives the precise expression of the minimum mean squared error for any prior $P_U,P_V$. In particular the boundary $\gamma^2 + \alpha \lambda^2 = 1$ is not expected to be the information-theoretic threshold for sufficiently sparse or unbalanced priors, see the phase diagram of Figure~\ref{fig:phase_diagram} for a similar scenario.
\\

Let us now deduce Theorem~\ref{th:limit_mmse_vv} from Proposition~\ref{prop:f_eps_uu}.
By the ``I-MMSE'' relation of Proposition~\ref{prop:i_mmse}:
\begin{equation}\label{eq:ras1}
\MMSE^{(u)}_n(\lambda) = 
\MMSE^{(u)}_n(\lambda,\gamma = 0) = \E_{P_U}[U^2] - 4 \frac{\partial F_n}{\partial \gamma}(\lambda,0^+).
\end{equation}
The sequence of convex functions $(F_n(\lambda,\cdot))_{n \geq 1}$ converges pointwise to $f(\lambda,\cdot)$ on $\R_{\geq 0}$. Thus, by Proposition~\ref{prop:deriv_convex}:
\begin{equation}\label{eq:ras2}
\limsup_{n \to \infty} \frac{\partial F_n}{\partial \gamma}(\lambda,0^+) \leq 
\frac{\partial f}{\partial \gamma}(\lambda,0^+).
\end{equation}
We need therefore the following lemma:
\begin{lemma}\label{lem:ras3}
	For all $\alpha > 0$ and all $\lambda \in D_{\alpha}$, \ $\displaystyle \frac{\partial f}{\partial \gamma}(\lambda, 0^+) = \frac{q_u^*(\alpha,\lambda)^2}{4}$.
\end{lemma}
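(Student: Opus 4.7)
The strategy is to apply the envelope theorem (Danskin) to the variational formula \eqref{eq:lim_f_eps_uu}. Write
\[
g(q_u,q_v,\gamma) \;=\; \frac{\gamma q_u^2}{4}+\frac{\alpha\lambda q_u q_v}{2}-\psi_{P_U}^\ast(q_u/2)-\alpha\psi_{P_V}^\ast(q_v/2),
\]
so that $f(\lambda,\gamma)=\sup_{q_u,q_v\geq 0} g(q_u,q_v,\gamma)$. Since $\partial g/\partial\gamma=q_u^2/4\geq 0$ is a non-negative affine function of $\gamma$, $f(\lambda,\cdot)$ is convex and non-decreasing, and its right-derivative at $0$ is given by
\[
\tfrac{\partial f}{\partial \gamma}(\lambda,0^+) \;=\; \sup\Big\{\tfrac{q_u^2}{4} \;:\; (q_u,q_v)\ \text{maximizes}\ g(\cdot,\cdot,0)\Big\}.
\]
So the proof reduces to identifying the maximizers of $g(\cdot,\cdot,0)$.

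The plan is to show that the maximizers of $g(\cdot,\cdot,0)$ are exactly the elements of $\Gamma(\lambda,\alpha)$ at which $\mathcal{F}(\lambda,\alpha,\cdot,\cdot)$ is maximal, and that the value coincides. First-order conditions for an interior critical point of $g(\cdot,\cdot,0)$ yield
\[
\tfrac{\alpha\lambda q_v}{2}=\tfrac12(\psi_{P_U}^\ast)'(q_u/2),\qquad \tfrac{\alpha\lambda q_u}{2}=\tfrac{\alpha}{2}(\psi_{P_V}^\ast)'(q_v/2).
\]
By convex duality between $\psi_{P_0}$ (which is convex, $\mathcal{C}^1$, non-decreasing and Lipschitz by Proposition~\ref{prop:i_mmse}) and its monotone conjugate $\psi_{P_0}^\ast$, the derivatives $\psi_{P_0}'$ and $(\psi_{P_0}^\ast)'$ are mutual inverses on the appropriate domains. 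Thus the critical-point equations rewrite as
\[
q_u=2\psi_{P_U}'(\alpha\lambda q_v),\qquad q_v=2\psi_{P_V}'(\lambda q_u),
\]
which are precisely the state-evolution equations \eqref{eq:def_gamma} defining $\Gamma(\lambda,\alpha)$. Moreover, by the Fenchel equality at conjugate points,
\[
\psi_{P_U}^\ast(q_u/2)=\alpha\lambda q_v\cdot q_u/2-\psi_{P_U}(\alpha\lambda q_v),\qquad \psi_{P_V}^\ast(q_v/2)=\lambda q_u\cdot q_v/2-\psi_{P_V}(\lambda q_u),
\]
and substituting these into $g(q_u,q_v,0)$ gives exactly $\mathcal{F}(\lambda,\alpha,q_u,q_v)=\psi_{P_U}(\alpha\lambda q_v)+\alpha\psi_{P_V}(\lambda q_u)-\tfrac{\alpha\lambda}{2}q_uq_v$.

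Combining these two facts with the existence of a maximizer (the effective domains of $\psi_{P_U}^\ast, \psi_{P_V}^\ast$ are bounded, e.g.\ on $[0,\tfrac12\E_{P_U}X^2]$, $[0,\tfrac12\E_{P_V}V^2]$ by the Lipschitz property of $\psi_{P_0}$), we obtain
\[
f(\lambda,0)=\sup_{(q_u,q_v)\in\Gamma(\lambda,\alpha)}\mathcal{F}(\lambda,\alpha,q_u,q_v),
\]
with the same maximizers. For $\lambda\in D_\alpha$ this maximizer is unique and equal to $(q_u^\ast(\lambda,\alpha),q_v^\ast(\lambda,\alpha))$, and the envelope formula above yields $\tfrac{\partial f}{\partial\gamma}(\lambda,0^+)=q_u^\ast(\lambda,\alpha)^2/4$.

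The main technical obstacle is the derivation of the right-derivative formula under only right-continuity/convexity rather than differentiability at $\gamma=0$: one has to argue carefully with Danskin's theorem for a sup of convex-in-$\gamma$ functions. This is standard once one knows that the set of maximizers at $\gamma=0$ is reduced to a single point $(q_u^\ast,q_v^\ast)$, which is exactly where the hypothesis $\lambda\in D_\alpha$ is used; boundary critical points with $q_u=0$ or $q_v=0$ correspond to the trivial fixed point $((\E U)^2,(\E V)^2)\in\Gamma$ and are handled by the same identification with $\mathcal{F}$ on $\Gamma$.
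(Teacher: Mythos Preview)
Your proposal is correct and follows essentially the same route as the paper: identify the maximizers of $g(\cdot,\cdot,0)$ with the maximizers of $\mathcal{F}(\lambda,\alpha,\cdot,\cdot)$ over $\Gamma(\lambda,\alpha)$, invoke $\lambda\in D_\alpha$ for uniqueness, and apply the envelope theorem to read off $\partial f/\partial\gamma(\lambda,0^+)=q_u^{*2}/4$. The paper's proof is simply a compressed version of yours: rather than deriving the first-order conditions and Fenchel equalities by hand, it cites Proposition~\ref{prop:sup_inf} (which establishes exactly this identification, including the boundary/subdifferential issues you only sketch at the end) and then cites Proposition~\ref{prop:envelope_compact} for the envelope step.
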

\begin{proof}
	Let $\alpha > 0$ and $\lambda \in D_{\alpha}$. Then the supremum of \eqref{eq:lim_f_eps_uu} is uniquely achieved at $(q_u^*(\lambda,\alpha),q_v^*(\lambda,\alpha))$ because the couples achieving this supremum are by Proposition~\ref{prop:sup_inf} precisely the couples achieving the supremum of $\mathcal{F}(\lambda,\alpha,\cdot,\cdot)$ over $\Gamma(\lambda,\alpha)$. The lemma follows then from the ``envelope theorem'' of Proposition~\ref{prop:envelope_compact}.
\end{proof}
\\

From Lemma~\ref{lem:ras3} and equations \eqref{eq:ras1}-\eqref{eq:ras2} above, we conclude:
	$$
	\liminf_{n \to \infty} \MMSE_{n}^{(u)}(\lambda) 
	\geq \E_{P_U}[U^2]^2 -q_u^*(\lambda,\alpha)^2.
	$$
	Let $(\bbf{u},\bbf{v})$ sampled from the posterior distribution of $(\bbf{U},\bbf{V})$ given $\bbf{Y}$, independently of everything else. Then $\MMSE_{n}^{(u)}(\lambda) = \E_{P_U}[U^2]^2 - \E \big[ (\bbf{u} \cdot \bbf{U})^2 \big] + o_n(1)$. This gives (the corresponding result for $V$ is obtained by symmetry):
	\begin{equation}\label{eq:upper_bound_overlap}
		\limsup_{n \to \infty} \E \big[ (\bbf{u} \cdot \bbf{U})^2 \big] \leq q_u^*(\lambda,\alpha)^2 \qquad \text{and} \qquad
		\limsup_{n \to \infty} \E \big[ (\bbf{v} \cdot \bbf{V})^2 \big] \leq q_v^*(\lambda,\alpha)^2 \,.
	\end{equation}
	Now, we know by Proposition~\ref{prop:mmse_uv} that
	$$
	\E_{P_U}[U^2] \E_{P_V}[V^2] - \E \big[ (\bbf{u} \cdot \bbf{U}) (\bbf{v} \cdot \bbf{V}) \big] = \MMSE_n(\lambda) \xrightarrow[n \to \infty]{}\E_{P_U}[U^2] \E_{P_V}[V^2] - q_u^* q_v^* \,,
	$$
	which gives $\E \big[ (\bbf{u} \cdot \bbf{U}) (\bbf{v} \cdot \bbf{V}) \big] \to q_u^* q_v^*$. By Cauchy-Schwarz inequality we have
	$$
\E \big[ (\bbf{u} \cdot \bbf{U}) (\bbf{v} \cdot \bbf{V}) \big]^2 
\leq
\E \big[ (\bbf{u} \cdot \bbf{U})^2 \big] \,
\E \big[ (\bbf{v} \cdot \bbf{V})^2 \big]
	$$
	which gives, by taking the liminf:
	$$
	(q_u^* q_v^*)^2 \leq 
	\Big(\liminf_{n \to \infty} \E \big[ (\bbf{u} \cdot \bbf{U})^2 \big] \Big)
	\Big(\liminf_{n \to \infty} \E \big[ (\bbf{v} \cdot \bbf{V})^2 \big] \Big) \,.
	$$
	Combining this with~\eqref{eq:upper_bound_overlap}, we get that $\lim \E \big[ (\bbf{u} \cdot \bbf{U})^2 \big] = (q_u^*)^2$
	and the relation $\MMSE_{n}^{(u)}(\lambda) = \E_{P_U}[U^2]^2 - \E \big[ (\bbf{u} \cdot \bbf{U})^2 \big] + o_n(1)$ gives the result.
\\

\subsubsection{Proof of Proposition~\ref{prop:f_eps_uu}}
	It suffices to prove the result in the case where $P_U$ and $P_V$ have bounded support, because we can then proceed by approximation as in Section~\ref{sec:approximation}. From now, we suppose to be in that case. Since the dependency in $\gamma$ can be incorporated in the prior $P_U$ and the one in $\lambda$ in the prior $P_V$, we only have to prove Proposition~\ref{prop:f_eps_uu} in the case $\gamma = \lambda = 1$. In the sequel we will therefore remove the dependencies in $\lambda,\gamma$.
	Define for $r \geq 0$
	$$
	L_n(r) = \frac{1}{n} \E \log \int dP_U^{\otimes n}(\bbf{u}) dP_V^{\otimes m} (\bbf{v}) \exp \Big(H_n(\bbf{u},\bbf{v}) + \sum_{i=1}^n \sqrt{r} Z_i'' u_i + r U_i u_i - \frac{r}{2} u_i^2 \Big) \,,
	$$
	where $Z''_i \iid \cN(0,1)$, independently of everything else and where the Hamiltonian $H_n(\bbf{u},\bbf{v})$ is defined by~\eqref{eq:hamiltonian} (with $\lambda = 1$). $L_n$ is the free energy (expected log-partition function) for observing jointly $\bbf{Y} = \frac{1}{\sqrt{n}} \bbf{U} \bbf{V}^{\sT} + \bbf{Z}$ and $\bbf{Y}'' = \sqrt{r} \bbf{U} + \bbf{Z}''$. By an straightforward extension of Theorem~\ref{th:rs_formula_uv} we have for all $r \geq 0$:
	\begin{equation}\label{eq:lim_rs_side}
	L_n(r) \xrightarrow[n \to \infty]{} L(r)
	\end{equation}
	where
	$$
	L(r) = 
	\sup_{q_u \geq 0}
	\inf_{q_v \geq 0} \Big\{
		\psi_{P_U}(\alpha q_v + r) + \alpha \psi_{P_V}(q_u) - \frac{\alpha q_u q_v}{2} 
	\Big\}
	\,.
	$$
	\begin{lemma}
		\begin{equation}
			F_n(\lambda,\gamma) \xrightarrow[n \to \infty]{}
			\sup_{r \geq 0} \Big\{ L(r) - \frac{r^2}{4} \Big\} \,.
		\end{equation}
	\end{lemma}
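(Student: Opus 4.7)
Since the text preceding the lemma has already reduced to the case $\lambda=\gamma=1$ with bounded priors, the goal is $F_n \to \sup_{r\geq 0}\{L(r)-r^2/4\}$. I will use a Guerra-type interpolation that continuously swaps the $\bbf{U}\bbf{U}^{\sT}$-observation in $F_n$ for a scalar Gaussian side-channel of strength $r$ on $\bbf{U}$, so as to morph $F_n$ into $L_n(r)$. The lower bound follows from a constant-$r$ interpolation, while the matching upper bound requires an adaptive choice $r=r(t)$ along the path, in the spirit of Section~\ref{sec:proof_rs_uv}.

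\textbf{Interpolation and lower bound.} For a non-increasing $\cC^1$ function $R:[0,1]\to\R_{\geq 0}$ with $R(1)=0$, define
\begin{align*}
	H_{n,t}(\bbf{u},\bbf{v}) = H_n(\bbf{u},\bbf{v})
	&+\sum_{i\leq j}\Bigl[\sqrt{t/n}\,Z'_{i,j} u_iu_j+\tfrac{t}{n}U_iU_ju_iu_j-\tfrac{t}{2n}u_i^2u_j^2\Bigr] \\
	&+\sum_{i=1}^n\Bigl[\sqrt{R(t)}\,Z''_iu_i+R(t)U_iu_i-\tfrac{R(t)}{2}u_i^2\Bigr],
\end{align*}
where $H_n$ is the Wishart Hamiltonian \eqref{eq:hamiltonian} (with $\lambda=1$) and the $Z''_i\iid\cN(0,1)$ are fresh. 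Setting $\psi_n(t)=\tfrac{1}{n}\E\log\cZ_{n,t}$, one has $\psi_n(0)=L_n(R(0))$ and $\psi_n(1)=F_n$. Gaussian integration by parts on $Z'_{i,j},Z''_i$ together with the Nishimori identity (exactly as in Proposition~\ref{prop:guerra_bound}) give, uniformly in $t$,
\begin{equation*}
	\psi_n'(t)=\tfrac{1}{4}\E\big\langle(\bbf{u}\cdot\bbf{U})^2\big\rangle_{n,t}+\tfrac{R'(t)}{2}\E\big\langle\bbf{u}\cdot\bbf{U}\big\rangle_{n,t}+o_n(1).
\end{equation*}
Taking $R(t)=r(1-t)$ for a constant $r\geq 0$ and completing the square yields $\psi_n'(t)\geq -r^2/4+o_n(1)$; integrating gives $F_n\geq L_n(r)-r^2/4+o_n(1)$. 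Using \eqref{eq:lim_rs_side} and then maximizing over $r$, one obtains $\liminf_n F_n\geq\sup_{r\geq 0}\{L(r)-r^2/4\}$.

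\textbf{Upper bound via adaptive interpolation.} Adapt the scheme of Section~\ref{sec:proof_rs_uv}: add a small independent perturbation on $\bbf{U}$ of vanishing strength $s_n\to 0$ (initial condition $R(0)=r_0+s_n\epsilon$ with $\epsilon\in[1,2]$) and choose $r(t)$ by the Cauchy problem $R'(t)=-r(t),\ r(t)=\E\langle\bbf{u}\cdot\bbf{U}\rangle_{n,t}$. The analog of Proposition~\ref{prop:overlap_uv}---a change of variables via the Liouville formula combined with Proposition~\ref{prop:overlap_concentration_gg} applied to the overlap on $\bbf{U}$---yields, on average over $\epsilon$,
\begin{equation*}
	\int_1^2\E\big\langle(\bbf{u}\cdot\bbf{U}-r(t))^2\big\rangle_{n,t}\,d\epsilon=o_n(1),
\end{equation*}
so that $\psi_n'(t)=-r(t)^2/4+o_n(1)$ and therefore $F_n = L_n(R(0))-\int_0^1 r(t)^2/4\,dt+o_n(1)$. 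Cauchy--Schwarz gives $\int_0^1 r(t)^2\,dt\geq R(0)^2$, hence
\begin{equation*}
	F_n\leq L_n(R(0))-\tfrac{R(0)^2}{4}+o_n(1)\leq \sup_{r\geq 0}\{L_n(r)-r^2/4\}+o_n(1).
\end{equation*}
Since each $L_n$ is convex in $r$ and converges pointwise to the convex $L$, convergence is uniform on the compact interval where the sup lies (the sup is attained on $[0,\E_{P_U} U^2]$), giving $\limsup_n F_n\leq \sup_{r\geq 0}\{L(r)-r^2/4\}$.

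\textbf{Main obstacle.} The crux is the overlap-concentration estimate driving Step~3: transferring Proposition~\ref{prop:overlap_uv} and the free-energy concentration Lemmas~\ref{lem:concentration_gauss}--\ref{lem:concentration_uv} to the present Hamiltonian that additionally contains the spiked-Wigner piece $tH^{UU}$. Because $P_U$ is compactly supported, the extra term only contributes bounded quadratic expressions, so both the Gaussian--Poincar\'e bound and the bounded-differences estimate extend with no new ideas---but the bookkeeping through the Liouville change of variables requires care. Once this concentration is established, everything else is routine Guerra interpolation, Jensen, and a pointwise-to-uniform upgrade for the convex sequence $L_n$.
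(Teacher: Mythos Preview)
Your lower bound is correct and is exactly the paper's argument with the time variable reversed.

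The upper bound, however, has a genuine gap in the Cauchy--Schwarz step. With your parametrization the scalar channel has strength $R(t)$, decreasing along the path via $R'(t)=-r(t)$ with $r(t)=\E\langle\bbf{u}\cdot\bbf{U}\rangle_{n,t}\geq 0$. Two things can go wrong. First, nothing prevents $R$ from hitting zero before $t=1$ (the drift $-r(t)$ is bounded below only by $-K^2$), in which case the interpolation is no longer a Bayesian channel and the Nishimori-based derivative formula fails. Second, if you pick $r_0$ large enough to keep $R\geq 0$ on $[0,1]$, then generically $R(1)>0$; but Jensen only gives $\int_0^1 r(t)^2\,dt\geq\bigl(\int_0^1 r(t)\,dt\bigr)^2=(R(0)-R(1))^2$, not $R(0)^2$. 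Your claimed inequality $\int_0^1 r(t)^2\,dt\geq R(0)^2$ would force $R(1)\leq 0$, contradicting the previous point. Moreover, when $R(1)>0$ one has $\psi_n(1)\neq F_n$ (there is still leftover side information), so the identity you write for $F_n$ is off by a term of order $R(1)$ that is not $o_n(1)$.

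The fix is to run the interpolation in the other direction, as the paper does: take the $\bbf{U}\bbf{U}^{\sT}$-channel at strength $(1-t)/n$ and a scalar channel of increasing strength $r(t)$ with $r(0)=s_n\epsilon$, $r'(t)=\E\langle\bbf{u}\cdot\bbf{U}\rangle_{n,t}$. Then $r$ is automatically non-negative and increasing, the endpoints are $f_n(0)=F_n+o_n(1)$ and $f_n(1)=L_n(r(1,\epsilon))$, and after overlap concentration one gets
\[
F_n=\int_{[1,2]}\Bigl(L_n(r(1,\epsilon))-\int_0^1\tfrac{r'(t,\epsilon)^2}{4}\,dt\Bigr)d\epsilon+o_n(1).
\]
Now the key is \emph{convexity of $L_n$}, not Cauchy--Schwarz on $r^2$: since $r(1,\epsilon)=\int_0^1 r'(t,\epsilon)\,dt+o_n(1)$ and $L_n$ is convex and Lipschitz, $L_n(r(1,\epsilon))\leq\int_0^1 L_n(r'(t,\epsilon))\,dt+o_n(1)$, which yields $F_n\leq\sup_{r\geq 0}\{L_n(r)-r^2/4\}+o_n(1)$. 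The uniform upgrade $L_n\to L$ then finishes as you describe. Your identification of the overlap-concentration transfer as the only technical hurdle is accurate; it is indeed routine. The error is purely in the direction of the adaptive path and the inequality you invoke at the end.
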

	\begin{proof}
	We will follow the same steps than in Section~\ref{sec:proof_rs_uv}: we will therefore only present the main steps. 
	Let $r: [0,1] \to \R_{\geq 0}$ be a differentiable function. 
	For $0 \leq t \leq 1$ we consider the following observation channel
\begin{equation}\label{eq:interpolation_uv_uu}
	\left\{
		\begin{array}{llcll}
			\bbf{Y} &=& \sqrt{1 / n} \, \bbf{U} \bbf{V}^{\sT} &+& \bbf{Z} \\
			\bbf{Y}'_t &=& \sqrt{(1-t) / n} \, \bbf{U} \bbf{U}^{\sT} &+& \bbf{Z}' \\
			\bbf{Y}''_t &=& \sqrt{r(t)} \, \bbf{U} &+& \bbf{Z}'' \,,
		\end{array}
	\right.
\end{equation}
We will denote (analogously to~\eqref{eq:interpolation_free_uv}) by $f_n(t)$ the interpolating free energy and by $\langle \cdot \rangle_{n,t}$ (analogously to~\eqref{eq:interpolation_gibbs_uv}) corresponding Gibbs measure.
We have the analog of Equation~\eqref{eq:convex_guerra} and Lemma~\ref{lem:der_f_interpolation_uv}:
\begin{equation}\label{eq:der_interpolation_mix}
	f_n'(t) =
- \frac{1}{4} \E \Big\langle\big(\bbf{u} \cdot \bbf{U} - r'(t)\big)^2 \Big\rangle_{n,t} + \frac{r'(t)^2}{4}
+o_n(1),
\end{equation}
where $o_n(1) \to 0$, uniformly in $t \in [0,1]$. 
By taking $r(t) = r t$ for all $t \in [0,1]$, we obtain
$$
F_n = f_n(0) = f_n(1) - \int_0^1 f_n'(t) dt \geq L_n(r) - \frac{r^2}{4} \,.
$$
Therefore $\liminf F_n \geq \liminf_{n \to \infty} \big\{ L_n(r) - \frac{r^2}{4} \big\}$ which gives $\liminf F_n \geq L(r) - \frac{r^2}{4}$ for all $r \geq 0$, hence $\liminf F_n \geq \sup_{r \geq 0} \big\{ L(r) - \frac{r^2}{4} \big\}$.
\\

To prove the converse upper-bound we proceed as in Section~\ref{sec:proof_rs_uv} and chose $r$ to be solution $r(\cdot; \epsilon)$ of the Cauchy problem:
$$
\begin{cases}
	r(0) = \epsilon n^{-1/32} \\
r'(t) = \E \langle \bbf{u} \cdot \bbf{U} \rangle_{n,t}
\end{cases}
$$
where $\epsilon \in [1,2]$ is a parameter. The analog of Proposition~\ref{prop:overlap_uv} holds:
$$
\int_1^2 \E \Big\langle \big(\bbf{u} \cdot \bbf{U} - \E \langle \bbf{u} \cdot \bbf{U} \rangle_{n,t} \big)^2 \Big\rangle_{n,t} d\epsilon \leq \frac{C}{n^{1/8}}
$$
for some constant $C>0$. Using \eqref{eq:der_interpolation_mix} we get
\begin{align*}
F_n 
&= f_n(0) + o_n(1) = \int_1^2 \Big(f_n(1) - \int_0^1 f_n'(t) dt \Big) d\epsilon + o_{n}(1)
\\
&= \int_1^2 \Big( L_n(r(1,\epsilon)) - \int_0^1 \frac{r'(t,\epsilon)^2}{4} dt \Big)d\epsilon + o_n(1)
\\
&\leq\int_1^2 \int_0^1 \Big( L_n(r'(t,\epsilon)) - \frac{r'(t,\epsilon)^2}{4} \Big) dt d\epsilon + o_n(1)
\leq \sup_{0 \leq r \leq \rho_u} \Big\{ L_n(r) - \frac{r^2}{4} \Big\} + o_n(1),
\end{align*}
where $\rho_u = \E_{P_U}[U^2]$.
The free energy $L_n$ is (by the usual arguments, see Section~\ref{sec:i_mmse}) convex and non-decreasing and converges to $L$ which is thus convex (therefore continuous) and non-decreasing. By Dini's second theorem we get that the convergence in~\eqref{eq:lim_rs_side} is uniform in $r$ over all compact subsets of $\R_{\geq 0}$. We conclude
	$$
	\limsup_{n \to \infty} F_n \leq \sup_{0 \leq r \leq \rho_u} \big\{L(r) - \frac{r^2}{4} \big\}
	\leq \sup_{r \geq 0} \big\{L(r) - \frac{r^2}{4} \big\}.
	$$
\end{proof}
\\

In order to prove Proposition~\ref{prop:f_eps_uu}, it remains to show that 
$$
\sup_{r \geq 0} \Big\{ L(r) - \frac{r^2}{4} \Big\} 
=
	\sup_{q_u,q_v \geq 0} \Big\{ 
		\frac{\gamma q_u^2}{4} + \frac{\alpha \lambda q_u q_v}{2} - \psi_{P_U}^*(q_u/2) - \alpha \psi_{P_V}^*(q_v/2)
	\Big\}.
$$
This is a consequence of the following Lemma:
\begin{lemma}
	Let $f,g$ be two non-decreasing lower semi-continuous convex functions on $\R_{\geq 0}$, such that $f(0)$ and $g(0)$ are finite. 
	Let $f^*$ and $g^*$ denote their monotone conjugate (see Definition~\ref{def:monotone_conjugate} in Appendix~\ref{sec:app_convex}).
	Then
	$$
	\sup_{r \geq 0} \sup_{q_1 \geq 0} \inf_{q_2 \geq 0} \Big\{f(q_1) + g(r + q_2) - q_1 q_2 - \frac{r^2}{2} \Big\}
	=
	\sup_{q_1,q_2 \geq 0}  \Big\{\frac{q_1^2}{2} + q_1 q_2 - f^*(q_2) - g^*(q_1)\Big\}
	$$
\end{lemma}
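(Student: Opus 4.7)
The plan is to show that both sides equal the common intermediate quantity
$$S := \sup_{p \geq 0}\bigl\{p^2/2 + f(p) - g^*(p)\bigr\}.$$

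The right-hand side reduces to $S$ directly. Since $f$ is non-decreasing, convex and lower semi-continuous on $\R_{\geq 0}$ with $f(0)$ finite, biconjugation for monotone conjugates gives $f^{**} = f$, i.e.\ $f(q_1) = \sup_{q_2 \geq 0}\{q_1 q_2 - f^*(q_2)\}$. Substituting this and exchanging the two outer suprema yields $R = \sup_{q_1 \geq 0}\{q_1^2/2 + f(q_1) - g^*(q_1)\} = S$.

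For the left-hand side $L$, the key step will be the identity
\begin{equation}\label{eq:lem-star}
\inf_{q_2 \geq 0}\bigl\{g(r + q_2) - q_1 q_2\bigr\} = \sup_{p \geq q_1}\bigl\{pr - g^*(p)\bigr\}, \qquad r, q_1 \geq 0.
\end{equation}
The $\geq$ direction is immediate from Fenchel-Young: for any $p \geq q_1$ and $q_2 \geq 0$, $g(r+q_2) - q_1 q_2 \geq p(r+q_2) - g^*(p) - q_1 q_2 = pr - g^*(p) + (p - q_1) q_2 \geq pr - g^*(p)$. For the reverse inequality, I would argue by cases on the right-derivative $g'(r^+)$ (which exists by convexity of $g$). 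If $q_1 \leq g'(r^+)$, convexity makes $q_2 \mapsto g(r + q_2) - q_1 q_2$ non-decreasing, so the infimum equals $g(r)$, and picking $p \in \partial g(r)$ with $p \geq q_1$ (which exists since $q_1 \leq g'(r^+)$) gives $pr - g^*(p) = g(r)$ via the Fenchel equality. If $q_1 > g'(r^+)$, there exists $u^* > r$ with $q_1 \in \partial g(u^*)$; then $q_2 = u^* - r \geq 0$ gives $g(r + q_2) - q_1 q_2 = g(u^*) - q_1 u^* + q_1 r = q_1 r - g^*(q_1)$, which also equals the value of $pr - g^*(p)$ at $p = q_1$.

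Granting \eqref{eq:lem-star}, substituting it back produces
$$L = \sup_{\substack{r \geq 0,\, q_1 \geq 0 \\ p \geq q_1}}\bigl\{f(q_1) + pr - r^2/2 - g^*(p)\bigr\}.$$
Because $p \geq q_1 \geq 0$, the supremum over $r \geq 0$ of $pr - r^2/2$ is attained at $r = p$ and equals $p^2/2$. Since $f$ is non-decreasing, $\sup_{0 \leq q_1 \leq p} f(q_1) = f(p)$, which gives $L = \sup_{p \geq 0}\{f(p) + p^2/2 - g^*(p)\} = S = R$, as desired.

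The main obstacle will be the reverse direction of \eqref{eq:lem-star}, which requires careful handling of the subdifferential of $g$ when it is non-smooth, and of the degenerate case where $q_1$ lies outside the range of $\partial g$ so that $g^*(q_1) = +\infty$ (in which event both sides of \eqref{eq:lem-star} are $-\infty$). Alternatively, \eqref{eq:lem-star} can be obtained in one stroke by applying Fenchel-Rockafellar duality to the convex program $\inf_{u \geq r}\{g(u) - q_1 u\}$, for which the constraint $u \geq r$ trivially satisfies Slater's condition.
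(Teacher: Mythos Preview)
Your argument is correct. The small caveat is the edge case in your Case~2 for \eqref{eq:lem-star}: when $q_1$ equals the asymptotic slope of $g$ (so $q_1>g'(r^+)$ for all $r$ but $g^*(q_1)<\infty$), no $u^*$ with $q_1\in\partial g(u^*)$ exists; there a limiting argument ($g(r+q_2)-q_1q_2\to q_1r-g^*(q_1)$ as $q_2\to\infty$) or the Fenchel--Rockafellar route you mention closes the gap.

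The paper proceeds differently. Rather than isolating the partial dual identity \eqref{eq:lem-star}, it applies its earlier $\sup$--$\inf$ formula (Proposition~\ref{prop:sup_inf}) to the pair $(f,g_r)$ with $g_r(q)=g(q+r)$, getting $\sup_{q_1}\inf_{q_2}\{f(q_1)+g_r(q_2)-q_1q_2\}=\sup_{q_1,q_2}\{q_1q_2-f^*(q_2)-g_r^*(q_1)\}$, and then biconjugates twice to rewrite this as $\sup_{q_1,q_2}\{q_1(q_2+r)-f^*(q_2)-g^*(q_1)\}$; only then does it maximise over $r$. Your route is more self-contained (you do not invoke the general $\sup$--$\inf$ proposition), at the price of the case analysis behind \eqref{eq:lem-star}; the paper's route recycles existing machinery but that machinery (Proposition~\ref{prop:sup_inf}) is stated for Lipschitz $f,g$, so strictly speaking your argument works under the weaker hypotheses of the lemma. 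Both approaches eventually reduce to the same elementary step $\sup_{r\ge 0}\{pr-r^2/2\}=p^2/2$.
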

\begin{proof}
	Let $r \geq 0$.
	Let us write $g_r: q \mapsto g(q + r)$.
	By Proposition~\ref{prop:sup_inf}, we have
	\begin{align*}
\sup_{q_1 \geq 0} \inf_{q_2 \geq 0} \Big\{f(q_1) + g(r + q_2) - q_1 q_2\Big\}
&=
\sup_{q_1,q_2 \geq 0} \Big\{q_1 q_2 - f^*(q_2) - g^*_r(q_1) \Big\}
\\
&=
\sup_{q_2 \geq 0} \Big\{g(r+q_2) - f^*(q_2) \Big\}
\\
&=\sup_{q_1,q_2 \geq 0} \Big\{q_1 (q_2 + r)  - f^*(q_2) - g^*(q_1) \Big\},
	\end{align*}
	where we used Proposition~\ref{prop:inverse} for the two last equalities. Therefore
\begin{align*}
	\sup_{r \geq 0} \sup_{q_1 \geq 0} \inf_{q_2 \geq 0} \Big\{f(q_1) + g(r + q_2) - q_1 q_2 - \frac{r^2}{2} \Big\}
	&=
	\sup_{q_1,q_2 \geq 0} \Big\{ \sup_{r \geq 0} \Big\{ q_1 r - \frac{r^2}{2} \Big\} + q_1 q_2 - f^*(q_2) - g^*(q_1) \Big\}
	\\
	&=
	\sup_{q_1,q_2 \geq 0} \Big\{\frac{q_1^2}{2} + q_1 q_2 - f^*(q_2) - g^*(q_1) \Big\}.
\end{align*}
\end{proof}

\appendix
\chapter*{Appendix}
\addcontentsline{toc}{chapter}{Appendix}
\renewcommand{\thesection}{\Alph{section}}

\numberwithin{proposition}{section}
\numberwithin{lemma}{section}
\numberwithin{theorem}{section}
\numberwithin{corollary}{section}
\section{Proofs of some basic properties of the MMSE and the free energy}

\subsection{Proof of Proposition~\ref{prop:mmse_dec}} \label{sec:proof_mmse_dec}
	Let $0 < \lambda_2 \leq \lambda_1$. Define $\Delta_1 = \lambda_1^{-1}$, $\Delta_2 = \lambda_2^{-1}$ and
	$$
	\begin{cases}
		\bbf{Y}_1 = \bbf{X} + \sqrt{\Delta_1} \bbf{Z}_1 \\
		\bbf{Y}_2 = \bbf{X} + \sqrt{\Delta_1} \bbf{Z}_1  + \sqrt{\Delta_2 - \Delta_1} \bbf{Z}_2 \,,
	\end{cases}
	$$
	where $\bbf{X} \sim P_X$ is independent from $\bbf{Z}_1,\bbf{Z}_2 \iid \cN(0,\Id_n)$. Now, by independence between $(\bbf{X},\bbf{Y}_1)$ and $\bbf{Z}_2$ we have
	\begin{align*}
		\MMSE(\lambda_1) 
		&= \E \big\| \bbf{X} - \E [\bbf{X} | \bbf{Y}_1] \big\|^2
		= \E \big\| \bbf{X} - \E [\bbf{X} | \bbf{Y}_1, \bbf{Z}_2] \big\|^2
		= \E \big\| \bbf{X} - \E [\bbf{X} | \bbf{Y}_1, \bbf{Y}_2] \big\|^2
		\\
		&\leq \E \big\| \bbf{X} - \E [\bbf{X} | \bbf{Y}_2] \big\|^2
		= \MMSE(\lambda_2) \,.
	\end{align*}
	Next, notice that
	\begin{equation}\label{eq:upper_bound_mmse}
		\MMSE(\lambda_1)
		= \E \big\| \bbf{X} - \E [\bbf{X} | \bbf{Y}_1] \big\|^2
		\leq \E \big\| \bbf{X} - \E [\bbf{X}] \big\|^2 = \MMSE(0)\,.
	\end{equation}
	This shows that the $\MMSE$ is non-increasing on $\R_+$. 
	It remains to prove the last point:
	$$
	0 \leq \MMSE(\lambda)
	= \E \| \bbf{X} - \E[\bbf{X}| \bbf{Y}]\|^2
	\leq \E \| \bbf{X} - \frac{1}{\sqrt{\lambda}} \bbf{Y}\|^2 = \frac{n}{\lambda} \xrightarrow[\lambda \to +\infty]{} 0 \,.
	$$

	\subsection{Proof of Proposition~\ref{prop:mmse_cont}} \label{sec:proof_mmse_cont}
	We start by proving that $\MMSE$ is continuous at $\lambda = 0$.
	Let $\lambda\geq0$ and consider $\bbf{Y},\bbf{X},\bbf{Z}$ as given by~\eqref{eq:inference}.
	By dominated convergence one has almost surely that 
	$$
	\E[\bbf{X} | \bbf{Y}]
	= 
	\frac{\int dP_X(\bbf{x}) \bbf{x} e^{-\frac{1}{2}\| \sqrt{\lambda} \bbf{x} - \bbf{Y}\|^2}}{\int dP_X(\bbf{x}) e^{-\frac{1}{2}\| \sqrt{\lambda} \bbf{x} - \bbf{Y}\|^2}}
	\xrightarrow[\lambda \to 0]{} \E[\bbf{X}] \,.
	$$
	Then by Fatou's Lemma we get
	$$
	\liminf_{\lambda \to 0} \MMSE(\lambda) \geq \E \Big[\liminf_{\lambda \to 0}
		\big\| \bbf{X} - \E[\bbf{X}| \bbf{Y}] \big\|^2
	\Big] = \E \big\| \bbf{X} - \E[\bbf{X}] \big\|^2 \,.
	$$
	Combining this with the bound $\MMSE(\lambda) \leq \E \| \bbf{X} - \E[\bbf{X}]\|^2$ gives $\MMSE(\lambda) \xrightarrow[\lambda \to 0]{} \E \| \bbf{X} -\E[\bbf{X}]\|^2$. This proves that the $\MMSE$ is continuous at $\lambda=0$.
	\\

	Let us now prove that the $\MMSE$ is continuous on $\R_+^*$. We need here a technical lemma:
	\begin{lemma}\label{lem:ui1}
		For all $\lambda > 0$, $p \geq 1$
		$$
		\E \| \bbf{X} - \langle \bbf{x} \rangle_{\lambda} \|^{2p} \leq \frac{2^p (2p !)}{\lambda ^{p} p!} n^{p+1} \,.
		$$
	\end{lemma}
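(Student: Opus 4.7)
The plan is to reduce the bound to a Gaussian moment estimate via Jensen's inequality and repeated application of the Nishimori identity (Proposition~\ref{prop:nishimori}). The crucial point is that although the posterior mean $\langle \bbf{x} \rangle_\lambda$ is a complicated functional of the prior, the Nishimori identity lets us swap the planted signal and a replica, which in turn allows us to compare $\bbf{X}$ (or a replica) to the \emph{explicit} estimator $\bbf{Y}/\sqrt{\lambda}$.

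First, I would apply Jensen's inequality to the convex function $\bbf{v} \mapsto \|\bbf{v}\|^{2p}$ applied to the posterior expectation: with $\bbf{X}$ held fixed inside the bracket,
$$
\bigl\| \bbf{X} - \langle \bbf{x} \rangle_{\lambda} \bigr\|^{2p}
= \bigl\| \langle \bbf{X} - \bbf{x} \rangle_{\lambda} \bigr\|^{2p}
\leq \bigl\langle \| \bbf{X} - \bbf{x} \|^{2p} \bigr\rangle_{\lambda}.
$$
Taking $\E$ and applying the Nishimori identity (replacing the planted $\bbf{X}$ by a second replica $\bbf{x}^{(2)}$) yields $\E\|\bbf{X} - \langle \bbf{x}\rangle_\lambda\|^{2p} \leq \E\bigl\langle \|\bbf{x}^{(1)}-\bbf{x}^{(2)}\|^{2p}\bigr\rangle_\lambda$.

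Next, I would insert $\bbf{Y}/\sqrt{\lambda}$ as a common reference point. The triangle inequality combined with $(a+b)^{2p}\leq 2^{2p-1}(a^{2p}+b^{2p})$ gives
$$
\|\bbf{x}^{(1)}-\bbf{x}^{(2)}\|^{2p}
\leq 2^{2p-1}\Bigl(\|\bbf{x}^{(1)}-\bbf{Y}/\sqrt{\lambda}\|^{2p}+\|\bbf{x}^{(2)}-\bbf{Y}/\sqrt{\lambda}\|^{2p}\Bigr),
$$
and exchangeability of the two replicas together with a second application of the Nishimori identity (swapping $\bbf{x}^{(1)}$ with the planted $\bbf{X}$) turns this into
$$
\E\|\bbf{X} - \langle \bbf{x}\rangle_\lambda\|^{2p}
\;\leq\; 2^{2p}\,\E\|\bbf{X}-\bbf{Y}/\sqrt{\lambda}\|^{2p}
\;=\; \frac{4^{p}}{\lambda^{p}}\,\E\|\bbf{Z}\|^{2p},
$$
where the final equality uses $\bbf{X}-\bbf{Y}/\sqrt{\lambda}=-\bbf{Z}/\sqrt{\lambda}$.

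All that remains is a standard Gaussian moment calculation: since $\|\bbf{Z}\|^{2}\sim\chi^{2}_{n}$,
$$
\E\|\bbf{Z}\|^{2p}=n(n+2)(n+4)\cdots(n+2p-2)=n^{p}\prod_{j=0}^{p-1}\!\left(1+\tfrac{2j}{n}\right)\leq n^{p}\,(2p-1)!! = n^{p}\cdot\frac{(2p)!}{2^{p}\,p!},
$$
the last inequality using $n\geq 1$. Substituting gives
$$
\E\|\bbf{X} - \langle \bbf{x}\rangle_\lambda\|^{2p}\leq \frac{4^{p}}{\lambda^{p}}\cdot n^{p}\cdot\frac{(2p)!}{2^{p}p!}=\frac{2^{p}(2p)!}{\lambda^{p}\,p!}\,n^{p}\leq \frac{2^{p}(2p)!}{\lambda^{p}\,p!}\,n^{p+1},
$$
which is the claim. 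There is no substantive obstacle in this plan; everything is Jensen, Nishimori, triangle, and the chi-squared moment formula. (In fact the argument yields the slightly sharper bound with $n^{p}$ in place of $n^{p+1}$, so the statement has room to spare, presumably chosen to make the subsequent uniform-integrability argument notationally cleaner.)
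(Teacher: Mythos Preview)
Your argument is correct and reaches the same intermediate bound $\E\|\bbf{X}-\langle\bbf{x}\rangle_\lambda\|^{2p}\le \tfrac{4^{p}}{\lambda^{p}}\E\|\bbf{Z}\|^{2p}$ as the paper, but by a genuinely different mechanism. The paper exploits the algebraic identity
\[
\sqrt{\lambda}\,\bigl(\bbf{X}-\langle\bbf{x}\rangle_\lambda\bigr)=\E[\bbf{Z}\mid\bbf{Y}]-\bbf{Z},
\]
and then uses only the contraction $\E\|\E[\bbf{Z}\mid\bbf{Y}]\|^{2p}\le\E\|\bbf{Z}\|^{2p}$ together with the convexity bound $(a+b)^{2p}\le 2^{2p-1}(a^{2p}+b^{2p})$; no Nishimori step is needed. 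Your route instead pushes Jensen inside the Gibbs bracket and invokes Nishimori twice to land on $\E\|\bbf{X}-\bbf{Y}/\sqrt{\lambda}\|^{2p}$. (In fact the first Nishimori step is dispensable: from $\E\langle\|\bbf{X}-\bbf{x}\|^{2p}\rangle_\lambda$ you can apply the triangle inequality through $\bbf{Y}/\sqrt{\lambda}$ directly and use Nishimori once on the $\bbf{x}$-term.) What the paper's identity buys is a proof that works without any symmetry between signal and replica, applicable even outside the Bayes-optimal setting; what your argument buys is that it stays entirely within the Nishimori toolbox already set up in the notes. For the Gaussian moment, your exact $\chi^2_n$ computation gives the sharper power $n^p$, whereas the paper uses the cruder bound $(\sum_i Z_i^2)^p\le n^{p}\sum_i Z_i^{2p}$ and lands on $n^{p+1}$; either suffices for the uniform-integrability application downstream.
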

	\begin{proof}
		We reproduce here the proof from~\cite{guo2011estimation}, Proposition~5. We start with the equality
		$$
		\sqrt{\lambda} \left(\bbf{X} - \langle \bbf{x} \rangle_{\lambda}\right)
		=
		\sqrt{\lambda} \bbf{X} - \E [ \sqrt{\lambda} \bbf{X} | \bbf{Y} ]
		=
		\bbf{Y} - \bbf{Z} - \E [ \bbf{Y} - \bbf{Z} | \bbf{Y} ]
		=
		\E [ \bbf{Z} | \bbf{Y} ] -\bbf{Z} \,.
		$$
		We have therefore
		\begin{align*}
			\E \| \bbf{X} - \langle \bbf{x} \rangle_{\lambda} \|^{2p} 
			&= \frac{1}{\lambda ^{p}} 
			\E \big\| \E [ \bbf{Z} | \bbf{Y} ] -\bbf{Z}\big\|^{2p}
			\leq 
			\frac{2^{2p-1}}{\lambda ^{p}} 
			\E \big[  \|\E [ \bbf{Z} | \bbf{Y} ]\|^{2p} +\|\bbf{Z}\|^{2p} \big]
			\leq
			\frac{2^{2p}}{\lambda ^{p}} 
			\E \|\bbf{Z}\|^{2p} \,.
		\end{align*}
		It remains to bound
		$$
		\E \|\bbf{Z}\|^{2p}
		\leq n^p \E \left[\sum_{i=1}^n Z_i^{2p} \right]
		= n^{p+1} \frac{(2p)!}{2^p p!} \,.
		$$
	\end{proof}
	\\

	Let $\lambda_0 > 0$. The family of random variables $\big(\| \bbf{X} - \langle \bbf{x}\rangle_{\lambda}\|^2\big)_{\lambda \geq \lambda_0}$ is bounded in $L^2$ by Lemma~\ref{lem:ui1} and is therefore uniformly integrable. The function $\lambda \mapsto \| \bbf{X} - \langle \bbf{x}\rangle_{\lambda}\|^2$ is continuous on $[\lambda_0,+\infty)$, the uniform integrability ensures then that $\MMSE: \lambda \mapsto  \E\| \bbf{X} - \langle \bbf{x}\rangle_{\lambda}\|^2$ is continuous over $[\lambda_0,+\infty)$. This is valid for all $\lambda_0 >0$: we conclude that $\MMSE$ is continuous over $(0,+\infty)$.

	\subsection{Proof of the I-MMSE relation: Proposition~\ref{prop:i_mmse}} \label{sec:proof_i_mmse}
	\begin{align*}
		\MMSE(\lambda)
		= \E \| \bbf{X} - \langle \bbf{x} \rangle_{\lambda} \|^2
		= \E \| \bbf{X} \|^2 + \E \| \langle \bbf{x} \rangle_{\lambda} \|^2 - 2 \E \langle \bbf{x}^{\sT} \bbf{X} \rangle_{\lambda}
	\end{align*}
	Now, by the Nishimori property $\E \| \langle \bbf{x} \rangle_{\lambda} \|^2 = \E \big\langle (\bbf{x}^{(1)})^{\sT} \bbf{x}^{(2)} \big\rangle_{\lambda} = \E \langle \bbf{x}^{\sT} \bbf{X} \rangle_{\lambda}$. Thus
	\begin{equation}\label{eq:mmse_overlap}
		\MMSE(\lambda)
		= \E \| \bbf{X} \|^2 -  \E \langle \bbf{x}^{\sT} \bbf{X} \rangle_{\lambda} \,.
	\end{equation}
	By~\eqref{eq:mmse_overlap} and~\eqref{eq:f_i}, it suffices now to prove the second equality in~\eqref{eq:i_mmse}. This will follow from the lemmas below.

	\begin{lemma}\label{lem:free_energy_cont}
		The free energy $F$ is continuous at $\lambda = 0$.
	\end{lemma}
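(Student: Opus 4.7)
The plan is to exploit the identity $F(\lambda) = \frac{\lambda}{2}\E\|\bbf{X}\|^2 - I(\bbf{X};\bbf{Y})$ from \eqref{eq:f_i} and reduce continuity at $\lambda=0$ to a quantitative upper bound on the mutual information. Since $\cZ(0,\bbf{Y}) = \int dP_X(\bbf{x}) = 1$, we have $F(0)=0$, so it suffices to show $F(\lambda)\to 0$ as $\lambda\to 0^+$. The term $\frac{\lambda}{2}\E\|\bbf{X}\|^2$ vanishes trivially by the standing assumption $\E\|\bbf{X}\|^2 < \infty$, and the whole argument reduces to bounding $I(\bbf{X};\bbf{Y})$ by something of order $\lambda$.

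To estimate $I(\bbf{X};\bbf{Y})$, I would write $I(\bbf{X};\bbf{Y}) = h(\bbf{Y}) - h(\bbf{Y}\,|\,\bbf{X}) = h(\sqrt{\lambda}\bbf{X}+\bbf{Z}) - h(\bbf{Z})$; both differential entropies are well-defined since $\bbf{Y}$ has a density (convolution of $\sqrt{\lambda}\bbf{X}$ with the standard Gaussian $\bbf{Z}$). Applying the classical fact that the multivariate Gaussian maximizes differential entropy under a covariance constraint, together with $\Cov(\bbf{Y}) = \Id_n + \lambda\Cov(\bbf{X})$ and $h(\bbf{Z}) = \tfrac{n}{2}\log(2\pi e)$, yields
$$
I(\bbf{X};\bbf{Y}) \;\leq\; \tfrac{1}{2}\log\det(\Id_n + \lambda\,\Cov(\bbf{X})).
$$
Applying $\log(1+x) \leq x$ eigenvalue-by-eigenvalue and using $\tr(\Cov(\bbf{X})) \leq \E\|\bbf{X}\|^2$ I then obtain
$$
0 \;\leq\; I(\bbf{X};\bbf{Y}) \;\leq\; \tfrac{\lambda}{2}\,\E\|\bbf{X}\|^2.
$$

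Plugging this double inequality back into \eqref{eq:f_i} sandwiches the free energy, giving $0 \leq F(\lambda) \leq \frac{\lambda}{2}\E\|\bbf{X}\|^2 \xrightarrow[\lambda\to 0]{} 0 = F(0)$, which is the desired continuity. There is really no serious obstacle: the only point deserving care is verifying that $h(\bbf{Y})$ is finite and that the Gaussian maximum-entropy bound applies, both of which follow from the presence of additive standard Gaussian noise and the finite second-moment assumption on $P_X$.
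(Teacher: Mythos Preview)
Your proof is correct, and it takes a genuinely different route from the paper's own argument. The paper proceeds by a direct analysis of the partition function: it rewrites
\[
F(\lambda) = \E \log \int dP_X(\bbf{x})\, e^{-\frac{1}{2}\|\sqrt{\lambda}\bbf{X}-\sqrt{\lambda}\bbf{x}+\bbf{Z}\|^2} + \lambda\,\E\|\bbf{X}\|^2 + n,
\]
applies dominated convergence to the inner integral as $\lambda\to 0$, and then produces a dominating function for the logarithm via Jensen's inequality so as to push the limit through the outer expectation. Your approach instead leverages the already-established identity~\eqref{eq:f_i} together with the Gaussian maximum-entropy bound, reducing everything to the sandwich $0 \le I(\bbf{X};\bbf{Y}) \le \tfrac{\lambda}{2}\E\|\bbf{X}\|^2$. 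This is cleaner and strictly stronger: you obtain the two-sided estimate $0 \le F(\lambda) \le \tfrac{\lambda}{2}\E\|\bbf{X}\|^2$, which shows not just continuity but a Lipschitz bound at the origin, and incidentally recovers the nonnegativity of $F$. The paper's argument, by contrast, is more self-contained in that it does not appeal to the differential-entropy machinery, but it yields only continuity. There is no circularity in your use of~\eqref{eq:f_i}, since that identity is proved in the paper directly from the definition of mutual information, independently of the I-MMSE relation whose proof this lemma supports.
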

	\begin{proof} For all $\lambda \geq 0$,
		$$
		F(\lambda) = \E \log \int dP_X(\bbf{x}) e^{-\frac{1}{2} \| \bbf{Y}-\sqrt{\lambda} \bbf{x}\|^2 + \frac{1}{2} \|\bbf{Y}\|^2}
		= \E \log \int dP_X(\bbf{x}) e^{-\frac{1}{2} \| \sqrt{\lambda}\bbf{X}-\sqrt{\lambda} \bbf{x} + \bbf{Z}\|^2}
		+ \lambda \E \|\bbf{X}\|^2 + n \,.
		$$
		By dominated convergence $\int dP_X(\bbf{x}) e^{-\frac{1}{2} \| \sqrt{\lambda}\bbf{X}-\sqrt{\lambda} \bbf{x} + \bbf{Z}\|^2} \xrightarrow[\lambda \to 0]{} e^{-\frac{1}{2}\|\bbf{Z}\|^2}$. Jensen's inequality gives
		\begin{align*}
			\left|\log \int \!\!dP_X(\bbf{x}) e^{-\frac{1}{2} \| \sqrt{\lambda}\bbf{X}-\sqrt{\lambda} \bbf{x} + \bbf{Z}\|^2} \right|
			&=
			- \log \int \!\!dP_X(\bbf{x}) e^{-\frac{1}{2} \| \sqrt{\lambda}\bbf{X}-\sqrt{\lambda} \bbf{x} + \bbf{Z}\|^2} 
			\\
			&\leq \frac{1}{2} \! \int \!\!dP_X(\bbf{x}) \| \sqrt{\lambda}\bbf{X}-\sqrt{\lambda} \bbf{x} + \bbf{Z}\|^2
			\leq \frac{3}{2}\big(\|\bbf{X}\|^2 + \E \|\bbf{X}\|^2 + \|\bbf{Z}\|^2 \big),
		\end{align*}
		for all $\lambda \in [0,1]$. One can thus apply the dominated convergence theorem again to obtain that $F$ is continuous at $\lambda=0$.
	\end{proof}
	\\

	\begin{lemma}\label{lem:der_f0}
		For all $\lambda \geq 0$,
		$$
		F(\lambda) - F(0) = \frac{1}{2} \int_{0}^{\lambda} \E \langle \bbf{x}^{\sT} \bbf{X} \rangle_{\gamma} d\gamma \,.
		$$
	\end{lemma}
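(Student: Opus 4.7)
The plan is to establish that $F$ is continuous on $\mathbb{R}_{\geq 0}$ and differentiable on $(0, +\infty)$ with $F'(\lambda) = \frac{1}{2}\E \langle \bbf{x}^{\sT} \bbf{X}\rangle_{\lambda}$, and then to conclude by the fundamental theorem of calculus, using the continuity of $F$ at $0$ that has just been proved in Lemma~\ref{lem:free_energy_cont}.

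To compute the derivative, I would substitute $\bbf{Y} = \sqrt{\lambda}\bbf{X} + \bbf{Z}$ into the Hamiltonian, giving
$$F(\lambda) = \E \log \int dP_X(\bbf{x}) \exp\!\Big(\lambda \bbf{x}^{\sT} \bbf{X} + \sqrt{\lambda}\, \bbf{x}^{\sT} \bbf{Z} - \tfrac{\lambda}{2}\|\bbf{x}\|^2\Big).$$
Formal differentiation under the expectation and the inner integral over $\bbf{x}$ yields
$$F'(\lambda) = \E \Big\langle \bbf{x}^{\sT} \bbf{X} + \tfrac{1}{2\sqrt{\lambda}} \bbf{x}^{\sT} \bbf{Z} - \tfrac{1}{2} \|\bbf{x}\|^2 \Big\rangle_{\lambda}.$$
I would then apply Gaussian integration by parts componentwise: $\E Z_i \langle x_i \rangle_{\lambda} = \sqrt{\lambda}\,\E(\langle x_i^2\rangle_{\lambda} - \langle x_i\rangle_{\lambda}^2)$, since $\partial_{Z_i} H = \sqrt{\lambda}\, x_i$. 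Summing in $i$ gives $\frac{1}{2\sqrt{\lambda}} \E \langle \bbf{x}^{\sT}\bbf{Z}\rangle_{\lambda} = \frac{1}{2}(\E\langle \|\bbf{x}\|^2\rangle_{\lambda} - \E\|\langle \bbf{x}\rangle_{\lambda}\|^2)$. The Nishimori identity (Proposition~\ref{prop:nishimori}) ensures $\E\|\langle \bbf{x}\rangle_{\lambda}\|^2 = \E\langle \bbf{x}^{(1)\sT}\bbf{x}^{(2)}\rangle_{\lambda} = \E\langle \bbf{x}^{\sT}\bbf{X}\rangle_{\lambda}$, so the three terms combine to produce precisely $F'(\lambda) = \frac{1}{2}\E\langle \bbf{x}^{\sT}\bbf{X}\rangle_{\lambda}$.

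The main obstacle is justifying the two interchanges above. On a compact subinterval $[\lambda_1,\lambda_2] \subset (0, +\infty)$ I would bound the $\lambda$-derivative of the integrand $e^{H_{\lambda,\bbf{Y}}(\bbf{x})}$ uniformly by an integrable envelope built from polynomials in $\|\bbf{x}\|, \|\bbf{X}\|, \|\bbf{Z}\|$; this is feasible because $\E\|\bbf{X}\|^2 < \infty$ and $\bbf{Z}$ has Gaussian tails, so Cauchy--Schwarz produces an $L^1$ dominator valid uniformly in $\lambda \in [\lambda_1,\lambda_2]$. The same kind of bound legitimizes the Gaussian integration by parts, whose only requirement is that $\langle x_i\rangle_\lambda$, viewed as a smooth function of $Z_i$, has polynomial growth together with its derivative. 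A parallel dominated convergence argument shows that $\lambda \mapsto \E\langle \bbf{x}^{\sT}\bbf{X}\rangle_{\lambda}$ is continuous on $(0, +\infty)$ (uniform integrability also follows readily from Lemma~\ref{lem:ui1}), so $F$ is actually $C^1$ there.

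To finish, note that the integrand $\frac{1}{2}\E\langle \bbf{x}^{\sT}\bbf{X}\rangle_{\gamma}$ equals $\frac{1}{2}\E\|\langle \bbf{x}\rangle_{\gamma}\|^2$, which by Jensen and Nishimori is bounded above by $\frac{1}{2}\E\|\bbf{X}\|^2$ uniformly in $\gamma$; the integral $\int_0^\lambda F'(\gamma)\,d\gamma$ therefore exists as a proper Lebesgue integral. Combining this with the continuity of $F$ at $0$ (Lemma~\ref{lem:free_energy_cont}) and the fundamental theorem of calculus on $(0,\lambda]$ yields
$$F(\lambda) - F(0) = \int_0^\lambda F'(\gamma)\, d\gamma = \frac{1}{2} \int_0^\lambda \E \langle \bbf{x}^{\sT} \bbf{X}\rangle_{\gamma}\, d\gamma,$$
which is the claim of Lemma~\ref{lem:der_f0}.
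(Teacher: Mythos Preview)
Your proposal is correct and follows essentially the same route as the paper: differentiate the log-partition function to obtain the Gibbs bracket $\langle \tfrac{1}{2\sqrt{\lambda}}\bbf{x}^{\sT}\bbf{Z} + \bbf{x}^{\sT}\bbf{X} - \tfrac{1}{2}\|\bbf{x}\|^2\rangle_\lambda$, simplify via Gaussian integration by parts and the Nishimori identity, and then invoke continuity of $F$ at $0$ (Lemma~\ref{lem:free_energy_cont}). The only cosmetic difference is that the paper integrates the pathwise derivative over $[\lambda_1,\lambda_2]$ and swaps $\int d\lambda$ with $\E$ via Fubini (using that the Gibbs bracket is integrable thanks to Nishimori), rather than differentiating under $\E$ directly; this sidesteps the need for an exponential envelope in $(\bbf{X},\bbf{Z})$ and is slightly cleaner under the sole assumption $\E\|\bbf{X}\|^2<\infty$.
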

	\begin{proof}
		Compute for $\lambda > 0$
		$$
		\frac{\partial}{\partial \lambda} \log \cZ(\lambda,\bbf{Y}) = 
		\left\langle \frac{1}{2 \sqrt{\lambda}} \bbf{x}^{\sT} \bbf{Z} + \bbf{x}^{\sT} \bbf{X} - \frac{1}{2} \|\bbf{x}\|^2 \right\rangle_{\lambda} \,.
		$$
		Since $\E \|\bbf{X}\|^2 < \infty$, the right-hand side is integrable and one can apply Fubini's theorem to obtain
		$$
		F(\lambda_2) - F(\lambda_1) = \int_{\lambda_1}^{\lambda_2} \E 
		\left\langle \frac{1}{2 \sqrt{\lambda}} \bbf{x}^{\sT} \bbf{Z} + \bbf{x}^{\sT} \bbf{X} - \frac{1}{2} \|\bbf{x}\|^2 \right\rangle_{\lambda} d\lambda\,.
		$$
		By Gaussian integration by parts, we have for all $i \in \{1, \dots, n \}$ and $\lambda >0$
		$$
		\E Z_i \langle x_i \rangle_{\lambda} = \E \frac{\partial}{\partial Z_i} \langle x_i \rangle_{\lambda}
		= \E \left[ \big\langle \sqrt{\lambda} x_i^2 \big\rangle_{\lambda} - \sqrt{\lambda} \big\langle x_i \big\rangle_{\lambda}^2 \right] 
		= \sqrt{\lambda} \E \left[ \big\langle x_i^2 \big\rangle_{\lambda} - \big\langle x_i X_i \big\rangle_{\lambda} \right] 
		\,,
		$$
		where the last equality comes from the Nishimori property (Proposition~\ref{prop:nishimori}). We have therefore
		$$
		F(\lambda_2) - F(\lambda_1) = \frac{1}{2} \int_{\lambda_1}^{\lambda_2} \E 
		\left\langle  \bbf{x}^{\sT} \bbf{X} \right\rangle_{\lambda} d\lambda \,.
		$$
		By Lemma~\ref{lem:free_energy_cont}, $F$ is continuous at $0$ so we can take the limit $\lambda_1 \to 0$ to obtain the result.
	\end{proof}
	\\

	By Proposition~\ref{prop:mmse_cont}, the function $\lambda \mapsto \MMSE(\lambda)$ is continuous over $\R_+$. By~\eqref{eq:mmse_overlap} we deduce that $\lambda \mapsto \E \langle \bbf{x}^{\sT} \bbf{X} \rangle_{\lambda}$ is continuous over $\R_+$ and therefore Lemma~\ref{lem:der_f0} proves~\eqref{eq:i_mmse}.
	\\

	It remains only to show that $F$ is strictly convex when $P_X$ differs from a Dirac mass. 
	We proceed by truncation. For $N \in \N$ and $x \in \R$ we write $x^{(N)} = x \, \1(-N \leq x \leq N)$. We extend this notation to vectors $\bbf{x} = (x_1,\dots, x_n) \in \R^n$ by $\bbf{x}^{(N)} = (x_1^{(N)}, \dots, x_n^{(N)})$.
	
	For $\bbf{X} \sim P_X$ we define $P_X^{(N)}$ as the distribution of $\bbf{X}^{(N)}$. $F^{(N)}$, $\MMSE^{(N)}$  and $\langle \cdot \rangle_{\lambda,N}$ will denote respectively the corresponding free energy, MMSE and posterior distribution.
	One can compute the second derivative (since $\bbf{X}^{(N)}$ is bounded, one can easily differentiate under the integral sign) and again, using Gaussian integration by parts and the Nishimori identity one obtains:
	\begin{equation}\label{eq:psi_seconde}
		F^{(N)\prime\prime}(\lambda) = \frac{1}{2} \E \left[
			\Tr \left(\left( \langle \bbf{x}\bbf{x}^{\sT} \rangle_{\lambda,N} - \langle \bbf{x} \rangle_{\lambda,N}\langle \bbf{x} \rangle_{\lambda,N}^{\sT}
			\right)^2\right)
	\right].
\end{equation}
By Cauchy-Schwarz inequality, we have for all positive, semi-definite matrix $\bbf{M} \in \R^{n \times n}$, $\Tr(\bbf{M})^2 \leq n \Tr(\bbf{M}^2)$. Hence
\begin{align*}
		F^{(N)\prime \prime}(\lambda) 
		&\geq \frac{1}{2n} \E \left[
			\Tr \left( \langle \bbf{x}\bbf{x}^{\sT} \rangle_{\lambda,N} - \langle \bbf{x} \rangle_{\lambda,N}\langle \bbf{x} \rangle_{\lambda,N}^{\sT}
			\right)^2
	\right]
	\\
		&\geq \frac{1}{2n} \E \left[
			\Tr \left( \langle \bbf{x}\bbf{x}^{\sT} \rangle_{\lambda,N} - \langle \bbf{x} \rangle_{\lambda,N}\langle \bbf{x} \rangle_{\lambda,N}^{\sT}
			\right)
	\right]^2
		= \frac{1}{2n} \MMSE^{(N)}(\lambda)^2,
\end{align*}
	by Jensen's inequality. 
	Let now $0<s<t$. 
	By integrating~\eqref{eq:psi_seconde} we get
	\begin{equation}\label{eq:der_sec_int}
		F^{(N)\prime}(t) - F^{(N)\prime}(s) \geq \frac{1}{2n} 
		\int_s^t 
		\MMSE^{(N)}(\lambda)^2
		d\lambda \,.
	\end{equation}
	The sequence of convex functions $(F^{(N)})_N$ converges (by Proposition \ref{prop:free_wasserstein}) to $F$ which is differentiable. Proposition \ref{prop:deriv_convex} gives that the derivatives $(F^{(N)\prime})_N$ converge to $F'$ and therefore $\MMSE^{(N)}$ converges to $\MMSE$. Therefore, equation~\eqref{eq:der_sec_int} gives
	$$
		F'(t) - F'(s) \geq \frac{1}{2n} 
		\int_s^t 
		\MMSE(\lambda)^2
		d\lambda 
		\geq \frac{1}{2n} (t-s) \MMSE(t)^2
		\,.
	$$
	If $P_0$ is not a Dirac measure, then the last term is strictly positive: this concludes the proof.

	\subsection{Pseudo-Lipschitz continuity of the free energy with respect to the Wasserstein distance}\label{sec:free_wasserstein}

	Let $P_{1}$ and $P_{2}$ be two probability distributions on $\R^n$, that admits a finite second moment. We denote by $W_2(P_1,P_2)$ the Wasserstein distance of order $2$ between $P_1$ and $P_2$.
	For $i = 1,2$ the free energy is defined as
	$$
	F_{P_i}(\lambda) = \E \log \int dP_i(\bbf{x}) \exp \Big(\sqrt{\lambda} \bbf{x}^{\sT} \bbf{Z} + \lambda \bbf{x}^{\sT} \bbf{X}- \frac{\lambda}{2} \|\bbf{x}\|^2 \Big) \,,
	$$
	where the expectation is with respect to $(\bbf{X},\bbf{Z}) \sim P_i \otimes \cN(0,\Id_n)$.

	\begin{proposition}\label{prop:free_wasserstein}
		For all $\lambda \geq 0$,
		$$
		\big| F_{P_1}(\lambda) - F_{P_2}(\lambda) \big| \leq \frac{\lambda}{2} \big(\sqrt{\E_{P_1}\|\bbf{X}\|^2} + \sqrt{\E_{P_2}\|\bbf{X}\|^2}\big)W_2(P_1,P_2) \,.
		$$
	\end{proposition}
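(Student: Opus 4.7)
The plan is to interpolate between the two free energies through an auxiliary Bayesian inference problem built from an optimal $W_2$-coupling of $P_1$ and $P_2$, then control the $t$-derivative of the interpolating free energy using the Nishimori identity and Cauchy--Schwarz.

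First I would fix a coupling $(\bbf{X}^1, \bbf{X}^2) \sim \pi$ achieving $\E\|\bbf{X}^1 - \bbf{X}^2\|^2 = W_2(P_1, P_2)^2$, together with $\bbf{Z} \sim \cN(\bbf{0}, \Id_n)$ independent, and for $t \in [0,1]$ introduce the Hamiltonian
\[
\tilde{H}_t(\bbf{x}^1, \bbf{x}^2) = \sqrt{\lambda}\, \bbf{w}_t^{\sT}\bbf{Z} + \lambda\, \bbf{w}_t^{\sT}\bbf{W}_t^* - \tfrac{\lambda}{2}\|\bbf{w}_t\|^2,
\]
where $\bbf{w}_t = (1-t)\bbf{x}^1 + t \bbf{x}^2$ and $\bbf{W}_t^* = (1-t)\bbf{X}^1 + t \bbf{X}^2$. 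This Hamiltonian corresponds to the Bayesian model with prior $\pi$ on the signal $(\bbf{X}^1,\bbf{X}^2)$ and observation $\bbf{Y} = \sqrt{\lambda}\,\bbf{W}_t^* + \bbf{Z}$, so the Nishimori identity (Proposition~\ref{prop:nishimori}) is available for the Gibbs measure $\langle \cdot\rangle_t$ it defines. Setting $\psi(t) = \E \log \int d\pi(\bbf{x}^1, \bbf{x}^2)\, e^{\tilde{H}_t}$ and marginalizing out the uninvolved slot at each endpoint identifies $\psi(0) = F_{P_1}(\lambda)$ and $\psi(1) = F_{P_2}(\lambda)$.

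Second, I would differentiate. Writing $\bbf{\delta} = \bbf{x}^2 - \bbf{x}^1$ and $\bbf{\Delta} = \bbf{X}^2 - \bbf{X}^1$, one has $\partial_t \tilde{H}_t = \sqrt{\lambda}\bbf{\delta}^{\sT}\bbf{Z} + \lambda \bbf{\delta}^{\sT}\bbf{W}_t^* + \lambda \bbf{w}_t^{\sT}\bbf{\Delta} - \lambda \bbf{w}_t^{\sT}\bbf{\delta}$. Gaussian integration by parts on the $\bbf{Z}$-term produces $\lambda\E\langle \bbf{\delta}^{\sT}\bbf{w}_t\rangle_t - \lambda\E[\langle \bbf{\delta}\rangle_t^{\sT}\langle \bbf{w}_t\rangle_t]$. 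Applying the Nishimori identity to terms with a single Gibbs bracket (which effectively swaps a replica with the planted variable) and its two-replica version to identify $\E[\langle \bbf{\delta}\rangle_t^{\sT}\langle \bbf{w}_t\rangle_t] = \E[\bbf{\Delta}^{\sT}\langle \bbf{w}_t\rangle_t]$, the various cross-terms combine and cancel down to the clean expression
\[
\psi'(t) = \lambda\, \E\bigl[\bbf{\Delta}^{\sT}\, \langle \bbf{w}_t\rangle_t\bigr].
\]

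Finally, Cauchy--Schwarz bounds this by $\lambda\, W_2(P_1,P_2)\,\bigl(\E\|\langle \bbf{w}_t\rangle_t\|^2\bigr)^{1/2}$; Jensen followed by another application of Nishimori gives $\E\|\langle \bbf{w}_t\rangle_t\|^2 \leq \E\langle \|\bbf{w}_t\|^2\rangle_t = \E\|\bbf{W}_t^*\|^2$; and Minkowski's $L^2$ inequality yields $(\E\|\bbf{W}_t^*\|^2)^{1/2} \leq (1-t)\sqrt{\E_{P_1}\|\bbf{X}\|^2} + t\sqrt{\E_{P_2}\|\bbf{X}\|^2}$. Integrating $|\psi'(t)|$ over $[0,1]$, with $\int_0^1 (1-t)\,dt = \int_0^1 t\,dt = \tfrac12$, produces the claimed bound. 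The hard part will be the derivative computation: several cross-terms of types $\E[\bbf{\Delta}^{\sT}\bbf{W}_t^*]$, $\E[\bbf{W}_t^{*\sT}\bbf{\Delta}]$ and $\E[\bbf{\Delta}^{\sT}\langle \bbf{w}_t\rangle_t]$ appear in the calculation, and correctly accounting for one-replica versus two-replica Nishimori identities is essential to collapse them into the single surviving term.
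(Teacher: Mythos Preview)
Your proof is correct but uses a genuinely different interpolation from the paper's. The paper couples $(\bbf{X}_1,\bbf{X}_2)\sim Q$ and considers \emph{two} independent Gaussian channels with interpolated signal-to-noise ratios,
\[
\bbf{Y}_1^{(t)}=\sqrt{\lambda t}\,\bbf{X}_1+\bbf{Z}_1,\qquad \bbf{Y}_2^{(t)}=\sqrt{\lambda(1-t)}\,\bbf{X}_2+\bbf{Z}_2,
\]
so that the $t$-derivative follows immediately from the I-MMSE relation as $f'(t)=\tfrac{\lambda}{2}\,\E\langle \bbf{X}_1^{\sT}\bbf{x}_1-\bbf{X}_2^{\sT}\bbf{x}_2\rangle_t$; this is then rewritten as $\E\langle \bbf{X}_1^{\sT}(\bbf{x}_1-\bbf{x}_2)-(\bbf{X}_2-\bbf{X}_1)^{\sT}\bbf{x}_2\rangle_t$ and bounded uniformly in $t$ by Cauchy--Schwarz plus Nishimori, giving the factor $\tfrac12$ directly. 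Your route instead keeps a \emph{single} channel and interpolates linearly in the signal, $\bbf{w}_t=(1-t)\bbf{x}^1+t\bbf{x}^2$. This costs more bookkeeping in the derivative (the three-way Nishimori identification collapsing $-\E[\langle\bbf{\delta}\rangle_t^{\sT}\langle\bbf{w}_t\rangle_t]+\E[\langle\bbf{\delta}\rangle_t^{\sT}\bbf{W}_t^*]+\E[\langle\bbf{w}_t\rangle_t^{\sT}\bbf{\Delta}]$ to a single term), but produces the cleaner formula $\psi'(t)=\lambda\,\E[\bbf{\Delta}^{\sT}\langle\bbf{w}_t\rangle_t]$, and the constant $\tfrac12$ then falls out of $\int_0^1 t\,dt=\int_0^1(1-t)\,dt=\tfrac12$ via Minkowski rather than from splitting a bilinear term. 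A minor cosmetic difference: the paper works with an $\epsilon$-suboptimal coupling and lets $\epsilon\to 0$, while you invoke directly the existence of an optimal $W_2$-coupling (valid here since $P_1,P_2$ have finite second moment on $\R^n$).
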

A similar result was proved in~\cite{wu2012functional} but with a weaker bound for the $W_2$ distance.

	\begin{proof}
		Let $\epsilon > 0$. Let us fix a coupling $Q$ of $\bbf{X}_1 \sim P_1$ and $\bbf{X}_2 \sim P_2$ such that
		$$
		\big( \E \| \bbf{X}_1 - \bbf{X}_2 \|^2\big)^{1/2} \leq W_2(P_1,P_2) + \epsilon \,.
		$$
		Let us consider for $t \in [0,1]$ the observation model
		$$
		\begin{cases}
			\bbf{Y}^{(t)}_1 &=\  \sqrt{\lambda t} \bbf{X}_1  + \bbf{Z}_1 \,, \\
			\bbf{Y}^{(t)}_2 &=\  \sqrt{\lambda (1-t)} \bbf{X}_2 + \bbf{Z}_2 \,,
		\end{cases}
		$$
		where $\bbf{Z}_1,\bbf{Z}_2 \iid \cN(0,\Id_n)$ are independent from $(\bbf{X}_1,\bbf{X}_2) \sim Q$. Define
		$$
		f(t) = \E \log \int\! dQ(\bbf{x}_1,\bbf{x}_2) \exp \Big(
			\sqrt{\lambda t} \bbf{x}_1^{\sT} \bbf{Y}_1^{(t)} - \frac{\lambda t}{2} \|\bbf{x}_1\|^2 
			+
			\sqrt{\lambda (1-t)} \bbf{x}_2^{\sT} \bbf{Y}_2^{(t)} - \frac{\lambda (1-t)}{2} \|\bbf{x}_2\|^2 
		\Big).
		$$
		We have $f(0) = F_{P_2}(\lambda)$ and $f(1)=F_{P_1}(\lambda)$. By an easy extension of the I-MMSE relation~\eqref{eq:i_mmse} we have for all  $t \in [0,1]$:
		$$
		f'(t) = \frac{\lambda}{2}\E \Big\langle \bbf{X}_1^{\sT} \bbf{x}_1 - \bbf{X}_2^{\sT} \bbf{x}_2 \Big\rangle_{t} \,,
		$$
		where $\langle \cdot \rangle_t$ denotes the expectation with respect to $(\bbf{x}_1,\bbf{x}_2)$ sampled from the posterior distribution of $(\bbf{X}_1,\bbf{X}_2)$ given $\bbf{Y}_1^{(t)},\bbf{Y}_2^{(t)}$, independently of everything else.
		We have then
		\begin{align*}
			| \frac{2}{\lambda} f'(t) | & =
			\Big| \E \Big\langle \bbf{X}_1^{\sT} (\bbf{x}_1 - \bbf{x}_2) - (\bbf{X}_2 - \bbf{X}_1)^{\sT} \bbf{x}_2 \Big\rangle_{t} \Big|
			\\
			&\leq
			\Big( \E \|\bbf{X}_1\|^2 \E \big\langle \| \bbf{x}_1 - \bbf{x}_2\|^2\big\rangle_{t} \Big)^{1/2} +
				\Big( \E \big\langle\|\bbf{x}_2\|^2\big\rangle_{t}  \E  \|\bbf{X}_2 - \bbf{X}_1\|^2   \Big)^{1/2}
			\\
			&=
			\Big( \E \|\bbf{X}_1\|^2 \E \| \bbf{X}_1 - \bbf{X}_2\|^2 \Big)^{1/2} +
				\Big( \E \|\bbf{X}_2\|^2  \E  \|\bbf{X}_2 - \bbf{X}_1\|^2   \Big)^{1/2}
				\\
				& \leq 
				\Big(
				\big(\E \|\bbf{X}_1\|^2 \big)^{1/2}
				+
				\big(\E \|\bbf{X}_2\|^2\big)^{1/2}
			\Big) (W_2(P_1,P_2) + \epsilon)\,,
		\end{align*}
		where we used successively the Cauchy-Schwarz inequality and the Nishimori property (Proposition~\ref{prop:nishimori}). We then let $\epsilon \to 0$ to obtain the result.
	\end{proof}

	\section{Convex analysis results}\label{sec:app_convex}

\begin{proposition}\label{prop:deriv_convex}
	Let $I \subset \R$ be an interval, and let $(f_n)_{n \geq 0}$ be a sequence of convex functions on $I$ that converges pointwise to a function $f$. Then for all $t \in I$ for which these inequalities have a sense
	$$
	f'(t^-) \leq \liminf_{n \to \infty} f_n'(t^-) \leq \limsup_{n \to \infty} f_n'(t^+) \leq f'(t^+).
	$$
\end{proposition}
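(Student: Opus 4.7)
The plan is to exploit two facts: (i) for any convex function $g$ on an interval, the map $h \mapsto \frac{g(t+h)-g(t)}{h}$ is non-decreasing in $h$, so the one-sided derivatives exist at interior points and sandwich every secant slope, and (ii) pointwise convergence of convex functions transfers directly to secant slopes, since those are just affine combinations of two function values. The middle inequality $\liminf f_n'(t^-) \leq \limsup f_n'(t^+)$ is free from $f_n'(t^-) \leq f_n'(t^+)$. The real work is the two outer inequalities, which are symmetric.

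For the right inequality $\limsup f_n'(t^+) \leq f'(t^+)$: pick any $s \in I$ with $s > t$. Convexity of $f_n$ gives
\begin{equation*}
f_n'(t^+) \,\leq\, \frac{f_n(s)-f_n(t)}{s-t}.
\end{equation*}
Taking $\limsup_{n \to \infty}$ on both sides and using pointwise convergence of $f_n$ at $s$ and $t$,
\begin{equation*}
\limsup_{n \to \infty} f_n'(t^+) \,\leq\, \frac{f(s)-f(t)}{s-t}.
\end{equation*}
Since the pointwise limit $f$ is itself convex (as a pointwise limit of convex functions), its right-derivative exists at the interior point $t$ and equals $\inf_{s>t} \frac{f(s)-f(t)}{s-t}$; letting $s \downarrow t$ in the displayed bound yields $\limsup f_n'(t^+) \leq f'(t^+)$.

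For the left inequality $f'(t^-) \leq \liminf f_n'(t^-)$: take $s < t$ in $I$. By convexity,
\begin{equation*}
f_n'(t^-) \,\geq\, \frac{f_n(t)-f_n(s)}{t-s}.
\end{equation*}
Take $\liminf_n$, use pointwise convergence, then let $s \uparrow t$ and use $f'(t^-) = \sup_{s<t} \frac{f(t)-f(s)}{t-s}$. Combining with the trivial middle step finishes the proof.

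The argument is essentially free of obstacles once one notes that the limit $f$ inherits convexity (so its one-sided derivatives are well defined) and is careful to take the limit in $n$ before letting $s \to t$, so that the secant slopes stay inside the region where convergence holds. The only minor point to flag is that the statement is meaningful only at interior $t$ of $I$, which is what the phrase ``for which these inequalities have a sense'' encodes.
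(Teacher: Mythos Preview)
Your proof is correct and follows essentially the same approach as the paper: bound the one-sided derivative $f_n'(t^+)$ by a secant slope via convexity, pass to the limit in $n$ using pointwise convergence, then let the secant collapse to recover $f'(t^+)$ (and symmetrically on the left). Your write-up is slightly more detailed—explicitly noting that $f$ inherits convexity and handling the middle inequality—but the argument is the same.
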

\begin{proof}
	Let $t \in I$ and $h > 0$. By convexity
	$$
	f_n'(t^+) \leq \frac{f_n(t+h) - f_n(t)}{h} \xrightarrow[n \to \infty]{} \frac{f(t+h) - f(t)}{h} \xrightarrow[h \to 0]{} f'(t^+).
	$$
	The first inequality follows from the same arguments.
\end{proof}

	\subsection{Basic results on the monotone conjugate}

	\begin{definition}\label{def:monotone_conjugate}
		We define the \textit{monotone conjugate} (see \cite{rockafellar2015convex} p.110) of a non-decreasing convex function $f: \R_+ \to \R$ by:
		\begin{equation}\label{eq:def_monotone}
			f^*(x) = \sup_{y \geq 0}\, \{ xy - f(y) \}.
		\end{equation}
	\end{definition}

	The most fundamental result on the monotone  conjugate is the analog of the Fenchel-Moreau theorem:
\begin{proposition}[\cite{rockafellar2015convex}~Theorem~12.4]\label{prop:inverse}
	Let $f$ be a non-decreasing lower semi-continuous convex function on $\R_+$ such that $f(0)$ is finite. Then $f^*$ is another such function and $(f^*)^* = f$.
\end{proposition}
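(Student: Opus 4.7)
The plan is to first verify directly that $f^*$ inherits the structural properties (non-decreasing, lsc convex, with $f^*(0)$ finite), and then to reduce the involution $(f^*)^* = f$ to the classical Fenchel--Moreau biconjugate theorem on $\R$ by means of a straightforward extension.

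For the structural properties of $f^*$, I would argue as follows. Monotonicity is immediate: for $0 \leq x_1 \leq x_2$ and any $y \geq 0$, one has $x_1 y - f(y) \leq x_2 y - f(y)$, so taking the supremum over $y$ yields $f^*(x_1) \leq f^*(x_2)$. Convexity and lower semi-continuity are automatic since $f^*$ is the pointwise supremum of the affine family $\{x \mapsto xy - f(y)\}_{y \geq 0}$. Finally, since $f$ is non-decreasing, $f^*(0) = \sup_{y \geq 0}(-f(y)) = -\inf_{y \geq 0} f(y) = -f(0)$, which is finite by assumption.

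For the involution, I would reduce to the classical setting. Extend $f$ to a function $\tilde f : \R \to \R \cup \{+\infty\}$ by putting $\tilde f(y) = f(0)$ for $y < 0$ and $\tilde f(y) = f(y)$ for $y \geq 0$. Then $\tilde f$ is a proper, non-decreasing, lsc, convex function on $\R$. The classical Fenchel conjugate $\tilde f^{c}(x) = \sup_{y \in \R}\{xy - \tilde f(y)\}$ can be computed explicitly: for $x \geq 0$, the contribution from $y < 0$ is at most $-f(0)$ (achieved in the limit), which is already attained at $y=0$ in the supremum over $y \geq 0$, so $\tilde f^{c}(x) = f^*(x)$; for $x < 0$, letting $y \to -\infty$ shows $\tilde f^{c}(x) = +\infty$.

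Applying the classical Fenchel--Moreau theorem to $\tilde f$ gives $\tilde f^{cc} = \tilde f$. For $y \geq 0$, since $\tilde f^{c}(x) = +\infty$ for $x < 0$, the supremum defining the biconjugate is effectively over $x \geq 0$, and we obtain
\[
(f^*)^*(y) = \sup_{x \geq 0}\{xy - f^*(x)\} = \sup_{x \in \R}\{xy - \tilde f^{c}(x)\} = \tilde f^{cc}(y) = \tilde f(y) = f(y).
\]
The one mild subtlety, which is really the only non-bookkeeping step, is identifying the monotone conjugate $f^*$ with the restriction to $\R_{\geq 0}$ of the classical Legendre conjugate of the extension $\tilde f$, which is precisely what makes the reduction to the standard biconjugate theorem work.
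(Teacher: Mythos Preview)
Your proof is correct. The paper does not actually supply a proof of this proposition; it simply cites Theorem~12.4 in Rockafellar's \emph{Convex Analysis} and moves on. So there is no ``paper's proof'' to compare against, and your argument stands on its own as a clean, self-contained justification.

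The verification of the structural properties of $f^*$ is routine and correct, including the computation $f^*(0) = -f(0)$. The reduction to the classical Fenchel--Moreau theorem via the constant extension $\tilde f(y) = f(0)$ for $y < 0$ is the standard trick, and you have checked the key identifications carefully: that $\tilde f$ is proper, lsc and convex on $\R$; that $\tilde f^{c}$ equals $f^*$ on $\R_{\geq 0}$ and $+\infty$ on $\R_{<0}$; and hence that the biconjugate restricted to $\R_{\geq 0}$ coincides with the monotone biconjugate. This is essentially how Rockafellar himself handles the monotone conjugate, so your approach is not only correct but canonical.
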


\begin{proposition}\label{prop:der_conjugate}
	Let $f$ be a non-decreasing lower semi-continuous convex function on $\R_+$ such that $f(0)$ is finite. Then for all $x,y \geq 0$:
	$$
	x \in \partial f^*(y)
	\ \Longleftrightarrow \ 
	f(x) + f^*(y) = xy
	\ \Longleftrightarrow \
	y \in \partial f(x).
	$$
\end{proposition}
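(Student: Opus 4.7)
The plan is to establish both equivalences via the classical Fenchel–Young equality, adapted to the monotone conjugate, and then to close the loop using the involution $(f^*)^* = f$ already granted by Proposition~\ref{prop:inverse}.

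First I would prove the equivalence $f(x) + f^*(y) = xy \Longleftrightarrow y \in \partial f(x)$ directly from the defining identities. For the forward implication, suppose $f(x) + f^*(y) = xy$. Then for every $x' \geq 0$, the definition of $f^*$ as a supremum gives $f^*(y) \geq x'y - f(x')$, so that
$$
f(x') \geq x'y - f^*(y) = x'y - (xy - f(x)) = f(x) + y(x' - x),
$$
which is exactly the subgradient inequality characterizing $y \in \partial f(x)$. Conversely, if $y \in \partial f(x)$, then $f(x') \geq f(x) + y(x'-x)$ for all $x' \geq 0$, i.e.\ $x'y - f(x') \leq xy - f(x)$ for every $x' \geq 0$; taking the supremum yields $f^*(y) \leq xy - f(x)$, while the reverse inequality holds by definition, so $f(x) + f^*(y) = xy$.

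Once this first equivalence is in hand, the second equivalence $f(x) + f^*(y) = xy \Longleftrightarrow x \in \partial f^*(y)$ follows by symmetry. Indeed, by Proposition~\ref{prop:inverse}, $f^*$ is itself a non-decreasing lower semi-continuous convex function on $\R_+$ with finite value at $0$, so the first step applied with $f$ replaced by $f^*$ gives $f^*(y) + (f^*)^*(x) = yx \Longleftrightarrow x \in \partial f^*(y)$. Invoking $(f^*)^* = f$ from Proposition~\ref{prop:inverse} rewrites the left-hand side as $f^*(y) + f(x) = xy$, which is the desired statement.

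The one subtlety to handle carefully is that everything takes place on the half-line $\R_+$: the subgradient inequality in $\partial f(x)$ only needs to be tested against $x' \geq 0$ (equivalently, one extends $f$ by $+\infty$ on $(-\infty,0)$), and in the definition \eqref{eq:def_monotone} the supremum is taken over $y \geq 0$ too. This affects only the boundary case $x=0$ (where $\partial f(0)$ must be interpreted as the set of $y$ such that $f(x') \geq f(0) + y x'$ for all $x' \geq 0$), but the algebraic manipulations above go through verbatim without distinguishing cases. This is essentially the only nontrivial point; the rest is a direct computation.
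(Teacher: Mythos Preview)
Your proof is correct and follows essentially the same route as the paper: both arguments rest on the Fenchel--Young equality and the involution $(f^*)^* = f$ from Proposition~\ref{prop:inverse}. The paper organizes the argument as a cycle of implications ($x \in \partial f^*(y) \Rightarrow f(x)+f^*(y)=xy \Rightarrow y \in \partial f(x)$, then closes the loop by symmetry), whereas you prove each equivalence separately; this is a cosmetic difference, and your version is in fact more explicit about the boundary issue at $x=0$.
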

\begin{proof}
	Let $x \in \partial f^*(y)$. We get that $(f^*)^*(x) = x y - f^*(y)$ and therefore that
	$f^*(y) = x y - f(x)$, by Proposition \ref{prop:inverse}. This gives that $x$ maximizes $s \mapsto s y - f(s)$ over $\R_+$ and thus $y \in \partial f(x)$. 
	It remains to show that $y \in \partial f(x) \implies x \in \partial f^*(y)$. This follows from Proposition \ref{prop:inverse} and the implication $ x \in \partial f^*(y) \implies y \in \partial f(x)$ that we just showed.
\end{proof}

	\subsection{A supremum formula}
The goal of this section is to prove:
\begin{proposition}\label{prop:sup_inf}
	Let $f,g$ be two convex Lipschitz functions on $\R_+$.
	For $(q_1,q_2) \in \R_+$ we define $\varphi(q_1,q_2) = f(q_1) + g(q_2) - q_1 q_2$
	and $\psi(q_1,q_2) = q_1 q_2 - f^*(q_2) - g^*(q_1)$.
	Then the set $\Gamma = \big\{(q_1,q_2) \in \R_+^2 \, \big| \, q_1 \in \partial g(q_2), \ q_2 \in \partial f(q_1) \big\}$ is non-empty and:
	\begin{equation}\label{eq:sup_inf}
		\sup_{(q_1,q_2) \in \Gamma} \varphi(q_1,q_2) \
		= \sup_{q_1,q_2 \geq 0} \psi(q_1,q_2) \ 
		=\sup_{q_1 \geq 0} \inf_{q_2 \geq 0} \varphi(q_1,q_2),
	\end{equation}
	and the two first suprema above are achieved and precisely at the same couples $(q_1,q_2)$.
	\\

	If moreover $f$ and $g$ are both differentiable and strictly convex, then the same result holds for $\Gamma$ replaced by
	\begin{equation}\label{eq:simpli_gamma}
	\widetilde{\Gamma} = \big\{ (q_1,q_2) \in \R_+^2 \, \big| \, q_2 = f'(q_1) \ \text{and} \ q_1 = g'(q_2) \big\}.
	\end{equation}
\end{proposition}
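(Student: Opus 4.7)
The plan is to exploit the fact that $\varphi$ and $\psi$ differ by the two Fenchel gaps associated with $(f,f^*)$ and $(g,g^*)$, both of which vanish exactly on $\Gamma$. Once this is observed, I can pass between the three expressions in~\eqref{eq:sup_inf} by locating a common maximizer in $\Gamma$, from which both non-emptiness of $\Gamma$ and coincidence of the maximizers will fall out automatically.

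First I would record the pointwise identity
\[
\varphi(q_1,q_2) - \psi(q_1,q_2) = \bigl(f(q_1) + f^*(q_2) - q_1 q_2\bigr) + \bigl(g(q_2) + g^*(q_1) - q_1 q_2\bigr),
\]
in which both summands are non-negative by definition of the monotone conjugate and vanish simultaneously iff $q_2 \in \partial f(q_1)$ and $q_1 \in \partial g(q_2)$, that is, iff $(q_1,q_2) \in \Gamma$ (Proposition~\ref{prop:der_conjugate}). In particular $\varphi = \psi$ on $\Gamma$, so $\sup_\Gamma \varphi \leq \sup \psi$. Next, using $(f^*)^* = f$ from Proposition~\ref{prop:inverse}, I would compute for every $q_1 \geq 0$
\[
\sup_{q_2 \geq 0} \psi(q_1,q_2) = (f^*)^*(q_1) - g^*(q_1) = f(q_1) - g^*(q_1) = \inf_{q_2 \geq 0} \varphi(q_1,q_2),
\]
which yields the second equality in~\eqref{eq:sup_inf} and reduces the remaining work to producing a maximizer of $\psi$ that lies in $\Gamma$.

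The main technical step is the attainment argument. Since $f$ and $g$ are non-decreasing and $L$-Lipschitz, the bound $f(x) \leq f(0) + Lx$ forces $f^*(q_2) = +\infty$ for $q_2 > L$, and similarly for $g^*$, so $\psi \equiv -\infty$ outside $[0,L]^2$. On this compact rectangle $\psi$ is upper semicontinuous (the conjugates are lower semicontinuous as suprema of continuous affine functions) and finite at $(0,0)$, hence admits a maximizer $(q_1^*,q_2^*)$. I would then show $(q_1^*,q_2^*) \in \Gamma$ directly: because $q_2^*$ maximizes $q_2 \mapsto q_1^* q_2 - f^*(q_2)$, the maximum value is $(f^*)^*(q_1^*) = f(q_1^*)$, so $f(q_1^*) + f^*(q_2^*) = q_1^* q_2^*$ and Proposition~\ref{prop:der_conjugate} gives $q_2^* \in \partial f(q_1^*)$; the symmetric argument with $q_1^*$ maximizing $\psi(\cdot,q_2^*)$ yields $q_1^* \in \partial g(q_2^*)$. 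This simultaneously shows $\Gamma \neq \emptyset$ and proves $\sup \psi = \psi(q_1^*,q_2^*) = \varphi(q_1^*,q_2^*) \leq \sup_\Gamma \varphi$, closing the chain of inequalities.

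Coincidence of the maximizers is then immediate: every maximizer of $\psi$ lies in $\Gamma$ by the partial-maximization argument, and there $\varphi = \psi$, so it is also a maximizer of $\varphi|_\Gamma$; conversely, at any maximizer of $\varphi|_\Gamma$ the two functionals agree, so its value equals $\sup \psi$. The strictly convex differentiable case reduces to the general one since $\partial f(q_1) = \{f'(q_1)\}$ and $\partial g(q_2) = \{g'(q_2)\}$, giving $\Gamma = \widetilde{\Gamma}$. I expect the only genuinely delicate point to be the attainment step, where one must correctly identify the effective domain of $\psi$ and invoke upper semicontinuity; crucially, no joint concavity of $\psi$ is needed, because the partial-maximization identity above bypasses any minimax/saddle-point argument.
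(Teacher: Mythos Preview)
Your argument for the first part of the proposition is essentially the paper's, organized a bit more transparently around the Fenchel-gap identity $\varphi-\psi=(f+f^*-q_1q_2)+(g+g^*-q_1q_2)$. Your attainment step via upper semicontinuity of $\psi$ on the compact $[0,L]^2$ is in fact slightly more robust than the paper's, which argues continuity on $[0,L_g)\times[0,L_f)$ together with divergence on the boundary.

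There is, however, a genuine gap in your treatment of the strictly convex differentiable case. Your claim that $\partial f(q_1)=\{f'(q_1)\}$ fails at the boundary point $q_1=0$: with the subdifferential characterized by $q_2\in\partial f(q_1)\iff f(q_1)+f^*(q_2)=q_1q_2$ (Proposition~\ref{prop:der_conjugate}), one has $\partial f(0)=[0,f'(0^+)]$, not $\{f'(0)\}$. In particular $(0,0)\in\Gamma$ always, whereas $(0,0)\in\widetilde\Gamma$ only when $f'(0)=g'(0)=0$. So $\Gamma\neq\widetilde\Gamma$ in general, and the reduction you propose does not go through. What \emph{is} true is the inclusion $\widetilde\Gamma\subset\Gamma$, which only yields $\sup_{\widetilde\Gamma}\varphi\le\sup_\Gamma\varphi$; you still need to exhibit a maximizer of $\psi$ lying in $\widetilde\Gamma$.

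The paper closes this gap by a short boundary analysis: if a maximizer $(q_1,q_2)\in\Gamma$ has $q_1=0$, then strict convexity of $g$ forces $q_2=0$, and one checks directly that $(g'(0),f'(0))$ is then also a maximizer of $\psi$ (since $f^*(f'(0))=-f(0)$ and $g^*(g'(0))=-g(0)$, so $\psi(g'(0),f'(0))=f(0)+g(0)+f'(0)g'(0)\ge\psi(0,0)$). Hence $(g'(0),f'(0))\in\Gamma$, and strict convexity again forces $f'(0)=g'(0)=0$, placing $(0,0)$ in $\widetilde\Gamma$ after all. You should add this (or an equivalent) argument.
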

\begin{proof}
	Let $L_f$ (resp.\ $L_g$) be the Lipschitz constant of $f$ (resp.\ $g$). For $x > L_f$, $f^*(x) = +\infty$ and (since $f^*$ is lower semi-continuous by Proposition \ref{prop:der_conjugate}) $f^*(x) \to +\infty$ as $x \to L_f$. Analogously, $g(x) \to +\infty$ as $x \to L_g$. The function $\psi$ is therefore continuous on $[0,L_g) \times [0,L_f)$ and goes to $-\infty$ on the border $\big(\{L_g\} \times [0,L_f]\big) \cup \big( [0,L_g] \times \{ L_f \} \big)$.

	The functions $\psi$ achieves therefore its maximum at some $(q_1,q_2) \in [0,L_g) \times [0,L_f)$. $(q_1,q_2)$ verifies then $q_2 \in \partial g^*(q_1)$ and $q_1 \in \partial f^*(q_2)$ which gives $(q_1,q_2) \in \Gamma$ by Proposition \ref{prop:der_conjugate}. The set $\Gamma$ is therefore non-empty and 
	$$
	\sup_{q_1,q_2 \geq 0} \psi(q_1,q_2) \leq \sup_{(q_1,q_2)\in \Gamma} \varphi(q_1,q_2).
	$$
	By definition of the conjugates $f^*$ and $g^*$ we have for all $q_1,q_2 \geq 0$
	$$
	\begin{cases}
		f(q_1) + f^*(q_2) \geq q_1 q_2 \\
		g(q_2) + g^*(q_1) \geq q_1 q_2.
	\end{cases}
	$$
	We get that $\varphi(q_1,q_2) \geq \psi(q_1,q_2)$ with equality if and only if $(q_1,q_2) \in \Gamma$, by Proposition \ref{prop:der_conjugate}. This gives in particular that 
	$$
	\sup_{q_1,q_2 \geq 0} \psi(q_1,q_2) \geq \sup_{(q_1,q_2)\in \Gamma} \varphi(q_1,q_2).
	$$
	Hence, both supremum are equal and are achieved over the same couples because we have seen that all couple $(q_1,q_2)$ that achieves the supremum of $\psi$ is in $\Gamma$.

	We consider now the second equality.
	Using the definition of the monotone conjugate \eqref{eq:def_monotone} and Proposition \ref{prop:inverse}:
	\begin{align*}
		\sup_{q_1 \geq 0} \inf_{q_2 \geq 0} \varphi(q_1,q_2) 
		&= \sup_{q_1 \geq 0} \big\{ f(q_1) - g^*(q_1) \big\}
		= \sup_{q_1 \geq 0} \sup_{q_2 \geq 0} \big\{ q_1 q_2 - f^*(q_2) - g^*(q_1) \big\}.
	\end{align*}

	Let us now prove the second part of the Proposition: we now assume that $f$ and $g$ are differentiable, strictly convex.
	Let $(q_1,q_2) \in \Gamma$ be a couple that achieves the maximum of $\varphi$ over $\Gamma$. It suffices to show that $(q_1,q_2) \in \widetilde{\Gamma}$.
	If $q_1 > 0$ and $q_2 >0$, then this is trivial because $f$ and $g$ are differentiable.

	Suppose now that $q_1 = 0$ (the case $q_2 = 0$ follows by symmetry). Since $0 = q_1 \in \partial g(q_2)$ and $g$ is strictly increasing, we get that $q_2 = 0$,
	so that $\sup \psi = \varphi(q_1,q_2) = f(0) + g(0)$.
	Notice that for $f^*(f'(0)) = -f(0)$ and $g^*(g'(0)) = -g(0)$ so
	\begin{equation}\label{eq:lower_bound0}
		f(0) + g(0) = \sup \psi \geq \psi\big(g'(0),f'(0)\big) = f(0) + g(0) + f'(0)g'(0) \geq f(0) + g(0).
	\end{equation}
	We get that $(g'(0),f'(0))$ achieves the supremum of $\psi$, which implies that $(g'(0),f'(0)) \in \Gamma$. This gives that $g'(0) = 0 = f'(0)$ by strict convexity of $f$ and $g$.
	This proves that $(q_1,q_2) = (0,0) \in \widetilde{\Gamma}$.
\end{proof}

	\section{Differentiation of a supremum of functions} \label{sec:appendix_envelope}

We recall in this section two results about the differentiation of a supremum of functions from Milgrom and Segal~\cite{milgrom2002envelope}.
Let $X$ be a set of parameters and consider a function $f: X \times [0,1] \to \R$. Define, for $t \in [0,1]$
\begin{align*}
	V(t) &= \sup_{x \in X} f(x,t) \,,\\
	X^*(t) &= \big\{ x \in X \, \big| \, f(x,t) = V(t) \big\} \,.
\end{align*}

\begin{proposition}[Theorem~1 from~\cite{milgrom2002envelope}\,] \label{prop:envelope}
	Let $t \in [0,1]$ such that $X^*(t) \neq \emptyset$. Let $x^* \in X^*(t)$ and suppose that $f(x^*,\cdot)$ is differentiable at $t$, with derivative $f_t(x^*,t)$.
	\begin{itemize}
		\item  If $t > 0$ and if $V$ is left-hand differentiable at $t$, then $V'(t^-) \leq f_t(x^*,t)$.
		\item  If $t < 0$ and if $V$ is right-hand differentiable at $t$, then $V'(t^+) \geq f_t(x^*,t)$.
		\item  If $t \in (0,1)$ and if $V$ is differentiable at $t$, then $V'(t) = f_t(x^*,t)$.
	\end{itemize}
\end{proposition}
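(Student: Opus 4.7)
The plan is to exploit the two elementary observations that (i) $V(s) \geq f(x^*, s)$ for every $s \in [0,1]$, because $x^*$ is one competitor in the supremum defining $V(s)$, and (ii) $V(t) = f(x^*, t)$, because $x^* \in X^*(t)$. Subtracting yields the key one-sided inequality
\[
V(s) - V(t) \;\geq\; f(x^*, s) - f(x^*, t),
\]
valid for every $s \in [0,1]$. The rest is just dividing by $s - t$ while carefully tracking signs and passing to the limit.

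For the right-hand derivative statement, I would take $s = t + h$ with $h > 0$ (which requires $t < 1$ so that $s \in [0,1]$ for small enough $h$), divide the displayed inequality by $h > 0$ without flipping it, and let $h \to 0^+$. Since $f(x^*, \cdot)$ is differentiable at $t$ by hypothesis, the right-hand side tends to $f_t(x^*, t)$, and if $V$ is right-hand differentiable at $t$, the left-hand side tends to $V'(t^+)$, giving $V'(t^+) \geq f_t(x^*, t)$.

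For the left-hand derivative statement, I would instead take $h < 0$ (requiring $t > 0$), divide the same inequality by $h$, which now flips the direction of the inequality, and let $h \to 0^-$. The right-hand side again tends to $f_t(x^*, t)$, while the left-hand side tends to $V'(t^-)$, yielding $V'(t^-) \leq f_t(x^*, t)$. Finally, the two-sided statement is a direct consequence: when $t \in (0,1)$ and $V$ is differentiable at $t$, we have $V'(t^-) = V'(t^+) = V'(t)$, and combining the two one-sided inequalities squeezes $V'(t)$ between $f_t(x^*, t)$ and itself, so equality must hold.

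There is essentially no obstacle here; the proof is a one-line comparison followed by a limit, and the only subtlety is making sure the sign flip in the limit $h \to 0^-$ is carried out correctly. The asymmetric roles played by the hypotheses $t > 0$ versus $t < 1$ come solely from needing $t+h \in [0,1]$ for small $h$ of the appropriate sign, so nothing deeper is involved.
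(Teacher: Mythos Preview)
Your proof is correct and is precisely the standard argument for this envelope theorem. The paper itself does not prove this proposition; it is stated as a quotation of Theorem~1 from Milgrom and Segal, so there is no ``paper's own proof'' to compare against, but your argument matches the original one-line comparison-and-limit proof in that reference. (You also silently corrected the typo in the statement, reading ``$t<1$'' where the paper writes ``$t<0$'' in the second bullet.)
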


\begin{proposition}[Corollary~4 from~\cite{milgrom2002envelope}\,] \label{prop:envelope_compact}
	Suppose that $X$ is nonempty and compact. Suppose that for all $t\in [0,1]$, $f(\cdot,t)$ is continuous. Suppose also that $f$ admits a partial derivative $f_t$ with respect to $t$ that is continuous in $(x,t)$ over $X \times [0,1]$. Then
	\begin{itemize}
		\item $\displaystyle V'(t^+) = \max_{x^* \in X^*(t)} f_t(x^*,t)$ for all $t\in [0,1)$ and $\displaystyle V'(t^-) = \min_{x^* \in X^*(t)} f_t(x^*,t)$ for all $t\in (0,1]$.
		\item $V$ is differentiable at $t \in (0,1)$ is and only if \,$\displaystyle \Big\{ f_t(x^*,t) \, \Big| \, x^* \in X^*(t) \Big\}$
			is a singleton. In that case $V'(t) = f_t(x^*,t)$ for all $x^* \in X^*(t)$.
	\end{itemize}
\end{proposition}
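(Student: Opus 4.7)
The plan is to establish the one-sided derivative formulas first and then deduce the characterization of differentiability. I assume throughout that $t \in [0,1)$ and I analyze $V'(t^+)$; the argument for $V'(t^-)$ is symmetric. Before anything else, I note that $X^*(t)$ is nonempty (continuity of $f(\cdot,t)$ on the compact $X$) and closed, hence compact, so the maximum $M(t) := \max_{x^* \in X^*(t)} f_t(x^*, t)$ is attained. Also, since $f_t$ is continuous on the compact $X \times [0,1]$, it is uniformly bounded, which makes each $f(x,\cdot)$ Lipschitz with a uniform constant, and therefore $V$ itself is Lipschitz (in particular continuous) on $[0,1]$.

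For the lower bound on the right derivative, I fix any $x^* \in X^*(t)$ and exploit the tautology $V(t+h) \geq f(x^*, t+h)$, $V(t) = f(x^*,t)$. Writing $f(x^*, t+h) - f(x^*,t) = \int_0^h f_t(x^*, t+s)\,ds$ and dividing by $h$, uniform continuity of $f_t$ on $X \times [0,1]$ yields
\[
\liminf_{h \to 0^+} \frac{V(t+h)-V(t)}{h} \geq f_t(x^*, t),
\]
and taking the supremum over $x^* \in X^*(t)$ gives the $\geq M(t)$ half.

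For the upper bound, I pick, for each $h > 0$, a maximizer $x_h^* \in X^*(t+h)$ (exists by the same compactness argument). Then $V(t) \geq f(x_h^*, t)$, so
\[
\frac{V(t+h)-V(t)}{h} \leq \frac{1}{h}\int_0^h f_t(x_h^*, t+s)\,ds.
\]
Given any sequence $h_n \downarrow 0$, compactness of $X$ lets me extract $x_{h_n}^* \to \bar{x}$ along a subsequence. By continuity of $V$ and of $f$, and by $f(x_{h_n}^*, t+h_n) = V(t+h_n) \to V(t)$, one concludes $f(\bar x, t) = V(t)$, i.e.\ $\bar x \in X^*(t)$. Uniform continuity of $f_t$ then forces $\frac{1}{h_n}\int_0^{h_n} f_t(x_{h_n}^*, t+s)\,ds \to f_t(\bar x, t) \leq M(t)$, giving $\limsup_{h\to 0^+} \tfrac{V(t+h)-V(t)}{h} \leq M(t)$. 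Combined with the previous step, this proves $V'(t^+) = M(t)$. The symmetric argument for $h \to 0^-$ (using minimizers and the reverse inequalities) produces $V'(t^-) = \min_{x^* \in X^*(t)} f_t(x^*, t)$.

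For the second bullet, $V$ is differentiable at $t \in (0,1)$ iff $V'(t^-) = V'(t^+)$, i.e.\ iff $\min_{x^* \in X^*(t)} f_t(x^*,t) = \max_{x^* \in X^*(t)} f_t(x^*,t)$, which is exactly the statement that $\{ f_t(x^*, t) : x^* \in X^*(t)\}$ is a singleton; the common value is then $V'(t)$. The one delicate step I expect to be the main obstacle is the extraction-and-closedness argument in the upper bound (showing the accumulation points of near-optimal selections $x_h^*$ lie in $X^*(t)$); this is essentially the upper-hemicontinuity part of Berge's maximum theorem, and it is why compactness of $X$ and joint continuity are both essential assumptions.
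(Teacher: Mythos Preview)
Your argument is correct and is essentially the standard direct proof of this envelope result: a pointwise lower bound from any fixed maximizer, an upper bound via compactness and upper-hemicontinuity of the argmax (Berge-type reasoning), and then matching one-sided derivatives for the differentiability characterization. The only cosmetic slip is the phrasing ``given any sequence $h_n\downarrow 0$'' in the upper-bound step; to control the $\limsup$ you should start from a sequence realizing that $\limsup$ and then pass to a subsequence along which $x_{h_n}^*$ converges. This is clearly what you intend, and the rest of the reasoning (joint continuity of $f$ from the hypotheses, $\bar x\in X^*(t)$, and the uniform-continuity estimate on the averaged integral) goes through.

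As for comparison with the paper: the paper does not supply its own proof of this proposition. It is stated in the appendix as a result quoted from Milgrom and Segal~\cite{milgrom2002envelope} (their Corollary~4), together with the companion Proposition~\ref{prop:envelope} (their Theorem~1), and is used as a black box in Proposition~\ref{prop:derivative_phi} and Lemma~\ref{lem:ras3}. So there is no ``paper's proof'' to contrast with; your write-up simply fills in what the paper delegates to the reference.
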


{%
	\bibliographystyle{plain}
	\bibliography{../../formule_RS/references}
}
\end{document}